\newtheorem{theorem}{Theorem}[section]
\newtheorem*{theorem*}{Theorem}
\newtheorem{lemma}[theorem]{Lemma}
\newtheorem{proposition}[theorem]{Proposition}
\newtheorem{remark}[theorem]{Remark}
\newtheorem{fact}[theorem]{Fact}
\newtheorem{definition}[theorem]{Definition}
\newtheorem{corollary}[theorem]{Corollary}
\newtheorem{observation}[theorem]{Observation}
\newtheorem{convention}[theorem]{Convention}
\newtheorem{example}[theorem]{Example}
\newcounter{claim}[theorem]
\newenvironment{claim}[1][]
{\refstepcounter{claim}\par\textbf{Claim. #1}\rmfamily}
{\par}
\newenvironment{claimProof}
{\par{Proof.} \rmfamily}
{$\blacksquare$\par}
\newenvironment{wide}{%
	\begin{list}{}{%
			\setlength{\topsep}{0pt}%
			\addtolength{\leftmargin}{-1.3cm}%
			\addtolength{\rightmargin}{-1.3cm}%
			\setlength{\listparindent}{\parindent}%
			\setlength{\itemindent}{\parindent}%
			\setlength{\parsep}{\parskip}}%
		\item[]%
	}{%
	\end{list}%
}
\newcommand{\domPnm}{(P_n\times P_2^m,\trianglelefteq)}
\newcommandx{\Vij}[2][1=i, 2=j]{V_{#1, #2}}
\newcommandx{\Pnm}[2][1=n, 2=m]{P_{#1}\times P_2^{#2}}
\newcommandx{\Hnm}[2][1=n, 2=m]{\mathscr{H}_{#1,#2}}
\newcommandx{\Ynm}[2][1=n, 2=m]{\mathrm{Y}_{#1,#2}}
\newcommandx{\Znm}[2][1=n, 2=m]{\mathrm{Z}_{#1,#2}}
\newcommandx{\Snm}[2][1=n, 2=m]{\mathcal{S}_{#1,#2}}
\newcommandx{\Gnm}[2][1=n, 2=m]{\mathrm{G}_{#1,#2}}
\newcommandx{\Fnm}[2][1=n, 2=m]{\mathcal{F}_{#1,#2}}
\newcommandx{\AGnm}[2][1=n, 2=m]{\mathscr{A}_{#1,#2}}
\newcommandx{\hnm}[2][1=n, 2=m]{\eta_{#1,#2}}
\newcommandx{\uz}[2][1=n, 2=m]{u^0_{#1,#2}}
\newcommandx{\uo}[2][1=n, 2=m]{u^1_{#1,#2}}
\newcommandx{\dnm}[2][1=n, 2=m]{d_{#1,#2}}
\newcommandx{\dz}[2][1=n, 2=m]{d^0_{#1,#2}}
\newcommandx{\doo}[2][1=n, 2=m]{d^1_{#1,#2}}
\newcommand{\st}{\operatorname{St}}
\newcommand{\aut}{\operatorname{Aut}}
\newcommand{\rank}{\operatorname{rank}}
\newcommand{\diam}{\operatorname{diam}}
\newcommand{\ecc}{\operatorname{ecc}}
\newcommand{\piv}{\operatorname{Piv}}
\newcommand{\len}{\operatorname{len}}
\newcommand{\ps}{\sigma}
\newcommand{\Icwidth}{\gamma}
\newcommand{\tri}{\trianglelefteq}
\newcommand{\join}{\vee}
\newcommand{\meet}{\wedge}
\begin{document}

\thispagestyle{plain}
\pagenumbering{Roman}\thispagestyle{empty}  \setcounter{page}{-4}
\begin{center}
	\Huge 
	Flip Graphs
	\Large
	\vskip 3cm
	\textbf{Roy Harold Jennings}
	\vskip 1cm
	\textrm{Department of Mathematics}
    \vskip 5cm
	\textrm{Ph.D. Thesis} 
	\vskip 1cm
    \textrm{Submitted to the Senate of
		Bar-Ilan University}
\end{center}
\vskip 1cm \textrm{Ramat-Gan, Israel  \hskip 7.5cm  January 2020}
\newpage 

\begin{wide}
	This work was carried out under the guidance and supervision of
	Prof. Ron Adin and Prof. Yuval Roichman, Department of Mathematics, Bar-Ilan
	University.
\end{wide}
\newpage

\setlength\epigraphwidth{.6\textwidth}
\setlength\epigraphrule{0pt}
\section*{Acknowledgements}

\epigraph{``Perhaps we’ll have some answers, at least, before the end. I always dreamed of dying well-informed."}{---Joe Abercrombie,  \textit{Last Argument of Kings}}

I would like to express my gratitude to my advisors Prof. \nolinebreak Ron M. Adin and Prof. Yuval Roichman for their continuous support of my Ph.D. study, for their patience and immense knowledge. 
Their encouragement and guidance helped me build the confidence and courage needed for the fail and retry iterations I went through during the research and writing of this thesis. 

I would also like to thank my friends and fellow researchers Dr. Menachem (Meny) Shlossberg and Dr. Arnon Netzer for their support and encouragement and for many helpful discussions and suggestions.

Special thanks go to my parents Roger and Hava Ben-Ari for supporting me throughout the writing of this thesis.

Last, but by no means least, I would like to thank Luie Jennings, my best friend and partner for life, for being a source of energy, motivation and inspiration; 
for being so supportive and understanding; but most of all, for always putting things in proportions, for her lifesaving sense of humor and her love.
\newpage

\linespread{1.4}
\setcounter{tocdepth}{3}
\tableofcontents

\newpage
\linespread{1.4}
\setcounter{secnumdepth}{0}
\section{Abstract}
Flip graphs are graphs on combinatorial objects in which the adjacency relation reflects a local change in the underlying objects.
In this thesis we introduce \textit{Yoke graphs}, a family of flip graphs that generalizes previously studied families of flip graphs on colored triangle-free triangulations \nolinebreak \cite{TFT1}, arc permutations \cite{elizalde} and geometric caterpillars \cite{yuval}.
Our main results are the computation of the diameter of an arbitrary Yoke graph and a full characterization of the automorphism group of this family of graphs.
We also show that Yoke graphs are Schreier graphs of the affine Weyl group of type $\tilde{C}_m$.

The approach we take in the computation of the diameter is different from the ones in \cite{TFT1} and \cite{elizalde}. 
We show that the approach in \cite{elizalde} (for arc permutation graphs) does not extend to Yoke graphs.
At the heart of our proof lies the idea of transforming a diameter evaluation into an eccentricity problem.
The characterization of the automorphism group is a new result even for the above mentioned three special families of Yoke graphs.

\newpage
\pagenumbering{arabic}
\setcounter{secnumdepth}{3}


\chapter{Introduction}
Over recent decades, there has been an increasing interest in flip graphs, namely, graphs on combinatorial objects in which the adjacency relation reflects a local change. 
For example, see \cite{bose, fabila, parlier, pournin, tarjan}.
In this work we introduce \textit{Yoke graphs}, a family of flip graphs that generalizes previously studied families of flip graphs on colored triangle-free triangulations (CTFT) \cite{TFT1}, arc permutations \cite{elizalde} and geometric caterpillars \cite{yuval}.

Typical problems related to flip graphs include metric properties, such as distance, diameter, finding antipodes and counting geodesics between them; and algebraic properties, such as presentations using Cayley/Schreier graphs, automorphism groups and eigenvalues.
The generalization to Yoke graphs of the three families of flip graphs mentioned above is motivated by their surprising similarities in terms of algebraic, combinatorial and metric properties.
In particular, they carry similar group actions, are intimately related to posets and have similar diameter formulas. 

\section{Main Results}
In this thesis, as mentioned above, we introduce a family of flip graphs, namely, Yoke graphs $\Ynm$.
We compute the diameter of Yoke graphs and prove (see Theorems \ref{thm:ynm_ecc}, \ref{thm:ecc_eq_diam} and \ref{thm:ecc_znm}):
\begin{theorem*}
    \begin{enumerate}
        \item[]
	    \item If $n=1$, then $\diam(\Ynm)=\binom{\lceil \frac{m}{2}\rceil + 1}{2} + \binom{\lfloor \frac{m}{2}\rfloor + 1}{2}$. 
        \item If $0\leq m\leq n$, then $\diam(\Ynm) = \lfloor\frac{n(m+1)}{2}\rfloor$.
        \item If $2\leq n\leq m$, let $\dz = \binom{\lfloor\frac{m+n}{2}\rfloor+1}{2} + \binom{\lceil\frac{m-n}{2}\rceil+1}{2}$. Then
        \begin{enumerate}
            \item if either $2\divides (m-n)$ or $n\leq\lceil\frac{m+1}{2}\rceil$, then $\diam(\Ynm) = \dz$;
            \item otherwise, $\diam(\Ynm) = \dz + n-\lceil\frac{m+1}{2}\rceil$.
        \end{enumerate}
    \end{enumerate}
\end{theorem*}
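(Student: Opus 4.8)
The plan is to avoid computing $\diam(\Ynm)=\max_{u,v}d(u,v)$ by a direct search over pairs, and instead to realize the diameter as the eccentricity $\ecc(\uz)$ of a single, carefully chosen base vertex $\uz$. This rests on two ingredients: first, that flip-distances in $\Ynm$ are governed by the product poset $\prodYnm$, so that $d(u,v)$ can be computed combinatorially rather than by hunting for geodesics in the graph; and second, an extremality statement (Theorem~\ref{thm:ecc_eq_diam}) asserting that $\uz$ is an eccentric vertex, i.e. $\ecc(\uz)=\diam(\Ynm)$. I would begin by using the map $\varphi$ to transport the flip-metric on $\Ynm$ to a tractable lattice-type metric on $\prodYnm$, under which moving between configurations amounts to crossing covering relations of the poset; in the affine $\tilde C_m$ realization this is exactly a count of separating walls, so that each distance becomes a sum of coordinatewise discrepancies.

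Next I would compute $\ecc(\uz)$ by solving the induced discrete optimization: among all admissible targets, maximize the separation from $\uz$. In the product-poset (equivalently, alcove) picture the reachable configurations split into two independent directions, and the cost of advancing in a direction grows linearly with how far one has already moved, so each direction contributes a triangular number $\binom{k+1}{2}=1+2+\cdots+k$. The two reaches are governed by $m+n$ and $m-n$, and summing the directional contributions produces $\dz=\binom{\lfloor\frac{m+n}{2}\rfloor+1}{2}+\binom{\lceil\frac{m-n}{2}\rceil+1}{2}$, the eccentricity value of Theorem~\ref{thm:ynm_ecc}; the floor and ceiling simply record the parity of $m\pm n$. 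The three regimes in the statement then correspond to which reach is binding. When $n\le m$ both directions are active and one sits in the quadratic triangular-number regime ($n=1$ being its degenerate boundary case); when $m\le n$ the second direction collapses (its reach $\frac{m-n}{2}$ would be nonpositive), only a capped linear accumulation survives, and the eccentricity degenerates to $\lfloor\frac{n(m+1)}{2}\rfloor$.

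I expect two steps to carry the real difficulty. The first is the identity $\ecc(\uz)=\diam(\Ynm)$ (Theorem~\ref{thm:ecc_eq_diam}): computing a single eccentricity is routine, but excluding a vertex of strictly larger eccentricity demands a uniform upper bound over all pairs. I would obtain it by exploiting the automorphism group together with the poset symmetry of $\domPnm$ to reduce the candidate far-pairs to those based at $\uz$ and its partner $\uo$. The second and more delicate point is the parity correction in case 3(b): when $m-n$ is odd and $n>\lceil\frac{m+1}{2}\rceil$, the far corner predicted by the unconstrained count is not itself an admissible vertex of $\Ynm$ — a congruence coming from the $\tilde C_m$ structure forbids it — so the true farthest admissible vertex is displaced and reaching it costs the extra $n-\lceil\frac{m+1}{2}\rceil$ steps. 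I would pin this down through the auxiliary graph $\Znm$ (Theorem~\ref{thm:ecc_znm}), which realizes the obstruction-free eccentricity, and then compare $\Ynm$ against $\Znm$ to measure precisely when the admissibility constraint forces this detour and by how much.
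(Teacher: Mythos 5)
Your proposal has a genuine gap at its central step, the reduction of diameter to a single eccentricity. You read Theorem~\ref{thm:ecc_eq_diam} as the statement $\ecc_{\Ynm}(\uz)=\diam(\Ynm)$ and plan to prove it by using the automorphism group together with the symmetry of $\domPnm$ to move an arbitrary far pair onto a pair based at $\uz$ (or $\uo$). That cannot work: by Theorem~\ref{thm:anm_structure}, $\aut(\Ynm)$ has order only $4n$, while $\Ynm$ has $n2^m$ vertices, so $\Ynm$ is very far from vertex-transitive and a generic pair $(u,v)$ cannot be carried by any symmetry to a pair with a prescribed endpoint. The paper's actual reduction --- and the real content of Theorem~\ref{thm:ecc_eq_diam} --- is different: it enlarges $\Ynm$ to the dYoke graph $\Znm$ (entries allowed in $\{-1,0,1\}$) and proves the difference identity $d_{\Ynm}(u,v)=d_{\Znm}(v-u,0)$ (Lemma~\ref{lem:dist_preserved_under_phi}, which in turn needs the geodesic surgery of Lemma~\ref{lem:handle_zeros_in_geodesics_znm}); maximizing over pairs then gives $\diam(\Ynm)=\ecc_{\Znm}(0)$, the eccentricity of the vertex $0$ in the \emph{larger} graph, not of $\uz$ in $\Ynm$. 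Your plan contains no substitute for this difference trick, and without it the uniform upper bound over all pairs --- which you yourself flag as the hard point --- is left unproved.

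There are two further problems. First, you propose to compute distances by transporting the flip metric to the product poset, so that ``each distance becomes a sum of coordinatewise discrepancies.'' But $\Ynm$ is not the Hasse diagram of $\domPnm$: by Observation~\ref{obs:Ynm_as_dominance} it is that Hasse diagram \emph{plus} $2^{m-1}$ wrap-around edges between vertices of the form $(n-1,1,v_2,\dots,v_m)$ and $(0,0,v_2,\dots,v_m)$, so graph distance can be strictly smaller than lattice distance. The paper uses the lattice picture only to settle $m\leq n$ (Section~\ref{sec:diameter_dominance}) and states explicitly that this route fails for $n<m$ --- precisely the regime of case 3 of the theorem you must prove. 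Second, your account of the correction in case 3(b) is off: there is no discrepancy between $\Ynm$ and $\Znm$ to measure, since Theorem~\ref{thm:ecc_znm} asserts $\ecc_{\Znm}(0)=\ecc_{\Ynm}(0)$ for \emph{all} $n,m$. The extra term $n-\lceil\frac{m+1}{2}\rceil$ arises inside the eccentricity computation itself, as the difference $\doo-\dz$ between the distances from $0$ of the two candidate vertices $\uo$ and $\uz$ (Lemma~\ref{lem:dist_of_uz}), not from an admissibility obstruction that displaces a far corner.
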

We also give a full characterization of the automorphism group of this family of graphs (for all values of $n$ and $m\neq 2$).
For example (see Theorems \ref{thm:anm_structure} and \ref{thm:aut_structure} for the complete result):

\begin{theorem*}
	Let $n\geq 1$ and $m\geq 3$ be two integers such that $(n,m)\neq(1, 3)$.
        \begin{enumerate}
            \item $\aut(\Ynm[1][m]) \cong C_2\times C_2$ ($\forall m>3$). 
            \item If $n>1$ and at least one of $n$ and $m$ is odd, then $\aut(\Ynm) \cong D_{2n}$.
            \item If $n>1$ and both $n$ and $m$ are even, then $\aut(\Ynm) \cong D_{n}\times C_2$.
        \end{enumerate}
\end{theorem*}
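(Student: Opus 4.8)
The plan is to follow the standard two-step strategy for the automorphism group of a highly symmetric graph: first exhibit a concrete group $G$ of graph automorphisms and compute its isomorphism type, then prove that $\aut(\Ynm)=G$, i.e.\ that no exotic automorphisms exist. The trichotomy in the statement, together with the uniform order $|\aut(\Ynm)|=4n$ that it encodes (note $|C_2\times C_2|=4=4\cdot 1$), strongly suggests that $G$ is generated by a small number of ``geometric'' symmetries of the underlying object, and that the case split is a purely group-theoretic consequence of how these generators interact.

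For the lower bound I would read the obvious symmetries off the poset $\domPnm$ and the flip structure. I expect three natural automorphisms of $\Ynm$: a \emph{rotation} $r$ of order $n$ coming from the cyclic symmetry of the $n$ layers (the $P_n$-direction); a \emph{reflection} $s$ reversing these layers; and a \emph{complementation} involution $c$ obtained by simultaneously swapping the two colours in each of the $m$ coordinates (the $P_2^m$-direction). Since flips are local moves, verifying that each of $r,s,c$ preserves adjacency is routine, and one checks $srs=r^{-1}$, so $\langle r,s\rangle\cong D_n$. This produces the required subgroup and hence the lower bound.

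The next step is to pin down the abstract structure of $G=\langle r,s,c\rangle$, and this is where the parity conditions are forced. I would compute the relations between $c$ and $\langle r,s\rangle$, in particular whether $c$ is central and the order of the product $cr$; I expect both to be governed by the parities of $n$ and $m$, because the compatibility of a per-layer colour swap with a half-step rotation depends on these parities. In one regime the colour swap merges with the rotation so that $cr$ has order $2n$, making $\langle cr,s\rangle\cong D_{2n}$ of order $4n$; in the other, $c$ is a genuinely independent central involution and $G\cong D_n\times C_2$, again of order $4n$. Invoking the classical fact that $D_{2k}\cong D_k\times C_2$ exactly when $k$ is odd, the two descriptions coincide whenever $n$ is odd (case (2) with at least one of $n,m$ odd), and split when $n$ is even, the parity of $m$ then selecting the correct one of the two non-isomorphic groups (cases (2) and (3)). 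When $n=1$ the rotation $r$ degenerates, $\langle r,s\rangle$ collapses to the single reflection, and $s$ together with the still nontrivial complementation $c$ yields $C_2\times C_2$, which is case (1).

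The hard part, and the genuine content of the upper bound, is to show that these are \emph{all} the automorphisms. Here I would exploit the metric data already established for the diameter computation: any $\alpha\in\aut(\Ynm)$ preserves distances, hence permutes diametral pairs and fixes setwise the vertices of extremal eccentricity, which together with degree data pins down the image of a small ``frame'', e.g.\ the distinguished vertices $\uz,\uo$ and their neighbourhoods. I would also use the Schreier-graph structure of type $\tilde C_m$: an automorphism must permute the flip-types (edge colour classes) by a symmetry of the $\tilde C_m$ Dynkin diagram, which admits only the single end-to-end reflection, so the colour action contributes at most one further involution, already accounted for by $c$ or $s$. A connectivity and local-reconstruction argument then shows that once the frame is fixed the whole map is determined, forcing $|\aut(\Ynm)|=|G|$ and hence equality. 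I expect the rigidity step to be the main obstacle: assembling enough metric-plus-local invariants to exclude automorphisms not arising from the poset symmetries, and separately disposing of the excluded small graphs $m=2$ and $(n,m)=(1,3)$, where accidental coincidences create extra symmetry.
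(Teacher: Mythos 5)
Your lower-bound half coincides in spirit with the paper's: it too takes a bucket rotation $\varphi$ of order $n$, a reversal $\psi$, and a complementation $\tau$, generating a subgroup $\AGnm$ whose isomorphism type gives the trichotomy. Two details you guess at are subtler than you expect, though both are repairable: the complementation cannot act only on the $m$ middle coordinates (it must also adjust the buckets, $\tau(v)=(-v_0,1-v_1,\dots,1-v_m,-(m+v_{m+1}))$, or adjacency and the mod-$n$ sum condition are broken), and it is \emph{not} central and does not interact with the rotation in the way you predict: the governing relations are $\tau\varphi=\varphi^{-1}\tau$ and $(\tau\psi)^2=\varphi^m$. Consequently the element of order $2n$ in the odd case is not $cr$ but $\varphi^k\tau\psi$ with $k$ chosen so that $m+2k$ is coprime to $n$, and the extra commuting involution in the even case is $\varphi^{m/2}\psi\tau$ rather than $\tau$ itself; your appeal to $D_{2k}\cong D_k\times C_2$ for odd $k$ to reconcile the cases is correct.

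The genuine gap is in the upper bound. Your claim that an automorphism ``must permute the flip-types (edge colour classes) by a symmetry of the $\tilde C_m$ Dynkin diagram'' is unjustified and is essentially the statement to be proved: a graph automorphism of a Schreier graph is only required to preserve adjacency, not the edge labelling by generators, and ruling out precisely such colour-scrambling automorphisms is the entire content of the rigidity step (were your claim automatic, the theorem would be nearly immediate). The rest of your upper bound is an acknowledged hope (``assembling enough metric-plus-local invariants \dots the main obstacle'') rather than an argument. The paper's actual rigidity proof proceeds quite differently: it uses the valency-$2$ vertices (those with all non-bucket entries equal) to compose an arbitrary automorphism with an element of $\AGnm$ so that the resulting $\pi$ fixes $0$, $0_\ell$, $0_r$, and then also $0_1$, $0_{n-1}$; it then takes a vertex $u$ not fixed by $\pi$ at minimal distance from $0$, so that $u$ and $v=\pi(u)$ share the same set $N^0$ of neighbours closer to $0$, and through a chain of lemmas on pivots, geodesic pivots and covering relations (Lemmas \ref{lem:N_zero_size_far}, \ref{lem:N_zero_size_only_one}, \ref{lem:N_zero_equal}) shows the only possible pair is $(n-1,0,0,1,0,\dots,0)$ and $(n-2,1,1,0,0,\dots,0)$, which is finally excluded by a valency count when $m>3$ and by a distance computation using the fixed point $0_{n-1}$ when $m=3$. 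None of this machinery, nor a workable substitute for it, is supplied by your sketch, so the proposal does not yet establish $\aut(\Ynm)=\AGnm$.
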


Since Yoke graphs generalize the three families of flip graphs mentioned above, these results provide a unified proof for known results regarding \linebreak diameter.
Our approach in this proof is different from the ones in \cite{TFT1} and \cite{elizalde}.
At the heart of the proof lies the idea of transforming a diameter evaluation into an eccentricity problem.
The characterization of the automorphism group is a new result for the mentioned three cases of Yoke graphs.

\section{The Structure of the Thesis}
In Chapter \ref{chp:background}, we recall and develop the theory that will be necessary for the presentation of our main results. 
Chapter \ref{chp:YokeGraphs} introduces Yoke graphs, the main object in this thesis.
As noted in the introduction, the definition of Yoke graphs was motivated by similarities between CTFT graphs, arc permutation graphs and caterpillar graphs, in terms of their algebraic, combinatorial and metric properties.
In Section \ref{sec:special_cases}, we show that these graphs are indeed special cases of Yoke graphs.
In Section \ref{sec:group_actions}, the fact that CTFT graphs and arc permutation graphs are Schreier graphs of the affine Weyl group of type $\tilde{C}_{n}$ is extended to Yoke graphs.

In Chapter \ref{chp:ecc_zero_Ynm}, we compute the eccentricity of the vertex $0$ in Yoke graphs.
Chapter \ref{chp:diameter_of_Ynm} deals with the diameter of Yoke graphs.
In Section \ref{sec:diameter_dominance}, we develop techniques and obtain partial results on the diameter, based on similarities between Yoke graphs and the dominance order on $\mathbb{Z}^n$.
In Sections \ref{sec:dYokeGraphs}, \ref{sec:diam_ecc} and \ref{sec:ecc_zero_Znm}, we show that the problem of the diameter in Yoke graphs can be converted into a problem of eccentricity in larger graphs, namely, \textit{dYoke graphs}.
The terminology and techniques that were introduced in Chapter \ref{chp:ecc_zero_Ynm} are extended and used in order to compute the eccentricity of $0$ in dYoke graphs.

Chapter \ref{chp:automorphisms} deals with the automorphism group of Yoke graphs.
In Section \ref{sec:fundamental_auto}, three fundamental automorphisms of $\Ynm$ are introduced and we compute the group they generate.
In Section \ref{sec:complete_auto_group}, we show that this group is in fact the entire automorphism group of $\Ynm$ whenever $m\neq 2$ and $(n,m)\neq (1,3)$.
We cover the case $(n,m)=(1,3)$ separately.

\chapter{Mathematical Background}\label{chp:background}
In this chapter, we recall and develop the theory that will be necessary for the presentation of our main results. 

\section{Graphs}
A \textbf{graph} $G$ on a \textbf{vertex} set $V$ is an ordered pair $(V, E)$ where $E\subseteq V\times V$ is a binary relation on $V$. 
$E$ is called the \textbf{adjacency} relation of $G$ and its elements are called \textbf{edges}. 
Accordingly, if $(u, v)\in E$, then $u$ and $v$ are said to be \textbf{adjacent} vertices or \textbf{neighbors} in $G$.
An edge $(u,u)\in E$ is called a \textbf{loop}.
If the relation $E$ is symmetric, then we say that $G$ is an \textbf{undirected graph} and denote adjacent vertices $u$ and $v$ by $u\sim v$.
Otherwise, we say that $G$ is a \textbf{directed graph} or a \textbf{digraph}.
A \textbf{simple graph} $G$ is an undirected graph without loops.
In the rest of this work, the term \textit{graph} denotes a simple graph on a finite set of vertices (directed and infinite graphs will be explicitly stated as such).

A graph $G'=(V',E')$ is a \textbf{subgraph} of $G=(V,E)$, denoted $G'\leq G$, if $V'\subseteq V$ and $E'\subseteq E$.
A subgraph $G'$ is said to be an \textbf{induced subgraph} of $G$ if every edge $e$ in $G$ such that $e\in V'\times V'$ is an edge in $G'$.
A \textbf{complete graph} is one in which all of the vertices are adjacent.
The complete graph on $n$ vertices is denoted by $K_n$.

A \textbf{path} $P=(v^0\sim\dots \sim v^d)$ between $v^0$ and $v^d$ in a (simple) graph $G$ is a sequence $v^0, \dots, v^d$ of vertices of $G$ such that $v^{i-1}\sim v^i$ for every $1\leq i\leq d$.
The path is a \textbf{cycle} if $v^0=v^d$.
Here, $d$ is called the \textbf{length} of $P$.
$G$ is said to be \textbf{connected} if every two vertices in $G$ are connected by a path.
A \textbf{geodesic} (or \textbf{shortest path}) between two vertices $u$ and $v$ in $G$ is a path with minimum length between them.
The \textbf{distance} $d_G(v,u)$ between $v$ and $u$ is the length of a geodesic between them.
We write $d(v,u)$ when $G$ is evident.
The \textbf{diameter} $\diam(G)$ of $G$ is the maximal distance between any two vertices in $G$.
Two vertices in $G$ are said to be \textbf{antipodes} if the distance between them is equal to the diameter of the graph.
The \textbf{eccentricity} $\ecc_G(v)$ of a vertex $v$ in a graph $G$ is the maximal distance of the vertex from any other vertex in $G$.
We write $\ecc(v)$ when $G$ is evident.
A vertex $u$ is said to be \textbf{eccentric} to the vertex $v$ if $d(v,u)=ecc(v)$.
The \textbf{valency} $\delta_G(v)$ of a vertex $v$ in a graph $G$ is the number of neighbors of $v$ in $G$.

An \textbf{isomorphism} of graphs $G$ and $H$ is a bijection $\varphi$ between the vertex sets of $G$ and $H$ such that any two vertices $u$ and $v$ of $G$ are adjacent in $G$ if and only if $\varphi(u)$ and $\varphi(v)$ are adjacent in $H$.
An isomorphism of a graph with itself is called an \textbf{automorphism}.
The set of automorphisms of a graph $G$, with respect to composition, forms a subgroup $\aut(G)$ of the symmetric group on $n$ elements, where $n$ is the number of vertices in the graph.
We call this group the \textbf{automorphism group} of the graph.
Throughout this work, composition of functions is computed right-to-left: 
$(gf)(x)=g(f(x))$.

\subsection{Cayley and Schreier Graphs}
Let $G$ be a group and let $S$ be a symmetric ($S^{-1}=S$) generating set of $G$.
The (right) {\bf Cayley graph} $X(G,S)$ is the graph on the set of elements of $G$ in which $g_1\sim g_2$ if $g_2 = g_1 s$ for some $s$ in $S$.
Let $H$ be a subgroup of $G$.
The (right) {\bf Schreier coset graph} (also \textbf{Schreier graph} or \textbf{coset graph}) $X(G/H,S)$ is the graph on the set of (right) cosets $\{Hg:g\in G\}$ of $H$, in which $Hg_1 \sim Hg_2$ if $Hg_2=(Hg_1)s$ for some $s$ in $S$.
Note that the Cayley graph $X(G,S)$ is a special case of a Schreier graph $X(G/H,S)$ for the trivial subgroup $H=\{e\}$. 

\begin{remark}\label{rem:shcreier_graph_iso}
Assume that a group $G$ acts transitively on the vertices of a graph $Y$ such that the edges of $Y$ correspond to the action of a symmetric generating set $S$ of $G$.
In this case, $Y$ is isomorphic to the Schreier graph $X(G/\st(v),S)$ for every $v$ in $Y$ ($\st(v)$ denotes the stabilizer of a vertex $v$ in $Y$).
\end{remark}

\subsection{Flip Graphs}\label{subsec:background_flip_graphs}
This thesis was motivated by three previously studied families of flip graphs on colored triangle-free triangulations (CTFT) \cite{TFT1}, arc permutations \cite{elizalde} and geometric caterpillars \cite{yuval}.
In this subsection we introduce these three families.

\noindent
{\large Colored Triangle-Free Triangulation Graphs\par}
All definitions and claims in this subsection, related to CTFT graphs, can be found in \cite{TFT1}.
The flip graph of triangulations of a convex polygon \cite{tarjan} inspired the definition of a few flip graphs on subsets of triangulations.
One such graph is the CTFT graph.

A {\bf triangulation} of a convex polygon in the plane is its subdivision into triangles using non-intersecting diagonals. 
An edge of a triangulation that belongs to the polygon is called an {\bf external edge}. 
Every other edge is called a {\bf chord}.
Every triangulation of a convex $n$-gon ($n\geq 3$) has exactly $n-2$ triangles and $n-3$ chords.
We denote by $P_{n}$ a convex polygon with $n\geq 5$ vertices labeled (counter clockwise) by the elements of the additive cyclic group $\mathbb{Z}_n$. 

A triangulation is called {\bf inner-triangle-free}, or simply {\bf triangle-free}, if it contains no triangle with $3$ chords. 
A chord which forms a triangle with two external edges is called a {\bf short chord} and the triangle formed by these three edges is called an {\bf outer triangle}.
A triangulation of $P_{n}$ is triangle-free if and only if it contains exactly two outer triangles.
Moreover, every triangle-free triangulation $T$ of $P_{n}$ induces two opposite linear orders on the chords of $T$ in which the two short chords are the minimum and maximum elements of the order (see \cite{TFT1} for more details). 

A {\bf coloring}, or {\bf orientation} of a triangle-free triangulation $T$ of $P_n$ is a labeling of its chords by $\{0,...,n-4\}$ in one of the two linear orders induced by $T$ on its chords. 
Denote the set of all colored triangle-free triangulations of $P_n$ by $CTFT(n)$. 

Each chord in a triangulation of $P_n$ is a diagonal of a unique quadrilateral (the union of two adjacent triangles). 
Replacing a chord in a triangulation by the other diagonal of its corresponding quadrilateral is called a {\bf flip} of the chord (see Figure \ref{fig:ex_triang_flip}).

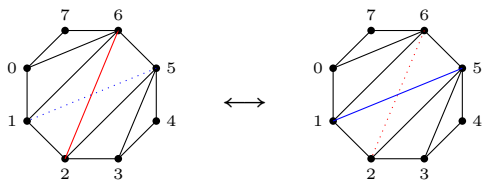
\begin{figure}[hbt]
    \[
    \begin{aligned}
    \begin{tikzpicture}[scale=0.5]
    \fill (2.4,3.4) circle (0.1) node[above]{\tiny 6};
    \fill (3.4,2.4) circle (0.1) node[right]{\tiny 5};
    \fill (3.4,1)   circle (0.1) node[right]{\tiny 4};
    \fill (2.4,0)   circle (0.1) node[below]{\tiny 3};
    \fill (1,0)     circle (0.1) node[below]{\tiny 2};
    \fill (0,1)     circle (0.1) node[left]{\tiny 1};
    \fill (0,2.4)   circle (0.1) node[left]{\tiny 0};
    \fill (1,3.4)   circle (0.1) node[above]{\tiny 7};
    
    \draw (1,3.4)--(2.4,3.4)--(3.4,2.4)--(3.4,1)--(2.4,0)--(1,0)--(0,1)--(0,2.4)--(1,3.4);
    \draw (3.4,2.4)--(1,0);
    \draw (0,1)--(2.4,3.4);
    \draw[red] (2.4,3.4)--(1,0);
    \draw (3.4,2.4)--(2.4,0);
    \draw (0,2.4)--(2.4,3.4);
    \draw[blue,dotted] (3.4,2.4)--(0,1);
    \end{tikzpicture}
    \end{aligned}
    \ \ \ \longleftrightarrow \ \ \
    \begin{aligned}
    \begin{tikzpicture}[scale=0.5]
    \fill (2.4,3.4) circle (0.1) node[above]{\tiny 6};
    \fill (3.4,2.4) circle (0.1) node[right]{\tiny 5};
    \fill (3.4,1)   circle (0.1) node[right]{\tiny 4};
    \fill (2.4,0)   circle (0.1) node[below]{\tiny 3};
    \fill (1,0)     circle (0.1) node[below]{\tiny 2};
    \fill (0,1)     circle (0.1) node[left]{\tiny 1};
    \fill (0,2.4)   circle (0.1) node[left]{\tiny 0};
    \fill (1,3.4)   circle (0.1) node[above]{\tiny 7};
    
    \draw (1,3.4)--(2.4,3.4)--(3.4,2.4)--(3.4,1)--(2.4,0)--(1,0)--(0,1)--(0,2.4)--(1,3.4);
    \draw (3.4,2.4)--(1,0);
    \draw (0,1)--(2.4,3.4);
    \draw (3.4,2.4)--(2.4,0);
    \draw (0,2.4)--(2.4,3.4);
    \draw[red,dotted] (2.4,3.4)--(1,0);
    \draw[blue] (3.4,2.4)--(0,1);
    \end{tikzpicture}
    \end{aligned}
    \]
    \caption{Flip of a chord}
    \label{fig:ex_triang_flip}
\end{figure}

The {\bf flip graph of colored triangle-free triangulations} $\Gamma_n$ is the graph on $CTFT(n)$ where two triangulations are connected by an edge if one is obtained from the other by a flip of a chord.
For example, see $\Gamma_6$ in Figure \ref{fig:ex_gamma6}.
This graph is closely related to a distinguished lower interval in the weak order on the affine Weyl group $\widetilde{C}_n$. 
The diameter of this flip graph was calculated using lattice properties of the order, see \cite[Theorem 5.1]{TFT1}.

\begin{figure}[hbt]
    \[
    \begin{aligned}
    \includegraphics[width=35mm]{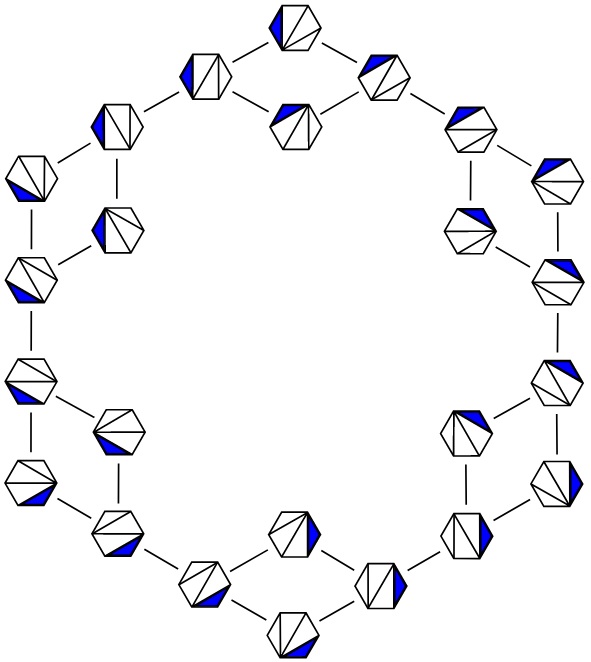}
    \end{aligned}
    \]
    \caption{$\Gamma_6$}
    \label{fig:ex_gamma6}
\end{figure}


\noindent
{\large Arc Permutation Graphs\par}
All definitions and claims relating to arc permutation graphs can be found in \cite{elizalde}.
A permutation $\pi$ in the symmetric group $S_n$ is called an {\bf arc permutation} if every prefix (and suffix) in $\pi$ forms an interval in $\mathbb{Z}_n$, where the element $0$ in $Z_n$ is represented by $n$ for convenience. 
For example, the permutation $3421576$ is an arc permutations in $S_7$ (see Figure \nolinebreak \ref{fig:ex_arc_permutation}) but $5643127$ is not. 
Denote the set of arc permutations in $S_n$ by $\mathcal{A}_n$.

\newcommand{\circRad} {20pt}
\newcommand{\circleScale} {0.7}

\newcommand{\createNodes}[7] {
    \draw (0,0) circle (\circRad);
    \node[draw, fill=#1, minimum size=1pt,shape=circle, scale=\circleScale] (v0) at ({-(360/7)*0 + 90}:\circRad) {\tiny 1};
    \node[draw, fill=#2, minimum size=1pt,shape=circle, scale=\circleScale] (v1) at ({-(360/7)*1 + 90}:\circRad) {\tiny 2};
    \node[draw, fill=#3, minimum size=1pt,shape=circle, scale=\circleScale] (v2) at ({-(360/7)*2 + 90}:\circRad) {\tiny 3};
    \node[draw, fill=#4, minimum size=1pt,shape=circle, scale=\circleScale] (v3) at ({-(360/7)*3 + 90}:\circRad) {\tiny 4};
    \node[draw, fill=#5, minimum size=1pt,shape=circle, scale=\circleScale] (v4) at ({-(360/7)*4 + 90}:\circRad) {\tiny 5};
    \node[draw, fill=#6, minimum size=1pt,shape=circle, scale=\circleScale] (v5) at ({-(360/7)*5 + 90}:\circRad) {\tiny 6};
    \node[draw, fill=#7, minimum size=1pt,shape=circle, scale=\circleScale] (v6) at ({-(360/7)*6 + 90}:\circRad) {\tiny 7};
}

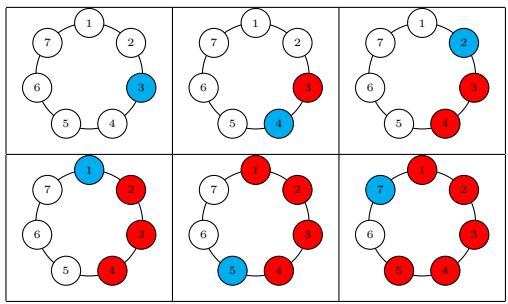
\begin{figure}[hbt]
    \[
    \begin{aligned}
    \begin{tabular}{|c|c|c|}
    \hline
    \begin{tikzpicture}
    \createNodes{white}{white}{cyan}{white}{white}{white}{white}
    \end{tikzpicture} 
    &  
    \begin{tikzpicture}
    \createNodes{white}{white}{red}{cyan}{white}{white}{white}
    \end{tikzpicture} 
    &  
    \begin{tikzpicture}
    \createNodes{white}{cyan}{red}{red}{white}{white}{white}
    \end{tikzpicture}\\
    \hline
    \begin{tikzpicture}
    \createNodes{cyan}{red}{red}{red}{white}{white}{white}
    \end{tikzpicture} 
    &
    \begin{tikzpicture}
    \createNodes{red}{red}{red}{red}{cyan}{white}{white}
    \end{tikzpicture}
    &  
    \begin{tikzpicture}
    \createNodes{red}{red}{red}{red}{red}{white}{cyan}
    \end{tikzpicture} \\
    \hline
    \end{tabular}
    \end{aligned}
    \]
    \caption{$\pi=3421576$ is an arc permutation in $\mathcal{A}_7$}
    \label{fig:ex_arc_permutation}
\end{figure}

The {\bf flip graph of arc permutations} $X_n$ is the graph on $\mathcal{A}_n$ in which two arc permutations $\pi$ and $\sigma$ in $\mathcal{A}_n$ are connected by an edge if \linebreak \mbox{$\pi = \sigma\circ(i,i+1)$} for some $1\leq i \leq n-1$, where $(i,i+1)$ is a transposition written in cycle notation.
For example, see $X_4$ in Figure \ref{fig:ex_arc_permutation_graph}.
A transposition $(i,i+1)$ is also called a simple reflection.
Recall that the set of all simple reflections in $S_n$ is a generating set of $S_n$.
An equivalent definition of $X_n$ is as follows: let $G$ be the Cayley graph associated with $(S_n,S)$, where $S$ is the set of simple reflections in $S_n$.
Then $X_n$ is the subgraph of $G$ that is induced by \nolinebreak $\mathcal{A}_n$.

\newcommand{\addVertex}[7]{%
    
    \filldraw[black] (#1:#2) circle(0.4pt);
    \draw (#1:#3) node[fill=white] {\scriptsize #4};
}

\begin{figure}[hbt]
    \[
    \begin{aligned}
    \begin{tikzpicture}[scale=1.5, cap=round, >=latex]
    
    \addVertex{{90+(360*0)/12}}{1cm}{1.11cm}{4321}{1.27cm}{$\psi=(3,0,0)$}{{90+(360*0)/12}}
    \addVertex{{90+(360*1)/12}}{1cm}{1.19cm}{3421}{1.39cm}{$\psi=(2,1,0)$}{{90+(360*1)/12+4}}
    \addVertex{{90+(360*2)/12}}{1cm}{1.20cm}{3241}{1.42cm}{$\psi=(2,0,1)$}{{90+(360*2)/12-2}}
    \addVertex{{(90+(360*3)/12)}}{1cm}{1.21cm}{3214}{1.39cm}{$\psi=(2,0,0)$}{{(90+(360*3)/12)+6}}
    \addVertex{{90+(360*4)/12}}{1cm}{1.21cm}{2314}{1.37cm}{$\psi=(1,1,0)$}{{90+(360*4)/12+4}}
    \addVertex{{90+(360*5)/12}}{1cm}{1.2cm}{2134}{1.38cm}{$\psi=(1,0,1)$}{{(90+(360*5)/12)}}
    \addVertex{{90+(360*6)/12}}{1cm}{1.2cm}{1234}{1.35cm}{$\psi=(0,1,1)$}{{90+(360*6)/12}}
    \addVertex{{90+(360*7)/12}}{1cm}{1.2cm}{1243}{1.38cm}{$\psi=(0,1,0)$}{{90+(360*7)/12}}
    \addVertex{{90+(360*8)/12}}{1cm}{1.21cm}{1423}{1.37cm}{$\psi=(0,0,1)$}{{90+(360*8)/12-4}}
    \addVertex{{90+(360*9)/12}}{1cm}{1.21cm}{1432}{1.39cm}{$\psi=(0,0,0)$}{{(90+(360*9)/12)-6}}
    \addVertex{{90+(360*10)/12}}{1cm}{1.21cm}{4132}{1.37cm}{$\psi=(3,1,0)$}{{90+(360*10)/12+4}}
    \addVertex{{90+(360*11)/12}}{1cm}{1.2cm}{4312}{1.43cm}{$\psi=(3,0,1)$}{{90+(360*11)/12-4}}
    
    \addVertex{{90+(360*0)/12}}{0.76cm}{0.66cm}{3412}{0.51cm}{$\psi=(2,1,1)$}{{90+(360*0)/12}}
    \addVertex{{(90+(360*3)/12)}}{0.76cm}{0.55cm}{2341}{0.46cm}{$\psi=(1,1,1)$}{{(90+(360*3)/12)+20}}
    \addVertex{{90+(360*6)/12}}{0.76cm}{0.66cm}{2143}{0.51cm}{$\psi=(1,0,0)$}{{90+(360*6)/12}}
    \addVertex{{90+(360*9)/12}}{0.76cm}{0.45cm}{4123}{0.46cm}{$\psi=(3,1,1)$}{{(90+(360*9)/12)-20}}
    
    \draw[thick] ({90+(360*0)/12}:1cm) -- ({90+(360*1)/12}:1cm);
    \draw[thick] ({90+(360*1)/12}:1cm) -- ({90+(360*2)/12}:1cm);
    \draw[thick] ({90+(360*2)/12}:1cm) -- ({90+(360*3)/12}:1cm);
    \draw[thick] ({90+(360*3)/12}:1cm) -- ({90+(360*4)/12}:1cm);
    \draw[thick] ({90+(360*4)/12}:1cm) -- ({90+(360*5)/12}:1cm);
    \draw[thick] ({90+(360*5)/12}:1cm) -- ({90+(360*6)/12}:1cm);
    \draw[thick] ({90+(360*6)/12}:1cm) -- ({90+(360*7)/12}:1cm);
    \draw[thick] ({90+(360*7)/12}:1cm) -- ({90+(360*8)/12}:1cm);
    \draw[thick] ({90+(360*8)/12}:1cm) -- ({90+(360*9)/12}:1cm);
    \draw[thick] ({90+(360*9)/12}:1cm) -- ({90+(360*10)/12}:1cm);
    \draw[thick] ({90+(360*10)/12}:1cm) -- ({90+(360*11)/12}:1cm);
    \draw[thick] ({90+(360*11)/12}:1cm) -- ({90+(360*0)/12}:1cm);
    
    \draw[thick] ({90+(360*1)/12}:1cm) -- ({90+(360*0)/12}:0.76cm);
    \draw[thick] ({90+(360*11)/12}:1cm) -- ({90+(360*0)/12}:0.76cm);
    
    \draw[thick] ({90+(360*2)/12}:1cm) -- ({(90+(360*3)/12)}:0.76cm);
    \draw[thick] ({90+(360*4)/12}:1cm) -- ({(90+(360*3)/12)}:0.76cm);
    
    \draw[thick] ({90+(360*5)/12}:1cm) -- ({90+(360*6)/12}:0.76cm);
    \draw[thick] ({90+(360*7)/12}:1cm) -- ({90+(360*6)/12}:0.76cm);
    
    \draw[thick] ({90+(360*8)/12}:1cm) -- ({90+(360*9)/12}:0.76cm);
    \draw[thick] ({90+(360*10)/12}:1cm) -- ({90+(360*9)/12}:0.76cm);

    \end{tikzpicture} 
    \end{aligned}
    \]
    \caption{$X_4$}
    \label{fig:ex_arc_permutation_graph}
\end{figure}

The diameter of the graph of arc permutations was computed using similarities between the graph and the dominance order on $\mathbb{Z}^n$, see \cite[Subsection \nolinebreak 6.2]{elizalde}.

\noindent
{\large Geometric Caterpillar Graphs\par}
To define a geometric caterpillar, start with the complete graph $K_n$, for $n\geq 3$, whose vertices are labeled by $\mathbb{Z}_n$. 
Embed $K_n$ in the plane such that its vertices are the vertices of a convex polygon (the labels increasing clockwise) and its edges are straight line segments.
Denote this geometric graph by $\hat{K}_n$.
A \textbf{geometric caterpillar} (or caterpillar) of order $n$ is a non-crossing spanning tree of $\hat{K}_n$, such that the vertices that are not leaves in the tree form an interval in $\mathbb{Z}_n$.
The \textbf{spine} of a caterpillar is its longest (simple) path whose edges are on the boundary of the polygon.
Caterpillars are also called fishbones or combs and were studied by Keller, Khachatryan, Perles, Sagan, Wachs and others in various contexts, see, e.g., \cite{keller, perles, yuval2, yuval, martin}.

The {\bf flip graph of caterpillars} $\mathcal{G}_n$ is the graph on the set of caterpillars of order $n$ in which two caterpillars are connected by an edge if one is obtained from the other by shifting an endpoint of an edge along the spine (for an example of two adjacent caterpillar in $\mathcal{G}_8$, see Figure \ref{fig:ex_cat_flip}).

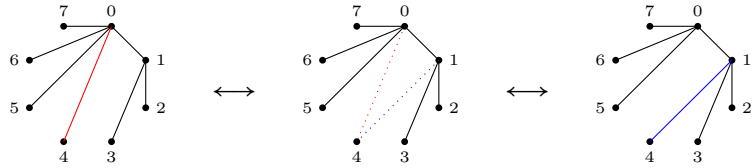
\begin{figure}[hbt]
    \[
    \begin{aligned}
    \begin{tikzpicture}[scale=0.45]
    \fill (2.4,3.4) circle (0.1) node[above]{\tiny 0}; 
    \fill (3.4,2.4) circle (0.1) node[right]{\tiny 1}; 
    \fill (3.4,1) circle (0.1) node[right]{\tiny 2}; 
    \fill (2.4,0) circle (0.1) node[below]{\tiny 3}; 
    \fill (1,0) circle (0.1) node[below]{\tiny 4}; 
    \fill (0,1) circle (0.1) node[left]{\tiny 5}; 
    \fill (0,2.4) circle (0.1) node[left]{\tiny 6}; 
    \fill (1,3.4) circle (0.1) node[above]{\tiny 7};
    \draw (1,3.4)--(2.4,3.4)--(3.4,2.4)--(3.4,1);
    \draw (0,1)--(2.4,3.4); 
    \draw[red] (2.4,3.4)--(1,0);
    \draw (3.4,2.4)--(2.4,0); 
    \draw (0,2.4)--(2.4,3.4);
    \end{tikzpicture}
    \end{aligned}
    \ \ \ \longleftrightarrow \ \ \
    \begin{aligned}
    \begin{tikzpicture}[scale=0.45]
    \fill (2.4,3.4) circle (0.1) node[above]{\tiny 0}; 
    \fill (3.4,2.4) circle (0.1) node[right]{\tiny 1}; 
    \fill (3.4,1) circle (0.1) node[right]{\tiny 2}; 
    \fill (2.4,0) circle (0.1) node[below]{\tiny 3}; 
    \fill (1,0) circle (0.1) node[below]{\tiny 4}; 
    \fill (0,1) circle (0.1) node[left]{\tiny 5}; 
    \fill (0,2.4) circle (0.1) node[left]{\tiny 6}; 
    \fill (1,3.4) circle (0.1) node[above]{\tiny 7};
    \draw (1,3.4)--(2.4,3.4)--(3.4,2.4)--(3.4,1);
    \draw[red,dotted] (2.4,3.4)--(1,0);
    \draw (3.4,2.4)--(2.4,0); 
    \draw (0,2.4)--(2.4,3.4);
    \draw[blue,dotted] (3.4,2.4)--(1,0);
    \draw (0,1)--(2.4,3.4);
    \end{tikzpicture}
    \end{aligned}
    \ \ \ \longleftrightarrow \ \ \
    \begin{aligned}
    \begin{tikzpicture}[scale=0.45]
    \fill (2.4,3.4) circle (0.1) node[above]{\tiny 0}; 
    \fill (3.4,2.4) circle (0.1) node[right]{\tiny 1}; 
    \fill (3.4,1) circle (0.1) node[right]{\tiny 2}; 
    \fill (2.4,0) circle (0.1) node[below]{\tiny 3}; 
    \fill (1,0) circle (0.1) node[below]{\tiny 4}; 
    \fill (0,1) circle (0.1) node[left]{\tiny 5}; 
    \fill (0,2.4) circle (0.1) node[left]{\tiny 6}; 
    \fill (1,3.4) circle (0.1) node[above]{\tiny 7};
    \draw (1,3.4)--(2.4,3.4)--(3.4,2.4)--(3.4,1); 
    \draw (3.4,2.4)--(2.4,0); 
    \draw (0,2.4)--(2.4,3.4);
    \draw[blue] (3.4,2.4)--(1,0); 
    \draw (0,1)--(2.4,3.4);
    \end{tikzpicture}
    \end{aligned}
    \]
    \caption{Adjacent caterpillars in  $\mathcal{G}_8$}
    \label{fig:ex_cat_flip}
\end{figure}

\section{Posets}

A {\bf partial order} $\leq$ on a set $P$ is a binary relation on $P$ which is reflexive, transitive and antisymmetric. 
A set $P$ equipped with a partial order $\leq$ is called a {\bf partially ordered set} (or a {\bf poset}) and is denoted by $(P,\leq)$.
If we refer to a set $P$ as a poset, then either the partial order is evident, or we assume that the partial order on $P$ is denoted by $\leq_P$.


$Q$ is called an {\bf induced subposet} of $P$ if $Q\subseteq P$ and $x\leq_Q y$ if and only if $x\leq_P y$ for every $x,y\in Q$.
In this case, the order on $Q$ is called the {\bf induced order}.
In the rest of this work, a {\bf subposet} means an induced subposet.

%
A map $\varphi:P \rightarrow Q$ between two posets is called {\bf order preserving} if $x\leq_P y$ implies that $\varphi(x)\leq_Q \varphi(y)$. 
$\varphi$ is an \textbf{isomorphism} if it is surjective and $\varphi(x)\leq_Q \varphi(y)$ if and only if $x\leq_P y$ for every $x$ and $y$ in $P$.
(Note that $\varphi(x)=\varphi(y)$ implies both $x\leq y$ and $y\leq x$, therefore $\varphi$ is injective.)
In this case, $P$ and $Q$ are said to be \textbf{isomorphic} and we denote it by $P\cong Q$.


Let $x$ and $y$ be two elements in $(P,\leq)$. If $x\leq y$ or $y\leq x$, then we say that $x$ and $y$ are {\bf comparable}.  Otherwise, $x$ and $y$ are {\bf incomparable}.
We denote by $x<y$ the case $x\leq y$ and $x\neq y$.
A poset in which every two elements are comparable is called a \textbf{chain} or a \textbf{linearly ordered set}.
A poset in which no two elements are comparable is called an {\bf antichain}.
For convenience, we say that $C\subseteq P$ is a chain in $P$, if $C$ is a chain as a subposet of $P$. 


For every $x\leq y$ in a poset $P$, the {\bf interval} $[x,y]$ is the subposet \linebreak $\{z\in P:x\leq z \leq y\}$. 
If every interval in $P$ is finite, then $P$ is called a {\bf locally finite} poset.
In this thesis, every poset is locally finite.
A subposet $Q$ of $P$ is called {\bf convex} if for every $x,y\in Q$ the interval $[x,y]$ in $P$ is contained in $Q$.

Let $x<y$ be two elements in a poset $P$. 
$y$ is said to $\bf cover$ $x$, denoted by $x\lessdot y$, if the interval $[x,y]$ is the two element set $\{x,y\}$. 
The {\bf covering relation} of $P$ is the relation $\lessdot$ on $P$, consisting of all such pairs in $P$.
The \textbf{Hasse diagram} of the poset $(P,\leq)$ is the digraph $\mathscr{H}(P,\leq)$ on $P$ in which $(x,y)$ is an edge if and only if $x\lessdot y$.
Throughout this thesis, by the Hasse diagram of a poset, we mean the underlying undirected graph of the Hasse diagram.


A {\bf maximal} chain in $P$ is one not contained in any other chain in $P$. 
A {\bf maximum} chain in $P$ is one with maximal length.
A chain $C$ is called \textbf{saturated} if there does not exist an element $z\notin C$ and two elements $x\leq y$ in $C$ such that $x \leq z \leq y$ and $z$ can be added to $C$ without losing the property of being totally ordered.

The {\bf length} $\len(C)$ of a finite chain $C$ is equal to $|C|-1$. 
The length $\len(P)$ of a finite poset $P$ is the length of its maximum chains. 
The length of an interval $[x,y]$ is denoted by $\len(x,y)$.

We say that a poset $P$ is a {\bf graded poset} if there exists an order preserving {\bf rank} function $\rho:P\rightarrow \mathbb{N}$ such that $x\lessdot y$ implies that $\rho(x)=\rho(y)-1$.
Equivalently, a poset $P$ is graded if and only if for every $x\leq y$ in $P$ all the maximal chains in $[x,y]$ have the same length.
In a graded poset with a rank function $\rho$, $\len(x,y)=\rho(y)-\rho(x)$ for every two elements $x\leq y$. 

Let $P_1,\dots, P_n$ be $n$ posets. 
The {\bf direct product} (or {\bf product}) order $\leq$ on $P_1\times\dots\times P_n$ is the order in which $(v_1,\dots,v_n) \leq (u_1,\dots,u_n)$ if $v_i\leq_{P_i} u_i$ for every $1\leq i\leq n$.
When we write $P_1\times\dots\times P_n$, we mean the poset on $P_1\times\dots\times P_n$ equipped with the product order, unless stated otherwise.
Note that $(v_1,\dots,v_n) \lessdot (u_1,\dots,u_n)$ in $P_1\times\dots\times P_n$ if and only if $u_i$ covers $v_i$ in $P_i$ for some $1\leq i\leq n$ and $u_j=v_j$ for every other $1\leq j\leq n$.
The poset obtained by a direct product of $(P,\leq)$ with itself $n$ times is denoted by $(P^n,\leq)$ or $P^n$.
In this case, the meaning of $\leq$ should be clear depending on the elements being compared.

\begin{example}\label{example:Z^n_is_graded}
    $(\mathbb{Z}^n,\leq)$ is a graded poset with a rank function $\rank_\leq(v)=\sum_{1}^{n}v_i$.
    The distance between any two elements $u=(u_1,\dots,u_n)$ and $v=(v_1,\dots,v_n)$ in its Hasse diagram $\mathscr{H}(\mathbb{Z}^n,\leq)$ is $\sum_{i=1}^{n}|u_i-v_i|$.
\end{example}

\subsection{Lattices}
Let $P$ be a poset and let $S\subseteq P$.
The {\bf join} of $S$ in $P$ is the least upper bound of $S$ (if it exists). 
The join $s\join t$ of two elements $s$ and $t$ in $P$ is the join of $\{s,t\}$ in $P$. 
The {\bf meet} of $S$ in $P$ is the greatest lower bound of $S$ (if it exists). 
The meet $s\meet t$ of two elements $s$ and $t$ in $P$ is the meet of $\{s,t\}$ in \nolinebreak $P$. 

A poset $L$ is called a {\bf lattice} if every two elements in $L$ have both a meet and a join. 
A subposet $S$ of a lattice $L$ is called a \textbf{sublattice} if $S$ is closed under the meet and join operations of $L$.
For example, every interval $[u,v]$ in a lattice $L$ is a sublattice of $L$.
If $L$ and $M$ are lattices, then so is $L \times M$ equipped with the product order.

\begin{example}\label{example:Z^n_is_a_lattice}
    $(\mathbb{Z}^n,\leq)$ is a lattice with meet and join operations being the point-wise $\min$ and $\max$, respectively. 
\end{example}

%

If $P$ is a poset that has a least element $\hat{0}$ and a greatest element $\hat{1}$ ($\hat{0}\leq p\leq \hat{1}$ for every $p$ in $P$), then we say that $P$ is {\bf bounded}.
Note that every non-empty finite lattice is bounded (by taking the meet and join of all the elements). 



A lattice $L$ is said to be a \textbf{modular lattice} if $x\leq b$ implies that \mbox{$x\join(a\meet b)=(x\join a)\meet b$} for every $x,a,b\in L$.
Clearly, every sublattice of a modular lattice is modular.

\begin{example}
    $(\mathbb{Z}^n,\leq)$ is a modular lattice.
\end{example}

%
%


\begin{theorem}(\cite[Lemma 1.2]{TFT1})\label{thm:dist_modular}
    Let $L$ be a locally finite, modular lattice. 
    Let $\rho$ be its rank function.
    Denote the Hasse diagram of $L$ by $H$. 
    Then for every $s,t\in L$:
    $ d_H(s,t) = \rho(s\join t) - \rho(s\meet t).$
\end{theorem}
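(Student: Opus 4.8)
The plan is to prove the two inequalities $d_H(s,t)\le \rho(s\join t)-\rho(s\meet t)$ and $d_H(s,t)\ge \rho(s\join t)-\rho(s\meet t)$ separately, both resting on the modular rank identity
\[ \rho(a\join b)+\rho(a\meet b)=\rho(a)+\rho(b)\qquad(a,b\in L). \]
I would establish this identity first, since everything else follows from it. The key tool is the classical \emph{diamond} (transposition) isomorphism: for $a,b\in L$ the map $x\mapsto x\join b$ is an order isomorphism from $[a\meet b,\,a]$ onto $[b,\,a\join b]$, with inverse $y\mapsto y\meet a$. Checking that these are mutually inverse is exactly one application of the modular law in each direction; for instance, for $a\meet b\le x\le a$ one has $(x\join b)\meet a=x\join(b\meet a)=x$ (the middle equality is modularity, using $x\le a$, and the last uses $x\ge a\meet b$). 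Since isomorphic intervals have equal length and $\len(x,y)=\rho(y)-\rho(x)$ in a graded poset, comparing the lengths of the two intervals gives $\rho(a)-\rho(a\meet b)=\rho(a\join b)-\rho(b)$, which rearranges to the identity. Local finiteness guarantees these intervals are finite, so $\rho$ and all the lengths involved are well defined.

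For the upper bound I would exhibit an explicit walk. Since $s\meet t\le s$ and $s\meet t\le t$, the interval $[s\meet t,s]$ carries a saturated chain of length $\rho(s)-\rho(s\meet t)$ and $[s\meet t,t]$ one of length $\rho(t)-\rho(s\meet t)$; concatenating them gives a walk $s\to s\meet t\to t$ in $H$ of length $\rho(s)+\rho(t)-2\rho(s\meet t)$. Substituting the rank identity turns this into $\rho(s\join t)-\rho(s\meet t)$, which is the desired bound (the symmetric walk through $s\join t$ works equally well).

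The lower bound is the part I expect to carry the real weight, and I would handle it with a potential (monovariant) argument. Define $g(x)=\rho(x\join t)-\rho(x\meet t)$ for $x\in L$, noting $g(t)=0$ while $g(s)=\rho(s\join t)-\rho(s\meet t)$ is the target value. The crucial claim is that $g$ changes by exactly $\pm 1$ across each edge of $H$. Indeed, if $x\lessdot y$, then applying the rank identity to the pairs $(x,t)$ and $(y,t)$ and subtracting yields $[\rho(y\join t)-\rho(x\join t)]+[\rho(y\meet t)-\rho(x\meet t)]=\rho(y)-\rho(x)=1$; since $x\join t\le y\join t$ and $x\meet t\le y\meet t$, both summands are nonnegative integers, so one is $1$ and the other $0$, whence $g(y)-g(x)=\pm1$. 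Then for any walk $s=x_0,x_1,\dots,x_d=t$ in $H$, the telescoping estimate $g(s)=g(x_0)-g(x_d)=\sum_{i=1}^{d}\bigl(g(x_{i-1})-g(x_i)\bigr)\le\sum_{i=1}^{d}\lvert g(x_{i-1})-g(x_i)\rvert=d$ forces $d\ge\rho(s\join t)-\rho(s\meet t)$, matching the upper bound.

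The only genuine obstacle is the rank identity itself, namely verifying that the two modular-law computations really do make $x\mapsto x\join b$ and $y\mapsto y\meet a$ mutually inverse order isomorphisms; once this diamond isomorphism is in place, both bounds are short. I would also flag that the hypotheses are used exactly where needed: modularity supplies the identity, while local finiteness (together with the rank function $\rho$ furnished in the statement) ensures that all intervals, chains, and distances are finite, so that the chain and telescoping arguments are legitimate.
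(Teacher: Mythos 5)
The paper never proves this statement: it is imported wholesale by citation from \cite[Lemma 1.2]{TFT1}, so there is no internal proof to compare yours against. Your argument is correct and self-contained, and it supplies exactly what the citation hides. The diamond-isomorphism computation uses the modular law precisely in the form the paper states it ($x\le b$ implies $x\join(a\meet b)=(x\join a)\meet b$); together with local finiteness and the fact that $\len(x,y)=\rho(y)-\rho(x)$ in a graded poset, it gives the rank identity $\rho(a\join b)+\rho(a\meet b)=\rho(a)+\rho(b)$. Your upper bound is sound: maximal chains in $[s\meet t,s]$ and $[s\meet t,t]$ are saturated, covering relations inside an interval agree with covering relations in $L$ (intervals are convex), so the concatenation is a genuine walk in $H$, which also settles finiteness of $d_H(s,t)$. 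For the lower bound, the key claim that $g(x)=\rho(x\join t)-\rho(x\meet t)$ changes by exactly $\pm 1$ along every Hasse edge is verified correctly: when $x\lessdot y$ the two increments $\rho(y\join t)-\rho(x\join t)$ and $\rho(y\meet t)-\rho(x\meet t)$ are nonnegative and sum to $1$, so $g$ is $1$-Lipschitz on $H$ and the telescoping estimate forces $d\ge g(s)$. The only hypothesis you lean on implicitly, that $L$ is graded so that $\rho$ has the stated properties, is part of the theorem's hypotheses (the statement furnishes $\rho$), as you flag at the end.
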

%

\subsection{The Dominance Order on $\mathbb{Z}^n$}
\begin{definition}\label{def:dom_order}
    The {\bf dominance order} $\trianglelefteq$ on $\mathbb{Z}^n$ is the order in which \linebreak
    $(v_1,\dots,v_n) \tri (u_1,\dots,u_n)$ if $\sum_{j=1}^i v_j\leq \sum_{j=1}^i u_j$
    for every $1\leq i \leq n$.
\end{definition}

\begin{observation}\label{obs:domprodiso}
    $(\mathbb{Z}^n,\trianglelefteq)$ and $(\mathbb{Z}^n,\leq) $ are isomorphic. 
    The following maps, $\chi$ and its inverse $\chi^{-1}$, are poset isomorphisms.
    $$
    \begin{array}{llll}
    \chi: & (\mathbb{Z}^n,\trianglelefteq) & \longrightarrow & (\mathbb{Z}^n,\leq) \\
    & (v_1,\dots,v_{n})    & \longmapsto & (v_1,v_1+v_2,\dots,v_1+\dots+v_{n})
    \end{array}
    $$
    $$
    \begin{array}{llll}
    \chi^{-1}: & (\mathbb{Z}^n,\leq) & \longrightarrow & (\mathbb{Z}^n,\trianglelefteq)\\
    & (v_1,\dots,v_{n}) & \longmapsto & (v_1,v_2-v_1,\dots,v_{n}-v_{n-1}).
    \end{array}
    $$
\end{observation}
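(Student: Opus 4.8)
The plan is to observe that this statement is essentially a direct unwinding of the two definitions, the single key point being that the $i$-th coordinate of $\chi(v)$ is exactly the $i$-th partial sum $\sum_{j=1}^{i} v_j$ appearing in Definition \ref{def:dom_order}. Concretely, I would first confirm that $\chi$ and $\chi^{-1}$ are mutually inverse bijections of $\mathbb{Z}^n$, and then verify that $\chi$ carries the dominance order to the product order in both directions.

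For the bijection part, I would compute $\chi^{-1}(\chi(v))$ and $\chi(\chi^{-1}(v))$ coordinate-wise. In each case the result telescopes: the $i$-th coordinate of $\chi(v)$ minus its $(i-1)$-th coordinate returns $v_i$, and conversely the partial sums of the successive differences $v_2 - v_1, \dots, v_n - v_{n-1}$ recover the original partial sums. This shows both composites equal the identity, so each map is a bijection; in particular $\chi$ is surjective, which is one of the two conditions in the isomorphism definition used here.

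For the order part, write $\chi(v)_i = \sum_{j=1}^{i} v_j$. By Definition \ref{def:dom_order}, $v \tri u$ means precisely that $\sum_{j=1}^{i} v_j \leq \sum_{j=1}^{i} u_j$ for every $1 \leq i \leq n$, which is literally the statement $\chi(v)_i \leq \chi(u)_i$ for every $i$, i.e. $\chi(v) \leq \chi(u)$ in the product order. Since every step is an equivalence, this yields $v \tri u \iff \chi(v) \leq \chi(u)$, so $\chi$ is an order isomorphism $(\mathbb{Z}^n, \tri) \to (\mathbb{Z}^n, \leq)$, and its inverse $\chi^{-1}$ is then automatically an isomorphism as well. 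I expect no real obstacle here: the argument is a short translation of definitions once the partial-sum identity is noted, and the only points requiring any care are the telescoping computation establishing that $\chi$ and $\chi^{-1}$ are genuine inverses, together with the bookkeeping needed to match the paper's definition of isomorphism (surjectivity plus the biconditional on the order), with injectivity coming for free.
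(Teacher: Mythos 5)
Your proof is correct: the telescoping check that $\chi$ and $\chi^{-1}$ are mutual inverses, together with the observation that $\chi(v)_i=\sum_{j=1}^i v_j$ turns the dominance condition verbatim into the product-order condition, is exactly the definitional unwinding the paper has in mind. The paper states this as an observation with no written proof precisely because the argument is this routine, so your proposal fills in the intended reasoning and matches the paper's definition of isomorphism (surjectivity plus the order biconditional, with injectivity automatic).
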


Let $E$ be the standard basis of $\mathbb{R}^n$ as a vector space and let $S=E\cup E^{-1}$.
Note that the isomorphisms $\chi$ and $\chi^{-1}$ are also automorphisms of $\mathbb{Z}^n$ as an additive group.
Therefore, they are determined by the mapping of the standard basis $E$.
For example, for $n=3$
$$\chi(u) = 
\left( \begin{array}{ccc}
1 & 0 & 0 \\
1 & 1 & 0 \\
1 & 1 & 1 \end{array} \right) u $$

$$\chi^{-1}(u) = 
\left( \begin{array}{ccc}
1 & 0 & 0 \\
-1 & 1 & 0 \\
0 & -1 & 1 \end{array} \right) u. $$
This implies that the generating set $\chi^{-1}(S)$ of $\mathbb{Z}^n$ describes the covering relation of $(\mathbb{Z}^n,\tri)$, since the generating set $S$ describes the covering relation of $(\mathbb{Z}^n,\leq)$.
In other words, $\mathscr{H}(\mathbb{Z}^n,\tri)$ is the Cayley graph $X(\mathbb{Z}^n, \chi^{-1}(S))$, since the Hasse diagram $\mathscr{H}(\mathbb{Z}^n,\leq)$ is the Cayley graph $X(\mathbb{Z}^n, S)$.

\begin{observation}\label{obs:domcover}
    The Hasse diagram $\mathscr{H}(\mathbb{Z}^n,\tri)$ is the Cayley graph 
    $X(\mathbb{Z}^n, A)$ where $A=\{\pm(1,-1,0,\dots),\pm(0,1,-1,0,\dots),\dots,\pm(0,\dots,0,1)\}$.
\end{observation}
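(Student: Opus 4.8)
The plan is to exploit the key feature of the map $\chi^{-1}$ highlighted in the discussion preceding the statement: it is at once a poset isomorphism $(\mathbb{Z}^n,\leq)\to(\mathbb{Z}^n,\trianglelefteq)$ and an automorphism of $\mathbb{Z}^n$ as an additive group. I would isolate two elementary structural facts and then reduce everything to one explicit computation. First, a poset isomorphism carries covering pairs to covering pairs, hence induces a graph isomorphism of the associated Hasse diagrams; in particular $\chi^{-1}$ carries $\mathscr{H}(\mathbb{Z}^n,\leq)$ onto $\mathscr{H}(\mathbb{Z}^n,\trianglelefteq)$. Second, any group automorphism $\psi$ of $\mathbb{Z}^n$ carries a Cayley graph $X(\mathbb{Z}^n,S)$ onto $X(\mathbb{Z}^n,\psi(S))$, since (using that $\mathbb{Z}^n$ is abelian and $S$ symmetric) $x\sim y$ means $y-x\in S$, which is equivalent to $\psi(y)-\psi(x)=\psi(y-x)\in\psi(S)$.

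Next I would combine these with the identification $\mathscr{H}(\mathbb{Z}^n,\leq)=X(\mathbb{Z}^n,S)$ recalled just before the statement, where $S=E\cup E^{-1}$ is the set of unit steps $\pm e_i$ governing the covering relation of the product order $\leq$. Because $\mathscr{H}(\mathbb{Z}^n,\leq)$ and $X(\mathbb{Z}^n,S)$ are literally the same graph on the vertex set $\mathbb{Z}^n$, their images under the single vertex bijection $\chi^{-1}$ coincide; the first fact identifies that image as $\mathscr{H}(\mathbb{Z}^n,\trianglelefteq)$, and the second identifies it as $X(\mathbb{Z}^n,\chi^{-1}(S))$. This yields the genuine equality $\mathscr{H}(\mathbb{Z}^n,\trianglelefteq)=X(\mathbb{Z}^n,\chi^{-1}(S))$, reducing the problem to evaluating $\chi^{-1}(S)$.

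Since $\chi^{-1}$ is linear, it suffices to apply it to the basis $E=\{e_1,\dots,e_n\}$ and to take negatives. Using $\chi^{-1}(v)=(v_1,v_2-v_1,\dots,v_n-v_{n-1})$ I get $\chi^{-1}(e_i)=e_i-e_{i+1}=(0,\dots,0,1,-1,0,\dots,0)$, with the $1$ in coordinate $i$, for $1\leq i\leq n-1$, and $\chi^{-1}(e_n)=(0,\dots,0,1)$. Adjoining the negatives $\chi^{-1}(E^{-1})=-\chi^{-1}(E)$ produces precisely the set $A$ of the statement, completing the argument.

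I do not expect a genuine obstacle here; the proof is essentially the one-line computation $\chi^{-1}(S)=A$ together with the transport argument. The only point demanding care is to keep separate the two roles of $\chi^{-1}$ — as a \emph{poset} isomorphism (which is what guarantees it respects the covering relation, hence Hasse-diagram edges) and as a \emph{group} automorphism (which is what guarantees that the transported graph is again a Cayley graph, with the image generating set). Invoking each fact at the correct moment, on the same underlying graph, is exactly what lets the explicit evaluation of $\chi^{-1}(S)$ close the proof.
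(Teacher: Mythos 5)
Your proposal is correct and follows essentially the same route as the paper: the text preceding the observation argues exactly that, since $\chi^{-1}$ is simultaneously a poset isomorphism and a group automorphism of $\mathbb{Z}^n$, the Hasse diagram $\mathscr{H}(\mathbb{Z}^n,\trianglelefteq)$ equals the Cayley graph $X(\mathbb{Z}^n,\chi^{-1}(S))$, with $S=E\cup E^{-1}$ describing the covering relation of $(\mathbb{Z}^n,\leq)$. Your explicit evaluation $\chi^{-1}(e_i)=e_i-e_{i+1}$ for $i<n$ and $\chi^{-1}(e_n)=e_n$, giving $A=\chi^{-1}(S)$, is the same one-line computation the paper leaves implicit.
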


\begin{observation}\label{obs:domprop}
    Let $v=(v_1,\dots, v_n)$ and $u=(u_1,\dots, u_n)$ be two elements in $\mathbb{Z}^n$.
    Examples \ref{example:Z^n_is_graded}, \ref{example:Z^n_is_a_lattice} and Observation \ref{obs:domprodiso} imply the following properties. 
    \begin{enumerate}
        \item $(\mathbb{Z}^n,\tri)$ is a graded poset with the following rank function:
            \begin{equation*}
             \rank_\tri(v) = \rank_{\leq}(\chi(v)) = \sum_{i=1}^{n}\sum_{j=1}^{i}v_j .
            \end{equation*}        
        \item  The distance between $u$ and $v$ in the Hasse diagram of $(\mathbb{Z}^n,\tri)$ is as follows:
            \begin{equation*}
            \begin{split}
d_{\mathscr{H}(\mathbb{Z}^n,\tri)}(v,u) & =  d_{\mathscr{H}(\mathbb{Z}^n,\leq)}(\chi(v),\chi(u)) \\
& = \sum_{i=1}^{n}|\chi(v)_{i}-\chi(u)_{i}| \\
& = \sum_{i=1}^{n}|\sum_{k=1}^{i}(v_k - u_k)|.
            \end{split}
            \end{equation*}        
        \item $(\mathbb{Z}^n,\tri)$ is a lattice with the following meet and join operations:
    \begin{equation*}
        v\meet u =(\alpha_1,\dots,\alpha_{n}) \text{ where } \alpha_k=\min\{\sum\limits_{i=1}^{k}v_i,\sum\limits_{i=1}^{k}u_i\}-\min\{\sum\limits_{i=1}^{k-1}v_i,\sum\limits_{i=1}^{k-1}u_i\};
    \end{equation*}
    \begin{equation*}
        v\join u =(\beta_1,\dots,\beta_{n}) \text{ where }	\beta_k=\max\{\sum\limits_{i=1}^{k}v_i,\sum\limits_{i=1}^{k}u_i\}-\max\{\sum\limits_{i=1}^{k-1}v_i,\sum\limits_{i=1}^{k-1}u_i\}.
    \end{equation*}
    \end{enumerate}
\end{observation}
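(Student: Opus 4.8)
The plan is to transport all three facts from $(\mathbb{Z}^n,\leq)$ to $(\mathbb{Z}^n,\tri)$ through the poset isomorphism $\chi$ of Observation \ref{obs:domprodiso}, using the general principle that a poset isomorphism $\varphi\colon P\to Q$ preserves every order-theoretic feature named in the statement. Concretely, I would first record three standard consequences of $\varphi$ being an isomorphism: it sends covers to covers (and its inverse does likewise), so it induces a graph isomorphism of the Hasse diagrams $\mathscr{H}(P)\to\mathscr{H}(Q)$; it pulls back any rank function $\rho$ on $Q$ to the rank function $\rho\circ\varphi$ on $P$, since $\rho\circ\varphi$ is order preserving and drops by one on covers; and it commutes with meet and join, i.e. $\varphi(x\meet y)=\varphi(x)\meet\varphi(y)$ and similarly for $\join$, so it transfers the lattice property together with explicit formulas via $x\meet y=\varphi^{-1}(\varphi(x)\meet\varphi(y))$. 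With this principle in hand, each claim becomes a substitution of the formula for $\chi$.

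For part (1), I would apply the rank-pullback to $\varphi=\chi$ and the rank function $\rank_\leq(w)=\sum_{i=1}^n w_i$ of Example \ref{example:Z^n_is_graded}, then substitute $\chi(v)_i=v_1+\dots+v_i$ and sum over $i$ to obtain the stated double sum. For part (2), cover-preservation makes $\chi$ an isometry between the two Hasse diagrams, so the distances agree; Example \ref{example:Z^n_is_graded} evaluates the distance in $\mathscr{H}(\mathbb{Z}^n,\leq)$ as $\sum_{i=1}^n|\chi(v)_i-\chi(u)_i|$, and inserting the partial-sum coordinates gives $\sum_{i=1}^n|\sum_{k=1}^i(v_k-u_k)|$. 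For part (3), Example \ref{example:Z^n_is_a_lattice} gives that $(\mathbb{Z}^n,\leq)$ is a lattice with pointwise $\min$ and $\max$, so $(\mathbb{Z}^n,\tri)$ is a lattice and $v\meet u=\chi^{-1}(\chi(v)\meet\chi(u))$; since the $k$-th coordinate of $\chi(v)\meet\chi(u)$ is $\min\{\sum_{i=1}^k v_i,\sum_{i=1}^k u_i\}$ and $\chi^{-1}$ takes successive differences, the $k$-th coordinate of the meet is exactly the stated $\alpha_k$, and the $\max$-analogue yields $\beta_k$.

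The arguments are routine, so I do not expect a genuine obstacle; the only steps deserving care are the two general claims about isomorphisms (that they pull back rank functions and induce Hasse-diagram graph isomorphisms, both immediate from the fact that $\chi$ and $\chi^{-1}$ preserve covers), and the index bookkeeping in the meet and join formulas, especially the boundary term at $k=1$, where the empty sum $\sum_{i=1}^0$ is read as $0$.
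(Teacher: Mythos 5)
Your proposal is correct and follows essentially the same route as the paper, which states this observation as an immediate consequence of Examples \ref{example:Z^n_is_graded}, \ref{example:Z^n_is_a_lattice} and Observation \ref{obs:domprodiso}, i.e.\ precisely the transport of rank, Hasse-diagram distance, and lattice structure through the isomorphism $\chi$. Your explicit justification of the three general preservation facts (covers, pulled-back rank functions, meets and joins) and the coordinate substitution is exactly the argument the paper leaves implicit.
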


\chapter{The Yoke Graphs $\Ynm$}\label{chp:YokeGraphs}
In this chapter, we introduce Yoke graphs, the main object of this thesis.
After providing basic definitions in Section \ref{sec:yoke_graphs_defs}, we review three important instances in Section \ref{sec:special_cases} and discuss some group actions on these graphs in Section \ref{sec:group_actions}.

\section{Basic Definitions}\label{sec:yoke_graphs_defs}
For every non-negative integer $n$, denote the set $\{0,\dots, n-1\}$ by $P_n$.

\begin{definition}\label{def:yoke_graph}
	Let $n\geq 1$ and $m\geq 0$ be two integers.
	The \textbf{Yoke graph} $\Ynm$ is a graph with vertices corresponding to all $v=(v_0\dots,v_{m+1})$ in \mbox{$\mathbb{Z}_n\times P_2^m\times \mathbb{Z}_n$} such that $\sum_{i=0}^{m+1}v_i\equiv 0(\bmod n).$
	Two vertices $u$ and $v$ are adjacent in $\Ynm$, denoted $u\sim v$, if there exists $0\leq i\leq m$ such that $u_j=v_j$ for every $j\notin\{i,i+1\}$ and one of the following two cases holds: either $u_i=v_i+1$ and $u_{i+1}=v_{i+1}-1$, or $u_i=v_i-1$ and $u_{i+1}=v_{i+1}+1$. 
\end{definition}

The name ``Yoke graph" is derived from the shoulder yoke, a tool that can be used to carry two buckets.
Based on this analogy, we refer to the entries $v_0$ and $v_{m+1}$ of a vertex $v$ in $\Ynm$ as the \textbf{left bucket} and the \textbf{right bucket} of $v$, respectively.

\begin{convention}\label{conv:cosets_are_integers}
	By Definition \ref{def:yoke_graph}, the buckets of a vertex $v\in\Ynm$ are elements (cosets) in the quotient group $\mathbb{Z}_n$ of $\mathbb{Z}$.
	Throughout this thesis, a bucket is identified with its smallest non-negative representative in $\mathbb{Z}$.
\end{convention}

According to Convention \ref{conv:cosets_are_integers}, the sum $\sum_{i=0}^{m+1}v_i$ in Definition \ref{def:yoke_graph} is a non-negative integer. 
Note that a vertex $u$ in $\Ynm$ is determined by its first (or last) $m+1$ entries, since $\sum_{i=0}^{m+1}u_i\equiv 0(\bmod n)$.
When convenient, we identify a vertex in $\Ynm$ by specifying its first (or last) $m+1$ entries.
We denote the vertex $(0,\dots,0)\in\Ynm$ by $0$.

In the first case of the adjacency relation in Definition \ref{def:yoke_graph}, where $u_i=v_i+1$ and $u_{i+1}=v_{i+1}-1$ for some $0\leq i\leq m$, we say that $u$ is obtained from $v$ by \textbf{shifting} a unit from entry $i+1$ to the \textbf{left}, and write $u=\overleftarrow{s}_{i}(v)$.
In the second case, where $u_i=v_i-1$ and $u_{i+1}=v_{i+1}+1$, we say that $u$ is obtained from $v$ by \textbf{shifting} a unit in entry $i$ to the \textbf{right}, and write $u=\overrightarrow{s}_{i}(v)$.
For an example of a Yoke graph, see $\Ynm[3][3]$ in Figure \ref{fig:Y33}.
Observe that $\Ynm$ is a connected simple graph.

\begin{figure}[hbt]
	\centering
	\begin{tikzpicture}[scale=0.4]
	\tikzstyle{every node}=[draw,circle,fill=white,minimum size=4pt,inner sep=0pt]
	
	\draw (0,0) node (01110) [label=left:\scriptsize(01110)] {}
	-- ++(-120:2.0cm) node (01101) [label=left:\scriptsize(01101)] {}
	-- ++(-120:4.0cm) node (01011) [label=right:\scriptsize(01011)] {}
	-- ++(-120:4.0cm) node (00111) [label=left:\scriptsize(00111)] {}
	-- ++(-120:2.0cm) node (21111) [label=left:\scriptsize(21111)] {}
	
	-- ++(0:2.0cm) node (21102) [label={[shift={(-90:0.8)}]:\scriptsize(21102)}] {}
	-- ++(0:4.0cm) node (21012) [label={[shift={(90:-0.3)}]:\scriptsize(21012)}] {}
	-- ++(0:4.0cm) node (20112) [label={[shift={(-90:0.8)}]:\scriptsize(20112)}] {}
	-- ++(0:2.0cm) node (11112) [label=right:\scriptsize(11112)] {}
	
	-- ++(120:2.0cm) node (11100) [label=right:\scriptsize(11100)] {}
	-- ++(120:4.0cm) node (11010) [label=left:\scriptsize(11010)] {}
	-- ++(120:4.0cm) node (10110) [label=right:\scriptsize(10110)] {}
	-- (01110) {};
	
	\draw (01101) -- ++(-60:2.0cm) node (10101) [label={[shift={(-90:0.8)}]:\scriptsize(10101)}] {}
	-- ++(0:4.0cm) node (11001) [label={[shift={(90:-0.3)}]:\scriptsize(11001)}] {}
	-- ++(0:2.0cm) node (20001) [label=right:\scriptsize(20001)] {}
	
	-- ++(-120:2.0cm) node (20010) [label=right:\scriptsize(20010)] {}
	-- ++(-120:4.0cm) node (20100) [label=left:\scriptsize(20100)] {}
	-- ++(-120:4.0cm) node (21000) [label=right:\scriptsize(21000)] {}
	-- ++(-120:2.0cm) node (00000) [label=right:\scriptsize 0] {}
	
	-- ++(120:2.0cm) node (00012) [label=left:\scriptsize(00012)] {}
	-- ++(120:4.0cm) node (00102) [label=right:\scriptsize(00102)] {}
	-- ++(120:4.0cm) node (01002) [label=left:\scriptsize(01002)] {}
	-- ++(120:2.0cm) node (10002) [label=left:\scriptsize(10002)] {}
	
	-- ++(0:2.0cm) node (10011) [label={[shift={(90:-0.3)}]:\scriptsize(10011)}] {}
	-- (10101) {};
	
	\draw (10101) -- (10110);
	\draw (11001) -- (11010) -- (20010);
	\draw (11100) -- (20100) -- (20112);
	\draw (21000) -- (21012) -- (00012);
	\draw (21102) -- (00102) -- (00111);
	\draw (01002) -- (01011) -- (10011);
	
	\end{tikzpicture}
	\caption{$\Ynm[3][3]$} \label{fig:Y33}
\end{figure}
\pagebreak
\begin{observation}[The case $m\leq 1$]\label{obs:small_m_is_trivial}
	The cases $\Ynm[1][0]$, $\Ynm[2][0]$ and $\Ynm[1][1]$ are graphs on, at most, two vertices (see Figure \ref{fig:small_yoke_graphs}).
	If $m=0$ and $2<n$, then $\Ynm$ is the cycle graph on $n$ vertices and if $m=1$ and $1<n$, then $\Ynm$ is the cycle graph on $2n$ vertices (see Figure \ref{fig:cycle_yoke_graphs}).
\end{observation}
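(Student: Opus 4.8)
The plan is to verify each assertion by reading off the vertex set and the adjacency relation directly from Definition \ref{def:yoke_graph}, while keeping careful track of two constraints that govern which shifts are admissible: the middle entries $v_1,\dots,v_m$ must remain in $P_2=\{0,1\}$ after a shift, whereas the buckets $v_0,v_{m+1}$ live in $\mathbb{Z}_n$ and are therefore manipulated modulo $n$. Everything reduces to a finite case check once one notes that a vertex is determined by its first $m+1$ entries.

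First I would dispose of the three degenerate cases. For $\Ynm[1][0]$ both buckets lie in $\mathbb{Z}_1$ and there are no middle entries, so $0$ is the only vertex and the graph is a single point. For $\Ynm[2][0]$ the vertices are the pairs $(v_0,v_1)\in\mathbb{Z}_2^2$ with $v_0+v_1\equiv 0$, namely $(0,0)$ and $(1,1)$, and the unique shift at $i=0$ joins them, giving a graph on two vertices. For $\Ynm[1][1]$ the buckets vanish and the single middle entry $v_1\in\{0,1\}$ gives exactly $(0,0,0)$ and $(0,1,0)$, which are adjacent via the admissible shift at $i=0$. This settles the claim of ``at most two vertices''.

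Next, for $m=0$ and $n>2$, I would identify each vertex $(v_0,v_1)$ with its left bucket $v_0\in\mathbb{Z}_n$, since $v_1\equiv -v_0$ is forced; this yields $n$ vertices. The only admissible index is $i=0$, and the two shifts $\overleftarrow{s}_0,\overrightarrow{s}_0$ send $v_0$ to $v_0\pm 1\pmod n$. The hypothesis $n>2$ guarantees that $v_0+1$, $v_0-1$, and $v_0$ are pairwise distinct, so each vertex has exactly two neighbors and no loops occur. Hence $\Ynm[n][0]$ is $2$-regular, and I would conclude that it is the $n$-cycle.

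Finally, for $m=1$ and $n>1$, I would parametrize vertices by $(v_0,v_1)$ with $v_0\in\mathbb{Z}_n$ and $v_1\in\{0,1\}$ (so $2n$ vertices, with $v_2\equiv -(v_0+v_1)$ determined). Computing the admissible shifts at $i=0$ and $i=1$ under the $\{0,1\}$-constraint shows that $(v_0,0)$ is adjacent exactly to $(v_0,1)$ and $(v_0-1,1)$, while $(v_0,1)$ is adjacent exactly to $(v_0,0)$ and $(v_0+1,0)$; for $n>1$ these neighbor pairs are distinct and loop-free, so the graph is again $2$-regular. The main (mild) obstacle is upgrading $2$-regularity to the statement that the graph is a \emph{single} cycle rather than a disjoint union of cycles: here I would invoke the connectivity of $\Ynm$ observed after Definition \ref{def:yoke_graph}, or equivalently exhibit the explicit Hamiltonian cycle $(0,0),(0,1),(1,0),(1,1),\dots,(n-1,0),(n-1,1),(0,0)$, to conclude that $\Ynm[n][1]$ is the cycle $C_{2n}$.
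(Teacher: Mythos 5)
Your proposal is correct and takes essentially the same approach as the paper: the paper offers no written proof at all for this Observation (it relies on the accompanying figures and direct inspection of Definition~\ref{def:yoke_graph}), which is exactly the verification you carry out. Your case analysis, including the explicit Hamiltonian cycle that upgrades $2$-regularity to a single cycle, simply fills in the routine details the paper leaves to the reader.
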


\begin{figure}[hbt]
    
    \begin{center}
        \begin{minipage}{.3\textwidth}
            \centering
            \begin{tikzpicture}[scale=0.4]
            \tikzstyle{every node}=[draw,circle,fill=white,minimum size=4pt,inner sep=0pt]
            \draw (0,0) node (00) [label=left:\scriptsize(00)] {};
            \end{tikzpicture}
        \end{minipage}
        \begin{minipage}{.3\textwidth}
            \centering
            \begin{tikzpicture}[scale=0.4]
            \tikzstyle{every node}=[draw,circle,fill=white,minimum size=4pt,inner sep=0pt]
            \draw (0,0) node (00) [label=left:\scriptsize(00)] {}
            -- ++(+90:2.0cm) node (11) [label=left:\scriptsize(11)] {};
            \end{tikzpicture}
        \end{minipage}
        \begin{minipage}{.3\textwidth}
            \centering
            \begin{tikzpicture}[scale=0.4]
            \tikzstyle{every node}=[draw,circle,fill=white,minimum size=4pt,inner sep=0pt]
            \draw (0,0) node (000) [label=left:\scriptsize(000)] {}
            -- ++(+90:2.0cm) node (010) [label=left:\scriptsize(010)] {};
            \end{tikzpicture}
        \end{minipage}
    \end{center}
    \caption{$\Ynm[1][0]$, $\Ynm[2][0]$ and $\Ynm[1][1]$}
    \label{fig:small_yoke_graphs}
\end{figure}

\begin{figure}[hbt]
    
    \begin{center}
        \begin{minipage}{.4\textwidth}
            \centering
                \begin{tikzpicture}[scale=0.4]
                \tikzstyle{every node}=[draw,circle,fill=white,minimum size=4pt,inner sep=0pt]
                \draw (0,0) node (00) [label=right:\scriptsize(00)] {} 
                -- ++(60:2.0cm) node (51) [label=right:\scriptsize(51)] {}
                -- ++(120:2.0cm) node (42) [label=right:\scriptsize(42)] {}
                -- ++(180:2.0cm) node (33) [label=left:\scriptsize(33)] {}
                -- ++(240:2.0cm) node (24) [label=left:\scriptsize(24)] {}
                -- ++(300:2.0cm) node (15) [label=left:\scriptsize(15)] {};
                \draw (15) -- (00);
                \end{tikzpicture}
        \end{minipage}
        \begin{minipage}{.4\textwidth}
            \centering
                \begin{tikzpicture}[scale=0.4]
                \tikzstyle{every node}=[draw,circle,fill=white,minimum size=4pt,inner sep=0pt]
                \draw (0,0) node (00) [label=right:\scriptsize(000)] {} 
                -- ++(60:2.0cm) node (51) [label=right:\scriptsize(210)] {}
                -- ++(120:2.0cm) node (42) [label=right:\scriptsize(201)] {}
                -- ++(180:2.0cm) node (33) [label=left:\scriptsize(111)] {}
                -- ++(240:2.0cm) node (24) [label=left:\scriptsize(102)] {}
                -- ++(300:2.0cm) node (15) [label=left:\scriptsize(012)] {};
                \draw (15) -- (00);
                \end{tikzpicture}
        \end{minipage}
    \end{center}
    \caption{$\Ynm[6][0]$ and $\Ynm[3][1]$}
    \label{fig:cycle_yoke_graphs}
\end{figure}

\section{Important Examples of Yoke Graphs}\label{sec:special_cases}
In this section, we show that the three flip graphs from Subsection \ref{subsec:background_flip_graphs} are in fact instances of Yoke graphs.
For every one of these graphs, we give its isomorphism with the appropriate instance of a Yoke graph.

\noindent
{\large Colored Triangle-Free Triangulation Graphs\par}
Recall the flip graph of colored triangle-free triangulations $\Gamma_n$ from Subsection \ref{subsec:background_flip_graphs}. In \cite[Definition 2.8]{TFT1}, a bijection between its vertex set $CTFT(n)$ and $\mathbb{Z}_n\times\{0,1\}^{n-4}$ is defined in order to calculate the cardinality of $CTFT(n)$. 
This map is now used to prove that the graphs $\Gamma_n$ and $\Ynm[n][n-4]$ are isomorphic.

\begin{definition}
    Define the map 
    $g: \Gamma_n \rightarrow \Ynm[n][n-4]$ 
    as follows.
    Let $T\in \Gamma_n$.
    If the (short) chord labeled by $0$ in $T$ is $(a-1, a+1)$ for $a\in\mathbb{Z}_n$, then let  $g(T)_0 = a$.
    Extend the definition of $g(T)_i$ iteratively for every $1\leq i\leq n-4$ as follows.
    First, note that if the chord labeled by $i-1$ in $T$ is $(k,t)$, then the chord labeled by $i$ is either $(k-1,t)$ or $(k,t+1)$.
    Now, let $g(T)_i$ be $0$ in the former case and $1$ in the latter.
\end{definition}
For example, assume that the edge $(0,6)$ in Figure \ref{fig:ex_triang_flip} is labeled by $0$ in $\Gamma_8$ (in both triangulations). 
Then these triangulations are mapped by $g$ from $\Gamma_8$ to $\Ynm[8][4]$.
The left triangulation in the figure is mapped to $(7,1,1,0,1,6)$ and the right is mapped to $(7,1,0,1,1,6)$.

\begin{observation}
    It is straightforward to verify that $g$ is a bijection and that a chord flip of the chord labeled by $i\in\{0,\dots, n-4\}$ in $T\in \Gamma_n$ corresponds to the case in the adjacency relation in $\Ynm[n][n-4]$, where a unit is shifted between the entries indexed by $i$ and $i+1$ of $g(T)$.
    This implies that $g$ is a graph isomorphism.
\end{observation}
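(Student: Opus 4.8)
The plan is to read $g$ as the composition of the set bijection of \cite[Definition 2.8]{TFT1} with the remark that a vertex of $\Ynm[n][n-4]$ is determined by its first $m+1=n-3$ coordinates. Indeed, a vertex $(v_0,\dots,v_{m+1})$ has $v_0\in\mathbb{Z}_n$, middle coordinates $v_1,\dots,v_{n-4}\in\{0,1\}$, and a last coordinate forced by $\sum_i v_i\equiv 0\pmod n$; hence the vertex set is in canonical bijection with $\mathbb{Z}_n\times\{0,1\}^{n-4}$, and $g$ followed by ``forget the last coordinate'' is exactly the map of \cite[Definition 2.8]{TFT1}. Thus bijectivity of $g$ reduces to the already known bijectivity of that map. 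Alternatively, I would exhibit the inverse directly by reconstructing $T$ from a pair $(a,w)$: start from the short chord $(a-1,a+1)$ and, for each letter $w_i$, append the chord $(k-1,t)$ if $w_i=0$ and $(k,t+1)$ if $w_i=1$, where $(k,t)$ is the previously built chord, then verify that this always yields a triangle-free triangulation inverting $g$.

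For the edge correspondence I would first isolate the structural picture underlying $g$. Since $T$ is triangle-free, its dual tree is a path, so the chords of $T$, read in the linear order fixed by the coloring, describe an interval of ``consumed'' polygon vertices that grows by exactly one vertex at each step: to the left when $g(T)_i=0$ and to the right when $g(T)_i=1$. Consequently $g(T)$ is precisely the pair consisting of the position $a$ of the first short chord together with the turn word of left/right extensions, the right bucket encoding the second short chord through the sum condition.

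With this picture in hand I would analyze a flip of the chord labeled $i$ by cases. For an interior label $1\le i\le n-5$, the quadrilateral around chord $i$ is governed by the two consecutive turns at steps $i$ and $i+1$: if these turns agree (that is, $g(T)_i=g(T)_{i+1}$), the flipped diagonal closes a triangle all of whose edges are chords, so the result leaves $CTFT(n)$ and is \emph{not} an edge of $\Gamma_n$ — exactly matching the fact that no unit can be shifted between two equal $\{0,1\}$-entries of $\Ynm[n][n-4]$; if the turns differ, the flip exchanges them, sending $(g(T)_i,g(T)_{i+1})$ from $(1,0)$ to $(0,1)$ or conversely, which is precisely a unit shift between entries $i$ and $i+1$, while all other coordinates — in particular both short chords, hence the orientation of the coloring — are left unchanged. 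For the two short chords, labels $0$ and $n-4$, flipping moves the corresponding bucket by $\pm1$ in $\mathbb{Z}_n$ and flips the adjacent turn, and I would check that this realizes exactly the shift $\overrightarrow{s}_0$ or $\overleftarrow{s}_0$ at entries $(0,1)$, respectively the shift at entries $(m,m+1)$ involving the right bucket determined by the sum condition. Since every flip that stays in $\Gamma_n$ corresponds to a unit shift, and every unit shift to such a flip, both $g$ and $g^{-1}$ preserve adjacency, so $g$ is a graph isomorphism.

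The main obstacle is the boundary (short chord) case: one must track the cyclic relabeling of the polygon vertices, confirm that the flipped short chord is again an outer triangle (so that the image lies in $CTFT(n)$ with the same coloring orientation), and reconcile the change of the right bucket — which $g$ defines only implicitly through $\sum_i v_i\equiv 0\pmod n$ — with the explicit shift in the adjacency relation. Establishing the dichotomy ``turns agree $\iff$ flip leaves $CTFT(n)$'' for interior chords, though elementary, is the other point requiring care, since it is exactly what explains why the forbidden Yoke moves (shifting a coordinate out of $\{0,1\}$) correspond to the non-triangle-free flips.
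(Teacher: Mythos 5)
Your proposal is correct and does exactly what the paper's observation asserts to be straightforward: the paper offers no further argument beyond the claim itself, so your detailed verification — bijectivity via the bijection of \cite[Definition 2.8]{TFT1} together with the fact that a vertex of $\Ynm[n][n-4]$ is determined by its first $n-3$ entries, plus the flip/shift case analysis for interior and short chords — is precisely the intended check. In particular, you correctly isolate the one point that genuinely needs care, namely that flipping an interior chord whose two neighboring turns agree closes a triangle with three chords and hence gives no edge of $\Gamma_n$, matching the impossibility of shifting a unit between two equal $\{0,1\}$-entries of a vertex of $\Ynm[n][n-4]$.
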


Note that if $P_n$ would be extended to be defined similarly in the case $n=4$, then the resulting graph $\Gamma_4$ would be a graph on $2$ vertices, whereas $\Ynm[4][0]$ is a graph on $4$ vertices.
The definition of a coloring of a triangle-free triangulation can be modified (from how it appears in Subsection \ref{subsec:background_flip_graphs} and in \cite{TFT1}) such that the linear order is applied to the triangles instead of the chords.
This will extend the correspondence between $\Gamma_n$ and $\Ynm[n][n-4]$ to the case of $\Gamma_4$.
In this case, however, the flip action of the chord in $\Gamma_4$ is not involutive (in contrast to the case $n\geq 5$) and its definition should be refined to include ``direction" of the flip.

\noindent
{\large Arc Permutation Graphs\par}
Recall the flip graph of arc permutations $X_n$ from Subsection \ref{subsec:background_flip_graphs}.
In \cite[Subsection  6.2]{elizalde}, a bijection between its vertex set $\mathcal{A}_n$ and $\mathbb{Z}_n\times\{0,1\}^{n-2}$ is defined in order to embed $X_n$ in the Hasse diagram of the dominance order on $\{0,\dots, n-1\}\times\{0,1\}^{n-2}$. 
This map is now used to prove that the graphs $X_n$ and $\Ynm[n][n-2]$ are isomorphic.

\begin{definition}
    Define the map 
    $f: X_n \rightarrow \Ynm[n][n-2]$ 
    as follows.
    Let $\pi\in X_n$ and set  $f(\pi)_0 = \pi(1)-1$.
    Extend the definition of $f(\pi)_i$ iteratively for every $1\leq i\leq n-2$ as follows.
    Let $I_i$ be the underlying set $\{\pi(1), \dots, \pi(i)\}$ of the interval formed by the prefix of $\pi$ up to $i$.
    First, note that either $(\pi(i+1)-1)\bmod n\in I_i$ or $(\pi(i+1)+1)\bmod n\in I_i$, since the interval $I_i$ must be extended by $\pi(i+1)$.
    Now, let $f(\pi)_i$ be $1$ in the former case and $0$ in the latter.
\end{definition}

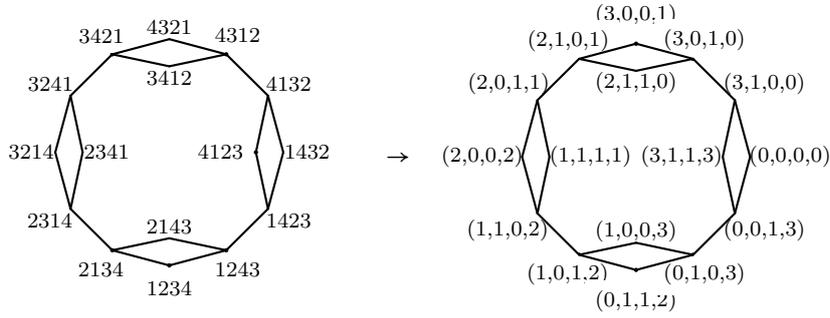
\begin{figure}[hbt]
    
    \begin{center}
        \begin{minipage}{.45\textwidth}
            \centering
            \begin{tikzpicture}[scale=1.5, cap=round, >=latex]
            \addVertex{{90+(360*0)/12}}{1cm}{1.11cm}{4321}{1.27cm}{$\psi=(3,0,0)$}{{90+(360*0)/12}}
            \addVertex{{90+(360*1)/12}}{1cm}{1.19cm}{3421}{1.39cm}{$\psi=(2,1,0)$}{{90+(360*1)/12+4}}
            \addVertex{{90+(360*2)/12}}{1cm}{1.20cm}{3241}{1.42cm}{$\psi=(2,0,1)$}{{90+(360*2)/12-2}}
            \addVertex{{(90+(360*3)/12)}}{1cm}{1.21cm}{3214}{1.39cm}{$\psi=(2,0,0)$}{{(90+(360*3)/12)+6}}
            \addVertex{{90+(360*4)/12}}{1cm}{1.21cm}{2314}{1.37cm}{$\psi=(1,1,0)$}{{90+(360*4)/12+4}}
            \addVertex{{90+(360*5)/12}}{1cm}{1.2cm}{2134}{1.38cm}{$\psi=(1,0,1)$}{{(90+(360*5)/12)}}
            \addVertex{{90+(360*6)/12}}{1cm}{1.2cm}{1234}{1.35cm}{$\psi=(0,1,1)$}{{90+(360*6)/12}}
            \addVertex{{90+(360*7)/12}}{1cm}{1.2cm}{1243}{1.38cm}{$\psi=(0,1,0)$}{{90+(360*7)/12}}
            \addVertex{{90+(360*8)/12}}{1cm}{1.21cm}{1423}{1.37cm}{$\psi=(0,0,1)$}{{90+(360*8)/12-4}}
            \addVertex{{90+(360*9)/12}}{1cm}{1.21cm}{1432}{1.39cm}{$\psi=(0,0,0)$}{{(90+(360*9)/12)-6}}
            \addVertex{{90+(360*10)/12}}{1cm}{1.21cm}{4132}{1.37cm}{$\psi=(3,1,0)$}{{90+(360*10)/12+4}}
            \addVertex{{90+(360*11)/12}}{1cm}{1.2cm}{4312}{1.43cm}{$\psi=(3,0,1)$}{{90+(360*11)/12-4}}
            
            \addVertex{{90+(360*0)/12}}{0.76cm}{0.66cm}{3412}{0.51cm}{$\psi=(2,1,1)$}{{90+(360*0)/12}}
            \addVertex{{(90+(360*3)/12)}}{0.76cm}{0.55cm}{2341}{0.46cm}{$\psi=(1,1,1)$}{{(90+(360*3)/12)+20}}
            \addVertex{{90+(360*6)/12}}{0.76cm}{0.66cm}{2143}{0.51cm}{$\psi=(1,0,0)$}{{90+(360*6)/12}}
            \addVertex{{90+(360*9)/12}}{0.76cm}{0.45cm}{4123}{0.46cm}{$\psi=(3,1,1)$}{{(90+(360*9)/12)-20}}
            
            \draw[thick] ({90+(360*0)/12}:1cm) -- ({90+(360*1)/12}:1cm);
            \draw[thick] ({90+(360*1)/12}:1cm) -- ({90+(360*2)/12}:1cm);
            \draw[thick] ({90+(360*2)/12}:1cm) -- ({90+(360*3)/12}:1cm);
            \draw[thick] ({90+(360*3)/12}:1cm) -- ({90+(360*4)/12}:1cm);
            \draw[thick] ({90+(360*4)/12}:1cm) -- ({90+(360*5)/12}:1cm);
            \draw[thick] ({90+(360*5)/12}:1cm) -- ({90+(360*6)/12}:1cm);
            \draw[thick] ({90+(360*6)/12}:1cm) -- ({90+(360*7)/12}:1cm);
            \draw[thick] ({90+(360*7)/12}:1cm) -- ({90+(360*8)/12}:1cm);
            \draw[thick] ({90+(360*8)/12}:1cm) -- ({90+(360*9)/12}:1cm);
            \draw[thick] ({90+(360*9)/12}:1cm) -- ({90+(360*10)/12}:1cm);
            \draw[thick] ({90+(360*10)/12}:1cm) -- ({90+(360*11)/12}:1cm);
            \draw[thick] ({90+(360*11)/12}:1cm) -- ({90+(360*0)/12}:1cm);
            
            \draw[thick] ({90+(360*1)/12}:1cm) -- ({90+(360*0)/12}:0.76cm);
            \draw[thick] ({90+(360*11)/12}:1cm) -- ({90+(360*0)/12}:0.76cm);
            
            \draw[thick] ({90+(360*2)/12}:1cm) -- ({(90+(360*3)/12)}:0.76cm);
            \draw[thick] ({90+(360*4)/12}:1cm) -- ({(90+(360*3)/12)}:0.76cm);
            
            \draw[thick] ({90+(360*5)/12}:1cm) -- ({90+(360*6)/12}:0.76cm);
            \draw[thick] ({90+(360*7)/12}:1cm) -- ({90+(360*6)/12}:0.76cm);
            
            \draw[thick] ({90+(360*8)/12}:1cm) -- ({90+(360*9)/12}:0.76cm);
            \draw[thick] ({90+(360*10)/12}:1cm) -- ({90+(360*9)/12}:0.76cm);
            \end{tikzpicture} 
        \end{minipage}$\rightarrow$
        \begin{minipage}{.45\textwidth}
            \centering
            \begin{tikzpicture}[scale=1.5, cap=round, >=latex]
            \addVertex{{90+(360*0)/12}}{1cm}{1.26cm}{(3,0,0,1)}{1.27cm}{$\psi=(3,0,0)$}{{90+(360*0)/12}}
            \addVertex{{90+(360*1)/12}}{1cm}{1.19cm}{(2,1,0,1)}{1.39cm}{$\psi=(2,1,0)$}{{90+(360*1)/12+4}}
            \addVertex{{90+(360*2)/12}}{1cm}{1.29cm}{(2,0,1,1)}{1.42cm}{$\psi=(2,0,1)$}{{90+(360*2)/12-2}}
            \addVertex{{(90+(360*3)/12)}}{1cm}{1.35cm}{(2,0,0,2)}{1.39cm}{$\psi=(2,0,0)$}{{(90+(360*3)/12)+6}}
            \addVertex{{90+(360*4)/12}}{1cm}{1.3cm}{(1,1,0,2)}{1.37cm}{$\psi=(1,1,0)$}{{90+(360*4)/12+4}}
            \addVertex{{90+(360*5)/12}}{1cm}{1.2cm}{(1,0,1,2)}{1.38cm}{$\psi=(1,0,1)$}{{(90+(360*5)/12)}}
            \addVertex{{90+(360*6)/12}}{1cm}{1.27cm}{(0,1,1,2)}{1.35cm}{$\psi=(0,1,1)$}{{90+(360*6)/12}}
            \addVertex{{90+(360*7)/12}}{1cm}{1.2cm}{(0,1,0,3)}{1.38cm}{$\psi=(0,1,0)$}{{90+(360*7)/12}}
            \addVertex{{90+(360*8)/12}}{1cm}{1.3cm}{(0,0,1,3)}{1.37cm}{$\psi=(0,0,1)$}{{90+(360*8)/12-4}}
            \addVertex{{90+(360*9)/12}}{1cm}{1.35cm}{(0,0,0,0)}{1.39cm}{$\psi=(0,0,0)$}{{(90+(360*9)/12)-6}}
            \addVertex{{90+(360*10)/12}}{1cm}{1.3cm}{(3,1,0,0)}{1.37cm}{$\psi=(3,1,0)$}{{90+(360*10)/12+4}}
            \addVertex{{90+(360*11)/12}}{1cm}{1.2cm}{(3,0,1,0)}{1.43cm}{$\psi=(3,0,1)$}{{90+(360*11)/12-4}}
            
            \addVertex{{90+(360*0)/12}}{0.76cm}{0.66cm}{(2,1,1,0)}{0.51cm}{$\psi=(2,1,1)$}{{90+(360*0)/12}}
            \addVertex{{(90+(360*3)/12)}}{0.76cm}{0.40cm}{(1,1,1,1)}{0.46cm}{$\psi=(1,1,1)$}{{(90+(360*3)/12)+20}}
            \addVertex{{90+(360*6)/12}}{0.76cm}{0.66cm}{(1,0,0,3)}{0.51cm}{$\psi=(1,0,0)$}{{90+(360*6)/12}}
            \addVertex{{90+(360*9)/12}}{0.76cm}{0.40cm}{(3,1,1,3)}{0.46cm}{$\psi=(3,1,1)$}{{(90+(360*9)/12)-20}}
            
            \draw[thick] ({90+(360*0)/12}:1cm) -- ({90+(360*1)/12}:1cm);
            \draw[thick] ({90+(360*1)/12}:1cm) -- ({90+(360*2)/12}:1cm);
            \draw[thick] ({90+(360*2)/12}:1cm) -- ({90+(360*3)/12}:1cm);
            \draw[thick] ({90+(360*3)/12}:1cm) -- ({90+(360*4)/12}:1cm);
            \draw[thick] ({90+(360*4)/12}:1cm) -- ({90+(360*5)/12}:1cm);
            \draw[thick] ({90+(360*5)/12}:1cm) -- ({90+(360*6)/12}:1cm);
            \draw[thick] ({90+(360*6)/12}:1cm) -- ({90+(360*7)/12}:1cm);
            \draw[thick] ({90+(360*7)/12}:1cm) -- ({90+(360*8)/12}:1cm);
            \draw[thick] ({90+(360*8)/12}:1cm) -- ({90+(360*9)/12}:1cm);
            \draw[thick] ({90+(360*9)/12}:1cm) -- ({90+(360*10)/12}:1cm);
            \draw[thick] ({90+(360*10)/12}:1cm) -- ({90+(360*11)/12}:1cm);
            \draw[thick] ({90+(360*11)/12}:1cm) -- ({90+(360*0)/12}:1cm);
            
            \draw[thick] ({90+(360*1)/12}:1cm) -- ({90+(360*0)/12}:0.76cm);
            \draw[thick] ({90+(360*11)/12}:1cm) -- ({90+(360*0)/12}:0.76cm);
            
            \draw[thick] ({90+(360*2)/12}:1cm) -- ({(90+(360*3)/12)}:0.76cm);
            \draw[thick] ({90+(360*4)/12}:1cm) -- ({(90+(360*3)/12)}:0.76cm);
            
            \draw[thick] ({90+(360*5)/12}:1cm) -- ({90+(360*6)/12}:0.76cm);
            \draw[thick] ({90+(360*7)/12}:1cm) -- ({90+(360*6)/12}:0.76cm);
            
            \draw[thick] ({90+(360*8)/12}:1cm) -- ({90+(360*9)/12}:0.76cm);
            \draw[thick] ({90+(360*10)/12}:1cm) -- ({90+(360*9)/12}:0.76cm);
            \end{tikzpicture} 
        \end{minipage}
    \end{center}
    \caption{$f:X_4\rightarrow\Ynm[4][2]$}
    \label{fig:ex_arc_permutation_isomorphism}
\end{figure}

\begin{observation}
    It is straightforward to verify that $f$ is a bijection and that right multiplication of $\pi\in X_n$ by the transposition $(i,i+1)$ (for \mbox{$1\leq i\leq n-1$}) corresponds to the case in the adjacency relation in $\Ynm[n][n-2]$, where a unit is shifted between the entries indexed by $i-1$ and $i$ of $f(\pi)$.
    This implies that $f$ is a graph isomorphism.
\end{observation}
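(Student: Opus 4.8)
The statement has two independent components: that $f$ is a bijection, and that it carries the transposition action to the unit-shift action. I would prove the bijection by exhibiting an explicit inverse. The guiding principle is that the prefix set $I_i=\{\pi(1),\dots,\pi(i)\}$ of an arc permutation is an arc in $\mathbb{Z}_n$, and an arc can be enlarged by one element in exactly two ways: by adjoining the element just above its top, or the element just below its bottom. The coordinate $f(\pi)_i$ records precisely which of these two extensions produces $I_{i+1}$. Hence, given a vertex $(v_0,\dots,v_{n-1})$, I would reconstruct $\pi$ by setting $\pi(1)=v_0+1$ and then, having built the arc $I_i$, letting $\pi(i+1)$ be the upper neighbour of $I_i$ if $v_i=1$ and the lower neighbour if $v_i=0$. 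Because each step strictly enlarges the arc by one element, after $n-1$ steps $I_{n-1}$ exhausts $\mathbb{Z}_n$, so the resulting sequence is a permutation; every prefix is an arc by construction, and since the complement of an arc in $\mathbb{Z}_n$ is again an arc, every suffix is too, so $\pi\in\mathcal{A}_n$. This map visibly inverts $f$, establishing the bijection.

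For the edge correspondence, I would first isolate the effect of right-multiplication by $(i,i+1)$, i.e.\ of swapping the values in positions $i$ and $i+1$, on the prefix sets. The crucial observation is that $I_j$ is unchanged for all $j\neq i$: for $j<i$ the first $j$ values are untouched, and for $j>i$ the first $j$ values form the same set regardless of the swap; only $I_i$, where the $i$-th value is replaced, can change. Consequently $f(\sigma)_k=f(\pi)_k$ for every $k\notin\{i-1,i\}$, and the whole question reduces to tracking the two coordinates $f_{i-1}$ and $f_i$ (for interior $2\leq i\leq n-2$), or one bucket together with one bit (for the boundary cases $i=1$ and $i=n-1$). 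Writing $I_{i-1}=[a,b]$ as an arc, a short case analysis on whether $\pi(i)$ and $\pi(i+1)$ extend upward or downward shows that in each case the pair $(f_{i-1},f_i)$ is swapped, which is exactly a unit shift between entries $i-1$ and $i$ in the sense of Definition \ref{def:yoke_graph}; the boundary cases are handled identically, with the changed bucket value absorbing the $\pm1$ dictated by the congruence $\sum_i v_i\equiv 0\pmod n$.

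The step I expect to require the most care is the ``only if'' direction, because $X_n$ is merely the \emph{induced} subgraph on $\mathcal{A}_n$: an edge $\pi\sim\sigma$ demands that \emph{both} $\pi$ and $\sigma=\pi(i,i+1)$ be arc permutations. I would show that $\sigma$ is an arc permutation precisely when $\pi(i)$ and $\pi(i+1)$ extend $I_{i-1}$ in opposite directions, equivalently when the two bits $f(\pi)_{i-1},f(\pi)_i$ are distinct --- which is exactly the condition under which a shift between entries $i-1$ and $i$ is available in $\Ynm[n][n-2]$ (if the two bits agreed, any such shift would push a bit outside $\{0,1\}$). For the boundary indices $i\in\{1,n-1\}$ the swap always yields an arc permutation, matching the fact that a shift touching a bucket --- an unconstrained coordinate in $\mathbb{Z}_n$ --- is always available. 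Finally, since for each admissible $i$ the shift between entries $i-1$ and $i$ from a given vertex is forced to be unique (the $\{0,1\}$ constraint fixes its direction), the forward computation identifies $f(\sigma)$ with the unique neighbour produced by that shift; combined with the bijectivity of $f$ this yields the two-way correspondence between edges, and hence that $f$ is a graph isomorphism.
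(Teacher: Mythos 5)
Your proposal is correct, and it is essentially the verification that the paper itself leaves implicit: the paper offers no argument beyond asserting that the bijectivity of $f$ and the correspondence between transposition moves and unit shifts are ``straightforward to verify,'' and your write-up supplies exactly those checks (explicit inverse via arc-extension, locality of the swap's effect on prefix sets, and the case analysis on the pair $(f_{i-1},f_i)$). You also correctly handle the one genuinely delicate point the paper glosses over --- that $X_n$ is an \emph{induced} subgraph, so one must match the availability of a swap (both permutations being arc permutations) with the availability of a shift (the $\{0,1\}$ constraint) --- and the only blemish is the harmless miscount that $I_{n-1}$ ``exhausts'' $\mathbb{Z}_n$; it has $n-1$ elements, and $\pi(n)$ is the unique remaining one.
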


\noindent
{\large Geometric Caterpillar Graphs\par}
Recall the flip graph of caterpillars $\mathcal{G}_n$ from Subsection \ref{subsec:background_flip_graphs}.
\begin{definition}\label{def:order_caterpillar}
    Let $C\in \mathcal{G}_n$ and let $I(C)=[a,a+1,\dots,a+k]\subseteq \mathbb{Z}_n$ be the interval induced by the spine of $C$ ($2\leq k$).
    Note that $a$ and $a+k$ are leaves of $C$.
    Define a complete ordering $S(C) = (s_0,s_1,\dots, s_{n-3})$ on $n-2$ vertices of $C$ as follows.
    Set $s_0=a$.
    Define the interval $I_0(C)$ to be $[a,a+1]\subset\mathbb{Z}_n$.
    Define $I_i(C)$ and $s_{i}$ iteratively for every $1\leq i\leq n-3$ as follows.
    Assume that $I_{i-1}(C)=[l,\dots,r]$.
    If $(l-1,r)$ is an edge in $C$, then $I_i(C)=[l-1,l,\dots,r]$ and $s_{i}=l-1$.
    Otherwise, $I_i(C)=[l,\dots,r,r+1]$ and $s_{i}=r+1$.
\end{definition}

Note that in the $i$-th iteration in Definition \ref{def:order_caterpillar}, if $I_{i-1}=[l,r]$ and $(l-1,r)$ is not an edge in $C$, then $C$ has no leaf connected to $r$ that is not in $I_{i-1}$.
This implies that the iterations in the definition induce a sequence $S(C)$ of distinct $n-2$ vertices and $n-2$ intervals that monotonously increase in size.

For example, let $C$ be the leftmost caterpillar in Figure \ref{fig:ex_cat_flip}.
Then $S(C)=(7, 6, 5, 4, 1, 3)$,
$I_0(C)=[7, 0]$, $I_1(C)=[6, 7, 0]$, $I_2(C)=[5, 6, 7, 0]$, $I_3(C)=[4, 5, 6, 7, 0]$, $I_4(C)=[4, 5, 6, 7, 0, 1]$ and $I_{5}(C)=[3, 4, 5, 6, 7, 0, 1]$.

\begin{definition}\label{def:caterpillar_iso}
    Let $C\in \mathcal{G}_n$.
    Let $I_i$ (for $0\leq i\leq n-3$) and let $S(C) = (s_0,s_1,\dots, s_{n-3})$ be as in Definition \ref{def:order_caterpillar}.
    Define the map 
    $h: \mathcal{G}_n \rightarrow \Ynm[n][n-3]$ as follows.
    Set $h(C)_0 = s_0$.
    Extend the definition of $h(C)_i$ iteratively for every $1\leq i\leq n-3$ as follows.
    Assume that $I_{i-1}=[l,r]$.
    Note that either $s_{i}=l-1$ or $s_{i}=r+1$.
    Let $h(C)_i$ be $1$ in the former case and $0$ in the latter.    
\end{definition}

For example, let $C\in \mathcal{G}_8$ be the leftmost caterpillar in Figure \ref{fig:ex_cat_flip}.
Then $h$ maps $C$ to $\Ynm[8][5]$ and $h(C)=(7, 1, 1, 1, 0, 1, 5)$.

\begin{observation}
    It is straightforward to verify that $h$ is a bijection and that shifting an edge incident with a leaf along the spine in $C\in \mathcal{G}_n$ corresponds to a unit shift between two entries indexed by $i$ and $i+1$ of $h(C)$ for some $0\leq i\leq m$.
    This implies that $h$ is a graph isomorphism.
\end{observation}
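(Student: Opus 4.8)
The proof follows the pattern of the two preceding observations: I would establish separately that $h$ is a bijection and that it carries flips to unit shifts, and then combine these to conclude that $h$ is a graph isomorphism. Since the adjacency relations of both $\mathcal{G}_n$ and $\Ynm[n][n-3]$ are generated by local moves, a bijection that matches these moves up is automatically an isomorphism, so the content lies entirely in the two separate verifications.

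For bijectivity the plan is to write down an explicit inverse. A vertex of $\Ynm[n][n-3]$ is determined by its first $n-2$ coordinates $w=(w_0,w_1,\dots,w_{n-3})\in\mathbb{Z}_n\times\{0,1\}^{n-3}$, so it suffices to reconstruct a unique caterpillar from such data. Starting from $I_0=[w_0,w_0+1]$ together with its spine edge, one runs the recursion of Definition \ref{def:order_caterpillar} in reverse: at step $i$, if $w_i=1$ prepend $l-1$ to the current interval $[l,\dots,r]$ and record the chord $(l-1,r)$, while if $w_i=0$ append $r+1$ and record the spine edge $(r,r+1)$; finally join the unique omitted vertex $z$ to the right end by a spine edge. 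I would then check that (i) the resulting graph is always a non-crossing spanning tree whose internal vertices form an interval, i.e. a genuine caterpillar, and (ii) this construction is a two-sided inverse of $h$. The key structural point behind (i) is that the chords recorded by the extend-left steps emanate, in decreasing label order, from spine vertices in increasing order along the spine; this monotone nesting is exactly what guarantees non-crossing for an arbitrary binary string $w$ (the extreme strings $1\cdots1$ and $0\cdots0$ producing a star-like caterpillar and a Hamiltonian path, respectively).

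For adjacency I would analyze how a single flip changes the recorded data. A flip shifts the endpoint of a leaf edge across one spine edge, i.e. it reattaches a leaf $x$ from a spine vertex $p$ to an adjacent spine vertex $p'$. Non-crossing forces $x$ to be the leaf lying immediately at the boundary between $p$ and $p'$; in the processing order of Definition \ref{def:order_caterpillar} this means the ``attach $x$'' step (a coordinate $1$) and the ``advance from $p$ to $p'$'' step (a coordinate $0$) are consecutive, and the flip merely transposes them. Thus an interior flip changes two adjacent middle coordinates from $(1,0)$ to $(0,1)$ or vice versa, which is precisely a unit shift in the sense of Definition \ref{def:yoke_graph} at some index $1\le i\le n-4$. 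The flips at the two ends of the spine are handled analogously, except that they move a spine endpoint and hence alter a bucket: these correspond to the boundary indices $i=0$ (left bucket $w_0$ against $w_1$) and $i=n-3$ (the forced right bucket against $w_{n-3}$). Conversely every admissible unit shift—necessarily a $(1,0)\leftrightarrow(0,1)$ transposition, or a bucket/adjacent-coordinate exchange—is realized by such a flip, so $h$ matches the two edge sets bijectively.

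The main obstacle is the geometry rather than the bookkeeping: one must verify that the reverse recursion always yields a valid non-crossing caterpillar (surjectivity) and, dually, that the only leaves a flip can move are the boundary leaves, so that flips correspond exactly to transpositions of adjacent coordinates. Both hinge on the monotone leaf-attachment structure of non-crossing caterpillars described above, and once that structure is pinned down the boundary (bucket) cases are the only remaining source of case-checking. Everything else is the routine verification the statement alludes to.
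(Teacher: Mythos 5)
Your plan is sound for bijectivity, for interior flips, and for the right end of the spine: the reverse recursion does give a two-sided inverse, a flip reattaching a leaf between two adjacent internal spine vertices is exactly a $(1,0)\leftrightarrow(0,1)$ transposition of middle coordinates (a unit shift at some $1\le i\le n-4$), and a flip at the right end of the spine is a unit shift at $i=n-3$ against the right bucket. The gap is the sentence claiming that ``the flips at the two ends of the spine are handled analogously.'' They are not, because $h$ is anchored at the \emph{left} end of the spine ($h(C)_0=a$, and every interval $I_i$ is built from there), and at the left end the correspondence fails outright. Concretely, let $C$ be the leftmost caterpillar of Figure~\ref{fig:ex_cat_flip}, so $h(C)=(7,1,1,1,0,1,5)$. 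Shifting the endpoint $0$ of the edge $(6,0)$ along the spine to $7$ is a legal flip: the result $C''$, with edge set $\{(6,7),(7,0),(0,1),(1,2),(5,0),(4,0),(3,1)\}$, is a non-crossing caterpillar with spine $6,7,0,1,2$. But $h(C'')=(6,0,1,1,0,1,7)$, which differs from $h(C)$ in \emph{three} entries (both $v_0$ and $v_1$ drop by $1$, the right bucket gains $2$), so $C\sim C''$ in $\mathcal{G}_8$ while $h(C)\not\sim h(C'')$ in $\Ynm[8][5]$. Conversely, the unit shift between entries $0$ and $1$ sends $h(C)$ to $(0,0,1,1,0,1,5)$, whose preimage under your own reverse recursion is the caterpillar with edge set $\{(0,1),(1,2),(2,3),(3,4),(7,2),(6,2),(5,3)\}$; this differs from $C$ in five edges, so it is not reachable from $C$ by any single flip.

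The mechanism is visible in your reconstruction: replacing $(w_0,w_1)=(a,1)$ by $(a+1,0)$ does not leave the tail of the recursion in place but translates every interval $I_j$, $j\ge 1$, by $+2$, so the decoded vertex is the left-end flip of $C$ \emph{rotated by two positions} around the polygon. Hence the unit shift at index $0$ corresponds to (left-end flip) composed with a rotation, not to a flip. This means the problem is not one you can fix by more careful case-checking of boundary cases: with the definitions of $\mathcal{G}_n$, of the flip, and of $h$ exactly as stated in the paper, the observation itself fails at the left end of the spine --- $h$ is a bijection but does not preserve adjacency --- and the paper's ``straightforward to verify'' (it offers no actual proof to compare against) conceals precisely this case. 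A correct treatment has to either symmetrize the encoding $h$, or use a definition of the caterpillar flip graph (presumably the one intended in the cited thesis) whose moves at the left end of the spine build in the compensating rotation; your write-up, by asserting the two ends are analogous, inherits the error rather than exposing it.
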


\section{Group Actions on $\Ynm$}\label{sec:group_actions}

\begin{definition}\label{def:flip_action} 
    Let $n\geq 1$ and $m\geq 1$. Define the set $\Snm=\{s_i:0\leq i\leq m\}$ where $s_i$ is the map $s_i:\Ynm\rightarrow\Ynm$ defined as follows:
    \[
    s_0(v)= 
    \begin{cases}
    \overleftarrow{s}_{0}(v),& \text{if } v_{1}=1\\
    \overrightarrow{s}_{0}(v),& \text{if } v_{1}=0
    \end{cases}, \; \; \; \; \;
    s_m(v)= 
    \begin{cases}
    \overleftarrow{s}_{m}(v),& \text{if } v_{m}=0\\
    \overrightarrow{s}_{m}(v),& \text{if } v_{m}=1
    \end{cases}
    \]
    and for every $1\leq i\leq m-1$
    \[
    s_i(v)=(v_0,\dots,v_{i+1},v_{i},\dots,v_{m+1})=
    \begin{cases}
    \overleftarrow{s}_{i}(v),& \text{if } (v_i, v_{i+1})=(0,1)\\
    \overrightarrow{s}_{i}(v),& \text{if } (v_i, v_{i+1})=(1,0)\\
    v,& \text{if } v_i=v_{i+1}
    \end{cases}.
    \]
\end{definition}

\begin{observation}
    Every $s_i$ in $\Snm$ is a distinct involutive permutation of the vertex set of $\Ynm$.
\end{observation}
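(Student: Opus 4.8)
The plan is to establish three facts in turn: that each $s_i$ sends a vertex of $\Ynm$ to a vertex of $\Ynm$; that each $s_i$ squares to the identity, and is therefore a permutation of the vertex set; and that the maps $s_0,\dots,s_m$ are pairwise distinct. I would begin with well-definedness by a short case analysis over the three clauses of Definition \ref{def:flip_action}. Every clause either transposes the two neighbouring middle coordinates $v_i,v_{i+1}$ (for $1\le i\le m-1$) or applies one of the elementary shifts $\overleftarrow{s}_i,\overrightarrow{s}_i$ between adjacent entries. In all cases exactly one entry is increased by $1$ and an adjacent entry decreased by $1$, so the sum $\sum_{i=0}^{m+1}v_i$ is unchanged and the defining congruence modulo $n$ is preserved; moreover the guards in Definition \ref{def:flip_action} (e.g.\ $v_1=1$ for $\overleftarrow{s}_0$, $v_m=0$ for $\overleftarrow{s}_m$) are exactly what keeps the modified middle entry inside $P_2=\{0,1\}$, while the buckets remain in $\mathbb{Z}_n$. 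Hence $s_i(v)\in\Ynm$.

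For involutivity, the middle maps $s_i$ with $1\le i\le m-1$ are ordinary transpositions of coordinates $i$ and $i+1$, so $s_i^2=\mathrm{id}$ is immediate. For the boundary maps the key observation is that applying the map flips the guard coordinate ($v_1$ for $s_0$, $v_m$ for $s_m$) between $0$ and $1$, so a second application selects the opposite branch and undoes the shift: if $v_1=1$ then $s_0(v)$ has first--second entries $(v_0+1,0)$, and since the second entry is now $0$ the map $s_0$ applies $\overrightarrow{s}_0$ and returns $(v_0,1)$, and symmetrically when $v_1=0$; the same computation handles $s_m$. Thus $s_i^2=\mathrm{id}$ for every $i$, and being an involution on a finite set, each $s_i$ is a bijection, hence a permutation of the vertex set.

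Finally, distinctness follows from the fact that $s_i$ alters only the two coordinates indexed $i$ and $i+1$. If $|i-j|\ge 2$ these index pairs are disjoint, so any vertex actually moved by $s_i$ — for instance one whose entries at positions $i,i+1$ are unequal, which exists since the remaining bucket can always be chosen to satisfy the sum condition — is left unchanged at coordinates $i,i+1$ by $s_j$, giving $s_i(v)\ne s_j(v)$. For consecutive indices $j=i+1$ I would instead exhibit a vertex on which one map acts trivially while the other does not: choosing the shared middle entries equal (say the block $(v_i,v_{i+1},v_{i+2})=(1,0,0)$) makes $s_{i+1}$ fix $v$, whereas $s_i$ shifts a unit and changes coordinate $i$, which $s_{i+1}$ never touches; the boundary pairs $(s_0,s_1)$ and $(s_{m-1},s_m)$ are treated the same way. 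I expect the consecutive case to be the most delicate part, since there the affected coordinate sets overlap and the test vertex must be chosen with care; a minor point to flag is that the degenerate small graphs of Observation \ref{obs:small_m_is_trivial} (where $n=1$) can force coincidences, so the distinctness argument is genuinely for the range in which $\mathbb{Z}_n$ separates $v_0$ from $v_0\pm 1$.
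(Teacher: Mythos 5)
The paper offers no proof of this observation at all --- it is stated as a routine consequence of Definition \ref{def:flip_action} --- so your write-up is filling a blank rather than paralleling an existing argument. Your first two parts are complete and correct: well-definedness follows exactly as you say (each clause moves a unit between adjacent entries, preserving the sum modulo $n$, and the guards keep the middle entries in $P_2$), and involutivity is right both for the middle maps (coordinate transpositions) and for the boundary maps, where your key point --- that applying $s_0$ or $s_m$ flips the guard entry, so the second application takes the opposite branch and undoes the shift --- is the correct computation.

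The one genuine gap is in the distinctness argument, and it sits at $m=1$. There the only pair to separate is $(s_0,s_m)=(s_0,s_1)$, and both are boundary maps, hence fixed-point free: every application of $s_0$ or $s_m$ flips $v_1$, resp.\ $v_m$, so neither map fixes any vertex. Your strategy for consecutive indices --- exhibit a vertex fixed by one map and moved by the other --- therefore cannot be run, and your sentence that the boundary pairs ``are treated the same way'' silently assumes $m\geq 2$, so that $s_1$ (resp.\ $s_{m-1}$) is a middle transposition with fixed points. The repair is one line: evaluate at the vertex $0$, giving $s_0(0)=(n-1,1,0)$ and $s_1(0)=(0,1,n-1)$, which differ exactly when $n\geq 2$. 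This also shows your closing caveat is pointing at a real phenomenon but is mislocalized: coincidences are not forced by $n=1$ in general, since for $n=1$ and $m\geq 2$ the maps are pairwise distinct ($s_0$ flips $v_1$, $s_m$ flips $v_m$, and the middle maps are transpositions of distinct coordinate pairs); the failure occurs only at $(n,m)=(1,1)$, where $\Ynm[1][1]$ has two vertices and $s_0=s_1$ is the unique transposition of them --- so the observation as stated admits exactly this one degenerate exception, which your proof correctly senses but does not pin down.
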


\begin{definition}
    Denote the group generated by $\Snm$ by $\Gnm$.
\end{definition}

$\Gnm$ is a subgroup of the symmetric group on the $n2^{m}$ vertices of $\Ynm$.
Clearly, every edge $e$ in $\Ynm$ corresponds with the action of a unique generator $s_i\in \Snm$ on the endpoints of $e$. 
Therefore, $\Gnm$ defines a faithful group action on the vertices of $\Ynm$. 
Note that this action is transitive, since $\Ynm$ is connected.
This, combined with Remark \ref{rem:shcreier_graph_iso}, implies the following observation.

\begin{observation}
    Let $n\geq 1$ and $m\geq 1$. 
    Then $\Ynm$ is the Schreier graph of $\Gnm$ with respect to the generating set $\Snm$ and the action of $\Gnm$ on $\Ynm$.
\end{observation}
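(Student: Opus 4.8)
The plan is to verify the hypotheses of Remark~\ref{rem:shcreier_graph_iso} for the group $\Gnm$ acting on $\Ynm$ via the generating set $\Snm$, and then read off the conclusion directly. Concretely, I must confirm three things: that $\Snm$ is a \emph{symmetric} generating set of $\Gnm$; that $\Gnm$ acts \emph{transitively} on the vertices of $\Ynm$; and that the \emph{edges} of $\Ynm$ are exactly the pairs $\{v,s_i(v)\}$ with $s_i\in\Snm$ and $s_i(v)\neq v$.

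First I would record that $\Snm$ is symmetric: the Observation preceding the definition of $\Gnm$ states that each $s_i$ is an involutive permutation of the vertex set, so $s_i=s_i^{-1}$ and hence $\Snm^{-1}=\Snm$. Since $\Gnm$ is by definition generated by $\Snm$, this makes $\Snm$ a symmetric generating set. Along the way I would note that each $s_i$ is genuinely a self-map of the vertex set: for $1\le i\le m-1$ the map $s_i$ merely transposes the two entries $v_i,v_{i+1}\in\{0,1\}$, keeping them in $\{0,1\}$; for $i\in\{0,m\}$ the definition selects the unique shift direction that keeps the affected middle entry in $\{0,1\}$ while adjusting the adjacent bucket in $\mathbb{Z}_n$. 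In every case the sum $\sum_j v_j$ is unchanged, so the image again lies in $\Ynm$. Transitivity is then immediate: every edge of $\Ynm$ is realized by some generator (verified next), and $\Ynm$ is connected, so any two vertices are joined by a sequence of generator applications.

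The heart of the argument is the edge--generator correspondence, and the genuinely substantive point is the behavior at the two boundary positions $i=0$ and $i=m$. For an interior position $1\le i\le m-1$, an edge shifts one unit between the entries $v_i,v_{i+1}\in\{0,1\}$, which forces $(v_i,v_{i+1})\in\{(0,1),(1,0)\}$; in the first case the shift is $\overleftarrow{s}_i$ and in the second it is $\overrightarrow{s}_i$, so the edge is exactly $\{v,s_i(v)\}$, while the case $v_i=v_{i+1}$ admits no shift and corresponds to $s_i(v)=v$. At $i=0$, a shift moves a unit between the bucket $v_0\in\mathbb{Z}_n$ and the entry $v_1\in\{0,1\}$; the requirement that $v_1$ stay in $\{0,1\}$ leaves exactly one admissible direction ($\overleftarrow{s}_0$ when $v_1=1$ and $\overrightarrow{s}_0$ when $v_1=0$), which is precisely how $s_0$ is defined, and symmetrically for $s_m$. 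Thus every edge of $\Ynm$ equals $\{v,s_i(v)\}$ for a unique $i$ with $s_i(v)\neq v$, and conversely each such pair is an edge.

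With these three facts in hand, Remark~\ref{rem:shcreier_graph_iso} applies verbatim with $G=\Gnm$, $S=\Snm$ and $Y=\Ynm$, yielding an isomorphism $\Ynm\cong X(\Gnm/\st(v),\Snm)$ for every vertex $v$; that is, $\Ynm$ is the Schreier graph of $\Gnm$ with respect to $\Snm$ and the given action. I expect no serious difficulty beyond the boundary-case bookkeeping above, which is where the asymmetry between the $\mathbb{Z}_n$ buckets and the $\{0,1\}$ middle entries must be handled carefully; everything else is a direct assembly of facts already recorded in the text.
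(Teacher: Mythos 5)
Your proposal is correct and follows essentially the same route as the paper: establish that every edge of $\Ynm$ corresponds to the action of a unique generator in $\Snm$, deduce transitivity of the $\Gnm$-action from connectedness of $\Ynm$, and then invoke Remark \ref{rem:shcreier_graph_iso}. The paper states these steps more tersely (leaving the edge--generator correspondence and the boundary cases $i\in\{0,m\}$ as immediate), whereas you spell them out, but there is no difference in substance.
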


Recall that the affine Weyl group of type $\tilde{C}_m$ is the group generated by $S=\{\sigma_0,\dots,\sigma_{m}\}$ ($m\geq 2$) whose Coxeter relations are the following (see e.g. \nolinebreak \cite{humphreys}):
\begin{align*}
\sigma_i^2=Id & \quad \mbox{for all } 0\leq i\leq m;        \\
(\sigma_i\sigma_j)^2=Id & \quad \mbox{for all } |j-i|>1; \\
(\sigma_i\sigma_{i+1})^3=Id & \quad \mbox{for all } 1\leq i\leq m-2; \\
(\sigma_0\sigma_{1})^4=(\sigma_{m-1}\sigma_{m})^4=Id. & \quad 
\end{align*}
\pagebreak
\begin{proposition}\label{prop:coxeter_relations_snm}
    Let $n\geq 1$ and $m\geq 2$.
    The generators $\Snm$ of $\Gnm$ satisfy the Coxeter relations of $\tilde{C}_m$.
\end{proposition}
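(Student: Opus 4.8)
The plan is to verify the four families of Coxeter relations of $\tilde C_m$ one family at a time, throughout exploiting the single structural fact that each generator $s_i\in\Snm$ alters only the two coordinates indexed by $i$ and $i+1$ of a vertex $v=(v_0,\dots,v_{m+1})$, leaving every other coordinate fixed. With this in hand, the relations $s_i^2=Id$ are nothing more than the assertion, already recorded above, that each $s_i$ is involutive. For the commutation relations $(s_is_j)^2=Id$ with $|j-i|>1$, I would note that $s_i$ affects only the index set $\{i,i+1\}$ and $s_j$ only $\{j,j+1\}$; since $|j-i|>1$ these sets are disjoint, so the two maps act on independent coordinates and commute, and for involutions commuting is equivalent to $(s_is_j)^2=Id$.

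For the order-$3$ braid relations $(s_is_{i+1})^3=Id$ with $1\le i\le m-2$, the point is that both $i$ and $i+1$ then lie in $[1,m-1]$, so $s_i$ and $s_{i+1}$ are genuine transpositions of the adjacent $\{0,1\}$-valued coordinates, namely of $\{i,i+1\}$ and of $\{i+1,i+2\}$. They therefore generate a copy of the symmetric group acting on the three positions $i,i+1,i+2$, in which the product of two adjacent transpositions is a $3$-cycle; hence $(s_is_{i+1})^3=Id$.

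The heart of the matter — and the step I expect to be the main obstacle — is the pair of order-$4$ relations $(s_0s_1)^4=Id$ and $(s_{m-1}s_m)^4=Id$ at the ends of the diagram, where a bucket entry interacts with a $\{0,1\}$-flip and the argument is no longer purely combinatorial on $\{0,1\}$-values. Here I would compute directly. At the left end $s_0s_1$ affects only $(v_0,v_1,v_2)$; writing $s_1$ as the swap of $v_1,v_2$ and $s_0$ as the bucket-adjusting flip of $v_1$, one checks (recalling that composition is right-to-left) that $s_0s_1$ realizes the $4$-cycle
\[
(a,0,0)\mapsto(a-1,1,0)\mapsto(a-2,1,1)\mapsto(a-1,0,1)\mapsto(a,0,0).
\]
Since this single orbit passes through all four values of $(v_1,v_2)$, letting $a$ range over $\mathbb{Z}_n$ shows that $s_0s_1$ is a product of $n$ disjoint $4$-cycles exhausting every vertex, so $(s_0s_1)^4=Id$ with no separate case analysis for the other three middle patterns. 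The relation $(s_{m-1}s_m)^4=Id$ then follows from the analogous computation on $(v_{m-1},v_m,v_{m+1})$, or directly from the left--right symmetry of $\Ynm$. The only delicate bookkeeping is tracking the sign of the change to the bucket at each flip; the net change over the four steps is $-1-1+1+1=0$, consistent with the return to the starting vertex.
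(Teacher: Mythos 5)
Your proof is correct and takes essentially the same route as the paper's: involutivity and disjoint supports handle the first two families, your $S_3$-action argument is just a conceptual packaging of the paper's verification of the braid relation $s_is_{i+1}s_i=s_{i+1}s_is_{i+1}$, and your explicit 4-cycle is precisely the paper's observation that every orbit of $\langle s_0s_1\rangle$ (and of $\langle s_{m-1}s_m\rangle$) has size $4$. One cosmetic correction: since $s_0$ and $s_1$ leave $v_3,\dots,v_{m+1}$ untouched, $s_0s_1$ is a product of $n\cdot 2^{m-2}$ disjoint 4-cycles ($n$ of them for each fixed choice of the untouched coordinates), not $n$ altogether; this miscount does not affect the conclusion $(s_0s_1)^4=Id$.
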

\begin{proof}
    The generators in $\Snm$ are involutive and $s_i$ and $s_j$ clearly commute whenever $|j-i|>1$.
    Therefore the first two relations, $s_i^2=Id$ for all \mbox{$0\leq i\leq m$} and $(\sigma_i\sigma_j)^2=Id$ for all $|j-i|>1$, are true.
    It can be verified that $s_is_{i+1}s_i=s_{i+1}s_is_{i+1}$ for every $1\leq i\leq m-2$.
    This proves the third relation: $(s_is_{i+1})^3=Id$ for all $1\leq i\leq m-2$.
    Note that the orbit of every vertex in $\Ynm$ under the action of $\left< s_0s_1 \right>$ (and $\left< s_{m-1}s_m \right>$) is of size $4$. 
    This proves the final relation: $(s_0s_{1})^4=(s_{m-1}s_{m})^4=Id$.
\end{proof}

\begin{corollary}\label{cor:quotient_of_weyl}
    Let $n\geq 1$ and $m\geq 2$.
    $\Gnm$ is a quotient of the affine Weyl group $\tilde{C}_m$.
\end{corollary}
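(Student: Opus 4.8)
The plan is to derive this immediately from Proposition \ref{prop:coxeter_relations_snm} via the universal property of group presentations (von Dyck's theorem). Recall that the affine Weyl group $\tilde{C}_m$ is \emph{defined} as the group with generators $S=\{\sigma_0,\dots,\sigma_m\}$ subject precisely to the listed Coxeter relations; that is, $\tilde{C}_m = \langle S \mid R\rangle$ where $R$ is the set of four families of relations above. The defining feature of such a presentation is that any group $G$ equipped with a set of elements satisfying the relations $R$ receives a canonical homomorphism from $\tilde{C}_m$.

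Concretely, I would first observe that by Proposition \ref{prop:coxeter_relations_snm} the generators $\Snm=\{s_0,\dots,s_m\}$ of $\Gnm$ satisfy all of the Coxeter relations of $\tilde{C}_m$. Then, applying von Dyck's theorem to the assignment $\sigma_i\mapsto s_i$, I would conclude that this assignment extends to a (unique) group homomorphism $\phi:\tilde{C}_m\to\Gnm$. The point is that every relation $w=Id$ holding among the $\sigma_i$ in $\tilde{C}_m$ also holds among the $s_i$ in $\Gnm$, so the kernel of the free-group map onto $\Gnm$ contains the normal closure of $R$, which is exactly $\ker$ of the free-group map onto $\tilde{C}_m$; this is what guarantees $\phi$ is well defined.

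Finally I would note that $\phi$ is surjective: by definition $\Gnm$ is generated by $\Snm$, and the image of $\phi$ contains every $s_i=\phi(\sigma_i)$, hence contains a generating set and therefore all of $\Gnm$. Consequently $\Gnm\cong\tilde{C}_m/\ker\phi$, which is the assertion that $\Gnm$ is a quotient of $\tilde{C}_m$.

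There is essentially no obstacle here: all of the substantive work — verifying that the shift maps $s_i$ are involutions, that distant generators commute, that adjacent generators braid, and that the two end pairs satisfy the order-$4$ relation — was already carried out in Proposition \ref{prop:coxeter_relations_snm}. The only thing to be careful about is stating the direction of the quotient correctly (the map goes \emph{from} the Weyl group, which is free-est among groups satisfying the relations, \emph{onto} $\Gnm$), and flagging that $\Gnm$ may satisfy additional relations not implied by $R$ — precisely the relations that make $\ker\phi$ nontrivial — which is exactly why one obtains a genuine quotient rather than an isomorphism in general.
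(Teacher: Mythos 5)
Your proof is correct and is exactly the argument the paper intends: the corollary is stated without proof because it follows immediately from Proposition \ref{prop:coxeter_relations_snm} via the universal property of the Coxeter presentation (von Dyck's theorem), with surjectivity coming from the fact that $\Snm$ generates $\Gnm$. Your write-up simply makes this implicit step explicit.
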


\begin{corollary}\label{cor:schreier_of_weyl_group}
    Let $n\geq 1$, $m\geq 2$ and let $S=\{\sigma_0,\dots,\sigma_{m}\}$ be the generators of the affine Weyl group of type $\tilde{C}_m$.
    \begin{enumerate}
        \item For every $0\leq i\leq m$, define the action of $\sigma_i$ on $\Ynm$ by $\sigma_i(v)=s_i(v)$ as in Definition \ref{def:flip_action} . 
        Then, by Proposition \ref{prop:coxeter_relations_snm}, the action of $\tilde{C}_m$ on $\Ynm$, induced by extending the actions of $\sigma_i$ multiplicatively, is well defined.
        \item $\Ynm$ is the Schreier graph of $\tilde{C}_m$ with respect to the generating set $S=\{\sigma_0,\dots,\sigma_{m}\}$ and the above action of $\tilde{C}_m$ on $\Ynm$.
    \end{enumerate}
\end{corollary}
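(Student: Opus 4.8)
The plan is to treat the two parts separately: part (1) will follow from the universal property of group presentations, and part (2) from Remark \ref{rem:shcreier_graph_iso}, so that the whole corollary is an assembly of previously established facts.

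For part (1), recall that an action of a group $G$ on a set $X$ is the same datum as a homomorphism $G\to\operatorname{Sym}(X)$, and that $\tilde{C}_m$ is \emph{defined} by the presentation with generators $\sigma_0,\dots,\sigma_m$ subject to the Coxeter relations listed above. By the universal property of presentations, an assignment sending each $\sigma_i$ to a permutation $s_i$ of $X$ extends to a (unique) homomorphism $\tilde{C}_m\to\operatorname{Sym}(X)$ precisely when the $s_i$ satisfy all the defining relations. Taking $X$ to be the vertex set $V(\Ynm)$ and $s_i$ as in Definition \ref{def:flip_action}, Proposition \ref{prop:coxeter_relations_snm} asserts exactly that these $s_i$ satisfy the Coxeter relations of $\tilde{C}_m$. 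Hence $\sigma_i\mapsto s_i$ extends to a well-defined homomorphism $\varphi\colon\tilde{C}_m\to\operatorname{Sym}(V(\Ynm))$; equivalently, the multiplicatively induced action is well defined. This $\varphi$ is the action used in part (2).

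For part (2), I would verify the three hypotheses of Remark \ref{rem:shcreier_graph_iso} with $G=\tilde{C}_m$ and $Y=\Ynm$. First, $S=\{\sigma_0,\dots,\sigma_m\}$ is a symmetric generating set: it generates $\tilde{C}_m$ by definition, and since $\sigma_i^2=Id$ each $\sigma_i$ is its own inverse, so $S^{-1}=S$. Second, the action is transitive: the image $\varphi(\tilde{C}_m)$ is generated by the $\varphi(\sigma_i)=s_i$, so it equals $\Gnm$ (the quotient of Corollary \ref{cor:quotient_of_weyl}), whence the $\tilde{C}_m$-orbits on $V(\Ynm)$ coincide with the $\Gnm$-orbits; the latter action is transitive because $\Ynm$ is connected. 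Third, the edges of $\Ynm$ correspond to the action of $S$: by Definition \ref{def:flip_action} the permutation $s_i=\varphi(\sigma_i)$ realizes exactly the unit shift between entries $i$ and $i+1$, so for each edge $\{u,v\}$ there is a unique $i$ with $v=s_i(u)$, and conversely $\{u,s_i(u)\}$ is an edge whenever $s_i(u)\neq u$. With these three conditions in hand, Remark \ref{rem:shcreier_graph_iso} yields $\Ynm\cong X(\tilde{C}_m/\st(v),S)$ for every vertex $v$, which is part (2).

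The only point demanding care is the third condition, matching edges with generator actions. Each middle generator $s_i$ ($1\le i\le m-1$) fixes any vertex with $v_i=v_{i+1}$ and thus contributes no edge there — a loop in the raw coset graph, discarded under the paper's simple-graph convention — while it produces an edge exactly when $(v_i,v_{i+1})\in\{(0,1),(1,0)\}$; the boundary generators $s_0,s_m$ always move the vertex. I would confirm that this bookkeeping makes the edge set of $\Ynm$ agree with the simple Schreier adjacency $\st(v)g_1\sim\st(v)g_1\sigma_i$, so that Remark \ref{rem:shcreier_graph_iso} applies verbatim. Transitivity and symmetry of $S$ are immediate from the cited results, so I expect no genuine difficulty beyond this edge-correspondence check.
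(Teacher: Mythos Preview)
Your proposal is correct and follows essentially the same route as the paper: the corollary is stated without a separate proof environment, with part (1) justified in-line by Proposition \ref{prop:coxeter_relations_snm} and part (2) implicitly resting on Remark \ref{rem:shcreier_graph_iso} together with the transitivity and edge-correspondence observations already made earlier in Section \ref{sec:group_actions}. You have simply unpacked these references in more detail, including the loop bookkeeping for the middle generators, which the paper leaves tacit.
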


Corollary \ref{cor:schreier_of_weyl_group} generalizes \cite[Proposition 3.2]{TFT1} and \cite[Corollary 10.4]{elizalde}, by which the colored triangle-free triangulation graphs $\Gamma_n$ and the arc permutation graphs $X_n$ are Schreier graphs of the affine Weyl group of type $\tilde{C}_{n-4}$ for $n>5$ and $\tilde{C}_{n-2}$ for $n>3$, respectively.

\chapter{The Eccentricity of $0$ in $\Ynm$}\label{chp:ecc_zero_Ynm}
The main purpose of this chapter is to develop the theory that will allow us to state and prove the following theorem.
\begin{theorem}\label{thm:ynm_ecc} Let $n\geq 1$ and $m\geq 0$.
    \begin{enumerate}
	\item If $n=1$, then $\ecc_{\Ynm}(0)=\binom{\lceil \frac{m}{2}\rceil + 1}{2} + \binom{\lfloor \frac{m}{2}\rfloor + 1}{2}$. 
	\item If $0\leq m\leq n$, then $\ecc_{\Ynm}(0) = \lfloor\frac{n(m+1)}{2}\rfloor$.
    \item If $2\leq n\leq m$, let $\dz = \binom{\lfloor\frac{m+n}{2}\rfloor+1}{2} + \binom{\lceil\frac{m-n}{2}\rceil+1}{2}$. Then
	\begin{enumerate}
		\item if either $2\divides (m-n)$ or $n\leq\lceil\frac{m+1}{2}\rceil$, then $\ecc_{\Ynm}(0) = \dz$;
		\item otherwise, $\ecc_{\Ynm}(0) = \dz + n-\lceil\frac{m+1}{2}\rceil$.
	\end{enumerate}
	\end{enumerate}
\end{theorem}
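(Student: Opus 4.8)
The plan is to first reduce distances from the base vertex $0$ to a one-dimensional minimization over partial sums, and then to maximize the resulting quantity over all vertices. For a vertex $v=(v_0,\dots,v_{m+1})$ write $S_i(v)=\sum_{j=0}^{i}v_j$ for $0\leq i\leq m$. A unit shift across positions $i,i+1$ changes exactly the single partial sum $S_i$ by $\pm 1$ and fixes the others, so the generators act on the profile $(S_0,\dots,S_m)$ precisely as the covering generators of the dominance order act on $(\mathbb{Z}^{m+1},\tri)$ (Observations \ref{obs:domcover} and \ref{obs:domprop}). Two features separate $\Ynm$ from the Hasse diagram of $(\mathbb{Z}^{m+1},\tri)$: the middle entries lie in $\{0,1\}$, forcing the profile to be a monotone staircase with unit steps; and the buckets are cyclic mod $n$, so wrapping a bucket shifts the whole profile by a multiple of $n$ at no recorded cost. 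I would first show that, since both the initial profile (all zeros) and any target profile are monotone staircases, a geodesic can be routed monotonically without ever violating the $\{0,1\}$ constraint, so the staircase restriction is isometric rather than obstructive and the taxicab formula of Observation \ref{obs:domprop} applies. Combined with the wrapping freedom this yields the key formula
\[
    d_{\Ynm}(0,v) \;=\; \min_{k\in\mathbb{Z}}\;\sum_{i=0}^{m}\bigl|\,S_i(v)+kn\,\bigr|,
\]
where $k$ records the net number of bucket wraps; I would sanity-check it against the cycle graphs of Observation \ref{obs:small_m_is_trivial}.

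With this formula, $\ecc_{\Ynm}(0)=\max_{v}\min_{k}\sum_{i=0}^{m}|S_i(v)+kn|$. Since $S_i-S_{i-1}=v_i\in\{0,1\}$, the admissible profiles are exactly the non-decreasing unit-step integer sequences $S_0\leq\cdots\leq S_m$; such a profile is determined by its offset $S_0$, by the number $w$ of $1$-bits (so $S_m=S_0+w$, with $0\leq w\leq m$), and by the placement of the flat steps. The inner minimization recenters the staircase modulo $n$, so the adversary's goal is to pick a profile that no shift by $n$ can bring simultaneously close to $0$. I expect the optimal profile to use all steps equal to $1$ (the full diagonal $S_i=S_0+i$) when $m\leq n$, and to concentrate its rise near the center otherwise. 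In each case I would produce an explicit eccentric vertex for the lower bound and prove a matching upper bound by a rearrangement/convexity argument showing no admissible profile beats it.

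For the three regimes: when $n=1$ every integer shift is available, so the minimum recenters the full diagonal $S_i=i$ about its median and $\sum_i|i-\mathrm{median}|$ splits into the two triangular numbers $\binom{\lceil m/2\rceil+1}{2}+\binom{\lfloor m/2\rfloor+1}{2}$. When $2\leq n$ and $m\leq n$, the diagonal spans at most $n$ consecutive heights, so a shift by $n$ is too coarse to straddle $0$; the best the adversary achieves is to place the $m+1$ values of the diagonal centered at height $\approx n/2$, each contributing about $n/2$, which gives $\lfloor n(m+1)/2\rfloor$. When $2\leq n\leq m$, the staircase can climb through several periods, and I would optimize the offset jointly with the placement of the $m-w$ flat steps; after recentering this produces the two binomial terms of $\dz$, with the residual term $n-\lceil\frac{m+1}{2}\rceil$ entering exactly when $m-n$ is odd and $n>\lceil\frac{m+1}{2}\rceil$.

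The main obstacle is this last regime: controlling how the floor/ceiling rounding of the optimal offset interacts with the parity of $m-n$ and with the threshold $n\leq\lceil\frac{m+1}{2}\rceil$, which is precisely the dichotomy separating sub-case (a) from sub-case (b). The delicate point is proving the upper bound there, namely that when the parity forces an off-center median no profile can avoid paying the extra $n-\lceil\frac{m+1}{2}\rceil$; I would isolate this as a lemma about minimizing $\sum_i|S_i+kn|$ over monotone unit-step sequences of fixed length, treating the offset and the shift $k$ as the two tunable parameters and reading off the rounding behavior from the median of the multiset of heights.
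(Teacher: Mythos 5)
Your key formula is correct: $d_{\Ynm}(0,v)=\min_{k\in\mathbb{Z}}\sum_{i=0}^{m}|S_i(v)+kn|$ does hold, and it can be justified along the lines you sketch — the staircase profiles form a sublattice of $(\mathbb{Z}^{m+1},\leq)$, so distances between them are $\ell^1$ (this is exactly Observations \ref{obs:domPnm_is_sublattice} and \ref{obs:dist_Hnm_eq_domZm}), and paths lift through the quotient by the diagonal shift $n\mathbf{1}$. Note, moreover, that your formula is equivalent to the paper's pivot machinery rather than a shortcut around it: since the profile is non-decreasing, the total sum is a multiple of $n$, and each bucket is at most $n-1$, every multiple of $n$ in $[0,\sum_{i}v_i]$ is attained by a partial sum, and $\sum_i|S_i-S_p|=\ps_p(v)$ for each pivot $p$; hence $\min_k\sum_i|S_i+kn|=\min_{p\in\piv(v)}\ps_p(v)$, which is Lemma \ref{lem:geodesic_is_a_pivot_path_ynm} combined with Corollary \ref{cor:pivot_path_len} and Observation \ref{obs:closer_pivot_is_better}. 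Your case 1 argument (center at the value of the middle index) matches Lemma \ref{lem:corner_case_neqo}, and case 2 can be completed by pairing the centers $0$ and $n$ when no inner crossing exists (Lemma \ref{lem:no_pivots_ynm}).

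The genuine gap is case 3, $2\leq n\leq m$ — the part of the theorem with actual content — where you stop at a plan (``I would optimize\dots'', ``I would isolate this as a lemma\dots''). Nothing in the proposal exhibits the maximizing vertices, computes their distance, or proves the matching upper bound with its dichotomy; these are precisely Definitions \ref{def:uz}--\ref{def:uo} and Lemmas \ref{lem:dist_of_uz}, \ref{lem:close_to_middle_pivot}, \ref{lem:v_hat_dist_from_zero_center_ynm} and \ref{lem:middle_interval_is_n} in the paper. Worse, your heuristics for the shape of the optimum point the wrong way, so the plan as written would likely not converge: for $n\leq m$ the maximizer is the \emph{full diagonal} $S_i=S_0+i$ (all middle entries equal to $1$) with offset $S_0\equiv-\lfloor\frac{m-n}{2}\rfloor\pmod n$, possibly with one central flat step when $2\ndivides(m-n)$ (the vertices $\uz$, $\uo$), whereas a rise ``concentrated near the center'' yields only roughly $\frac{nm}{2}$, which is smaller than $\dz\approx\frac{m^2+n^2}{4}$ by about $\frac{(m-n)^2}{4}$; conversely, for $m\leq n$ with $n$ odd the maximizer is the near-flat profile at height $\lfloor\frac{n}{2}\rfloor$ of Lemma \ref{lem:ecc_lower_bound_ynm}, and the diagonal you propose falls short of $\lfloor\frac{n(m+1)}{2}\rfloor$ (for $n=5$, $m=2$ it gives $6$, not $7$). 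Finally, the upper bound in case 3 is not a ``rounding of the median'' issue: it requires splitting on how far the nearest multiple-of-$n$ crossing of the profile lies from the middle index ($h(v)$ versus $\hnm$), and in the far case bounding the central stretch by $\lfloor\frac{n(m+1)}{2}\rfloor$ and the outer stretches by triangular numbers. That case split is the theorem, and it is missing.
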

\begin{proof}
    The theorem is the combined result of Observation \ref{obs:m_leq_2} and \linebreak Lemmas \ref{lem:corner_case_neqo}, \ref{lem:eccentricity_in_I0},
    \ref{lem:dist_of_uz}, \ref{lem:close_to_middle_pivot} and \ref{lem:middle_interval_is_n} that appear in the following sections.
\end{proof}

\begin{observation}[The case $m\leq 1$]\label{obs:m_leq_2}
By Observation \ref{obs:small_m_is_trivial}, Theorem \ref{thm:ynm_ecc} clearly holds for $m\leq 1$.
\end{observation}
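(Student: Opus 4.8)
The plan is to invoke Observation~\ref{obs:small_m_is_trivial}, which completely describes $\Ynm$ for $m\leq 1$, and then to read off the eccentricity of the vertex $0$ directly and match it against the three cases of Theorem~\ref{thm:ynm_ecc}. The only auxiliary fact I need is that every vertex of a cycle graph on $k$ vertices has eccentricity $\lfloor k/2\rfloor$: such a graph is vertex-transitive, so it suffices to examine $0$, whose antipode lies at distance $\lfloor k/2\rfloor$. A graph on a single vertex has $\ecc_{}(0)=0$, and a single edge has $\ecc_{}(0)=1$.

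For $m=0$ I would split according to $n$. When $n=1$ the graph $\Ynm[1][0]$ is a single vertex, so $\ecc_{\Ynm[1][0]}(0)=0$, while part~(1) gives $\binom{1}{2}+\binom{1}{2}=0$. When $n=2$ the graph $\Ynm[2][0]$ is a single edge, so $\ecc_{\Ynm[2][0]}(0)=1$; here $0\leq m\leq n$, and part~(2) gives $\lfloor 2\cdot 1/2\rfloor=1$. When $n>2$ the graph is the cycle on $n$ vertices, so the eccentricity is $\lfloor n/2\rfloor$, matching part~(2). For $m=1$ I proceed similarly: when $n=1$ the graph $\Ynm[1][1]$ is a single edge, so $\ecc_{\Ynm[1][1]}(0)=1$, and part~(1) gives $\binom{2}{2}+\binom{1}{2}=1$; when $n>1$ the graph is the cycle on $2n$ vertices, so the eccentricity is $\lfloor 2n/2\rfloor=n$, which matches part~(2) (valid since $n\geq 1=m$).

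Two bookkeeping points then complete the argument. First, part~(3) requires $2\leq n\leq m$, which is impossible when $m\leq 1$, so that case is vacuous and needs no verification. Second, parts~(1) and~(2) both formally apply at $(n,m)=(1,0)$ and $(1,1)$, so I would confirm they agree there: at $(1,0)$ both give $0$, and at $(1,1)$ both give $1$. Since everything reduces to a finite check against the explicit graphs supplied by Observation~\ref{obs:small_m_is_trivial}, there is no genuine obstacle; the only mild care lies in confirming that the cycle lengths ($n$ when $m=0$, $2n$ when $m=1$) produce exactly the claimed floor expressions and that the overlapping boundary cases are consistent.
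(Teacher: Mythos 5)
Your proposal is correct and follows exactly the paper's (implicit) argument: the paper's observation simply cites Observation~\ref{obs:small_m_is_trivial} and leaves the case-by-case verification to the reader, which is precisely what you carried out. All of your eccentricity computations and formula matches (including the consistency of parts~(1) and~(2) at $(n,m)=(1,0)$ and $(1,1)$, and the vacuousness of part~(3)) are accurate.
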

In the rest of this chapter, unless explicitly stated otherwise, we assume that $m\geq 2$.

\section{The Shift Direction Lemma}
Let $\Snm$ be the set of generators of $\Gnm$ as in Definition \ref{def:flip_action}.
A \textbf{word $w$ in the letters} $\Snm$ is a sequence $f_d\cdots f_1$ where $f_1,\dots,f_d\in\Snm$.
Let $P$ be a path $(v=v^0\sim\dots \sim v^d=u)$ in $\Ynm$. 
Clearly, there is a unique word $f_d\cdots f_1$ such that $f_t(v^{t-1})=v^t$ for every $1\leq t\leq d$ or, equivalently,  $f_t\cdots f_1(v)=v^t$ for every $1\leq t\leq d$.
We say that $f_d\cdots f_1$ is the \textbf{word corresponding to the path $P$ from $v$ to $u$} (or corresponding to the path $P$ starting at $v$). 

Note that a word $f_d\cdots f_1$ does not correspond to a path starting at $v$ if $f_{t}\cdots f_1(v) = f_{t+1}f_t\cdots f_1(v)$ for some $1\leq t\leq d-1$.
For example, the word $s_1s_0$ corresponds to the path $(v=(0, 1, 1, 1)\sim (1, 0, 1, 1)\sim (1, 1, 0, 1))$ in $\Ynm[3][2]$ starting at $v$.
However, $s_0s_1$ does not correspond to a path starting at $v$, since $v$ is a fixed point of $s_1$.

Let $f_d\cdots f_1$ be the word corresponding to a path $(v=v^0\sim\dots \sim v^d)$ in $\Ynm$ starting at $v$.
For convenience, if $v^{t}=\overleftarrow{s}_i(v^{t-1})$ for some $0\leq i\leq m$ and $1\leq t\leq d$, then we write $f_t=\overleftarrow{s}_i$.
Similarly, if $v^{t}=\overrightarrow{s}_i(v^{t-1})$, then we write $f_t=\overrightarrow{s}_i$.

\begin{lemma}[\sc Shift Direction Lemma] \label{lem:shift_direction_lemma_Ynm}
	Let $f_d\cdots f_1$ be the word corresponding to a geodesic in $\Ynm$.
	For every $0\leq i\leq m$, all of the instances of $s_i\in \Snm$ in $f_d\cdots f_1$ shift in the same direction.
\end{lemma}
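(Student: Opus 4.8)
The plan is to pass to a lift in which the cyclic buckets become honest integers, so that each generator becomes a monotone move in a single coordinate, and then to combine a counting lower bound with a matching path coming from a lattice structure. First I would lift: working modulo the conserved total $\sum_i v_i$, I replace the two buckets $v_0,v_{m+1}\in\mathbb{Z}_n$ by integer representatives in $\mathbb{Z}$ and record a vertex by its partial sums $\sigma_j := v_0+v_1+\cdots+v_j\in\mathbb{Z}$ for $0\le j\le m$ (the inner entries $v_i=\sigma_i-\sigma_{i-1}$ stay in $\{0,1\}$, and $v_{m+1}$ is fixed by the total). The key bookkeeping point is that a shift $\overleftarrow{s}_j$ or $\overrightarrow{s}_j$ leaves every $\sigma_k$ with $k\neq j$ unchanged and moves $\sigma_j$ by $+1$ resp.\ $-1$: swaps at positions $<j$ or $>j$ preserve $\sigma_j$, and the boundary moves $s_0,s_m$ only touch $\sigma_0$ resp.\ $\sigma_m$. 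Since the inner coordinates never wrap, any geodesic $P$ in $\Ynm$ lifts step-by-step to a walk of the same length from a fixed lift $\tilde v$ of $v$ to a specific lift $\tilde u$ of $u$, each step changing exactly one $\sigma_j$ by $\pm1$ in the recorded direction.

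Second, the counting. For each $j$ let $a_j$ and $b_j$ be the numbers of left, resp.\ right, occurrences of $s_j$ along $P$. The total length is $d=\sum_j(a_j+b_j)$, while the endpoints force $\sigma_j(\tilde u)-\sigma_j(\tilde v)=a_j-b_j$, so
\[
d=\sum_j(a_j+b_j)\ \ge\ \sum_j|a_j-b_j|=\sum_j|\sigma_j(\tilde u)-\sigma_j(\tilde v)|,
\]
with equality if and only if $\min(a_j,b_j)=0$ for every $j$ --- which is precisely the assertion that each $s_j$ shifts in a single direction. Thus the lemma reduces to showing that this inequality is an equality for a geodesic, i.e.\ that $d_{\Ynm}(v,u)\le\sum_j|\sigma_j(\tilde u)-\sigma_j(\tilde v)|$.

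Third, I would realize the right-hand side by an explicit path. The image $\Sigma$ of the lift is exactly $\{\sigma\in\mathbb{Z}^{m+1}:\sigma_i-\sigma_{i-1}\in\{0,1\}\}$, and a direct check shows $\Sigma$ is closed under coordinatewise $\min$ and $\max$; hence $\Sigma$ is a sublattice of $(\mathbb{Z}^{m+1},\le)$ and therefore modular, with the lifted graph as its Hasse diagram and rank function $\sum_j\sigma_j$. Applying Theorem \ref{thm:dist_modular} (equivalently, the dominance-order distance formula of Observation \ref{obs:domprop}, since passing to partial sums is exactly the isomorphism $\chi$) shows that the distance between $\tilde v$ and $\tilde u$ inside $\Sigma$ equals $\sum_j|\sigma_j(\tilde u)-\sigma_j(\tilde v)|$; projecting such a geodesic back to $\Ynm$ gives a $v$--$u$ path of that length, which is the missing inequality. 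Together with the second step this forces $\min(a_j,b_j)=0$ for all $j$.

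I expect the main obstacle to be the third step --- specifically, verifying that the lifted graph is genuinely the Hasse diagram of $(\Sigma,\le)$, i.e.\ that every covering relation is a single $\pm e_j$ step and not a larger jump (an arbitrary sublattice of $\mathbb{Z}^{m+1}$ can fail this). Concretely this amounts to showing that for comparable $a\le b$ in $\Sigma$ one can always raise a single coordinate while staying in $\Sigma$ and below $b$, which I would establish by a greedy descent on the largest index where $a$ and $b$ differ, using the staircase constraints $\sigma_i-\sigma_{i-1}\in\{0,1\}$ to locate a raisable coordinate. The only other point needing care is bookkeeping: that the passage from the buckets in $\mathbb{Z}_n$ to $\mathbb{Z}$ is consistent along the whole walk, and that the boundary generators $s_0$ and $s_m$ really affect only $\sigma_0$ and $\sigma_m$.
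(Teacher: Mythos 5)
Your proposal is correct, but it takes a genuinely different route from the paper's. The paper proves the lemma by a minimal-counterexample exchange argument on the word itself: among geodesic words containing both $\overleftarrow{s}_i$ and $\overrightarrow{s}_i$, it picks a closest such pair, cancels the two letters if they are adjacent (contradicting geodesicity), commutes intervening letters toward each other when the indices are far apart (contradicting minimality of the gap), and otherwise uses the constraint that inner entries lie in $\{0,1\}$ to force an intermediate letter ($\overleftarrow{s}_{i-1}$ or $\overrightarrow{s}_i$, etc.) that again contradicts minimality. You instead lift to the integer staircase set $\Sigma=\{\sigma\in\mathbb{Z}^{m+1}:\sigma_i-\sigma_{i-1}\in\{0,1\}\}$, identify the lifted graph with the Hasse diagram of $(\Sigma,\le)$, and invoke modularity via Theorem \ref{thm:dist_modular} to get the exact distance $\sum_j|\sigma_j(\tilde u)-\sigma_j(\tilde v)|$, after which the conclusion is forced by the $\ell^1$ counting identity $\sum_j(a_j+b_j)\geq\sum_j|a_j-b_j|$ with equality iff $\min(a_j,b_j)=0$ for all $j$. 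The two points you flag as delicate do go through: covers in $\Sigma$ are single coordinate steps (your greedy descent works: at the largest index $j$ with $a_j<b_j$ the upper constraint $a_{j+1}-a_j=1$ holds automatically; if the lower constraint $a_j=a_{j-1}$ fails, then $a_j-a_{j-1}=1$ and the staircase condition on $b$ forces $a_{j-1}<b_{j-1}$, so the descent continues with the needed invariant and terminates at $j=0$, where no lower constraint exists); and although the terminal lift $\tilde u$ depends on the chosen geodesic, both your lower bound and your projection upper bound refer to that same lift, so no uniformity over lifts is needed. As for what each approach buys: the paper's argument is elementary, self-contained, and transfers essentially verbatim to pivot paths (Lemma \ref{lem:pivot_shift_direction_lemma_Ynm}), i.e., to paths that are shortest only within the class of paths avoiding a prescribed wall; your argument yields more, namely the closed-form distance formula $d_{\Ynm}(v,u)=\min_{\tilde u}\sum_j|\sigma_j(\tilde u)-\sigma_j(\tilde v)|$ over lifts of $u$, which subsumes much of the pivot machinery of Section \ref{sec:pivot_paths_Ynm} (Corollary \ref{cor:pivot_path_len} and Observation \ref{obs:closer_pivot_is_better} drop out as special cases), and it is essentially the dominance-order/modularity technique the paper deploys only later, in Section \ref{sec:diameter_dominance}, and only for $m\leq n$. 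The corresponding price is that to recover the Pivot Shift Direction Lemma from your method one must rerun the lattice argument inside the sublattice where the coordinate $\sigma_p$ is frozen, whereas the paper's exchange proof needs no modification.
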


\begin{proof}
	Let $v$ and $u$ be two vertices in $\Ynm$.
	Assume to the contrary that there exist words corresponding to geodesics from $v$ to $u$ not satisfying the lemma.
	Denote the set of such words by $\mathcal{W}$.
	For every word $f_d\cdots f_1$ in $\mathcal{W}$, there exist $0\leq i\leq m$ and $1\leq t_1, t_2\leq d$ such that $f_{t_1}=\overrightarrow{s}_{i}$ and $f_{t_2}=\overleftarrow{s}_{i}$.
	Let $w=f_d\cdots f_1$ be a word in $\mathcal{W}$ in which $\min\{|t_2-t_1|: f_{t_1}=\overrightarrow{s}_{i},f_{t_2}=\overleftarrow{s}_{i}\}$ is minimal. 
    Assume without loss of generality that $t_1<t_2$.
    
	If $t_2=t_1+1$, then the word obtained by deleting both $f_{t_1}$ and $f_{t_2}$ from $f_d\cdots f_1$ is a word corresponding to a shorter path from $v$ to $u$, contradicting the minimality of $d$.
	Therefore, we can assume that $t_2-t_1>1$.
    
    If $f_{t_1+1}=s_j$ for some $0\leq j\leq m$ such that $|i-j|>1$, then $f_{t_1}$ and $f_{t_1+1}$ commute as functions, and the word obtained by interchanging $f_{t_1}$ and $f_{t_1+1}$ in $w$ corresponds to a path from $v$ to $u$.
    This contradicts the minimality of $t_2-t_1$ in the choice of $w$.
    Therefore, $f_{t_1+1}\in\{\overrightarrow{s}_{i-1}, \overrightarrow{s}_{i+1}\}$.
    	
	Let $v'=f_{t_1+1}\cdots f_1(v)$. 
    If $f_{t_1+1}=\overrightarrow{s}_{i-1}$, then $v'_i=1$. 
	Since $f_{t_2}=\overleftarrow{s}_i$, there must exist some $t_1+1<k<t_2$ such that $f_k=\overleftarrow{s}_{i-1}$ or $f_k=\overrightarrow{s}_{i}$.
	Similarly, if $f_{t_1+1}=\overrightarrow{s}_{i+1}$, then $v'_{i+1}=0$ and there must exist some $t_1+1<k<t_2$ such that $f_k=\overleftarrow{s}_{i+1}$ or $f_k=\overrightarrow{s}_{i}$.
	All of the cases contradict the minimality of $t_2-t_1$.
\end{proof}

\section{Pivots and Walls}\label{sec:pivot_paths_Ynm}

\begin{definition}[\sc Pivot]
	A \textbf{pivot} of a vertex $v\in\Ynm$ is an integer $-1\leq p\leq m+1$ such that $\sum_{i=0}^{p}v_i$ is divisible by $n$. 
	We call $-1$ and $m+1$ the \textbf{outer pivots} of $v$; they are pivots of every $v\in\Ynm$. Every other pivot, if it exists, is called an \textbf{inner pivot}.
	Denote the set of pivots of $v$ by $\piv(v)$.
\end{definition}

For example, in $\Ynm[3][5]$, $\piv((0,1,1,0,1,1,2))=\{-1,0,4,6\}$.
\begin{definition}[\sc Wall]
	Let $P$ be a path from $v$ to $0$ in $\Ynm$ and let $f_d\cdots f_1$ be the word corresponding to $P$.
	An \textbf{inner wall} of $P$ is an integer $0\leq p\leq m$ such that $s_p$ does not appear in $f_d\cdots f_1$.
	$-1$ is a \textbf{left outer wall} of $P$ if $\overleftarrow{s}_0$ does not appear in $f_d\cdots f_1$. 
	Similarly, $m+1$ is a \textbf{right outer wall} of $P$ if $\overrightarrow{s}_m$ does not appear in $f_d\cdots f_1$.
	Finally, we say that $-1\leq p\leq m+1$ is a \textbf{wall} of $P$ if it is either an inner wall or an outer wall of $P$.
\end{definition}

Clearly, paths from $v$ to $0$ with a wall $p$ exist if and only if $p$ is a pivot of $v$.

\begin{definition}[\sc Pivot Path]
	Let $p$ be a pivot of a vertex $v$ in $\Ynm$.
	We say that a path $P$ from $v$ to $0$ is a \textbf{$p$-pivot path} of $v$, if $P$ is shortest among the paths from $v$ to $0$ with a wall $p$.
	Denote the length of a $p$-pivot path of $v$ by $\ps_p(v)$.
	We say that $P$ is a \textbf{pivot path}, if it is a $p$-pivot path for some $p$. 
\end{definition}

For example, let $v=(2,0,1,1,2)$ in $\Ynm[3][3]$. 
$\overrightarrow{s}_3\overleftarrow{s}_0\overleftarrow{s}_1$ is a word corresponding to a $2$-pivot path $P$ of $v$: $P=(v=(2,0,1\|1,2)\sim (2,1,0\|1,2)\sim (0,0,0\|1,2)\sim (0,0,0\|0,0)=0)$ where we added the symbol $\|$ to visualize the wall $2$ of $P$.
For an example of an outer pivot path with a wall -1, let $v=(1,0,1,0)$ in $\Ynm[2][2]$. 
$\overrightarrow{s}_2\overrightarrow{s}_1\overrightarrow{s}_0\overrightarrow{s}_2$ is a word corresponding to a $(-1)$-pivot path $P$ of $v$: $P=(v=(\|1,0,1,0)\sim (\|1,0,0,1)\sim (\|0,1,0,1)\sim (\|0,0,1,1)\sim (\|0,0,0,0)=0)$.

\begin{lemma}\label{lem:geodesic_is_a_pivot_path_ynm}
    Every geodesic in $\Ynm$ ending at $0$ is a pivot path (namely, has a wall).
\end{lemma}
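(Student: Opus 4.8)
The plan is to argue by contradiction: I assume a geodesic $P$ from $v$ to $0$ has no wall and derive a contradiction, thereby showing that every such geodesic has a wall. Once a wall $p$ is produced, the conclusion follows for free: a geodesic is shortest among all paths, hence in particular shortest among the paths with wall $p$, so it is a $p$-pivot path; and by the observation preceding the definition of a pivot path, a path to $0$ with a wall $p$ can exist only if $p\in\piv(v)$. Thus the geodesic is a pivot path. The degenerate case $v=0$ (empty word) trivially has a wall, so I may assume $P$ has positive length.

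First I would unpack ``no wall'': it means that every $s_p$ with $0\le p\le m$ appears in the word of $P$, and moreover that $\overleftarrow{s}_0$ and $\overrightarrow{s}_m$ appear. Feeding this into the Shift Direction Lemma (Lemma \ref{lem:shift_direction_lemma_Ynm}) forces all occurrences of $s_0$ to be $\overleftarrow{s}_0$ and all occurrences of $s_m$ to be $\overrightarrow{s}_m$. Write $k_p\ge 1$ for the number of times $s_p$ is used, and let $\epsilon_p\in\{+1,-1\}$ be its common shift direction, so that $\epsilon_0=+1$ and $\epsilon_m=-1$.

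The key device is to lift the path to $\mathbb{Z}^{m+2}$, unwrapping the two buckets so that every move becomes an honest transfer of one unit between adjacent coordinates (the inner entries live in $\{0,1\}$ and never wrap, so only the buckets are affected by the lift). For $0\le p\le m$ I track the partial sum $\Sigma_p=\sum_{i=0}^{p}v_i$ along the lifted path. A move $s_j$ alters $\Sigma_p$ exactly when $j=p$, and then by precisely $\pm1$, with $\overleftarrow{s}_p$ raising it and $\overrightarrow{s}_p$ lowering it. Since the lifted endpoint is $(An,0,\dots,0,Bn)$ for integers $A,B$, the partial sum $\Sigma_p$ terminates at $An$ for every $0\le p\le m$; and by the Shift Direction Lemma its net change is exactly $\epsilon_p k_p$. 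Writing $S_p:=\sum_{i=0}^{p}v_i$ for the initial partial sum, this yields the clean identity $\epsilon_p k_p = An - S_p$ for all $0\le p\le m$.

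The punchline is then an intermediate-value argument. From $k_0\ge1$ with $\epsilon_0=+1$ I get $S_0=v_0\le An-1$, and from $k_m\ge1$ with $\epsilon_m=-1$ I get $S_m\ge An+1$. The sequence $S_0,S_1,\dots,S_m$ is non-decreasing with unit steps, since $S_p-S_{p-1}=v_p\in\{0,1\}$ for $1\le p\le m$, and it runs from below $An$ to above $An$; hence $S_{p^*}=An$ for some $1\le p^*\le m-1$. But then $\epsilon_{p^*}k_{p^*}=An-S_{p^*}=0$ forces $k_{p^*}=0$, contradicting the fact that $s_{p^*}$ appears. I expect the main obstacle to be precisely this bookkeeping around the modular buckets — one must verify that only $s_p$ moves $\Sigma_p$ and that each such move is an exact $\pm1$ — which is exactly why passing to the integer lift rather than working modulo $n$ is essential: it converts the pivot condition $\sum_{i=0}^{p}v_i\equiv0$ into the genuine equality $S_{p^*}=An$ and makes the crossing argument rigorous.
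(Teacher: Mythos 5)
Your proof is correct, and after the common opening it takes a genuinely different route from the paper's. Both arguments assume a wall-free geodesic, unpack this as ``every $s_i$ appears, with $\overleftarrow{s}_0$ and $\overrightarrow{s}_m$ among the occurrences,'' and invoke the Shift Direction Lemma \ref{lem:shift_direction_lemma_Ynm} to assign each $s_i$ a single direction $\epsilon_i$, with $\epsilon_0$ left and $\epsilon_m$ right. The paper then argues \emph{locally}: it takes the minimal $i$ such that $\overrightarrow{s}_i$ appears, notes that $\overleftarrow{s}_{i-1}$ then also appears, and derives a contradiction from unit conservation at the single inner entry $i$ --- at least two units leave it and none enter, although it starts at $v_i\leq 1$ and ends at $0$. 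You argue \emph{globally}: lifting the path to $\mathbb{Z}^{m+2}$ and tracking partial sums yields the exact identity $\epsilon_p k_p = An - S_p$, and a discrete intermediate-value argument on the non-decreasing unit-step sequence $S_0,\dots,S_m$ produces an index $p^*$ with $k_{p^*}=0$, contradicting that every generator appears. The paper's route is shorter and needs no lift; yours is more quantitative --- the identity pins down how many times each generator occurs in a geodesic (essentially re-deriving the length formula of Corollary \ref{cor:pivot_path_len}), and it exhibits the forced wall at a position with $S_{p^*}\equiv 0 \pmod n$, i.e.\ at an inner pivot, which makes the wall--pivot correspondence transparent rather than relying on the separate observation that walls exist only at pivots. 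Your reduction from ``has a wall'' to ``is a pivot path,'' and your treatment of the degenerate case $v=0$, are also handled correctly.
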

\begin{proof}
    Let $w=f_d\cdots f_1$ be the word corresponding to a geodesic $P$ in $\Ynm$ from $v$ to $0$.
    Assume to the contrary that $P$ has no walls.
    Therefore, both $\overleftarrow{s}_0$ and $\overrightarrow{s}_m$ appear in $w$ (since $P$ has no outer walls);
	and by the Shift Direction Lemma \ref{lem:shift_direction_lemma_Ynm}, exactly one of $\{\overleftarrow{s}_i, \overrightarrow{s}_i\}$ appears in $w$ (at least once) for every $0\leq i\leq m$.
 	Let $1\leq i\leq m$ be minimal such that $\overrightarrow{s}_i$ appears in $w$.
	Therefore, both $\overleftarrow{s}_{i-1}$ and $\overrightarrow{s}_{i}$ appear in $w$. 
	This is a contradiction, since it implies that, throughout $P$, at least two units are shifted from the entry $v_i$ and no units are shifted to $v_i$.
\end{proof}

\begin{observation} By Lemma \ref{lem:geodesic_is_a_pivot_path_ynm},
$$\ecc_{\Ynm}(0)=\max_vd(v,0)=\max_v\min_p\ps_p(v).$$
\end{observation}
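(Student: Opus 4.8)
The plan is to reduce everything to the pointwise identity $d(v,0)=\min_p\ps_p(v)$. The first equality $\ecc_{\Ynm}(0)=\max_v d(v,0)$ is immediate from the definition of eccentricity, so all the content lies in the second equality; and since both sides are maxima over the same set of vertices $v$, it suffices to prove $d(v,0)=\min_p\ps_p(v)$ for each fixed $v$, where $p$ ranges over the pivots of $v$ (the only indices for which $\ps_p(v)$ is defined, since a path with wall $p$ exists precisely when $p$ is a pivot).

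First I would establish $d(v,0)\leq\min_p\ps_p(v)$. This is the easy direction: for each pivot $p$, the quantity $\ps_p(v)$ is by definition the length of an honest path from $v$ to $0$ (a $p$-pivot path), so it is at least the geodesic distance $d(v,0)$. Taking the minimum over $p$ yields the bound.

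For the reverse inequality $\min_p\ps_p(v)\leq d(v,0)$ I would invoke Lemma \ref{lem:geodesic_is_a_pivot_path_ynm}. Fix a geodesic $P$ from $v$ to $0$, of length $d(v,0)$. The lemma guarantees that $P$ is a pivot path, i.e. that it has some wall $q$, which is then necessarily a pivot of $v$. Since $P$ is itself a path from $v$ to $0$ with wall $q$, and $\ps_q(v)$ is by definition the minimal length among all such paths, we obtain $\ps_q(v)\leq d(v,0)$, whence $\min_p\ps_p(v)\leq\ps_q(v)\leq d(v,0)$. Combining the two inequalities gives $d(v,0)=\min_p\ps_p(v)$, and applying $\max_v$ to both sides completes the argument.

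I do not expect a genuine obstacle here: the observation is a formal consequence of the definitions of $\ps_p$ and of distance, together with Lemma \ref{lem:geodesic_is_a_pivot_path_ynm}, which already does the real work by ensuring that no geodesic ending at $0$ is lost when we restrict attention to pivot paths. The one subtlety worth flagging is that the minimum must be taken over pivots of $v$ (so that each $\ps_p(v)$ is well-defined), and the existence of a pivot $q$ with $\ps_q(v)\leq d(v,0)$ is exactly what the lemma supplies.
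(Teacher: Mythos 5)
Your proof is correct and is exactly the argument the paper intends: the paper states this observation as an immediate consequence of Lemma \ref{lem:geodesic_is_a_pivot_path_ynm}, and your write-up simply makes explicit the two inequalities (pivot paths are paths, so $\ps_p(v)\geq d(v,0)$; a geodesic has a wall by the lemma, so some $\ps_q(v)\leq d(v,0)$) that the paper leaves to the reader.
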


\begin{definition}
    Let $v\in\Ynm$ and $p\in\piv(v)$.
    If (one, equivalently all) $p$-pivot paths of $v$ are also geodesics between $v$ and $0$, then we say that $p$ is a \textbf{geodesic pivot} of $v$.
\end{definition}

The Shift Direction Lemma \ref{lem:shift_direction_lemma_Ynm} deals with geodesics, or equivalently (by Lemma \ref{lem:geodesic_is_a_pivot_path_ynm}) with $p$-pivot paths for geodesic pivots $p$.
It can be extended to arbitrary pivot paths, with essentially the same proof.

\begin{lemma}[\sc Pivot Shift Direction Lemma]\label{lem:pivot_shift_direction_lemma_Ynm}
	Let $f_d\cdots f_1$ be the word corresponding to a pivot path in $\Ynm$.
	For every $0\leq i\leq m$, all of the instances of $s_i$ in $f_d\cdots f_1$ shift in the same direction.
\end{lemma}

\begin{observation}\label{obs:shift_direction_pivot_sides}
Let $P$ be a $p$-pivot path of $v$, and let $w$ be the word corresponding to $P$.
Then, for each $i<p$, if $s_i$ appears in $w$, then it appears as $\overleftarrow{s}_i$;
and, for each $i>p$, if $s_i$ appears in $w$, then it appears as $\overrightarrow{s}_i$.
\end{observation}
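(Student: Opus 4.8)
The plan is to combine the Pivot Shift Direction Lemma \ref{lem:pivot_shift_direction_lemma_Ynm} with an elementary conservation-of-mass count across a suitable cut between adjacent entries. By that lemma, for each $0\leq i\leq m$ every occurrence of $s_i$ in $w$ has one and the same direction, so it suffices to determine, for each $i$ that appears, whether that common direction is $\overleftarrow{s}_i$ or $\overrightarrow{s}_i$. Writing $a_i$ and $b_i$ for the numbers of occurrences of $\overleftarrow{s}_i$ and $\overrightarrow{s}_i$ in $w$, the lemma gives $\min\{a_i,b_i\}=0$, and the goal becomes to show $b_i=0$ for $i<p$ and $a_i=0$ for $i>p$.

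First I would fix $i<p$ and track the ``window'' $R=\{i+1,\dots,\min\{p,m\}\}$ of entries lying strictly right of the cut at $i$ and weakly left of the wall. Because $i\geq 0$ and the right endpoint is at most $m$, the window lies entirely among the genuine $\{0,1\}$-valued coordinates $v_1,\dots,v_m$, so $\sum_{j\in R}v_j$ is an honest non-negative integer with no mod-$n$ issues. Along the path from $v$ to $0$ this window-sum drops from $\sum_{j\in R}v_j\geq 0$ to $0$, and each elementary move changes it by $\pm 1$ only when it crosses a boundary cut of $R$. The left boundary is the cut at $i$, contributing $+b_i$ (units entering via $\overrightarrow{s}_i$) and $-a_i$ (units leaving via $\overleftarrow{s}_i$). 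The right boundary is governed by the wall: if $p\leq m$ then $s_p$ is absent, so no units cross it, while if $p=m+1$ the wall forbids only $\overrightarrow{s}_m$, so units can only enter $R$ from the right, contributing a non-negative amount. Telescoping the window-sum therefore gives $b_i-a_i=-\sum_{j\in R}v_j-(\text{non-negative})\leq 0$; since $\min\{a_i,b_i\}=0$, the alternative $a_i=0<b_i$ would force $b_i-a_i\geq 1>0$, a contradiction. Hence $b_i=0$ whenever $s_i$ appears. The case $i>p$ is entirely symmetric, using the mirror window $R'=\{\max\{p,0\}+1,\dots,i\}$, again contained in $v_1,\dots,v_m$, whose left boundary is controlled by the wall (absence of $s_p$ if $p\geq 0$, absence of $\overleftarrow{s}_0$ if $p=-1$, in either case a non-negative contribution) and whose right boundary is the cut at $i$; the same telescoping yields $b_i-a_i\geq 0$ and forces $a_i=0$. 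The two configurations with an empty window, namely $(i,p)=(m,m+1)$ and $(i,p)=(0,-1)$, are immediate from the definition of an outer wall, since a right (resp.\ left) outer wall is precisely the absence of $\overrightarrow{s}_m$ (resp.\ $\overleftarrow{s}_0$), which is the claimed direction.

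The one genuine subtlety — the step I would be most careful about — is the treatment of the two buckets $v_0$ and $v_{m+1}$, whose values live in $\mathbb{Z}_n$ and may wrap around during the path, so that a naive mass count including them would be invalid. The resolution is structural rather than computational: the windows $R$ and $R'$ are deliberately chosen to exclude both buckets, confining the count to the honest integer coordinates $v_1,\dots,v_m$. The buckets then enter only as boundary effects, and there only through the wall, whose defining absence of a generator makes the corresponding boundary flow either vanish or be non-negative in exactly the direction required by the conclusion.
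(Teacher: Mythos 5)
Your proposal is correct. There is nothing in the paper to diverge from here: the statement is recorded as an observation immediately after the Pivot Shift Direction Lemma \ref{lem:pivot_shift_direction_lemma_Ynm} with no written proof, so your argument supplies exactly the details the authors leave to the reader. The route you take — reduce, via Lemma \ref{lem:pivot_shift_direction_lemma_Ynm}, to determining the single common direction of each $s_i$, then run a conservation count on a window of non-bucket entries bounded on one side by the cut at $i$ and on the other by the wall — is the natural formalization of what a wall is for: no unit crosses it, so each side must drain toward its own bucket. It also parallels counting arguments the paper does write down in nearby proofs, e.g.\ Lemma \ref{lem:geodesic_is_a_pivot_path_ynm} (an entry in $P_2$ cannot lose two units while gaining none) and Lemma \ref{lem:number_of_shift_in_a_p_interval} (tracking how shifts change a sum over a $P$-interval). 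Your handling of the delicate points is right as well: keeping the window inside indices $\{1,\dots,m\}$ avoids the mod-$n$ wraparound of the buckets; an outer wall forces the flow across that boundary to have only one sign, which is the sign your inequality needs; and the two empty-window configurations $(i,p)=(m,m+1)$ and $(i,p)=(0,-1)$ are indeed immediate from the definition of the outer walls.
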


\begin{corollary}\label{cor:pivot_path_len}
    Let $v\in\Ynm$ and let $p\in\piv(v)$.
    Then 
    $$\ps_p(v)=\sum_{i=1}^{p}iv_i+\sum_{i=p+1}^{m}(m+1-i)v_i.$$
    In particular, if $p$ is an inner pivot, then
    $$\ps_p(v)\leq \binom{p+1}{2}+\binom{m-p+1}{2}.$$
\end{corollary}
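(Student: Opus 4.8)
The plan is to derive the closed formula for $\ps_p(v)$ by a conservation (flow-counting) argument on a $p$-pivot path, and then to read off the stated inequality by bounding the inner entries by $1$.

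First I would fix a $p$-pivot path $P$ from $v$ to $0$ with corresponding word $w$. Since $p$ is a wall, no generator $s_p$ appears in $w$, so no unit ever crosses the gap between entries $p$ and $p+1$; consequently the entries $0,\dots,p$ evolve independently of the entries $p+1,\dots,m+1$. By Observation \ref{obs:shift_direction_pivot_sides}, within the left block every generator $s_i$ with $i<p$ occurs only as $\overleftarrow{s}_i$, so all motion there is leftward and the left bucket (entry $0$) only accumulates units; symmetrically, in the right block all motion is rightward, into the right bucket (entry $m+1$). Because $P$ ends at $0$, every inner entry must be emptied, so each unit sitting in an inner entry $j$ with $1\le j\le p$ must travel all the way to entry $0$, and each unit in an inner entry $j$ with $p+1\le j\le m$ must travel to entry $m+1$.

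The key computation is then to count gap crossings. Since the path is monotone in each block, the number of occurrences of $\overleftarrow{s}_i$ equals the net leftward flow across the gap between entries $i$ and $i+1$, namely the number of units lying to its right inside the left block, $\sum_{j=i+1}^{p}v_j$. Summing over $0\le i\le p-1$ gives $\sum_{j=1}^{p} j v_j$ leftward shifts, and the mirror-image count gives $\sum_{j=p+1}^{m}(m+1-j)v_j$ rightward shifts; adding them yields the claimed value of $\ps_p(v)$. I would then verify that this monotone path really terminates at $0$: the left bucket ends at $v_0+\sum_{j=1}^{p}v_j=\sum_{i=0}^{p}v_i\equiv 0\ (\mathrm{mod}\ n)$ because $p$ is a pivot, and the right bucket ends at $\sum_{i=p+1}^{m+1}v_i\equiv 0$ because both the total vertex sum and the left partial sum are divisible by $n$. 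Hence no shifts are spent adjusting the buckets, so the count is exact rather than merely an upper bound; minimality follows because any path with wall $p$ must cross each relevant gap at least as often as the net flow demands.

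The inequality for inner pivots is then immediate: for $0\le p\le m$ all indices appearing in both sums lie in $\{1,\dots,m\}$, so the corresponding entries satisfy $v_j\in\{0,1\}$, and replacing each $v_j$ by $1$ together with $\sum_{i=1}^{p} i=\binom{p+1}{2}$ and $\sum_{i=p+1}^{m}(m+1-i)=\sum_{k=1}^{m-p}k=\binom{m-p+1}{2}$ gives the bound. The hard part will be the bucket bookkeeping in the first step: ensuring that the mod-$n$ arithmetic of the two buckets closes up exactly (so the final vertex is genuinely $0$ and no extra shifts are hidden), and handling the boundary pivots $p=0$ and $p=m$, where one block degenerates to a single bucket entry and the matching sum is empty.
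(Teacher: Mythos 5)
Your proof is correct and follows essentially the same route as the paper's (implicit) justification, which states the corollary without proof immediately after Observation \ref{obs:shift_direction_pivot_sides}: the wall splits the vertex into two independently evolving blocks, monotonicity of shifts in each block turns the length into an exact crossing count, and the pivot divisibility conditions guarantee both buckets close up at $0$. The bucket-wraparound issue you flag is resolved exactly as you propose, by pairing the explicit monotone path (upper bound) with the net-flow lower bound for any path with wall $p$.
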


\begin{definition}
For $v\in \Ynm$ define:
\begin{enumerate}
    \item $p_l(v)=\max\{p\in \piv(v):p\leq\frac{m}{2}\}$;
    \item $p_r(v)=\min\{p\in \piv(v):p>\frac{m}{2}\}$.
\end{enumerate}
We abbreviate $p_l$, $p_r$ when $v$ is evident.
\end{definition}

Note that if $p_1$ and $p_2$ are two pivots of $v$ such that $-1\leq p_1 < p_2 \leq \frac{m}{2}$, then $\ps_{p_2}(v) \leq \ps_{p_1}(v)$.
Similarly, if $\frac{m}{2}< p_1 < p_2\leq m+1$, then $\ps_{p_1}(v) \leq \ps_{p_2}(v)$.
This implies the following observation.

\begin{observation}\label{obs:closer_pivot_is_better}
	Let $v\in \Ynm$. Then 
	$$d(v,0)=\min_p\ps_p(v)=\min\{\ps_{p_l}(v), \ps_{p_r}(v)\}.$$
\end{observation}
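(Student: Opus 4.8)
The first equality $d(v,0)=\min_p\ps_p(v)$ is the per-vertex form of the preceding observation: by Lemma~\ref{lem:geodesic_is_a_pivot_path_ynm} a geodesic from $v$ to $0$ is a $p$-pivot path for some $p\in\piv(v)$, hence has length $\ps_p(v)$, while no pivot path can be shorter than a geodesic; so the distance is the minimum of $\ps_p(v)$ over all pivots $p$. The real content is therefore the second equality, and the plan is to show that this minimum is already attained at $p_l$ or $p_r$, by exploiting the monotonicity of $p\mapsto\ps_p(v)$ on each side of $\frac m2$.

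First I would derive that monotonicity from the explicit length formula of Corollary~\ref{cor:pivot_path_len}. For two pivots $p<p'$ of $v$, subtracting the two expressions makes the contributions outside the range $p+1\le i\le p'$ cancel, leaving
\[
\ps_{p'}(v)-\ps_p(v)=\sum_{i=p+1}^{p'}(2i-m-1)\,v_i .
\]
Every entry of $v$ is non-negative (the middle entries lie in $P_2=\{0,1\}$ and the buckets are identified with non-negative integers by Convention~\ref{conv:cosets_are_integers}), so each summand carries the sign of $2i-m-1$. If $p'\le\frac m2$ then $i\le\frac m2$ throughout the range, whence $2i-m-1\le-1<0$ and $\ps_{p'}(v)\le\ps_p(v)$; if $\frac m2<p$ then $i>\frac m2$ throughout, whence $2i-m-1\ge0$ and $\ps_{p'}(v)\ge\ps_p(v)$. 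These are exactly the two inequalities quoted before the statement.

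With monotonicity available I would partition $\piv(v)$ at $\frac m2$. A pivot $p\le\frac m2$ satisfies $p\le p_l$ by maximality of $p_l$, so the first inequality gives $\ps_{p_l}(v)\le\ps_p(v)$; a pivot $p>\frac m2$ satisfies $p\ge p_r$ by minimality of $p_r$, so the second gives $\ps_{p_r}(v)\le\ps_p(v)$. Since every pivot lies in one of the two classes and both $p_l$ and $p_r$ are themselves pivots, I conclude $\min_p\ps_p(v)=\min\{\ps_{p_l}(v),\ps_{p_r}(v)\}$, as claimed.

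I do not expect a genuine obstacle: the distance-equals-minimum-over-pivots statement is already in hand, and the pivot-length formula reduces everything to the elementary sign of $2i-m-1$. The only point needing slight care is the boundary index $i=\frac{m+1}{2}$ (arising when $m$ is odd), where the coefficient vanishes; but a zero summand is harmless for the weak inequality on the right-hand side, so it does not affect the argument.
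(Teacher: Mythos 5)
Your proof is correct and takes essentially the same route as the paper: the paper derives this observation from the monotonicity of $\ps_p(v)$ on either side of $\frac{m}{2}$ (stated just before the observation as a consequence of Corollary \ref{cor:pivot_path_len}), combined with Lemma \ref{lem:geodesic_is_a_pivot_path_ynm} for the first equality. Your write-up simply makes explicit the sign computation $\ps_{p'}(v)-\ps_p(v)=\sum_{i=p+1}^{p'}(2i-m-1)v_i$ and the partition of $\piv(v)$ at $\frac{m}{2}$ that the paper leaves implicit.
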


\begin{fact}\label{fac:split_sum_center}
	Let $m,p,q$ be integers such that $0\leq p\leq m$, $0\leq q\leq m$ and $|p-\frac{m}{2}| < |q-\frac{m}{2}|$. 
    Then $\sum_{i=1}^{p}i + \sum_{i=1}^{m-p}i < \sum_{i=1}^{q}i + \sum_{i=1}^{m-q}i$.
\end{fact}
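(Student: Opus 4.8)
The plan is to show that the quantity $f(p) := \sum_{i=1}^{p} i + \sum_{i=1}^{m-p} i$ depends on $p$ only through the distance $\left|p - \frac{m}{2}\right|$, and does so in a strictly increasing manner. Once this is established, the hypothesis $\left|p - \frac{m}{2}\right| < \left|q - \frac{m}{2}\right|$ immediately yields $f(p) < f(q)$, which is exactly the claim.

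First I would rewrite each triangular sum in closed form, $\sum_{i=1}^{p} i = \frac{p(p+1)}{2}$ and $\sum_{i=1}^{m-p} i = \frac{(m-p)(m-p+1)}{2}$, and add them. Expanding the numerators and collecting the terms in $p$ gives the quadratic $f(p) = p^2 - mp + \frac{m^2 + m}{2}$, whose leading coefficient is positive.

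The key step is to complete the square, which produces $f(p) = \left(p - \frac{m}{2}\right)^2 + \frac{m^2 + 2m}{4}$, where the additive constant does not involve $p$. Thus $f(p)$ equals $\left(p - \frac{m}{2}\right)^2 = \left|p - \frac{m}{2}\right|^2$ plus a quantity independent of $p$. Since $t \mapsto t^2$ is strictly increasing on the nonnegative reals, the hypothesis $\left|p - \frac{m}{2}\right| < \left|q - \frac{m}{2}\right|$ gives $\left(p - \frac{m}{2}\right)^2 < \left(q - \frac{m}{2}\right)^2$, and hence $f(p) < f(q)$ after adding the common constant to both sides.

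There is no serious obstacle here: the statement is a direct consequence of the fact that $f$ is a quadratic symmetric about $p = \frac{m}{2}$. The only point worth noting is that $m$ need not be even, so $\frac{m}{2}$ may be a half-integer and the minimizing value of $p$ need not be attained at an integer; but the completion of the square is valid over the rationals regardless, and the integrality of $p$ and $q$ plays no role beyond ensuring the finite sums are well defined.
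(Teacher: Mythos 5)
Your proof is correct: the algebra checks out, with $f(p)=\frac{p(p+1)}{2}+\frac{(m-p)(m-p+1)}{2}=\left(p-\frac{m}{2}\right)^2+\frac{m^2+2m}{4}$, and strict monotonicity of squaring on the nonnegative reals finishes the argument. The paper states this as a Fact with no proof at all, so your completion-of-the-square computation is precisely the routine verification the authors left implicit.
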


\begin{lemma}\label{lem:ecc_lower_bound_ynm}
	For any $n\geq 1$ and $m\geq 0$, $\ecc_{\Ynm}(0)\geq\lfloor\frac{n(m+1)}{2} \rfloor$.
\end{lemma}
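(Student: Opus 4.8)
The plan is to prove this lower bound by exhibiting a single vertex $v$ with $d(v,0)\geq\lfloor n(m+1)/2\rfloor$; since $\ecc_{\Ynm}(0)=\max_v d(v,0)$, one well-chosen vertex suffices. By Observation \ref{obs:m_leq_2} the cases $m\leq 1$ are already settled, so I assume $m\geq 2$. The guiding heuristic is that a unit parked in a bucket must be carried the full length $m+1$ before it can be removed, so I push as much mass as possible into the two buckets while keeping the configuration as balanced as I can. The crucial structural feature I will arrange is that the chosen $v$ has \emph{no inner pivots}: its partial sums $\sum_{i=0}^{p}v_i$ will be monotone and confined to $\{1,\dots,n-1\}$, so the only pivots are the outer ones and, by Observation \ref{obs:closer_pivot_is_better}, $d(v,0)=\min\{\ps_{-1}(v),\ps_{m+1}(v)\}$.

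First I would handle $n$ even. I take $v=(n/2,0,\dots,0,n/2)$, both buckets equal to $n/2$ and every middle entry $0$. All partial sums equal $n/2\in\{1,\dots,n-1\}$, so $v$ has no inner pivots, and Corollary \ref{cor:pivot_path_len} gives $\ps_{-1}(v)=(m+1)\tfrac n2=\ps_{m+1}(v)$, whence $d(v,0)=n(m+1)/2=\lfloor n(m+1)/2\rfloor$.

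For $n$ odd and $n\geq 3$ I keep the buckets balanced at $(n-1)/2$ each and insert a single extra unit as close to the centre as possible: $v_{(m+1)/2}=1$ when $m$ is odd, and $v_{m/2+1}=1$ when $m$ is even, with all other middle entries $0$. Now the partial sums take only the values $(n-1)/2$ and $(n+1)/2$, both in $\{1,\dots,n-1\}$, so again there are no inner pivots. Writing $c$ for the position of the central unit, Corollary \ref{cor:pivot_path_len} yields $\ps_{-1}(v)=(m+1)\tfrac{n-1}2+(m+1-c)$ and $\ps_{m+1}(v)=(m+1)\tfrac{n-1}2+c$: the balanced buckets contribute equally, and the central unit produces a difference $m+1-2c$, which is $0$ when $m$ is odd and $\pm1$ when $m$ is even. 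Since $\ps_{-1}(v)+\ps_{m+1}(v)=n(m+1)$, in both parities $\min\{\ps_{-1}(v),\ps_{m+1}(v)\}=\lfloor n(m+1)/2\rfloor$. The degenerate case $n=1$ is the same vertex with the buckets forced to $0$; here every integer is a pivot, but one checks directly that $\ps_p(v)\in\{c,\,m+1-c\}$ for all $p$, whose minimum is $\min(c,m+1-c)=\lfloor (m+1)/2\rfloor=\lfloor n(m+1)/2\rfloor$.

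The only place requiring genuine care is the parity of $n(m+1)$. When $n$ is odd the two buckets cannot be perfectly balanced, and a naive ``all mass into the buckets'' choice would force $\ps_{-1}(v)$ and $\ps_{m+1}(v)$ to differ by $m+1$, far too much; the entire purpose of the single centrally placed unit is to absorb this imbalance and bring the gap down to at most $1$, which is the best possible given that $\ps_{-1}(v)-\ps_{m+1}(v)\equiv n(m+1)\pmod 2$. The other key step — verifying that these vertices have no inner pivots — is immediate from the monotonicity of the partial sums together with the buckets being nonzero, and this is precisely what lets me discard every inner pivot and reduce the distance to the two outer pivot paths computed by Corollary \ref{cor:pivot_path_len}.
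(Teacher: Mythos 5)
Your proposal is correct and follows essentially the same route as the paper: both arguments exhibit the witness vertex with both buckets equal to $\lfloor\frac{n}{2}\rfloor$ and, for odd $n$, a single unit placed as centrally as possible, observe that it has no inner pivots, and then use Corollary \ref{cor:pivot_path_len} together with Observation \ref{obs:closer_pivot_is_better} to see that the two outer pivot-path lengths sum to $n(m+1)$ and differ by at most $1$. The only (cosmetic) difference is that the paper dispatches the case $n=1$ by citing Lemma \ref{lem:corner_case_neqo}, whereas you verify that case directly with the same single-unit vertex, which makes your argument slightly more self-contained.
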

\begin{proof}
	If $m\leq 1$, this lemma follows from Observation \ref{obs:small_m_is_trivial}.
	If $n=1$, it follows from Lemma \ref{lem:corner_case_neqo}.
	Let $n,m\geq 2$, and let $u\in\Ynm$ be defined as follows:
	$u_0=u_{m+1}=\lfloor\frac{n}{2}\rfloor$. 
	If $n$ is even, then $u_i=0$ for all $1\leq i\leq m$, and if $n$ is odd, then $u_{\lfloor\frac{m+1}{2}\rfloor}=1$ and $u_i=0$ for all other $1\leq i\leq m$.
	For example:
\begin{center}
\begin{tabular}{ l l l }
	\hline 
	n & m & u \\ \hline                        
	4 & 2 & $(2,0,0,2)$ \\
	4 & 3 & $(2,0,0,0,2)$ \\
	5 & 4 & $(2,0,1,0,0,2)$ \\
	5 & 5 & $(2,0,0,1,0,0,2)$ \\
	\hline  
\end{tabular}	 
\end{center}

	In all of these cases, $u$ has no inner pivots, that is, $\piv(u)=\{-1,m+1\}$.
	Therefore, by Observation \ref{obs:closer_pivot_is_better}, $d(u,0)=\min\{\ps_{-1}(u), \ps_{m+1}(u)\}$.	
	Note that $\sum_{i=0}^{m+1}u_i=n$. 
	Therefore, by Corollary \ref{cor:pivot_path_len}, $$\ps_{-1}(u)+\ps_{m+1}(u)=\sum_{i=0}^{m}(m+1-i)u_i + \sum_{i=1}^{m+1}iu_i=n(m+1).$$
	It now suffices to show that $|\ps_{-1}(u)-\ps_{m+1}(u)|\leq 1$, since this implies that $\ecc_{\Ynm}(0)\geq d(u,0)=\lfloor\frac{n(m+1)}{2} \rfloor$.
	
	Clearly, if either $2 \divides n$ or $2\divides (m-n)$, then $\ps_{-1}(u)=\ps_{m+1}(u)$. 
	If $2 \ndivides n$ and $2\divides m$, then $|\ps_{-1}(u)-\ps_{m+1}(u)|=1$.
	This implies that $|\ps_{-1}(u)-\ps_{m+1}(u)|\leq 1$, as required.
\end{proof}

\begin{lemma}\label{lem:no_pivots_ynm}
	Let $v\in\Ynm$ such that $v$ has no inner pivots. 
	Then $d(v,0)\leq \lfloor\frac{n(m+1)}{2}\rfloor$.
\end{lemma}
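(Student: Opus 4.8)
The plan is to reduce $d(v,0)$ to the two outer-pivot path lengths and then bound their average. Since $v$ has no inner pivots, $\piv(v)=\{-1,m+1\}$, so $p_l=-1$ and $p_r=m+1$, and Observation \ref{obs:closer_pivot_is_better} gives
$$d(v,0)=\min\{\ps_{-1}(v),\ps_{m+1}(v)\}.$$
Because $\min\{a,b\}\le\frac{a+b}{2}$ and $d(v,0)$ is an integer, it will suffice to prove $\ps_{-1}(v)+\ps_{m+1}(v)\le n(m+1)$; the floor in the statement then comes for free.

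Evaluating the formula of Corollary \ref{cor:pivot_path_len} at the two outer pivots yields $\ps_{-1}(v)=\sum_{i=0}^{m}(m+1-i)v_i$ and $\ps_{m+1}(v)=\sum_{i=1}^{m+1}iv_i$. Adding them, every index $0\le i\le m+1$ is weighted by $m+1$, so
$$\ps_{-1}(v)+\ps_{m+1}(v)=(m+1)\sum_{i=0}^{m+1}v_i=(m+1)\,\Sigma,$$
where $\Sigma:=\sum_{i=0}^{m+1}v_i$. Thus the whole lemma reduces to showing that the total mass satisfies $\Sigma\le n$.

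This last reduction is the heart of the argument and the step I expect to be the main obstacle. To handle it I would track the partial sums $S_p=\sum_{i=0}^{p}v_i$, noting $S_{-1}=0$ and $S_{m+1}=\Sigma\equiv 0\pmod n$, and recall that $p$ is a pivot exactly when $S_p\equiv 0\pmod n$. The crucial structural fact is that the interior entries obey $v_i\in\{0,1\}$ for $1\le i\le m$, so $S_p$ changes by at most $1$ as $p$ runs from $0$ to $m$. Since $S_0=v_0\in\{0,\dots,n-1\}$ cannot vanish (that would make $0$ an inner pivot), we have $1\le S_0\le n-1$; a short induction then shows that a sequence which starts strictly inside $(0,n)$, moves in steps of size at most $1$, and is forbidden from hitting the multiple $n$, must stay in $[1,n-1]$ throughout $0\le p\le m$. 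In particular $S_m\le n-1$, and since $v_{m+1}\le n-1$ we get $\Sigma=S_m+v_{m+1}\le 2n-2<2n$. As $\Sigma$ is a non-negative multiple of $n$, this forces $\Sigma\in\{0,n\}$.

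Finally I would assemble the pieces: if $\Sigma=0$ then $v=0$ and $d(v,0)=0$, while if $\Sigma=n$ then $\ps_{-1}(v)+\ps_{m+1}(v)=n(m+1)$, so $d(v,0)\le\frac{n(m+1)}{2}$, and integrality upgrades this to $d(v,0)\le\lfloor\frac{n(m+1)}{2}\rfloor$. Everything outside the partial-sum crossing argument is routine bookkeeping with Corollary \ref{cor:pivot_path_len}, so that crossing step is where I would concentrate the care.
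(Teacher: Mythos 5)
Your proposal is correct and follows essentially the same route as the paper: express $d(v,0)$ as $\min\{\ps_{-1}(v),\ps_{m+1}(v)\}$ via Observation \ref{obs:closer_pivot_is_better}, sum the two outer-pivot path lengths from Corollary \ref{cor:pivot_path_len} to get $(m+1)\sum_i v_i = n(m+1)$, and bound the minimum by the floored average. The only difference is that the paper simply asserts $\sum_{i=0}^{m+1}v_i=n$ from the absence of inner pivots, whereas you supply the partial-sum argument justifying it --- a worthwhile detail, but not a different proof.
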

\begin{proof}
	Note that $v\neq 0$ and $\sum_{i=0}^{m+1}v_i = n$, since $v$ has no inner pivots.
	Therefore, by Corollary \ref{cor:pivot_path_len} and Observation \ref{obs:closer_pivot_is_better},
    \begin{align*}
	d(v,0) & = \min\{\ps_{-1}(v),\ps_{m+1}(v)\} \\ 
    & \leq \lfloor\frac{\ps_{-1}(v) + \ps_{m+1}(v)}{2}\rfloor = \lfloor\frac{\sum_{i=0}^{m+1}iv_i + \sum_{i=0}^{m+1}(m+1-i)v_i}{2}\rfloor \\
    & = \lfloor \frac{n(m+1)}{2} \rfloor.
    \end{align*}
\end{proof}


\section{Computation of the Eccentricity}\label{sec:n_leq_m_Ynm}
\begin{lemma}[$n=1$]\label{lem:corner_case_neqo}
    Let $m\geq 0$. Then 
    $$\ecc_{\Ynm[1][m]}(0)= \binom{\lceil \frac{m}{2}\rceil + 1}{2} + \binom{\lfloor \frac{m}{2}\rfloor + 1}{2}.$$
\end{lemma}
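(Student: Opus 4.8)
The plan is to exploit the drastic simplification that occurs when $n=1$: since $\mathbb{Z}_1$ is trivial, both buckets vanish and a vertex is just a string $(0,v_1,\dots,v_m,0)$ with $v_i\in\{0,1\}$, and, crucially, \emph{every} integer $-1\le p\le m+1$ is a pivot of \emph{every} vertex, because each partial sum $\sum_{i=0}^{p}v_i$ is trivially divisible by $1$. Consequently $\piv(v)=\{-1,0,\dots,m+1\}$ for all $v$, and by the observation following Lemma~\ref{lem:geodesic_is_a_pivot_path_ynm} we may write, uniformly over all vertices,
\[
\ecc_{\Ynm[1][m]}(0)=\max_v\, d(v,0)=\max_v\,\min_{-1\le p\le m+1}\ps_p(v).
\]

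First I would establish the lower bound using the all-ones vertex $\mathbf 1=(0,1,\dots,1,0)$. By Corollary~\ref{cor:pivot_path_len},
\[
\ps_p(\mathbf 1)=\sum_{i=1}^{p}i+\sum_{i=p+1}^{m}(m+1-i)=\binom{p+1}{2}+\binom{m-p+1}{2},
\]
the second equality coming from the substitution $j=m+1-i$ in the right-hand sum. To minimize this over $p$ I would invoke Fact~\ref{fac:split_sum_center}: since $\ps_p(\mathbf 1)=\sum_{i=1}^p i+\sum_{i=1}^{m-p}i$, the value strictly decreases as $|p-\frac m2|$ decreases, so the minimum over $0\le p\le m$ is attained at $p=\lfloor\frac m2\rfloor$ (equivalently at $p=\lceil\frac m2\rceil$), while a direct check shows the two boundary pivots $p=-1,m+1$ give the strictly larger value $\binom{m+2}{2}$. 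Since $m-\lfloor\frac m2\rfloor=\lceil\frac m2\rceil$, this yields
\[
d(\mathbf 1,0)=\min_p\ps_p(\mathbf 1)=\binom{\lceil\frac m2\rceil+1}{2}+\binom{\lfloor\frac m2\rfloor+1}{2},
\]
which is exactly the claimed quantity, so $\ecc_{\Ynm[1][m]}(0)$ is at least this value.

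The upper bound is where the uniform availability of all pivots pays off. For an arbitrary vertex $v$ we have $v_i\le 1$ for every $1\le i\le m$, and all coefficients appearing in the formula of Corollary~\ref{cor:pivot_path_len} are non-negative; hence $\ps_p(v)\le\ps_p(\mathbf 1)$ holds for each \emph{fixed} $p$. Taking minima over the common index set $\{-1,\dots,m+1\}$ then gives $d(v,0)=\min_p\ps_p(v)\le\min_p\ps_p(\mathbf 1)=d(\mathbf 1,0)$ for every $v$, so the eccentricity is at most the claimed value, completing the proof.

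I do not expect a serious obstacle here; the only points requiring care are the binomial bookkeeping in the minimization — ensuring the floors and ceilings land correctly and that the two boundary pivots are genuinely suboptimal — and, conceptually, the fact that legitimizes the dominance comparison, namely that $\ps_p(v)$ is defined for \emph{all} $p$ in the same range for every $v$ precisely because $n=1$. It is this feature, every index being a pivot, that makes $\mathbf 1$ simultaneously the extremal vertex for the lower bound and the entrywise maximum dominating all other vertices for the upper bound.
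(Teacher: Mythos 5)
Your proof is correct and takes essentially the same route as the paper's: both exploit that every index is a pivot when $n=1$, apply Corollary \ref{cor:pivot_path_len} together with Fact \ref{fac:split_sum_center} to bound $d(v,0)$ by the value at the central pivot, and exhibit the all-ones vertex as the extremal one. (One cosmetic slip: since the buckets vanish for $n=1$, the outer pivots $p=-1,m+1$ actually give $\binom{m+1}{2}$, not $\binom{m+2}{2}$ --- still at least the central value, so your conclusion is unaffected.)
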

\begin{proof}
    The case $m=0$ is trivial. Assume that $m>0$. 
    Note that since $n=1$, $\piv(v)=\{-1,\ldots,m+1\}$ for every $v\in\Ynm$.
    Therefore, by Corollary \ref{cor:pivot_path_len}, Observation \ref{obs:closer_pivot_is_better} and Fact \ref{fac:split_sum_center}, $d(v,0)\leq \ps_{\lceil\frac{m}{2}\rceil}(v)\leq\sum_{i=1}^{\lceil \frac{m}{2}\rceil}i + \sum_{i=1}^{\lfloor \frac{m}{2}\rfloor}i = \binom{\lceil \frac{m}{2}\rceil + 1}{2} + \binom{\lfloor \frac{m}{2}\rfloor + 1}{2}$. This upper bound is obtained by the vertex $v\in\Ynm$ with $v_i=1$ for every $1\leq i\leq m$.
\end{proof}

\begin{lemma}\label{lem:eccentricity_in_I0}
    If $0\leq m \leq n$, then $\ecc_{\Ynm}(0) = \lfloor\frac{n(m+1)}{2} \rfloor$.
\end{lemma}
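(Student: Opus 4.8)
The plan is to prove the statement by combining the general lower bound from Lemma~\ref{lem:ecc_lower_bound_ynm} with a matching upper bound that exploits the hypothesis $m\leq n$. Since Lemma~\ref{lem:ecc_lower_bound_ynm} already gives $\ecc_{\Ynm}(0)\geq\lfloor\frac{n(m+1)}{2}\rfloor$ for all $n,m$, the entire content of this lemma is the \emph{upper} bound $\ecc_{\Ynm}(0)\leq\lfloor\frac{n(m+1)}{2}\rfloor$, which amounts to showing $d(v,0)\leq\lfloor\frac{n(m+1)}{2}\rfloor$ for every vertex $v\in\Ynm$.

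First I would dispose of the vertices with no inner pivots: for these, Lemma~\ref{lem:no_pivots_ynm} already yields $d(v,0)\leq\lfloor\frac{n(m+1)}{2}\rfloor$ directly. So the real work is to bound $d(v,0)$ for vertices $v$ that \emph{do} possess an inner pivot. The key observation is that any inner pivot gives a good upper bound through Corollary~\ref{cor:pivot_path_len}: if $p$ is an inner pivot then $\ps_p(v)\leq\binom{p+1}{2}+\binom{m-p+1}{2}$, and by Observation~\ref{obs:closer_pivot_is_better} we have $d(v,0)\leq\ps_p(v)$ for any pivot. The natural strategy is to use the pivot $p_l$ or $p_r$ nearest the center $\frac{m}{2}$, since by Fact~\ref{fac:split_sum_center} the bound $\binom{p+1}{2}+\binom{m-p+1}{2}$ is minimized when $p$ is as close to $\frac{m}{2}$ as possible. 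I would therefore aim to show that the centered bound $\binom{\lceil m/2\rceil+1}{2}+\binom{\lfloor m/2\rfloor+1}{2}$ does not exceed $\lfloor\frac{n(m+1)}{2}\rfloor$ when $m\leq n$.

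The main technical step is thus the inequality
\[
\binom{\lceil m/2\rceil+1}{2}+\binom{\lfloor m/2\rfloor+1}{2}\leq\left\lfloor\frac{n(m+1)}{2}\right\rfloor
\quad\text{whenever } m\leq n.
\]
Indeed, the left-hand side is exactly the $n=1$ eccentricity from Lemma~\ref{lem:corner_case_neqo}, and it equals roughly $\frac{m(m+2)}{4}$; the right-hand side is roughly $\frac{n(m+1)}{2}$, so the inequality should follow from $m\leq n$ by a short computation splitting into the parities of $m$. The subtlety is that a vertex with an inner pivot need not have its best pivot exactly at $\lceil m/2\rceil$ — the nearest pivots $p_l,p_r$ to the center depend on $v$ — so I would need to argue that whatever inner pivot $v$ has still gives a path of length at most $\binom{\lceil m/2\rceil+1}{2}+\binom{\lfloor m/2\rfloor+1}{2}$, or otherwise fall back on the no-inner-pivot bound; combining these two cases is where the argument must be assembled carefully.

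The step I expect to be the main obstacle is handling vertices that have an inner pivot but whose inner pivots are all far from the center $\frac{m}{2}$, since for such $v$ the crude bound $\binom{p+1}{2}+\binom{m-p+1}{2}$ at an extreme pivot $p$ can be large. The resolution should be that when $v$ has an inner pivot it also has the two outer pivots, so one can still invoke the no-inner-pivot style estimate via $\ps_{-1}(v)$ and $\ps_{m+1}(v)$ together with $\sum_i v_i\leq$ a controlled quantity; but reconciling the ``entry mass'' of $v$ (which may exceed $n$ when inner pivots are present) with the target bound $\lfloor\frac{n(m+1)}{2}\rfloor$ under the constraint $m\leq n$ is the delicate point that the proof must pin down.
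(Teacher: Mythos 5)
Your case decomposition is the same as the paper's (lower bound from Lemma~\ref{lem:ecc_lower_bound_ynm}, Lemma~\ref{lem:no_pivots_ynm} for vertices with no inner pivot, Corollary~\ref{cor:pivot_path_len} for the rest), but the inner-pivot case is left genuinely open, and both devices you propose for closing it fail. The intermediate inequality you aim at --- that every vertex possessing an inner pivot lies within the centered bound $\binom{\lceil m/2\rceil+1}{2}+\binom{\lfloor m/2\rfloor+1}{2}$ of $0$ --- is false. Take $n=m=4$ and $v=(3,1,1,1,1,1)\in\Ynm[4][4]$: its only inner pivot is $1$, and $d(v,0)=\ps_1(v)=7$, while the centered bound is $\binom{3}{2}+\binom{3}{2}=6$. (More dramatically, $v=(0,1,1,1,1,0)$ has $d(v,0)=10$.) Your fallback for such vertices --- estimating via the outer pivots in the style of Lemma~\ref{lem:no_pivots_ynm} --- fails on the same vertex, exactly for the reason you flag: its entry mass is $2n=8$, so $\ps_{-1}(v)=25$ and $\ps_{5}(v)=15$, both exceeding the target $\lfloor\frac{n(m+1)}{2}\rfloor=10$, and no estimate built from the outer pivots alone can succeed. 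So the ``delicate point'' you defer is not a technicality; as written, the argument does not go through.

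The missing idea is that no centering and no fallback are needed: under the hypothesis $m\leq n$, even the \emph{worst} inner pivot is good enough. Fact~\ref{fac:split_sum_center}, applied with $q=m$, shows that every inner pivot $0\leq p\leq m$ satisfies $\binom{p+1}{2}+\binom{m-p+1}{2}\leq\binom{m+1}{2}=\frac{m(m+1)}{2}\leq\frac{n(m+1)}{2}$, and since the left-hand side is an integer it is at most $\lfloor\frac{n(m+1)}{2}\rfloor$; combined with $d(v,0)\leq\ps_p(v)$ from Corollary~\ref{cor:pivot_path_len} and Observation~\ref{obs:closer_pivot_is_better}, this settles every vertex having an inner pivot in one line (in the example above, $\ps_1(v)=7\leq 10$). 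This is precisely the paper's proof, and note that it inverts the roles your plan assigns: vertices \emph{with} an inner pivot are the easy case, bounded crudely by $\binom{m+1}{2}$, and only the pivot-free vertices need the sharper bound of Lemma~\ref{lem:no_pivots_ynm}. (Also dispose of $m\leq 1$ first via Observation~\ref{obs:small_m_is_trivial}, since the chapter's pivot machinery carries the standing assumption $m\geq 2$.)
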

\begin{proof}
    The case $m\leq 1$ is true by Observation \ref{obs:small_m_is_trivial}. Assume that $m\geq 2$. 
    By Lemma \ref{lem:ecc_lower_bound_ynm}, it is sufficient to show that $\ecc_{\Ynm}(0) \leq \lfloor\frac{n(m+1)}{2} \rfloor$.
    Let $v\in\Ynm$. 
    By Lemma \ref{lem:no_pivots_ynm}, we can assume that $v$ has some inner pivot $p\in\piv(v)$.
    By Corollary \ref{cor:pivot_path_len} and Fact \ref{fac:split_sum_center} (with $q=m$), $d(v,0) \leq \ps_p(v) \leq \binom{p+1}{2}+\binom{m-p+1}{2} \leq \binom{m+1}{2}$.
    Since $m \le n$, $\binom{m+1}{2} \le \frac{n(m+1)}{2}$ and therefore $d(v,0)\leq\lfloor\frac{n(m+1)}{2}\rfloor$.
\end{proof}

Throughout the rest of this subsection, we assume that $2\leq n\leq m$.

\pagebreak
\begin{definition}\label{def:ynm_pivot_notations}
Let $v\in \Ynm$.
\begin{enumerate}
	\item $I_c(v) = [p_l(v)+1,p_r(v)]$. If $v_{p_l+1}=v_{p_r}=0$ (and $p_l+1=p_r$), then we say that $I_c(v)$ is \textbf{trivial}. We abbreviate $I_c$ when $v$ is evident.
	\item $h(v)=\min\{|p-\frac{m}{2}|:p\in \piv(v)\}$.
	\item $\hnm = 
	\begin{cases}
	\frac{n}{2} & \mbox{if } 2\divides (m-n) \\
	\frac{n+1}{2} & \mbox{if } 2\ndivides (m-n)
	\end{cases}.$
\end{enumerate}

\end{definition}

The outline of the rest of this subsection is as follows.
We first construct two candidates for a vertex eccentric to $0$ in Definitions \ref{def:uz} and \ref{def:uo} and compute their distances from $0$ in Lemma \ref{lem:dist_of_uz}.
We then prove that the maximum of the two distances is an upper bound on the distance of an arbitrary vertex $v$ from $0$.
We split this proof into two cases.

\begin{enumerate}
	\item $h(v)<\hnm$ (Lemma \ref{lem:close_to_middle_pivot}).
	\item $h(v)\geq\hnm$ (Lemma \ref{lem:middle_interval_is_n}).
\end{enumerate}

\begin{definition}[\sc $\uz$]\label{def:uz}
	Let $u\in\Ynm$ such that $u_i=1$ for all $1\leq i\leq m$ and $u_0\equiv -\lfloor\frac{m-n}{2} \rfloor \bmod n$. 
	Denote this vertex by $\uz$ and denote $\dz=d(\uz,0)$.
\end{definition}
For example, $\uz[3][5]=(2,1,1,1,1,1,2)$ and $\uz[3][6]=(2,1,1,1,1,1,1,1)$.

\begin{definition}[\sc $\uo$]\label{def:uo}
	Assume that $2\ndivides (m-n)$.
	Let $u\in\Ynm$ such that $u_{i}=0$ for $i=\lceil\frac{m+1}{2}\rceil$, $u_i=1$ for all other $1\leq i\leq m$ and $u_0\equiv -\lfloor\frac{m-n}{2} \rfloor \bmod n$. 
	Denote this vertex by $\uo$ and denote $\doo=d(\uo,0)$.
\end{definition}
For example, $\uo[2][5]=(1,1,1,0,1,1,1)$ and $\uo[3][6]=(2,1,1,1,0,1,1,2)$.

\pagebreak

\begin{observation}\label{obs:h_of_candidates}\ 
\begin{enumerate}
\item If $2\divides (m-n)$, then $h(\uz)=\frac{n}{2}=\hnm$.
\item If $2\ndivides (m-n)$, then $h(\uz)=\frac{n-1}{2}=\hnm-1$ and $h(\uo)=\frac{n+1}{2}=\hnm$.
\end{enumerate}
\end{observation}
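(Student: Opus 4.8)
The plan is to compute explicitly, in each case, the set $\piv(\uz)$ (resp.\ $\piv(\uo)$) and then single out the pivot nearest to $\frac{m}{2}$. Write $k=\lfloor\frac{m-n}{2}\rfloor$, so that the left bucket satisfies $(\uz)_0\equiv -k\pmod n$. Since no partial sum $\sum_{i=0}^p v_i$ with $0\le p\le m$ involves the right bucket, and $(\uz)_i=1$ for $1\le i\le m$, we get $\sum_{i=0}^p(\uz)_i\equiv p-k\pmod n$. Hence, for $0\le p\le m$, the integer $p$ is a pivot of $\uz$ if and only if $p\equiv k\pmod n$; these form an arithmetic progression of step $n$. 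Both outer pivots $-1,m+1$ sit at distance $\frac{m}{2}+1>\frac{n+1}{2}$ from $\frac{m}{2}$ (using $m\ge n$), so they never attain the minimum defining $h$, and I may restrict attention to the inner progression.

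First I would treat $\uz$. If $2\divides(m-n)$ then $k=\frac{m-n}{2}$ and $\frac{m}{2}=k+\frac{n}{2}$, so the two consecutive pivots $p=k$ and $p=k+n$ both lie at distance $\frac{n}{2}$ from $\frac{m}{2}$ and are the nearest; thus $h(\uz)=\frac{n}{2}=\hnm$. If $2\ndivides(m-n)$ then $k=\frac{m-n-1}{2}$ and $\frac{m}{2}=k+\frac{n+1}{2}$; now $p=k+n$ lies at distance $\frac{n-1}{2}$ while $p=k$ lies at distance $\frac{n+1}{2}$, so $h(\uz)=\frac{n-1}{2}=\hnm-1$. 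In each case the relevant pivots lie in the admissible range $[0,m]$, which follows from $n\le m$ (e.g.\ $k+n=\frac{m+n-1}{2}\le m$).

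The computation for $\uo$ (where $2\ndivides(m-n)$) is the delicate part, since the single zero entry at position $j:=\lceil\frac{m+1}{2}\rceil$ offsets every partial sum past $j$ by one. Concretely, $\sum_{i=0}^p(\uo)_i\equiv p-k\pmod n$ for $p<j$ but $\equiv p-k-1\pmod n$ for $p\ge j$, so the pivots split into two progressions: for $p<j$ one needs $p\equiv k\pmod n$, and for $p\ge j$ one needs $p\equiv k+1\pmod n$. The crux is locating $j$ relative to these progressions. Using $n\ge 2$ together with the parity of $m$ forced by $2\ndivides(m-n)$, I would verify the boundary inequalities $k+1<j\le k+n$ and $k+1+n\ge j$. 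These show that the largest left-hand pivot is $p=k$ and the smallest right-hand pivot is $p=k+1+n$, with no pivot of $\uo$ strictly between them (in particular the class $p\equiv k$ is unavailable on the right, so the former pivot $k+n$ of $\uz$ is destroyed). Since $\frac{m}{2}=k+\frac{n+1}{2}$, both $p=k$ and $p=k+1+n$ sit at distance exactly $\frac{n+1}{2}$ from $\frac{m}{2}$, whence $h(\uo)=\frac{n+1}{2}=\hnm$.

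The main obstacle is the bookkeeping around the shifted position $j$ in the $\uo$ case: one must pin down which residue class governs each side of $j$ and confirm that $j$ falls strictly between the two progressions, so that neither the missing pivot $k+n$ nor any spurious pivot is nearer to $\frac{m}{2}$ than $\frac{n+1}{2}$. This boundary analysis is exactly where the hypothesis $2\le n$ is used; for $n=2$ (which forces $m$ odd) the inequality $k+n\ge j$ holds with equality, and it is worth treating the parities of $m$ separately to keep the ceiling $\lceil\frac{m+1}{2}\rceil$ under control.
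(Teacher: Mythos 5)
Your proposal is correct. Note that the paper offers no proof at all for this statement---it is labelled an Observation and treated as immediate from the definitions of $\uz$, $\uo$, $h$ and $\hnm$---so your argument is exactly the routine verification the paper leaves implicit. Your description of the pivot sets matches what the paper asserts elsewhere without justification (in the proof of Lemma \ref{lem:dist_of_uz}): the inner pivots of $\uz$ form the progression $p\equiv k\pmod n$ with $k=\lfloor\frac{m-n}{2}\rfloor$, giving $p_l(\uz)=k$ and $p_r(\uz)=\lfloor\frac{m+n}{2}\rfloor$; and the zero entry of $\uo$ at $j=\lceil\frac{m+1}{2}\rceil$ shifts the residue class on the right of $j$ to $p\equiv k+1\pmod n$, which kills the pivot $k+n$ and makes $p_r(\uo)=k+n+1=\lceil\frac{m+n}{2}\rceil$, both at distance $\frac{n+1}{2}$ from $\frac{m}{2}$. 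Your boundary inequalities $k+1<j\le k+n\le k+1+n$ do hold in both parity sub-cases under $2\le n\le m$ (with equality $j=k+n$ exactly when $n=2$, as you say), so the argument is complete.
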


\begin{lemma}\label{lem:dist_of_uz}\ 
	\begin{enumerate}
		\item $\dz = \binom{\lfloor\frac{m+n}{2}\rfloor+1}{2} + \binom{\lceil\frac{m-n}{2}\rceil+1}{2}$.
		\item If $2\ndivides (m-n)$, then $\doo= \dz +n - \lceil\frac{m+1}{2}\rceil$.
	\end{enumerate}
	
\end{lemma}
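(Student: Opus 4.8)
The plan is to compute both distances via Corollary \ref{cor:pivot_path_len} together with Observation \ref{obs:closer_pivot_is_better}, so that the whole lemma reduces to locating, for each of $\uz$ and $\uo$, the two pivots $p_l$ and $p_r$ that straddle $\frac{m}{2}$. Since both vertices have $u_i=1$ for all $1\le i\le m$, the partial sum $\sum_{i=0}^{p}u_i$ is an affine function of $p$; hence the inner pivots of $\uz$ form a single arithmetic progression $p\equiv\lfloor\frac{m-n}{2}\rfloor\pmod n$, while for $\uo$ the deletion of the unit in entry $c:=\lceil\frac{m+1}{2}\rceil$ shifts this residue by $1$ for all $p\ge c$. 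First I would record these pivot sets explicitly and note that, being spaced $n$ apart, the two pivots nearest $\frac{m}{2}$ are the only ones relevant to distance from $0$.

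For part (1), I would split on the parity of $m-n$, guided by Observation \ref{obs:h_of_candidates}. When $2\divides(m-n)$ the two nearest pivots are $\frac{m-n}{2}$ and $\frac{m+n}{2}$, symmetric about $\frac{m}{2}$, so Corollary \ref{cor:pivot_path_len} gives $\ps_{p_l}(\uz)=\ps_{p_r}(\uz)=\binom{\frac{m-n}{2}+1}{2}+\binom{\frac{m+n}{2}+1}{2}$. When $2\ndivides(m-n)$ the nearest pivots are $\frac{m-n-1}{2}$ and $\frac{m+n-1}{2}$, the latter strictly closer to $\frac{m}{2}$; by Fact \ref{fac:split_sum_center} the minimum in Observation \ref{obs:closer_pivot_is_better} is attained at $p_r=\frac{m+n-1}{2}$, yielding $\dz=\binom{\frac{m+n-1}{2}+1}{2}+\binom{\frac{m-n+1}{2}+1}{2}$. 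In both cases one checks that $\frac{m+n}{2}$ rounds to $\lfloor\frac{m+n}{2}\rfloor$ and $\frac{m-n}{2}$ to $\lceil\frac{m-n}{2}\rceil$ in exactly the way needed to recover the stated formula.

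For part (2), I would exploit that $\uo$ differs from $\uz$ only in entry $c$: evaluating Corollary \ref{cor:pivot_path_len} gives the clean relation $\ps_p(\uo)=\binom{p+1}{2}+\binom{m-p+1}{2}-w_c(p)$, where $w_c(p)=\lfloor\frac{m+1}{2}\rfloor$ for $p<c$ and $w_c(p)=\lceil\frac{m+1}{2}\rceil$ for $p\ge c$. A direct check (here one must distinguish the parity of $m$, which controls whether $c$ equals $\frac{m}{2}+1$ or $\frac{m+1}{2}$) shows that for $\uo$ one has $p_l=\frac{m-n-1}{2}$ and $p_r=\frac{m+n+1}{2}=m-p_l$, both at distance $\frac{n+1}{2}=\hnm$ from $\frac{m}{2}$, so the binomial base $B:=\binom{\frac{m-n+1}{2}}{2}+\binom{\frac{m+n+3}{2}}{2}$ is common to both. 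Since $p_r\ge c$ while $p_l<c$, the penalty $w_c$ subtracted at $p_r$ is the larger of the two (they are equal when $m$ is odd), so $\doo=\ps_{p_r}(\uo)=B-\lceil\frac{m+1}{2}\rceil$. It then remains to verify the identity $B-\dz=n$, which collapses by the telescoping rule $\binom{x+1}{2}-\binom{x}{2}=x$ applied to $x=\frac{m+n+1}{2}$ and $x=\frac{m-n+1}{2}$.

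I expect the bookkeeping around the break point $c$ to be the only real obstacle: because $c=\lceil\frac{m+1}{2}\rceil$ sits immediately to the right of $\frac{m}{2}$, one must argue carefully that the pivot $\frac{m+n-1}{2}$ of $\uz$ is destroyed for $\uo$ while $\frac{m+n+1}{2}$ becomes the new $p_r$, and this step genuinely depends on the parity of $m$. Once the pivots are correctly located, everything reduces to the elementary binomial identity above, so the remainder is routine.
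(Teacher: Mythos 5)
Your proof is correct and takes essentially the same approach as the paper's: identify $p_l$ and $p_r$ for $\uz$ and $\uo$, apply Corollary \ref{cor:pivot_path_len} together with Observation \ref{obs:closer_pivot_is_better} (and Fact \ref{fac:split_sum_center}), and split on the parity of $m-n$ and of $m$. The only difference is organizational: where the paper treats the two parity cases of part (2) by direct computation, you package the same computation into the uniform penalty $w_c$ at the break point $c$ and the telescoping identity $\binom{x+1}{2}-\binom{x}{2}=x$, which is tidier but not a different argument.
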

\begin{proof}
    Note that by definition, $p_l(\uz) = p_l(\uo) = \lfloor\frac{m-n}{2} \rfloor$ and $p_r(\uz)=\lfloor \frac{m+n}{2}\rfloor$.
	Regarding $\dz$: if $2\divides (m-n)$, then $|p_l(\uz)-\frac{m}{2}|=|p_r(\uz)-\frac{m}{2}|=\frac{n}{2}$.
	It follows, by Corollary \ref{cor:pivot_path_len} and Observation \ref{obs:closer_pivot_is_better}, that $\dz=d(\uz,0)=\ps_{p_l}(\uz)=\ps_{p_r}(\uz)=\binom{\frac{m+n}{2}+1}{2} + \binom{\frac{m-n}{2}+1}{2}$.
	If $2\ndivides (m-n)$, then $|p_l(\uz)-\frac{m}{2}|=|p_r(\uz)-\frac{m}{2}|+1=\frac{n+1}{2}$.
	Therefore $\dz= d(\uz,0)=\ps_{p_r}(\uz)=\binom{\lfloor\frac{m+n}{2}\rfloor+1}{2} + \binom{\lceil\frac{m-n}{2}\rceil+1}{2}$.
	
	Regarding $\doo$: if $2\divides n$ and $2\ndivides m$, then $|p_l(\uo)-\frac{m}{2}|=|p_r(\uo)-\frac{m}{2}|=\frac{n+1}{2}$ and computation gives $\doo= d(\uo,0)=\ps_{p_l}(\uo)=\ps_{p_r}(\uo)=  \dz +n - \frac{m+1}{2}$.
	If $2\ndivides n$ and $2\divides m$, then again $|p_l(\uo)-\frac{m}{2}|=|p_r(\uo)-\frac{m}{2}|=\frac{n+1}{2}$, but (since the entry at $\lceil\frac{m+1}{2}\rceil$ of $\uo$ is equal to zero) $\ps_{p_r}(\uo)<\ps_{p_l}(\uo)$ and $\doo=d(\uo,0)=\ps_{p_r}(\uo)=\dz +n - \lceil\frac{m+1}{2}\rceil$.
\end{proof}

\begin{definition}[\sc $\dnm$]\label{def:dnm}
	Denote 
	$$
	\dnm = 
	\begin{cases}
	\dz & \mbox{if } 2\divides (m-n); \\
	\max\{\dz, \doo\} & \mbox{if } 2\ndivides (m-n).
	\end{cases}
	$$
\end{definition}

\begin{observation}
    $\ecc_{\Ynm}(0)\geq \dnm$.
\end{observation}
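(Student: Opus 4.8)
The plan is simply to unwind the definitions. By the definition of eccentricity recalled in the Graphs section, $\ecc_{\Ynm}(0)$ is the maximum of $d(v,0)$ taken over all vertices $v$ of $\Ynm$, so any single vertex immediately supplies a lower bound. First I would recall that $\uz$ is a genuine vertex of $\Ynm$ (Definition \ref{def:uz}) and that $\dz$ was defined to be exactly $d(\uz,0)$; hence $\ecc_{\Ynm}(0)\geq d(\uz,0)=\dz$ with no further work.

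Next I would split on the parity of $m-n$, matching the case division in Definition \ref{def:dnm}. When $2\divides (m-n)$ we have $\dnm=\dz$, and the inequality just obtained already gives $\ecc_{\Ynm}(0)\geq\dnm$. When $2\ndivides (m-n)$, the vertex $\uo$ is defined (Definition \ref{def:uo}) with $\doo=d(\uo,0)$, so the same maximum-over-vertices argument yields $\ecc_{\Ynm}(0)\geq\doo$ as well; combining the two lower bounds gives $\ecc_{\Ynm}(0)\geq\max\{\dz,\doo\}=\dnm$.

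The only point meriting a moment's care is that $\uz$ and $\uo$ genuinely lie in $\Ynm$, that is, that their entries sum to a multiple of $n$; but this is built into their definitions through the choice $u_0\equiv-\lfloor\frac{m-n}{2}\rfloor \bmod n$, so the membership condition holds by construction. Accordingly there is no real obstacle here: the statement is an immediate consequence of the fact that $\dz$ and $\doo$ were introduced precisely as distances from $0$ to explicit vertices, together with the definition of eccentricity as the largest such distance. The substance of the matter lies not in this observation but in the matching upper bound, to be established in the subsequent lemmas (\ref{lem:close_to_middle_pivot} and \ref{lem:middle_interval_is_n}).
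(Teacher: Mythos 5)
Your proof is correct and matches the paper's (implicit) reasoning exactly: the paper states this as an observation precisely because $\dz=d(\uz,0)$ and $\doo=d(\uo,0)$ are by definition distances from genuine vertices to $0$, so the eccentricity of $0$ dominates each of them and hence their maximum $\dnm$. Your case split on the parity of $m-n$ and the check that $\uz,\uo\in\Ynm$ are the same routine verifications the paper leaves to the reader.
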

It now suffices to show that $d(v,0)\leq \dnm$ for each $v\in\Ynm$.
As noted above we distinguish two cases.

\pagebreak
\begin{lemma}\label{lem:close_to_middle_pivot}
	Let $v\in\Ynm$ such that $h(v)<\hnm$. Then $d(v,0)\leq \dnm$.
\end{lemma}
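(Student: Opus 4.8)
The plan is to bound $d(v,0)$ above by the length of a single, well-chosen pivot path, and then compare that length with $\dnm$ using the monotonicity recorded in Fact \ref{fac:split_sum_center}. Let $p^{*}\in\{p_l(v),p_r(v)\}$ be the pivot realizing $h(v)=|p^{*}-\tfrac{m}{2}|$. First I would verify that $p^{*}$ is an \emph{inner} pivot: since $h(v)<\hnm\le\tfrac{n+1}{2}\le\tfrac{m+1}{2}$, we get $|p^{*}-\tfrac{m}{2}|<\tfrac{m+1}{2}$, i.e. $0\le p^{*}\le m$, which excludes the outer pivots $-1,m+1$ and legitimizes the use of Corollary \ref{cor:pivot_path_len}.

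Write $f(p):=\binom{p+1}{2}+\binom{m-p+1}{2}$. By Observation \ref{obs:closer_pivot_is_better} and Corollary \ref{cor:pivot_path_len}, together with $v_i\le 1$ for $1\le i\le m$,
\[
d(v,0)\le\ps_{p^{*}}(v)\le\binom{p^{*}+1}{2}+\binom{m-p^{*}+1}{2}=f(p^{*}).
\]
The value $f(p)$ is symmetric about $\tfrac{m}{2}$, hence depends only on $|p-\tfrac{m}{2}|$, and by Fact \ref{fac:split_sum_center} it strictly increases as the pivot moves away from $\tfrac{m}{2}$. So it remains to compare $h(v)$ with the distance from $\tfrac{m}{2}$ of the pivot that produces $\dz$, namely $q:=\lfloor\tfrac{m+n}{2}\rfloor$. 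Using $m-q=\lceil\tfrac{m-n}{2}\rceil$ and Lemma \ref{lem:dist_of_uz}, one has $f(q)=\dz$, and $0\le q\le m$ since $n\le m$.

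The remaining step splits on the parity of $m-n$. When $2\divides(m-n)$, then $|q-\tfrac{m}{2}|=\tfrac{n}{2}=\hnm>h(v)$, so Fact \ref{fac:split_sum_center} gives $f(p^{*})<f(q)=\dz=\dnm$. When $2\ndivides(m-n)$, then $|q-\tfrac{m}{2}|=\tfrac{n-1}{2}=\hnm-1$, and here lies the only real subtlety, a discreteness argument: $2h(v)=|2p^{*}-m|$ is an integer $\equiv m\pmod 2$, and since $m-n$ is odd it is congruent to both $n-1$ and $n+1\pmod 2$; combined with $2h(v)<2\hnm=n+1$ this forces $2h(v)\le n-1$, i.e. $h(v)\le|q-\tfrac{m}{2}|$. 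Then either the inequality is strict and Fact \ref{fac:split_sum_center} gives $f(p^{*})<\dz$, or $p^{*}\in\{q,m-q\}$ and symmetry gives $f(p^{*})=\dz$; in both subcases $f(p^{*})\le\dz\le\dnm$. Either way $d(v,0)\le f(p^{*})\le\dnm$.

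I expect the main obstacle to be precisely this parity bookkeeping in the case $2\ndivides(m-n)$: the strict hypothesis $h(v)<\hnm$ must be converted into the non-strict bound $h(v)\le\hnm-1$ measured against the \emph{correct} target distance, and one must keep track that $\dz$ (not $\doo$) is the relevant value, since $\doo$ is automatically dominated by $\dnm=\max\{\dz,\doo\}$ and plays no role in this lemma. The secondary point worth stating explicitly is the initial reduction that $p^{*}$ avoids the outer pivots, which is what allows the binomial bound of Corollary \ref{cor:pivot_path_len} to be applied at all.
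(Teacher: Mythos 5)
Your proof is correct and follows essentially the same route as the paper's: bound $d(v,0)$ by $\ps_{p^{*}}(v)$ via Observation \ref{obs:closer_pivot_is_better} and Corollary \ref{cor:pivot_path_len}, then compare against $\dz$ (realized at the pivot $\lfloor\frac{m+n}{2}\rfloor$ of $\uz$) using Fact \ref{fac:split_sum_center}, concluding $d(v,0)\leq\dz\leq\dnm$. The only difference is that you spell out two details the paper leaves implicit --- that the pivot realizing $h(v)$ is automatically an inner pivot, and the parity argument converting the strict hypothesis $h(v)<\hnm$ into the bound $h(v)\leq\hnm-1$ (the paper's unexplained ``By assumption, $h(v)\leq \hnm-1$'').
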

\begin{proof}
    We show that actually $d(v,0)\leq \dz$.
	Let $p'$ be an inner pivot of $v$ for which $|p'-\frac{m}{2}|=h(v)$.
	By Observation \ref{obs:h_of_candidates}, if $2\divides(m-n)$, then $h(\uz)=\frac{n}{2}=\hnm$, and if $2\ndivides(m-n)$, then $h(\uz)=\hnm-1$. 
	By assumption, $h(v)\leq \hnm-1$ and therefore $h(v)\leq h(\uz)$.
    By Corollary \ref{cor:pivot_path_len} and Fact \ref{fac:split_sum_center}, $d(v,0)\leq \sum_{i=1}^{p'}i+\sum_{i=1}^{m-p'}i\leq \sum_{i=1}^{p}i+\sum_{i=1}^{m-p}i=\dz$
\end{proof}

The following lemma generalizes Lemma \ref{lem:no_pivots_ynm}.

\begin{lemma}\label{lem:v_hat_dist_from_zero_center_ynm}
	Let $v\in\Ynm$. 
	Then $d(v,0)\leq \sum_{i=1}^{p_l}i + \sum_{i=1}^{m-p_r}i + \lfloor\frac{n(m+1)}{2}\rfloor.$
\end{lemma}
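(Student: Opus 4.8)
The plan is to combine Observation~\ref{obs:closer_pivot_is_better}, which gives $d(v,0)=\min\{\ps_{p_l}(v),\ps_{p_r}(v)\}$, with the explicit formula of Corollary~\ref{cor:pivot_path_len}, splitting each $\ps$ along the three regions cut out by the central pivots $p_l=p_l(v)$ and $p_r=p_r(v)$. Concretely, I would write
\begin{align*}
\ps_{p_l}(v)&=\sum_{i=1}^{p_l}iv_i+\sum_{i=p_l+1}^{p_r}(m+1-i)v_i+\sum_{i=p_r+1}^{m}(m+1-i)v_i,\\
\ps_{p_r}(v)&=\sum_{i=1}^{p_l}iv_i+\sum_{i=p_l+1}^{p_r}iv_i+\sum_{i=p_r+1}^{m}(m+1-i)v_i,
\end{align*}
so that the outer two sums agree in both expressions. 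Abbreviating $M=\sum_{i=p_l+1}^{p_r}iv_i$ and $M'=\sum_{i=p_l+1}^{p_r}(m+1-i)v_i$, this yields
$$d(v,0)=\sum_{i=1}^{p_l}iv_i+\sum_{i=p_r+1}^{m}(m+1-i)v_i+\min\{M,M'\}.$$

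Next I would bound the three terms separately. Since every middle coordinate satisfies $v_i\in\{0,1\}$, the first term is at most $\sum_{i=1}^{p_l}i$ and, reindexing by $j=m+1-i$, the second is at most $\sum_{i=1}^{m-p_r}i$. For the central term I would use $\min\{M,M'\}\le\lfloor (M+M')/2\rfloor$ together with the identity $M+M'=(m+1)\sum_{i=p_l+1}^{p_r}v_i$, which holds because $iv_i+(m+1-i)v_i=(m+1)v_i$ for each $i$. It then remains to prove the key inequality $\sum_{i=p_l+1}^{p_r}v_i\le n$, which gives $M+M'\le n(m+1)$ and hence $\min\{M,M'\}\le\lfloor n(m+1)/2\rfloor$.

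The crux, and the step that truly generalizes Lemma~\ref{lem:no_pivots_ynm}, is this central inequality. Writing $S_k=\sum_{i=0}^{k}v_i$ (with $S_{-1}=0$), the fact that $p_l,p_r\in\piv(v)$ gives $S_{p_l}\equiv S_{p_r}\equiv 0\pmod n$, so the central sum equals $S_{p_r}-S_{p_l}$ and is a nonnegative multiple of $n$; and by maximality of $p_l$ and minimality of $p_r$, no index $k$ with $p_l<k<p_r$ is a pivot, i.e. $S_k\not\equiv 0\pmod n$ there. I would argue by contradiction: if $S_{p_r}\ge S_{p_l}+2n$, let $k^\ast$ be the largest index in $[p_l,p_r]$ with $S_{k^\ast}\le S_{p_l}+n$, so that $p_l<k^\ast<p_r$ and $S_{k^\ast+1}>S_{p_l}+n$. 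Every coordinate $v_k$ with $k\notin\{0,m+1\}$ lies in $\{0,1\}$, so unless $k^\ast+1$ is the right bucket position $m+1$, the jump $v_{k^\ast+1}\le 1$ forces $S_{k^\ast}=S_{p_l}+n\equiv 0\pmod n$, a forbidden inner pivot strictly between $p_l$ and $p_r$; and in the remaining case $k^\ast=m$, $p_r=m+1$, the bound $S_m\le S_{p_l}+n-1$ (since $S_m\not\equiv0$) together with $v_{m+1}\le n-1$ again gives $S_{p_r}<S_{p_l}+2n$. Either way we reach a contradiction, so the central sum is $<2n$ and thus at most $n$. The main obstacle is precisely this bucket bookkeeping: the only coordinates that may exceed $1$ are $v_0$ and $v_{m+1}$, and these can sit inside the central interval only as its extreme left or right position (when $p_l=-1$ or $p_r=m+1$), so the "no skipped residue" argument has to be run carefully around those ends.

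Combining the three bounds yields
$$d(v,0)\le\sum_{i=1}^{p_l}i+\sum_{i=1}^{m-p_r}i+\Big\lfloor\tfrac{n(m+1)}{2}\Big\rfloor,$$
which is the claimed estimate; specializing to the case with no inner pivots (where $p_l=-1$, $p_r=m+1$ and the two outer sums vanish) recovers Lemma~\ref{lem:no_pivots_ynm}.
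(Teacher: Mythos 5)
Your proof is correct and follows essentially the same route as the paper's: both split the bound into the contribution of the entries outside $I_c(v)$, bounded by $\sum_{i=1}^{p_l}i+\sum_{i=1}^{m-p_r}i$, and a central contribution bounded by $\lfloor\frac{n(m+1)}{2}\rfloor$ using $\min\{\ps_{p_l}(v),\ps_{p_r}(v)\}\leq\lfloor\frac{\ps_{p_l}(v)+\ps_{p_r}(v)}{2}\rfloor$ together with the fact that $\sum_{i\in I_c}v_i\leq n$. The only difference is presentational — the paper clears the outside entries operationally and then reuses the argument of Lemma \ref{lem:no_pivots_ynm}, while you expand the exact formulas of Corollary \ref{cor:pivot_path_len}; your intermediate-value argument (no pivot strictly between $p_l$ and $p_r$) makes explicit the claim ``$\sum_{i\in I_c}v_i$ is either $0$ or $n$'' that the paper merely asserts.
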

\begin{proof}
    Recall the definition of $I_c=I_c(v)$ (Definition \ref{def:ynm_pivot_notations}).
	The sum $\sum_{i=1}^{p_l}i + \sum_{i=1}^{m-p_r}i$ is an upper bound on the number of steps needed to set to $0$ every $v_i$ with $i\in[1,m]\setminus I_c$, without shifting units into $I_c$; note that necessarily, following these steps, a bucket of $v$ not in $I_c$, is also equal to $0$.
	We can therefore assume that $v_i=0$ for every $i\notin I_c$ and prove that $d(v,0)\leq \lfloor\frac{n(m+1)}{2}\rfloor$.
	Note that $\sum_{i\in I_c}v_i$ is either $0$ or $n$.
    Therefore, as in the proof of Lemma \ref{lem:no_pivots_ynm}, $d(v,0)\leq\min\{\ps_{p_l}(v),\ps_{p_r}(v)\}\leq \lfloor\frac{\ps_{p_l}(v) + \ps_{p_r}(v)}{2}\rfloor \leq \lfloor\frac{n(m+1)}{2}\rfloor.$
\end{proof}


\begin{lemma}\label{lem:middle_interval_is_n}
	Let $v\in\Ynm$ such that $h(v)\geq\hnm$. 
	Then $d(v,0) \leq \dnm$.
\end{lemma}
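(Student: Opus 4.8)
The plan is to derive the bound straight from Lemma \ref{lem:v_hat_dist_from_zero_center_ynm}, using the hypothesis $h(v)\ge\hnm$ only to locate the two flanking pivots $p_l$ and $p_r$ relative to the centre $\tfrac m2$, and then to collapse the whole estimate into one algebraic identity.

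First I would unravel the hypothesis. Since $h(v)=\min\{|p-\tfrac m2|:p\in\piv(v)\}\ge\hnm$, the pivot $p_l\le\tfrac m2$ closest to the centre from the left and the pivot $p_r>\tfrac m2$ closest from the right satisfy $p_l\le\tfrac m2-\hnm$ and $p_r\ge\tfrac m2+\hnm$. A short check of the two defining cases of $\hnm$ shows that, regardless of the parities of $m$ and $n$, one has the clean integer equality $\tfrac m2-\hnm=\lfloor\tfrac{m-n}2\rfloor$; hence $p_l\le\lfloor\tfrac{m-n}2\rfloor$ and, symmetrically, $m-p_r\le\lfloor\tfrac{m-n}2\rfloor$.

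Next I would feed this into Lemma \ref{lem:v_hat_dist_from_zero_center_ynm}, which gives $d(v,0)\le\binom{p_l+1}2+\binom{m-p_r+1}2+\lfloor\tfrac{n(m+1)}2\rfloor$. As $k\mapsto\binom{k+1}2$ is increasing, the two displayed pivot bounds let me replace $p_l$ and $m-p_r$ by their common maximum $\lfloor\tfrac{m-n}2\rfloor$, yielding $d(v,0)\le 2\binom{\lfloor\frac{m-n}2\rfloor+1}2+\lfloor\tfrac{n(m+1)}2\rfloor=:B$. It then remains to prove $B\le\dnm$, and in fact I expect the sharp statements $B=\dz$ when $2\mid(m-n)$ and $B=\doo$ when $2\nmid(m-n)$, from which $B\le\dnm$ is immediate by the definition of $\dnm$.

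Proving these two evaluations of $B$ is where the real work sits. Writing $P=\tfrac{m+n}2$, $Q=\tfrac{m-n}2$ (their floors when $m-n$ is odd), the engine is the telescoping identity $\binom{P+1}2-\binom{Q+1}2=\tfrac{(P-Q)(P+Q+1)}2=\tfrac{n(m+1)}2$. When $2\mid(m-n)$ this is an exact integer relation and gives $B=\dz$ on the nose. The case $2\nmid(m-n)$ is the main obstacle: here $\lfloor\tfrac{n(m+1)}2\rfloor$ must be evaluated, which forces a split by the parities of $m$ and $n$, and it is precisely the resulting half-integer correction that converts $\dz$ into $\doo=\dz+n-\lceil\tfrac{m+1}2\rceil$. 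Carrying this parity bookkeeping out uniformly, so that the floor is absorbed correctly in every sub-case, is the only genuinely fiddly part; everything else is monotonicity and substitution.
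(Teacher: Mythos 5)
Your proposal is correct and follows essentially the same route as the paper: both invoke Lemma \ref{lem:v_hat_dist_from_zero_center_ynm}, convert $h(v)\geq\hnm$ into the pivot bounds $p_l\leq\lfloor\frac{m-n}{2}\rfloor$ and $m-p_r\leq\lfloor\frac{m-n}{2}\rfloor$, apply monotonicity, and identify the resulting bound with $\dz$ (when $2\divides(m-n)$) or $\doo$ (when $2\ndivides(m-n)$). The only differences are cosmetic: you treat the two parities uniformly before splitting, and you carry out explicitly the algebraic identification that the paper asserts without proof (it appears later as Observation \ref{obs:dz_doo_split} in the dYoke setting); your telescoping identity and parity checks are correct.
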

\begin{proof}
	Assume that $2|(m-n)$.
	By Lemma \ref{lem:v_hat_dist_from_zero_center_ynm}, $d({v},0)\leq \sum_{i=1}^{p_l}i + \sum_{i=1}^{m-p_r}i + \frac{n(m+1)}{2}$.
    Since $h(v)\geq \hnm =\frac{n}{2}$,	$p_l\leq\frac{m-n}{2}$ and $p_r\geq\frac{m+n}{2}$.
	Therefore
	$ d(v,0)\leq \sum_{i=1}^{\frac{m-n}{2}}i + \sum_{i=1}^{m-\frac{m-n}{2}}i +\frac{n(m+1)}{2}=\dz$. 
	
	Assume that $2\ndivides(m-n)$.
	By Lemma \ref{lem:v_hat_dist_from_zero_center_ynm}, $d({v},0)\leq \sum_{i=1}^{p_l}i + \sum_{i=1}^{m-p_r}i + \lfloor\frac{n(m+1)}{2}\rfloor.$
	Since $h(v)\geq \hnm =\frac{n+1}{2}$,	$p_l\leq \frac{m-n-1}{2}=\lfloor\frac{m-n}{2}\rfloor$ and $p_r\geq \frac{m+n+1}{2}=\lceil\frac{m+n}{2}\rceil$.
	Therefore
	$ d(v,0)\leq \sum_{i=1}^{\lfloor\frac{m-n}{2}\rfloor}i + \sum_{i=1}^{m-\lceil\frac{m+n}{2}\rceil}i +\lfloor\frac{n(m+1)}{2}\rfloor=\doo\leq \dnm$. 
\end{proof}

This concludes the proof of Theorem \ref{thm:ynm_ecc}.

\chapter{The Diameter of $\Ynm$}\label{chp:diameter_of_Ynm}
In this chapter, we study the diameter of Yoke graphs.
Since Yoke graphs generalize already studied graphs, we first build upon a method used in order to calculate the diameter of one of these graphs.
Specifically, in \cite{elizalde}, similarities between arc permutation graphs and the Hasse diagram of the dominance order on $\Pnm[n][n-2]$ were used in order to calculate the diameter of arc permutation graphs.
In Section \ref{sec:diameter_dominance}, we generalize this approach in order to calculate the diameter of $\Ynm$ in the case $m\leq n$.

This approach fails for $n<m$. In Sections \ref{sec:dYokeGraphs}, \ref{sec:diam_ecc} and \ref{sec:ecc_zero_Znm}, we present a new approach that works for all instances of Yoke graphs.
In the core of this approach lies the idea to convert the problem of computation of diameter to that of computation of eccentricity.
Specifically, in Section \ref{sec:dYokeGraphs}, we introduce dYoke graphs $\Znm$. 
In Section \ref{sec:diam_ecc}, we show that the diameter of $\Ynm$ is equal to the eccentricity of $0$ in $\Znm$.
In Section \ref{sec:ecc_zero_Znm}, we compute the eccentricity of $0$ in dYoke graphs and show that $\ecc_{\Znm}(0)=\ecc_{\Ynm}(0)$ and therefore $\diam(\Ynm)=\ecc_{\Ynm}(0)$ with an explicit formula for all values of $n$ and $m$.

\section{Proof for $m\leq n$ via the Dominance Order}\label{sec:diameter_dominance}
Recall the dominance order $\trianglelefteq$ in Definition \ref{def:dom_order} and recall that we denote the set $\{0,\dots, n-1\}$ by $P_n$. 

\begin{observation}\label{obs:domPnm_is_sublattice}
	$\domPnm$ is a sublattice of $(\mathbb{Z}^{m+1},\tri)$.
\end{observation}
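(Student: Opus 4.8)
The plan is to check directly that $\Pnm$, regarded as a subset of $\mathbb{Z}^{m+1}$, is closed under the meet and join of $(\mathbb{Z}^{m+1},\tri)$ computed explicitly in Observation \ref{obs:domprop}. I index the coordinates by $0,\dots,m$ to match $\Pnm$, and write a generic element as $v=(v_0,\dots,v_m)$ with $v_0\in\{0,\dots,n-1\}$ and $v_k\in\{0,1\}$ for $1\le k\le m$. Introducing the partial sums $V_k=\sum_{i=0}^k v_i$ (with $V_{-1}=0$), membership in $\Pnm$ is equivalent to the two conditions $0\le V_0\le n-1$ and $V_k-V_{k-1}\in\{0,1\}$ for every $1\le k\le m$. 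Given $v,u\in\Pnm$ with partial sums $V_k,U_k$, I will verify that $v\meet u$ and $v\join u$ again satisfy these conditions.

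For the bucket coordinate, the formulas of Observation \ref{obs:domprop} give $(v\meet u)_0=\min\{V_0,U_0\}=\min\{v_0,u_0\}$ and $(v\join u)_0=\max\{v_0,u_0\}$; both lie in $\{0,\dots,n-1\}$ because $v_0,u_0$ do. For $1\le k\le m$ the formulas read $(v\join u)_k=\max\{V_k,U_k\}-\max\{V_{k-1},U_{k-1}\}$ and $(v\meet u)_k=\min\{V_k,U_k\}-\min\{V_{k-1},U_{k-1}\}$. Writing $a=V_{k-1}$, $b=U_{k-1}$, $V_k=a+v_k$ and $U_k=b+u_k$ with $v_k,u_k\in\{0,1\}$, the whole matter reduces to the elementary inequalities
\[
\max\{a,b\}\le\max\{a+v_k,b+u_k\}\le\max\{a,b\}+1,\qquad \min\{a,b\}\le\min\{a+v_k,b+u_k\}\le\min\{a,b\}+1,
\]
each of which follows immediately from $0\le v_k,u_k\le 1$ together with the monotonicity of $\min$ and $\max$. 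These bounds say exactly that $(v\join u)_k,(v\meet u)_k\in\{0,1\}$, so both $v\meet u$ and $v\join u$ lie in $\Pnm$, which is the sublattice claim.

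I do not anticipate a genuine obstacle: the argument is a direct application of the meet/join formulas already recorded in Observation \ref{obs:domprop}, and the only estimates needed are the monotonicity of $\min$ and $\max$ under adding quantities in $[0,1]$. If a more conceptual route is preferred, the same fact drops out of the isomorphism $\chi$ of Observation \ref{obs:domprodiso}: under $\chi$ the operations $\meet$ and $\join$ become coordinatewise $\min$ and $\max$, and $\chi(\Pnm)$ is precisely the set of sequences $V_0\le V_1\le\dots\le V_m$ with unit steps and $0\le V_0\le n-1$, which is visibly closed under coordinatewise $\min$ and $\max$.
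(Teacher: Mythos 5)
Your proposal is correct, and in substance it coincides with the paper's proof; the difference is only in where the computation is carried out. The paper's argument passes immediately to the product side: it sets $L=\chi(\Pnm)\subseteq(\mathbb{Z}^{m+1},\leq)$, characterizes $L$ as the set of sequences $x$ with $0\leq x_0\leq n-1$ and $x_i-x_{i-1}\in\{0,1\}$, and declares $L$ "clearly closed" under coordinatewise $\min$ and $\max$, which is exactly your closing remark. Your main argument instead stays on the dominance side and verifies closure using the explicit meet/join formulas of Observation \ref{obs:domprop}; since those formulas are themselves obtained by conjugating coordinatewise $\min$/$\max$ by $\chi$, the two verifications are the same inequality, namely that $\max\{a+v_k,b+u_k\}-\max\{a,b\}\in\{0,1\}$ (and likewise for $\min$) when $v_k,u_k\in\{0,1\}$. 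What your version buys is that it spells out the elementary estimate the paper compresses into the word "clearly"; what the paper's version buys is brevity, since on the product side the closure of a set defined by bounds on consecutive differences under coordinatewise $\min$/$\max$ is visually evident. Both correctly use the paper's notion of sublattice (closure under the ambient meet and join), so there is no gap either way.
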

\begin{proof}
	Let $\chi$ be the isomorphism in Observation \ref{obs:domprodiso} and let \linebreak $L=\chi(\Pnm)\subseteq (\mathbb{Z}^{m+1},\leq)$.
	Note that $x=(x_0,\dots,x_m) \in L$ if and only if $0\leq x_0\leq n-1$ and $x_i - x_{i-1}\in \{0, 1\}$ for every $1\leq i\leq m$.
	$L$ is clearly closed under the meet and join operations of $(\mathbb{Z}^{m+1},\leq)$.
	Therefore, $L$ is a sublattice of $(\mathbb{Z}^{m+1},\leq)$.
	This proves that $\domPnm$ is a sublattice of $(\mathbb{Z}^{m+1},\tri)$.
\end{proof}

\begin{definition}
	Denote $\mathscr{H}\domPnm$ (the Hasse diagram of \linebreak $\domPnm$) by $\Hnm$.
	Denote the elements $(0,\dots,0)$ and $(n-1,1,\dots, 1)$ in $\Pnm$ by $\hat{0}$ and $\hat{1}$, respectively.
\end{definition}

Note that $\hat{0}$ and $\hat{1}$ are the minimum and maximum elements of \linebreak \mbox{$\domPnm$}, respectively.

Recall that vertices in $\Ynm$ are determined by their first $m+1$ entries.
Accordingly (and in accordance with Convention \ref{conv:cosets_are_integers}), we identify vertices in $\Ynm$ with vertices in $\Hnm$ by ignoring the last entry, namely ``forgetting" the right bucket of every vertex in $\Ynm$.

If $m\leq 1$, then the diameter of $\Ynm$ is trivial by Observation \ref{obs:small_m_is_trivial}.
Unless explicitly stated otherwise, we assume that $2\leq m\leq n$ (in $\Ynm$, $\Hnm$ and $\Pnm$) throughout this section.

%

\begin{observation}\label{obs:cover_in_domPnm}
Let $x$ and $y$ be two elements in $\Pnm$.
Then $x \lessdot y$ in $\domPnm$ if and only if $x \lessdot y$ in $(\mathbb{Z}^{m+1},\tri)$.
\end{observation}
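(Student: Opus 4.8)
The plan is to transport the entire question through the poset isomorphism $\chi\colon(\mathbb{Z}^{m+1},\tri)\to(\mathbb{Z}^{m+1},\le)$ of Observation \ref{obs:domprodiso}. Since $\chi$ is an isomorphism carrying $\Pnm$ bijectively onto the set $L=\chi(\Pnm)$ described in the proof of Observation \ref{obs:domPnm_is_sublattice}, it preserves covering relations in both the ambient posets and the subposets. Hence it suffices to prove the analogous statement in the $\le$ world: for $x,y\in L$ one has $x\lessdot y$ in $(L,\le)$ if and only if $x\lessdot y$ in $(\mathbb{Z}^{m+1},\le)$. I would recall at this point that $x\in L$ means $0\le x_0\le n-1$ and $x_i-x_{i-1}\in\{0,1\}$ for $1\le i\le m$, that $(\mathbb{Z}^{m+1},\le)$ is graded by $\rho(x)=\sum_i x_i$ (Example \ref{example:Z^n_is_graded}), and that its covers are exactly the pairs $x\lessdot x+e_a$ for a standard basis vector $e_a$.

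One direction is the general fact that an ambient cover lying in a subposet remains a cover there: if $y=x+e_a$ with $x,y\in L$, no element of $\mathbb{Z}^{m+1}$ lies strictly between $x$ and $y$, so a fortiori none of $L$ does. For the converse I would argue by a rank count. Suppose $x\lessdot y$ in $L$ but that this is not an ambient cover, so $\rho(y)-\rho(x)\ge 2$. The crux is the following claim: whenever $x<y$ in $L$, there is an index $a$ with $y_a>x_a$ and $x+e_a\in L$. Granting it, $z=x+e_a$ satisfies $x<z\le y$ and $\rho(z)=\rho(x)+1\le\rho(y)-1$, so $z\ne y$ and $x<z<y$, contradicting $x\lessdot y$ in $L$. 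Thus $\rho(y)-\rho(x)=1$, forcing $y-x$ to be a single basis vector, i.e.\ an ambient cover.

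The heart of the proof, and the step I expect to be the main (if routine) obstacle, is establishing this claim using the \emph{staircase} shape of elements of $L$; mere sublattice closure is insufficient, since $\{(0,0),(1,1)\}$ is a sublattice of $(\mathbb{Z}^2,\le)$ in which $(0,0)\lessdot(1,1)$ is not an ambient cover. My construction is to let $b$ be the smallest index with $x_b<y_b$, and let $c\ge b$ be the largest index for which $x_b=x_{b+1}=\dots=x_c$ (the end of the flat run of $x$ beginning at $b$); I would then verify that $x+e_c\in L$ and $x_c<y_c$. The verification checks the staircase conditions only at positions $c$ and $c+1$: at position $c$ one needs $x_c=x_{c-1}$, which holds within the flat run when $c>b$ and follows from minimality of $b$ (giving $x_{b-1}=y_{b-1}$ and $x_b\le x_{b-1}$) when $c=b$; at position $c+1$, if $c<m$, maximality of $c$ together with $x\in L$ forces $x_{c+1}-x_c=1$, so $(x+e_c)_{c+1}-(x+e_c)_c=0$; and the range condition is immediate. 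The essential point is that choosing $c$ at the \emph{right} end of the flat run is exactly what repairs the staircase constraint at $c+1$, where the naive choice $a=b$ would fail.

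Finally I would note that translating back through $\chi$ recovers the statement as written: covers of $\domPnm$ and of $(\mathbb{Z}^{m+1},\tri)$ correspond respectively to covers of $(L,\le)$ and of $(\mathbb{Z}^{m+1},\le)$, so the equivalence just proved yields $x\lessdot y$ in $\domPnm$ if and only if $x\lessdot y$ in $(\mathbb{Z}^{m+1},\tri)$.
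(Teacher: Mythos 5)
Your proof is correct. There is nothing in the paper to compare it against: Observation \ref{obs:cover_in_domPnm} is asserted without proof, so your writeup supplies the missing justification, and it does so along the lines the paper implicitly relies on (transporting through $\chi$ and using the staircase description of $L=\chi(\Pnm)$ from the proof of Observation \ref{obs:domPnm_is_sublattice}). Two points in your proposal are worth highlighting. First, your example $\{(0,0),(1,1)\}$ correctly identifies why the statement is not a formal consequence of the sublattice property alone; in fact $L$ is not even a convex subposet of $(\mathbb{Z}^{m+1},\le)$ (for instance $(0,2)$ lies between $(0,0)$ and $(1,2)$ but violates the increment condition), so an argument exploiting the staircase shape is genuinely required, and the choice of the \emph{right} end of the flat run is exactly the point where a naive attempt would break. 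Second, the verification of the key claim is sound: $y_c\ge y_b>x_b=x_c$ since $y$ is nondecreasing; at position $c$ the needed equality $x_{c-1}=x_c$ holds inside the flat run when $c>b$, and when $c=b$ it follows from $x_{b-1}=y_{b-1}$ (minimality of $b$), $x_b<y_b\le y_{b-1}+1$, and the staircase inequality $x_b\ge x_{b-1}$; at position $c+1$ maximality of $c$ together with $x\in L$ forces $x_{c+1}=x_c+1$, which is precisely what makes $x+e_c$ a legal staircase. Combined with the rank count in $(\mathbb{Z}^{m+1},\le)$, this produces an element of $L$ strictly between $x$ and $y$ whenever $x\lessdot y$ in $L$ fails to be an ambient cover, completing the equivalence, which then transports back through $\chi^{-1}$ to the dominance-order statement.
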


By Observations \ref{obs:domcover} and \ref{obs:cover_in_domPnm}, the covering relation of $(\Pnm,\trianglelefteq)$ is precisely the adjacency relation of $\Ynm$ except for $2^{m-1}$ edges between every pair of vertices of the form $(n-1,1,v_2,\dots,v_{m})$ and $(0,0,v_2,\dots,v_{m})$ in $\Ynm$.

\begin{observation}\label{obs:Ynm_as_dominance}
$\Ynm$ is isomorphic to the graph obtained by taking $\Hnm$ and adding to it $2^{m-1}$ edges between every pair of elements of the form $(n-1,1,v_2,\dots,v_{m})$ and $(0,0,v_2,\dots,v_{m})$.
\end{observation}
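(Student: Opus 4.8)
The plan is to verify the isomorphism directly on edge sets, the vertex sets already being identified. Forgetting the right bucket sends $v\mapsto(v_0,\dots,v_m)$; because $\sum_{i=0}^{m+1}v_i\equiv 0\pmod n$ this is a bijection from the vertices of $\Ynm$ onto $\Pnm$, which is the vertex set of $\Hnm$. By definition the edges of $\Hnm$ are the covers of $\domPnm$, and by Observations \ref{obs:domcover} and \ref{obs:cover_in_domPnm} a pair $x,y\in\Pnm$ is such a cover precisely when $y-x$ lies in the generating set $A$ of $\mathscr{H}(\mathbb{Z}^{m+1},\tri)$. I would therefore run through the shifts that define adjacency in $\Ynm$, indexed by $0\le i\le m$, and read off in the first $m+1$ coordinates which of them coincide with a single generator step of $A$.

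First I would dispose of the shifts that stay inside $\Pnm$. A shift at an index $1\le i\le m-1$ alters only the two $P_2$-entries $v_i,v_{i+1}$, by $\pm(0,\dots,1,-1,\dots,0)\in A$; a shift at $i=m$ is absorbed by the (forgotten) right bucket, so in the first $m+1$ coordinates it merely flips $v_m$ between $0$ and $1$, i.e.\ adds the last generator $\pm(0,\dots,0,1)\in A$; and a shift at $i=0$ for which $v_0$ remains in $\{0,\dots,n-1\}$ adds $\pm(1,-1,0,\dots,0)\in A$. In each case both endpoints lie in $\Pnm$, so Observation \ref{obs:cover_in_domPnm} makes them a cover of $\domPnm$, hence an edge of $\Hnm$; conversely each generator step with both endpoints in $\Pnm$ is realized by exactly one such admissible shift. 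This identifies the edges of $\Hnm$ with the ``non-wrapping'' edges of $\Ynm$.

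It then remains to account for the shifts at $i=0$ in which $v_0$ wraps modulo $n$. Since $v_0\in\{0,\dots,n-1\}$ under Convention \ref{conv:cosets_are_integers}, the only wrap is $(v_0,v_1)=(n-1,1)\leftrightarrow(0,0)$, so every such edge joins $(n-1,1,v_2,\dots,v_m)$ to $(0,0,v_2,\dots,v_m)$; letting $v_2,\dots,v_m$ range over $\{0,1\}$ yields exactly $2^{m-1}$ of them. I would finish by checking these are genuinely new edges and not covers already present in $\Hnm$: by the rank function of Observation \ref{obs:domprop} the two endpoints differ in rank by $(m+1)n-1$, which exceeds $1$ throughout the range $2\le m\le n$, so they cannot form a covering pair. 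Adjoining these $2^{m-1}$ edges to $\Hnm$ therefore recovers precisely the edge set of $\Ynm$, giving the stated isomorphism. The main obstacle is the careful bookkeeping of the $i=0$ case: one must use the cyclicity of the bucket together with Convention \ref{conv:cosets_are_integers} to see that it is the unique source of edges leaving $\Pnm$, and to confirm both the count $2^{m-1}$ and the disjointness from the covers.
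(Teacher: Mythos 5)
Your proposal is correct and follows essentially the same route as the paper: the paper derives this observation directly from Observations \ref{obs:domcover} and \ref{obs:cover_in_domPnm}, identifying covers in $\domPnm$ with the non-wrapping shifts and isolating the $2^{m-1}$ wrap-around edges at index $0$, exactly as you do. Your added rank-difference check (difference $n(m+1)-1>1$, so the new edges are not covers) is a sound way of making explicit a point the paper leaves implicit.
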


Recall the rank function $\rank_\tri$ on $(\mathbb{Z}^n,\tri)$ in Observation \ref{obs:domprop}.
By Observation \ref{obs:cover_in_domPnm}, it induces a rank function on $\domPnm$: for every $v=(v_0,\dots,v_m)\in\Pnm$
$$\rank_\tri(v) = \sum_{i=0}^{m}\sum_{j=0}^{i}v_j.$$

\begin{observation}\label{obs:dist_Hnm_eq_domZm}
	If $u$ and $v$ are two elements in $\Pnm$, then
	$$ d_{\Hnm}(u,v) = d_{\mathscr{H}(\mathbb{Z}^{m+1},\tri)}(u,v).$$
\end{observation}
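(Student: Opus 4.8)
The plan is to recognize both graphs as Hasse diagrams of locally finite modular lattices and to apply Theorem \ref{thm:dist_modular} to each, thereby reducing both distances to a difference between the rank of a join and the rank of a meet.

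First I would record that $(\mathbb{Z}^{m+1},\tri)$ is a locally finite modular lattice: by Observation \ref{obs:domprodiso} it is isomorphic to $(\mathbb{Z}^{m+1},\leq)$, which is modular, and it carries the rank function $\rank_\tri$ of Observation \ref{obs:domprop}. Next, by Observation \ref{obs:domPnm_is_sublattice}, $\domPnm$ is a sublattice of $(\mathbb{Z}^{m+1},\tri)$; being a finite sublattice of a modular lattice, it is itself a locally finite modular lattice, and by Observation \ref{obs:cover_in_domPnm} its rank function is precisely the restriction of $\rank_\tri$ to $\Pnm$. With these hypotheses verified, applying Theorem \ref{thm:dist_modular} to $\domPnm$ (whose Hasse diagram is $\Hnm$) yields
$$ d_{\Hnm}(u,v) = \rank_\tri(u \join v) - \rank_\tri(u \meet v), $$
where the meet and join are taken inside $\domPnm$, while applying the same theorem to $(\mathbb{Z}^{m+1},\tri)$ yields
$$ d_{\mathscr{H}(\mathbb{Z}^{m+1},\tri)}(u,v) = \rank_\tri(u \join v) - \rank_\tri(u \meet v), $$
with the meet and join now taken in the ambient lattice. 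Since $\domPnm$ is a \emph{sublattice}, the meet and join of $u,v\in\Pnm$ computed inside $\domPnm$ coincide with those computed in $(\mathbb{Z}^{m+1},\tri)$, so the two right-hand sides are literally identical, which establishes the equality.

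The point requiring genuine care is the a priori worry, not the final computation: $\Hnm$ is essentially an induced subgraph of $\mathscr{H}(\mathbb{Z}^{m+1},\tri)$, and in general an induced subgraph can have strictly larger distances, since an ambient geodesic between $u$ and $v$ might leave $P_n\times P_2^m$. Modularity is exactly what rules this out, for it forces both distances to equal the same rank difference evaluated at the common meet and join, and the sublattice property guarantees that these meet and join already lie in $\Pnm$. Thus no ambient shortcut can beat the path staying inside $\domPnm$, and beyond checking the modularity, local finiteness, and rank hypotheses there is no further obstacle.
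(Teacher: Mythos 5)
Your proof is correct and follows essentially the same route as the paper: the paper's own (very terse) proof simply cites Theorem \ref{thm:dist_modular} together with Observation \ref{obs:domPnm_is_sublattice} and writes both distances as $\rank_\tri(u\join v)-\rank_\tri(u\meet v)$. Your write-up merely makes explicit the hypothesis-checking (modularity and local finiteness of the sublattice, the agreement of covering relations via Observation \ref{obs:cover_in_domPnm}, and the coincidence of meets and joins) that the paper leaves implicit.
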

\begin{proof}
	The statement in this observation is implied by Theorem \ref{thm:dist_modular} and Observation \ref{obs:domPnm_is_sublattice}.
	Specifically: 
	$$ d_{\Hnm}(u,v) = d_{\mathscr{H}(\mathbb{Z}^{m+1},\tri)}(u,v) = \rank_\tri(u\join v)-\rank_\tri(u\meet v).$$
\end{proof}

\begin{lemma}\label{lemma:incomparable_in_pnm}
	If $u$ and $v$ are a pair of incomparable elements in \linebreak $\domPnm$, then 
	$$d_{\Hnm}(u,v)\leq 1+\binom{m}{2}.$$
\end{lemma}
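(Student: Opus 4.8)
The plan is to translate everything into a statement about the sequence of partial-sum differences and then to use incomparability to force a zero strictly in the interior. For $x\in\Pnm$ write $\tilde{x}_i=\sum_{j=0}^{i}x_j$ for its partial sums, and set $d_i=\tilde{u}_i-\tilde{v}_i$ for $0\le i\le m$. By Observation \ref{obs:dist_Hnm_eq_domZm} together with Observation \ref{obs:domprop}(2), the quantity to bound is exactly
\[
d_{\Hnm}(u,v)=\sum_{i=0}^{m}\Big|\sum_{k=0}^{i}(u_k-v_k)\Big|=\sum_{i=0}^{m}|d_i|.
\]
Two features of $(d_i)$ drive the argument. First, since $u_i,v_i\in\{0,1\}$ for $1\le i\le m$, we have $d_i-d_{i-1}=u_i-v_i\in\{-1,0,1\}$, so $(d_i)$ has unit steps. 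Second, incomparability means neither $d_i\le 0$ for all $i$ (which would be $u\tri v$) nor $d_i\ge 0$ for all $i$ (which would be $v\tri u$), so $(d_i)$ attains both a strictly positive and a strictly negative value. The lemma thus reduces to the purely combinatorial claim that an integer sequence $d_0,\dots,d_m$ with unit steps that attains both signs satisfies $\sum_{i=0}^m|d_i|\le 1+\binom{m}{2}$.

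First I would record the bound coming from a single zero: if $d_c=0$, then unit steps give $|d_i|\le|c-i|$, hence $\sum_i|d_i|\le\binom{c+1}{2}+\binom{m-c+1}{2}$. A short convexity check shows the right-hand side is at most $1+\binom{m}{2}$ \emph{provided} $1\le c\le m-1$ (the sum is convex and symmetric in $c$, with value exactly $1+\binom{m}{2}$ at the endpoints $c=1,m-1$), while at $c\in\{0,m\}$ it only yields the weaker $\binom{m+1}{2}=\binom{m}{2}+m$. This is precisely the main obstacle: knowing merely that a both-signs unit-step sequence crosses zero \emph{somewhere} is not enough, since a monotone sequence like $(0,1,\dots,m)$ has its only zero at an endpoint and would give the too-large $\binom{m+1}{2}$ — though of course that sequence is single-signed. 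The real content is to upgrade ``both signs'' to ``a zero strictly inside $\{1,\dots,m-1\}$''.

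The key step is therefore this interior-zero claim, which I expect to settle cleanly by contradiction. Suppose $d_i\ne 0$ for every $1\le i\le m-1$. Because consecutive terms differ by at most $1$, any sign change among interior indices would force an interior zero; hence $d_1,\dots,d_{m-1}$ are all nonzero of one common sign, say all positive (else replace $d$ by $-d$). Then $d_0\ge d_1-1\ge 0$ and $d_m\ge d_{m-1}-1\ge 0$, so every $d_i\ge 0$, contradicting the existence of a strictly negative value (for $m=2$ the interior is the single index $1$ and the same one-line comparison applies). Thus some $d_c=0$ with $1\le c\le m-1$, and feeding this $c$ into the single-zero estimate gives $\sum_{i=0}^m|d_i|\le\binom{c+1}{2}+\binom{m-c+1}{2}\le 1+\binom{m}{2}$, as required. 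Note that the standing hypothesis $m\le n$ is not needed for this upper bound; it only ensures the bound is attained, e.g.\ by $u=(m-1,0,\dots,0)$ and $v=(0,1,\dots,1)$ in $\Pnm$, for which $d_i=m-1-i$ and $\sum_i|d_i|=\binom{m}{2}+1$.
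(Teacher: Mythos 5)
Your proposal is correct and follows essentially the same route as the paper: pass to partial sums (the isomorphism $\chi$ of Observation \ref{obs:domprodiso}), express $d_{\Hnm}(u,v)$ as $\sum_{i=0}^m|d_i|$ via Observation \ref{obs:dist_Hnm_eq_domZm}, use the unit-step property and incomparability to locate a zero $d_k=0$ with $1\le k\le m-1$, and conclude with the bound $\binom{k+1}{2}+\binom{m-k+1}{2}\le 1+\binom{m}{2}$. The only (cosmetic) difference is your contradiction argument for the interior zero, where the paper simply observes that the zero must lie strictly between an index where the difference is negative and one where it is positive; your version also usefully makes explicit why a zero at $c\in\{0,m\}$ would not suffice.
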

\begin{proof}
    Let $\chi$ be the isomorphism in Observation \ref{obs:domprodiso} and let $x=(x_0,\dots,x_m)=\chi(u)$ and $y=(y_0,\dots,y_m)=\chi(v)$.
    Note that $x$ and $y$ are incomparable in $(\mathbb{Z}^{m+1},\leq)$, since $u$ and $v$ are incomparable in $\domPnm$.
    By Observation \ref{obs:dist_Hnm_eq_domZm}, $d_{\Hnm}(u,v)=d_{\mathscr{H}(\mathbb{Z}^{m+1},\leq)}(x,y)$.
    We prove that $d_{\mathscr{H}(\mathbb{Z}^{m+1},\leq)}(x,y)\leq 1+\binom{m}{2}$.
    
    Note that $z=(z_0,\dots,z_m) \in \chi(\Pnm)$ if and only if $0\leq z_0\leq n-1$ and $z_i - z_{i-1}\in \{0, 1\}$ for every $1\leq i\leq m$.
    Therefore  $(x_{i+1}-y_{i+1})-(x_{i}-y_{i})=(x_{i+1}-x_{i})-(y_{i+1}-y_{i})\in\{-1,0,1\}$ for every $1\leq i\leq m$.
    Since $x$ and $y$ are incomparable, there exist $0\leq j_1, j_2 \leq m$ such that $x_{j_1}-y_{j_1}<0$ and $x_{j_2}-y_{j_2}>0$. 
    This implies that there exists some $1\leq k\leq m-1$ such that $|x_k-y_k|=0$.
    Finally $d_{\mathscr{H}(\mathbb{Z}^{m+1},\leq)}(x,y)=\sum_{i=0}^m|x_i-y_i|\leq k + \dots + 1 + 0 + 1 + \dots + m-k=\binom{k+1}{2}+\binom{m-k+1}{2}\leq 1+\binom{m}{2}$. The maximum is obtained for $k=1$ (or $k=m-1$).
\end{proof}

\begin{lemma}\label{lem:modulo_along_edges}
If $u, v$ are two elements in $\Pnm$ such that $|\rank_\tri(v)-\rank_\tri(u)|\leq \lfloor\frac{n(m+1)}{2}\rfloor$, then $d_{\Ynm}(u,v)\geq |\rank_\tri(v)-\rank_\tri(u)|$.
\end{lemma}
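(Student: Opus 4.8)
The plan is to track the quantity $\rank_\tri$ reduced modulo $N:=n(m+1)$ along the edges of a path, and to show that every edge of $\Ynm$ moves this residue by exactly $\pm 1$. A shortest path from $u$ to $v$ then cannot be shorter than the cyclic distance between the residues of $\rank_\tri(u)$ and $\rank_\tri(v)$ in $\mathbb{Z}_N$, and the hypothesis guarantees that this cyclic distance equals $|\rank_\tri(v)-\rank_\tri(u)|$.

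First I would classify the edges of $\Ynm$ using Observation \ref{obs:Ynm_as_dominance}: every edge is either an edge of the Hasse diagram $\Hnm$, or one of the added wraparound edges joining a pair $(n-1,1,v_2,\dots,v_m)$ and $(0,0,v_2,\dots,v_m)$. Along a Hasse edge the endpoints are in a covering relation of the graded poset $\domPnm$, so $\rank_\tri$ changes by exactly $\pm 1$. For a wraparound edge, a direct computation using $\rank_\tri(w)=\sum_{j=0}^{m}(m+1-j)w_j$ gives
$$\rank_\tri(n-1,1,v_2,\dots,v_m)-\rank_\tri(0,0,v_2,\dots,v_m)=(m+1)(n-1)+m=n(m+1)-1=N-1.$$

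The key observation is that $N-1\equiv-1\pmod N$, so modulo $N$ \emph{every} edge, Hasse or wraparound, changes $\rank_\tri$ by $\pm 1$. Consequently, writing $\bar r(x)$ for the class of $\rank_\tri(x)$ in $\mathbb{Z}_N$, any path in $\Ynm$ from $u$ to $v$ of length $d$ produces a walk of $d$ unit steps in the cycle $\mathbb{Z}_N$ from $\bar r(u)$ to $\bar r(v)$. Hence $d_{\Ynm}(u,v)$ is at least the cyclic distance in $\mathbb{Z}_N$ between $\bar r(u)$ and $\bar r(v)$, which equals $\min(|\Delta|,\,N-|\Delta|)$ where $\Delta:=\rank_\tri(v)-\rank_\tri(u)$ (here $|\Delta|<N$ since $|\Delta|\le\lfloor N/2\rfloor$, so no further reduction is needed).

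Finally I would invoke the hypothesis $|\Delta|\le\lfloor N/2\rfloor$. This forces $|\Delta|\le N-|\Delta|$, so $\min(|\Delta|,\,N-|\Delta|)=|\Delta|$, giving $d_{\Ynm}(u,v)\ge|\Delta|=|\rank_\tri(v)-\rank_\tri(u)|$, as desired. The only genuine content is the rank computation along the wraparound edge together with the recognition that $n(m+1)$ is precisely the modulus for which that change becomes $\equiv-1$; I expect this to be the crux, while the cyclic-distance bound and the use of the hypothesis are routine once the $\pm1$-step structure is in place.
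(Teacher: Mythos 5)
Your proposal is correct and follows essentially the same route as the paper's proof: classify each edge of $\Ynm$ via Observation \ref{obs:Ynm_as_dominance} as either a Hasse edge of $\Hnm$ (rank change $\pm 1$) or a wraparound edge (rank change of absolute value $n(m+1)-1$), conclude that $\rank_\tri$ changes by $\pm 1$ modulo $n(m+1)$ along every edge, and deduce the lower bound. The only difference is cosmetic: the paper leaves the final cyclic-distance argument implicit (``this implies the claim''), whereas you spell it out, including the explicit computation $(m+1)(n-1)+m=n(m+1)-1$ for the wraparound edges.
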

\begin{proof}
If an edge $e=(x,y)$ in $\Ynm$ is also an edge in $\Hnm$, then, by Observation \ref{obs:cover_in_domPnm}, $|\rank_\tri(x)-\rank_\tri(y))|=1$. 
Otherwise, by \mbox{Observation \ref{obs:Ynm_as_dominance}}, $e$ is one of the additional $2^{m-1}$ edges in $\Ynm$ for which $|\rank_\tri(x)-\rank_\tri(y)|$ is constant and is equal to $n(m+1)-1$. 
Therefore, $\rank_\tri(x)$ changes by $\pm 1$ modulo $n(m+1)$ along every edge in $\Ynm$.
This implies the claim of this lemma.
\end{proof}

\begin{corollary}\label{cor:saturated_chains_are_deodesics}
    Every saturated chain of length smaller or equal to $\lfloor\frac{n(m+1)}{2}\rfloor$ in $(P_n\times P_2^m,\trianglelefteq)$ is a geodesic in $\Ynm$.
\end{corollary}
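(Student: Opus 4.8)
The plan is to exhibit the saturated chain as a path in $\Ynm$ whose length equals the length of the chain, and then to certify that this path cannot be shortened by invoking the rank lower bound from Lemma \ref{lem:modulo_along_edges}. Write the chain as $C=(x_0\lessdot x_1\lessdot\cdots\lessdot x_\ell)$ with $\ell=\len(C)\leq\lfloor\frac{n(m+1)}{2}\rfloor$; being saturated means precisely that consecutive elements are related by the covering relation of $\domPnm$.

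First I would observe that each cover $x_{i-1}\lessdot x_i$ in $\domPnm$ is an edge of the Hasse diagram $\Hnm$, and hence, by Observation \ref{obs:cover_in_domPnm} (covers in $\domPnm$ agree with covers in $(\mathbb{Z}^{m+1},\tri)$, which are exactly the Hasse edges that appear as adjacencies of $\Ynm$), it is also an edge of $\Ynm$. Since the $x_i$ are distinct, $C$ is therefore a genuine path of length $\ell$ in $\Ynm$ joining $x_0$ to $x_\ell$, which already gives the upper bound $d_{\Ynm}(x_0,x_\ell)\leq\ell$.

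Next I would use gradedness to pin down the rank displacement of the endpoints. Since $\domPnm$ is graded with rank function $\rank_\tri$ (Observation \ref{obs:domprop} together with Observation \ref{obs:cover_in_domPnm}), and each step of a saturated chain raises the rank by exactly $1$, I obtain $\rank_\tri(x_\ell)-\rank_\tri(x_0)=\ell$. The hypothesis $\ell\leq\lfloor\frac{n(m+1)}{2}\rfloor$ says exactly that $|\rank_\tri(x_\ell)-\rank_\tri(x_0)|\leq\lfloor\frac{n(m+1)}{2}\rfloor$, so Lemma \ref{lem:modulo_along_edges} applies and yields the matching lower bound $d_{\Ynm}(x_0,x_\ell)\geq|\rank_\tri(x_\ell)-\rank_\tri(x_0)|=\ell$. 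Combining the two bounds gives $d_{\Ynm}(x_0,x_\ell)=\ell$, so $C$ realizes the distance and is a geodesic.

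The argument is essentially a synthesis of three earlier facts, so there is no serious obstacle; the only point demanding care is the role of the length hypothesis. The bound $\ell\leq\lfloor\frac{n(m+1)}{2}\rfloor$ is precisely what keeps us inside the range where Lemma \ref{lem:modulo_along_edges} controls the $\Ynm$-distance through the rank difference. Without it, the extra $2^{m-1}$ non-Hasse edges of $\Ynm$ (which change $\rank_\tri$ by $n(m+1)-1$ and realize the modular wrap-around of the rank) could offer a shortcut, so that a sufficiently long saturated chain need not be a geodesic. Thus the crux is recognizing that staying below half of $n(m+1)$ rules out any such wrap-around shortcut.
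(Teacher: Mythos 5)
Your proof is correct and follows exactly the argument the paper intends: the corollary is stated immediately after Lemma \ref{lem:modulo_along_edges} with no separate proof, the implicit reasoning being precisely your synthesis—the saturated chain is a path in $\Ynm$ of length $\ell$ (via Observations \ref{obs:cover_in_domPnm} and \ref{obs:Ynm_as_dominance}), while gradedness gives rank displacement $\ell$, so the lemma's lower bound forces the path to be a geodesic. Your closing remark about the wrap-around edges correctly identifies why the hypothesis $\ell\leq\lfloor\frac{n(m+1)}{2}\rfloor$ is needed.
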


Saturated chains of length $\lfloor\frac{n(m+1)}{2}\rfloor$ exist in $\Ynm$, since $\rank_\tri(\hat{1}) \geq \lfloor\frac{n(m+1)}{2}\rfloor$ for any $n$ and $m$.

\begin{corollary}\label{cor:Ynm_diam_lower_by_dominance}
    $diam(\Ynm)\geq \lfloor\frac{n(m+1)}{2}\rfloor$.
\end{corollary}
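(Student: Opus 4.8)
The plan is to exhibit a concrete pair of vertices whose distance in $\Ynm$ already attains $\lfloor\frac{n(m+1)}{2}\rfloor$, and the natural candidates come straight out of the saturated-chain machinery just developed. Corollary \ref{cor:saturated_chains_are_deodesics} says that any saturated chain in $\domPnm$ of length at most $\lfloor\frac{n(m+1)}{2}\rfloor$ is a geodesic in $\Ynm$ (under the identification of vertices of $\Ynm$ with elements of $\Pnm$ obtained by forgetting the right bucket). So it suffices to produce one saturated chain of length \emph{exactly} $\lfloor\frac{n(m+1)}{2}\rfloor$: its two endpoints will then be at distance precisely $\lfloor\frac{n(m+1)}{2}\rfloor$, which is the claimed diameter lower bound.

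To build such a chain I would start at $\hat{0}=(0,\dots,0)$, the minimum of $\domPnm$. Since $\domPnm$ is a sublattice of the graded poset $(\mathbb{Z}^{m+1},\tri)$ (Observation \ref{obs:domPnm_is_sublattice}) and inherits its rank function $\rank_\tri$ (Observation \ref{obs:cover_in_domPnm}), it is itself graded, so a maximal chain from $\hat{0}$ to $\hat{1}$ has length $\rank_\tri(\hat{1})-\rank_\tri(\hat{0})=\rank_\tri(\hat{1})$. The remark preceding Corollary \ref{cor:saturated_chains_are_deodesics} records that $\rank_\tri(\hat{1})\geq\lfloor\frac{n(m+1)}{2}\rfloor$. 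Walking up this maximal chain one cover at a time and stopping after exactly $\ell:=\lfloor\frac{n(m+1)}{2}\rfloor$ covering steps, I obtain a saturated chain $\hat{0}=x^0\lessdot x^1\lessdot\dots\lessdot x^\ell$ whose top element satisfies $\rank_\tri(x^\ell)=\ell$.

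To conclude, since $\ell\leq\lfloor\frac{n(m+1)}{2}\rfloor$, Corollary \ref{cor:saturated_chains_are_deodesics} applies and the chain $x^0\lessdot\dots\lessdot x^\ell$ is a geodesic in $\Ynm$. Hence $d_{\Ynm}(\hat{0},x^\ell)=\ell=\lfloor\frac{n(m+1)}{2}\rfloor$, and therefore $\diam(\Ynm)\geq\lfloor\frac{n(m+1)}{2}\rfloor$, as required.

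The only point that needs care is the existence of a saturated chain of \emph{exactly} the prescribed length, and this is precisely where gradedness does the work: because $\rank_\tri$ increases by exactly $1$ along every cover, a maximal chain meets each rank value between $0$ and $\rank_\tri(\hat{1})$ exactly once, so truncating it after $\ell$ steps gives a chain of length precisely $\ell$ with no risk of overshooting. I do not expect a genuine obstacle here: the substantive content — that unit shifts never decrease the dominance rank by more than is accounted for, so that these chains cannot be shortcut in $\Ynm$ — has already been absorbed into Lemma \ref{lem:modulo_along_edges} and Corollary \ref{cor:saturated_chains_are_deodesics}. (As a sanity check, the bound is also forced by $\diam(\Ynm)\geq\ecc_{\Ynm}(0)$ together with Lemma \ref{lem:ecc_lower_bound_ynm}, but the dominance-order argument is the one that fits this section and generalizes the method of \cite{elizalde}.)
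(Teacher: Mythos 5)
Your proof is correct and takes essentially the same route as the paper: the paper derives the corollary from Corollary \ref{cor:saturated_chains_are_deodesics} together with the remark that saturated chains of length exactly $\lfloor\frac{n(m+1)}{2}\rfloor$ exist because $\rank_\tri(\hat{1})\geq\lfloor\frac{n(m+1)}{2}\rfloor$. Your truncation of a maximal chain from $\hat{0}$, justified by gradedness via Observations \ref{obs:domPnm_is_sublattice} and \ref{obs:cover_in_domPnm}, merely makes that existence step explicit.
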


In the rest of this section, we show that $\lfloor\frac{n(m+1)}{2}\rfloor$ is also an upper bound on the diameter of $\Ynm$, for $n\geq m$.
\begin{definition}
Let $u_0=(n-1,1,0,\dots,0)$ and $u_1=(0,0,1,\dots,1)$ in $\Pnm$.
\begin{enumerate}
\item Denote the interval $[\hat{0},u_0]\subseteq \domPnm$ by $I_0$. Denote by $\Ynm^0$ the subgraph induced by $I_0$ (in $\Ynm$).
\item Denote the interval $[u_1,\hat{1}]\subseteq \domPnm$ by $I_1$. Denote by $\Ynm^1$ the subgraph induced by $I_1$ (in $\Ynm$).
\end{enumerate}
\end{definition}

\begin{observation}\label{obs:intervals_by_sum}
    The elements in $I_0$ and $I_1$ can be characterized as follows.
    \begin{enumerate}
        \item $I_0=\{v\in\Pnm: \sum_{i=0}^{m}v_i \leq n\}$.
        \item $I_1=\{v\in\Pnm: \sum_{i=0}^{m}v_i \geq m-1\}$. 
    \end{enumerate}
    Therefore, the following is true (since $m\leq n$).
    \begin{enumerate}
        \item $I_0\cup I_1=\Pnm$.
        \item $I_0\cap I_1=\{v\in\Ynm:m-1\leq \sum_{i=0}^{m}v_i \leq n\} = [u_1,u_0]$.
    \end{enumerate}
\end{observation}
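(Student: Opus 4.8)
The plan is to collapse each of the two dominance intervals into a single inequality on the total sum $s(v) := \sum_{i=0}^{m} v_i$, and then read off the union and intersection directly. Throughout I would use the facts, recorded just before the statement, that $\hat 0$ is the minimum and $\hat 1$ the maximum of $\domPnm$, so that half of each defining pair of inequalities is automatic.

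First I would treat $I_0 = [\hat 0, u_0]$. Since $\hat 0 \tri v$ always holds, $v \in I_0$ iff $v \tri u_0$. The partial sums of $u_0 = (n-1,1,0,\dots,0)$ are $n-1$ at index $0$ and exactly $n$ at every index $i \geq 1$. Because every entry of $v$ is non-negative, the partial sums $\sum_{j=0}^{i} v_j$ are non-decreasing in $i$, so the whole family of dominance inequalities $\sum_{j=0}^{i} v_j \leq \sum_{j=0}^{i}(u_0)_j$ collapses to the single binding constraint at $i=m$, namely $s(v) \leq n$ (the $i=0$ constraint $v_0 \leq n-1$ holds for free since $v_0 \in P_n$). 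This gives statement (1).

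The main work is statement (2), $I_1 = [u_1, \hat 1]$. Here $v \tri \hat 1$ is automatic, so $v \in I_1$ iff $u_1 \tri v$, which unfolds to $\sum_{j=0}^{i} v_j \geq i-1$ for every $1 \leq i \leq m$ (the partial sums of $u_1 = (0,0,1,\dots,1)$ being $0,0,1,2,\dots,m-1$). Unlike the $I_0$ case these are lower bounds, so I cannot simply discard all but the $i=m$ inequality, and I expect this to be the only non-routine point: the hard part is showing that the single inequality $s(v) \geq m-1$ already forces all of them. The key estimate is a tail bound. For $i \geq 1$ all indices $i+1,\dots,m$ are at least $2$, so the corresponding entries lie in $P_2$ and $\sum_{j=i+1}^{m} v_j \leq m-i$; hence $\sum_{j=0}^{i} v_j = s(v) - \sum_{j=i+1}^{m} v_j \geq (m-1)-(m-i) = i-1$, recovering every dominance inequality. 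The converse is just the $i=m$ instance, so $I_1 = \{v : s(v) \geq m-1\}$.

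Finally the two corollaries follow immediately from (1) and (2), using $m \leq n$. For the union, the complement of $I_0$ consists of those $v$ with $s(v) > n$; since $m \leq n$ gives $s(v) > n \geq m-1$, any such $v$ lies in $I_1$, so $I_0 \cup I_1 = \Pnm$. For the intersection, conjunction of the two sum conditions yields $I_0 \cap I_1 = \{v : m-1 \leq s(v) \leq n\}$, and this coincides with $[u_1,u_0] = \{v : u_1 \tri v \tri u_0\}$ precisely because $v \tri u_0 \iff s(v) \leq n$ and $u_1 \tri v \iff s(v) \geq m-1$ are the two equivalences just established (one checks $u_1 \tri u_0$ from $1 \leq n$ and $m \leq n$, so the interval is nonempty). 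Everything beyond the tail-bound step is bookkeeping with partial sums.
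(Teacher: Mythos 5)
Your proof is correct; the paper states this result as an observation with no proof at all, treating it as immediate from the definitions, and your argument is exactly that routine verification, collapsing each family of dominance inequalities against $u_0$ and $u_1$ to the single total-sum inequality. The one step that needs an actual idea --- that $\sum_{i=0}^{m}v_i \geq m-1$ alone forces every lower partial-sum bound in $u_1 \tri v$ --- is handled correctly by your tail bound $\sum_{j=i+1}^{m} v_j \leq m-i$, which uses only that the non-bucket entries lie in $P_2$.
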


\begin{definition}\label{def:two_automorphisms}
    Let $\varphi$ and $\tau$ be the following two maps from $\Ynm$ to $\Ynm$:
    $$\begin{array}{c c c l}
    & \varphi(v_0,v_1,\dots,v_m) & = & (v_0+1,v_1,\dots,v_m) \\
    & \tau(v_0,v_1,\dots,v_m)    & = & (-v_0,1-v_1,\dots,1-v_{m}) \\
    \end{array}$$
\end{definition}
It is straightforward to verify that $\varphi$ and $\tau$ are automorphisms of $\Ynm$.

\begin{lemma}\label{lem:interval_iso}
     $\Ynm^0$ and $\Ynm^1$ are isomorphic as graphs.
\end{lemma}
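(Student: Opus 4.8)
The plan is to produce a single automorphism of $\Ynm$ that carries the vertex set $I_0$ onto the vertex set $I_1$, and then invoke the general fact that an automorphism $\psi$ of a graph with $\psi(A)=B$ restricts to an isomorphism of the induced subgraphs on $A$ and $B$. Applied here, such a $\psi$ would restrict to an isomorphism from the subgraph induced by $I_0$ (namely $\Ynm^0$) onto the subgraph induced by $I_1$ (namely $\Ynm^1$), which is exactly the claim.

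Concretely, I would take $\psi=\varphi^{-1}\circ\tau$, which by the composition convention acts as
\[
\psi(v_0,v_1,\dots,v_m)=(-v_0-1,\,1-v_1,\dots,1-v_m).
\]
Since $\varphi$ and $\tau$ are automorphisms of $\Ynm$ and the inverse and composition of automorphisms are again automorphisms, $\psi$ is an automorphism of $\Ynm$. As a first sanity check I would evaluate it on the four corner elements: a short computation gives $\psi(\hat{0})=\hat{1}$ and $\psi(u_0)=u_1$, so $\psi$ exchanges the top and bottom ends of the two intervals, suggesting it swaps $I_0$ and $I_1$. To turn this into a proof I would use the sum characterization of Observation \ref{obs:intervals_by_sum}: writing $\Sigma(v)=\sum_{i=0}^{m}v_i$ (with the left bucket $v_0$ taken as its representative in $\{0,\dots,n-1\}$), one has $I_0=\{v:\Sigma(v)\le n\}$ and $I_1=\{v:\Sigma(v)\ge m-1\}$. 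The crux is the uniform identity
\[
\Sigma(\psi(v))=(n+m-1)-\Sigma(v),
\]
from which $v\in I_0\iff \Sigma(v)\le n\iff \Sigma(\psi(v))\ge m-1\iff \psi(v)\in I_1$, and hence $\psi(I_0)=I_1$ because $\psi$ is a bijection on the vertex set.

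The main (and essentially only) obstacle is the modular arithmetic in the left bucket, which threatens to spoil the displayed sum identity. Since $\psi(v)_0=(-v_0-1)\bmod n$, its non-negative representative equals $n-v_0-1$ when $v_0\le n-2$ but collapses to $0$ when $v_0=n-1$, so I would verify the identity in these two cases separately. The point I would emphasize is that both cases yield the same value $\Sigma(\psi(v))=n+m-1-\Sigma(v)$, so the wrap-around does not actually break the formula — the contribution lost in the bucket when $v_0=n-1$ is precisely compensated. Once $\psi(I_0)=I_1$ is established, no further work on the edges is needed: adjacency in $\Ynm^0$ and $\Ynm^1$ is inherited from $\Ynm$, and $\psi$ preserves adjacency because it is an automorphism of $\Ynm$, so $\psi|_{I_0}$ is the desired graph isomorphism $\Ynm^0\to\Ynm^1$.
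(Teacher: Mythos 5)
Your proposal is correct and takes essentially the same approach as the paper: your map $\varphi^{-1}\tau$ is in fact the identical automorphism $(v_0,\dots,v_m)\mapsto(-v_0-1,\,1-v_1,\dots,1-v_m)$ as the paper's choice $f=\tau\varphi$ (they coincide, cf.\ the relation $\tau\varphi=\varphi^{-1}\tau$), and both proofs reduce the claim to the same sum identity $\sum_{i=0}^{m}f(v)_i=n+m-1-\sum_{i=0}^{m}v_i$ combined with the characterization of $I_0$ and $I_1$ in Observation \ref{obs:intervals_by_sum}. The only cosmetic difference is that you conclude $f(I_0)=I_1$ from a biconditional plus bijectivity, whereas the paper checks the two inclusions $f(\Ynm^0)\subseteq\Ynm^1$ and $f(\Ynm^1)\subseteq\Ynm^0$ and uses that $f$ is involutive; also, your worry about the bucket wrap-around at $v_0=n-1$ is moot, since $n-v_0-1$ is already the correct representative there.
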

\begin{proof}
Let $\tau$ and $\varphi$ be the automorphisms of $\Ynm$ in Definition \ref{def:two_automorphisms} and let $f=\tau\varphi$. 
Note that $f$ is involutive.
We show that both $f(\Ynm^0) \subseteq \Ynm^1$ and $f(\Ynm^1) \subseteq \Ynm^0$. This implies that $f$ is a graph isomorphism between $\Ynm^0$ and $\Ynm^1$.

Let $x\in\Ynm$ and let $y=f(x)$.
Assume that $x_0=t$ and $\sum_{i=1}^m x_i = k$.
Then $y_0=n-t-1$ and $\sum_{i=1}^m y_i = m-k$.
Therefore $\sum_{i=0}^m y_i = n+m-1-(t+k)$.

By Observation \ref{obs:intervals_by_sum}, if $x\in\Ynm^0$, then $t+k\leq n$.
Therefore, $\sum_{i=0}^m y_i = n+m-1-(t+k) \geq m-1$ and $y\in \Ynm^1$.
Similarly, if $x\in\Ynm^1$, then $t+k\geq m-1$.
Therefore, $\sum_{i=0}^m y_i = n+m-1-(t+k) \leq n$ and $y\in \Ynm^0$.
\end{proof}
\pagebreak
\begin{lemma}\label{lem:I0_diam}
    $\diam(\Ynm^0) = \diam(\Ynm^1) = \big\lfloor\frac{n(m+1)}{2}\big\rfloor.$
\end{lemma}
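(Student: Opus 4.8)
The plan is to first invoke Lemma \ref{lem:interval_iso} to reduce to computing $\diam(\Ynm^0)$, and then to pin down the precise graph structure of $\Ynm^0$. By Observation \ref{obs:Ynm_as_dominance}, $\Ynm$ is the Hasse diagram $\Hnm$ together with $2^{m-1}$ extra edges, each joining a vertex of the form $(n-1,1,v_2,\dots,v_m)$ to $(0,0,v_2,\dots,v_m)$. A vertex $(n-1,1,v_2,\dots,v_m)$ has coordinate sum $n+\sum_{i\geq 2}v_i$, which by Observation \ref{obs:intervals_by_sum} lies in $I_0$ only when $v_2=\dots=v_m=0$. Hence exactly one of these extra edges, namely $\{\hat 0,u_0\}$, has both endpoints in $I_0$, so $\Ynm^0$ is precisely the Hasse diagram of the interval $I_0=[\hat 0,u_0]$ with the single additional ``shortcut'' edge $\{\hat 0,u_0\}$. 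Since $I_0$ is an interval of the modular lattice $\domPnm$ (Observation \ref{obs:domPnm_is_sublattice}), it is itself modular, and I would record via Theorem \ref{thm:dist_modular} that the Hasse-diagram distance inside $I_0$ equals $\rank_\tri(u\join v)-\rank_\tri(u\meet v)$, which for elements of $I_0$ agrees with $d_{\Hnm}$. In particular $\rank_\tri(\hat 0)=0$ and $R:=\rank_\tri(u_0)=n(m+1)-1$, so $\lfloor\frac{R+1}{2}\rfloor=\lfloor\frac{n(m+1)}{2}\rfloor$.

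For the lower bound I would take the initial segment of length $\lfloor\frac{n(m+1)}{2}\rfloor$ of a maximal chain from $\hat 0$ to $u_0$; such a segment exists because $R\geq\lfloor\frac{n(m+1)}{2}\rfloor$, and it lies entirely in $I_0$. This is a saturated chain of length $\lfloor\frac{n(m+1)}{2}\rfloor$, hence a geodesic in $\Ynm$ by Corollary \ref{cor:saturated_chains_are_deodesics}; its two endpoints therefore lie at distance $\lfloor\frac{n(m+1)}{2}\rfloor$ in $\Ynm$, and since the chain itself is a path inside $\Ynm^0$, at exactly that distance in the subgraph $\Ynm^0$ as well. This gives $\diam(\Ynm^0)\geq\lfloor\frac{n(m+1)}{2}\rfloor$.

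For the upper bound I would bound $d_{\Ynm^0}(u,v)$ for all $u,v\in I_0$ by splitting into two cases. If $u$ and $v$ are incomparable, then, since the Hasse distance inside $I_0$ coincides with $d_{\Hnm}$, Lemma \ref{lemma:incomparable_in_pnm} gives $d_{\Ynm^0}(u,v)\leq d_{\Hnm}(u,v)\leq 1+\binom{m}{2}\leq\binom{m+1}{2}\leq\lfloor\frac{n(m+1)}{2}\rfloor$, where the last inequality uses $m\leq n$. If $u\tri v$, set $k=\rank_\tri(v)-\rank_\tri(u)$; then the direct saturated path has length $k$, while routing through the shortcut (down from $u$ to $\hat 0$, across to $u_0$, then up to $v$) has length $\rank_\tri(u)+1+(R-\rank_\tri(v))=R+1-k$, so $d_{\Ynm^0}(u,v)\leq\min\{k,R+1-k\}\leq\lfloor\frac{R+1}{2}\rfloor=\lfloor\frac{n(m+1)}{2}\rfloor$. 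Combining the two bounds with Lemma \ref{lem:interval_iso} yields $\diam(\Ynm^0)=\diam(\Ynm^1)=\lfloor\frac{n(m+1)}{2}\rfloor$.

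The main subtlety, I expect, is the comparable case: the shortcut edge collapses the long vertical distance across $I_0$, so the diameter is governed by the ``circular'' quantity $\min\{k,R+1-k\}$ rather than by the height $R$ of the interval, and one must check that both the direct and the shortcut routes are genuinely available inside the induced subgraph $\Ynm^0$. The incomparable case is easier but hinges on the numeric inequality $1+\binom{m}{2}\leq\lfloor\frac{n(m+1)}{2}\rfloor$, which is exactly where the hypothesis $m\leq n$ is used.
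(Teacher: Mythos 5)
Your proof is correct and follows essentially the same route as the paper's: reduction to $\Ynm^0$ via Lemma \ref{lem:interval_iso}, the lower bound from saturated chains together with Corollary \ref{cor:saturated_chains_are_deodesics}, the incomparable case via Lemma \ref{lemma:incomparable_in_pnm} with the inequality $1+\binom{m}{2}\leq\lfloor\frac{n(m+1)}{2}\rfloor$ (using $m\leq n$), and, for comparable vertices, routing through the shortcut edge $\{\hat 0,u_0\}$ --- your bound $\min\{k,R+1-k\}$ is exactly the paper's cycle-of-length-$n(m+1)$ argument written arithmetically. Your explicit verification that only one of the $2^{m-1}$ extra edges has both endpoints in $I_0$, and your appeal to modularity of the interval $I_0$ to identify internal Hasse distances with $d_{\Hnm}$, are correct details that the paper leaves implicit.
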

\begin{proof}
    By Lemma \ref{lem:interval_iso}, it is sufficient to prove that $\diam(\Ynm^0) = \lfloor\frac{n(m+1)}{2}\rfloor.$
    Let $u$ and $v$ be a pair of vertices in $\Ynm^0$.
    Since $\rank_\tri(u_0)=n(m+1)-1\geq \big\lfloor\frac{n(m+1)}{2}\big\rfloor$, $I_0$ contains saturated chains of length $\big\lfloor\frac{n(m+1)}{2}\big\rfloor$.
    Therefore, by Corollary \ref{cor:saturated_chains_are_deodesics}, it is sufficient to show that $d_{\Ynm^0}(u,v)\leq \lfloor\frac{n(m+1)}{2}\rfloor$.
    By Lemma \ref{lemma:incomparable_in_pnm}, we can assume that $u$ and $v$ are comparable, since $n\geq m$. 
    Therefore, there exists a saturated chain $A$ in $I_0$ between $0$ and $u_0$ containing both $u$ and $v$.
    $\len(A)=\rank_\tri(u_0)=n(m+1)-1$.
    There is an edge in $\Ynm^0$ connecting $0$ and $u_0$. 
    It completes $A$ to a cycle $C$ of length $n(m+1)$. 
    This proves that $d_{\Ynm^0}(u,v)\leq d_C(u,v)\leq \lfloor\frac{n(m+1)}{2}\rfloor$.
\end{proof}

\begin{theorem}\label{thm:diam_by_dominance}
    If $0\leq m\leq n$, then $$\diam(\Ynm) = \big\lfloor\frac{n(m+1)}{2}\big\rfloor.$$
\end{theorem}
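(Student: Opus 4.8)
The lower bound $\diam(\Ynm)\ge\lfloor\tfrac{n(m+1)}{2}\rfloor$ is already Corollary \ref{cor:Ynm_diam_lower_by_dominance}, so the whole task is the matching upper bound; write $N:=n(m+1)$. The plan is to take an arbitrary pair $u,v\in\Pnm$ and bound $d_{\Ynm}(u,v)$ by $\lfloor N/2\rfloor$ using the decomposition $\Pnm=I_0\cup I_1$ from Observation \ref{obs:intervals_by_sum}. Since $\Ynm^0$ and $\Ynm^1$ are induced subgraphs, $d_{\Ynm}(u,v)\le d_{\Ynm^j}(u,v)$ whenever $u,v$ lie in a common $I_j$; hence if $u$ and $v$ are both in $I_0$ or both in $I_1$, Lemma \ref{lem:I0_diam} immediately gives $d_{\Ynm}(u,v)\le\diam(\Ynm^j)=\lfloor N/2\rfloor$.

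This reduces everything to the genuinely mixed case $u\in I_0\setminus I_1$ and $v\in I_1\setminus I_0$ (up to swapping $u,v$). Here Observation \ref{obs:intervals_by_sum} gives $\sum_i u_i\le m-2$ and $\sum_i v_i\ge n+1$, so $v\ntrianglelefteq u$ and the pair is either incomparable or satisfies $u\tri v$. If $u$ and $v$ are \emph{incomparable}, Lemma \ref{lemma:incomparable_in_pnm} applies directly and yields $d_{\Ynm}(u,v)\le 1+\binom m2$; one checks that $1+\binom m2\le\lfloor N/2\rfloor$ for all $2\le m\le n$, since $\tfrac N2-\binom m2\ge\tfrac{m(m+1)}{2}-\tfrac{m(m-1)}{2}=m\ge 2$.

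The remaining case, $u\tri v$ with $u,v$ in the mixed position, is where I expect the real difficulty. A saturated chain from $u$ to $v$ has length $\rank_\tri(v)-\rank_\tri(u)$, which in this regime exceeds $\lfloor N/2\rfloor$ (already for $u=\hat 0$, $v=\hat 1$), so the direct chain is not a geodesic and one must exploit the extra wrap edges of Observation \ref{obs:Ynm_as_dominance}. My plan is to pass to the coordinates $\chi$ of Observation \ref{obs:domprodiso}, in which $\tri$ becomes the coordinatewise order, $\Pnm$ becomes the strip $\{z:0\le z_0\le n-1,\ z_i-z_{i-1}\in\{0,1\}\}$, and a single wrap edge $(0,0,w)\sim(n-1,1,w)$ becomes a translation by the fixed vector $\Delta=(n-1,n,\dots,n)$ (of $\ell_1$-length $N-1$). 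Writing $\chi(v)-\chi(u)=\Delta+\big(\chi(v)-\chi(u)-\Delta\big)$, I would realize the residual $\chi(v)-\chi(u)-\Delta$ by cover moves and bound the resulting path by $1+\|\chi(v)-\chi(u)-\Delta\|_1$; the extremal computation $u=\hat 0$, $v=\hat 1$ gives residual $(0,0,1,2,\dots,m-1)$ of length $\binom m2$, suggesting the uniform bound $d_{\Ynm}(u,v)\le 1+\binom m2\le\lfloor N/2\rfloor$.

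The main obstacle is the \textbf{feasibility} of this shortcut: I must choose the tail $w$ so that $(0,0,w)$ and $(n-1,1,w)$ are actually reachable from $u$ and $v$ by cover moves that never leave the strip $\chi(\Pnm)$, and show the residual $\ell_1$-length is genuinely $\le\binom m2$ (equivalently $\le\lfloor N/2\rfloor-1$) for \emph{every} comparable mixed pair, not just the extreme one. A cleaner alternative I will also pursue is to route through the overlap $I_0\cap I_1=[u_1,u_0]$ and to invoke the distance-preserving automorphism $f=\tau\varphi$ of Lemma \ref{lem:interval_iso} (which swaps $I_0$ and $I_1$ and satisfies $f(\hat 0)=\hat 1$, $f(u_0)=u_1$), reducing the comparable mixed estimate to a same-interval cycle-completion argument of the type used in Lemma \ref{lem:I0_diam}. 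Once this case is settled, combining it with the same-interval and incomparable cases establishes $d_{\Ynm}(u,v)\le\lfloor N/2\rfloor$ for all $u,v$, which together with Corollary \ref{cor:Ynm_diam_lower_by_dominance} proves the theorem.
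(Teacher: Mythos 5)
Your frame (lower bound from Corollary \ref{cor:Ynm_diam_lower_by_dominance}, same-interval case via Lemma \ref{lem:I0_diam}, mixed case split off) matches the paper's, and your treatment of incomparable mixed pairs via Lemma \ref{lemma:incomparable_in_pnm} is correct, including the check $1+\binom{m}{2}\le\lfloor\frac{n(m+1)}{2}\rfloor$. But the comparable mixed case --- which you yourself flag as the crux and leave open --- is a genuine gap, and neither of your plans closes it as stated. Plan A is doomed because its target bound is false: comparable mixed pairs can be antipodal. Take $n=m=4$, $u=(0,0,1,1,0)$ and $v=(1,1,1,1,1)$ in $\Ynm[4][4]$. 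Then $\sum_i u_i=2=m-2$ and $\sum_i v_i=5=n+1$, so the pair is mixed; $\chi(v)-\chi(u)=(1,2,2,2,3)\ge 0$, so $u\tri v$; your residual $\chi(v)-\chi(u)-\Delta$ (with $\Delta=(3,4,4,4,4)$) equals $(-2,-2,-2,-2,-1)$, of $\ell_1$-length $9>6=\binom{4}{2}$; and in fact $\rank_\tri(v)-\rank_\tri(u)=15-5=10=\lfloor\frac{n(m+1)}{2}\rfloor$, so Lemma \ref{lem:modulo_along_edges} gives $d_{\Ynm[4][4]}(u,v)=10>7=1+\binom{4}{2}$. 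So the ``suggested uniform bound'' $1+\binom m2$ fails outright (independently of the feasibility issue you acknowledge), and any correct estimate here must hit $\lfloor\frac{n(m+1)}{2}\rfloor$ exactly. Plan B fails for a structural reason: by the computation in the proof of Lemma \ref{lem:interval_iso}, $\sum_{i=0}^m(\tau\varphi)(x)_i=n+m-1-\sum_{i=0}^m x_i$, so $\tau\varphi$ maps $I_0\setminus I_1$ onto $I_1\setminus I_0$ and vice versa; it sends mixed pairs to mixed pairs and cannot reduce the mixed case to a same-interval one.

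The missing idea (and the paper's actual argument) is to use the rotation $\varphi$ alone, together with the covering $I_0\cup I_1=\Pnm$: given the mixed vertex $v\in I_0\setminus I_1$, apply $\varphi^{-1}$ repeatedly until $v_0=0$ (each such step keeps the vertex in $I_0\setminus I_1$, since the coordinate sum only decreases), then apply $\varphi^{c}$ with $c=(m-1)-\sum_{i=0}^m v_i$; since $1\le c\le m-1\le n-1$ there is no wraparound, and the image has coordinate sum exactly $m-1$, hence lies in $I_0\cap I_1$. The image of the other vertex lies in $I_0$ or in $I_1$ regardless, so the image pair is a same-interval pair, Lemma \ref{lem:I0_diam} applies, and distance-preservation under automorphisms finishes the proof. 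Note that this argument never mentions comparability, so your case split by comparability and the entire wrap-edge analysis can be discarded.
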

\begin{proof}
    If $m\leq 1$, then this theorem is trivial by Observation \ref{obs:small_m_is_trivial}.
    Assume that $2\leq m\leq n$.
    
	By Corollary \ref{cor:Ynm_diam_lower_by_dominance}, it is sufficient to prove that $diam(\Ynm)\leq \lfloor\frac{n(m+1)}{2}\rfloor$.
	Let $u,v\in\Ynm$.
	By Observation \ref{obs:intervals_by_sum}, $I_0\cup I_1=\Pnm$.
	If both $u$ and $v$ are in the same interval, say $I_0$, then by Lemma \ref{lem:I0_diam}, $d_{\Ynm}(u,v)\leq d_{\Ynm^0}(u,v)\leq \lfloor\frac{n(m+1)}{2}\rfloor$.
	Otherwise, assume without loss of generality that $u\in I_1$ and $v\in I_0\setminus I_1$. 
    By Observation \ref{obs:intervals_by_sum}, $I_0\setminus I_1 = \{v\in\Ynm:\sum_{i=0}^{m}v_i\leq m-2\}$.
	It is sufficient to show that $v$ can be mapped into $I_0\cap I_1$ via an automorphism of $\Ynm$.

	Let $\varphi$ be the automorphism of $\Ynm$ in Definition \ref{def:two_automorphisms}.
    Note that if $v_0\neq 0$, then $\varphi^{-1}(v)$ is still in $I_0\setminus I_1$. 
    Therefore, we can assume that $v_0=0$.
    Let $c=(m-1)-\sum_{i=0}^{m}v_i$ and let $v'=\varphi^c(v)$.
    Note that $1\leq c \leq m-1\leq n-1$, since $\sum_{i=0}^{m}v_i\leq m-2$.
    Therefore, $v'_0=c$ and $\sum_{i=0}^{m} v'_i=m-1$.
	This proves, by Observation \ref{obs:intervals_by_sum}, that $\varphi^c(v)\in I_0\cap I_1$ as required.
\end{proof}
\pagebreak
The above line of proof fails for $n<m$.
We therefore pursue a new, totally different approach, which works for all values of $n$ and $m$.

\section{The dYoke Graphs $\Znm$}\label{sec:dYokeGraphs}
As noted in the introduction of this thesis, at the heart of our approach to the calculation of the diameter of $\Ynm$ lies the idea of converting the diameter problem of one graph into an eccentricity problem of another graph.
To this end, we introduce a new family of graphs.

\begin{definition}\label{def:dyoke_graph}
    Let $n\geq 1$ and $m\geq 0$ be two integers. 
    Denote the subset $\{-1,0,1\}\subseteq \mathbb{Z}$ by $Q_3$.
    The \textbf{dYoke graph} $\Znm$ is a graph with vertices corresponding to all $u=(u_0\dots,u_{m+1})\in\mathbb{Z}_n\times Q_3^m\times \mathbb{Z}_n$ such that $\sum_{i=0}^{m+1}u_i\equiv 0(\bmod n).$ 
    Two vertices $u$ and $v$ are adjacent in $\Znm$ if there exists $0\leq i\leq m$ such that $u_j=v_j$ for every $j\notin\{i,i+1\}$ and one of the following two cases holds: either $u_i=v_i+1$ and $u_{i+1}=v_{i+1}-1$, or $u_i=v_i-1$ and $u_{i+1}=v_{i+1}+1$.
\end{definition}

As with Yoke graphs, we refer to the entries $u_0$ and $u_{m+1}$ of a vertex $u$ in $\Znm$ as \textbf{buckets}; and Convention \ref{conv:cosets_are_integers} applies to dYoke graphs as well, so that the sum $\sum_{i=0}^{m+1}u_i$ in Definition \ref{def:dyoke_graph} is an integer. 

Note that, as with Yoke graphs, a vertex $u$ in $\Znm$ is determined by its first (or last) $m+1$ entries, since $\sum_{i=0}^{m+1}u_i\equiv 0(\bmod n)$.
We denote the vertex $(0,\dots,0)\in\Znm$ by $0$.
In the first case of the adjacency relation, where $u_i=v_i+1$ and $u_{i+1}=v_{i+1}-1$ for some $0\leq i\leq m$, we say that $u$ is obtained from $v$ by \textbf{shifting} a unit from entry $i+1$ to the \textbf{left}, and write $u=\overleftarrow{s}_{i}(v)$.
In the second case, where $u_i=v_i-1$ and $u_{i+1}=v_{i+1}+1$, we say that $u$ is obtained from $v$ by \textbf{shifting} a unit from entry $i$ to the \textbf{right}, and write $u=\overrightarrow{s}_{i}(v)$.
When $v_0=0$ ($v_{m+1}=0$), we say that the left (right) bucket is \textbf{empty}.

For example, if $v=(3,0,-1,1,2)\in\Znm[5][3]$, then $\overleftarrow{s}_2(v)=(3,0,0,0,2)$ and $\overrightarrow{s}_1(v)=(3,-1,0,1,2)$.
If $i<j$, we say that $v_j$ is an \textbf{entry to the right} of $v_i$ and that $v_i$ is an \textbf{entry to the left} of $v_j$. 
For convenience, we write $s_i$ to indicate a unit shift between the entries indexed by $i$ and $i+1$ without specifying its direction.

Note, that if $m>0$, $1\leq k\leq m$ and $v_k=-1$, then a unit shift from entry $k$ of $v$ is not possible.
Similarly, if $v_k=1$, then a unit shift to entry $k$ of $v$ not possible. 
Nevertheless, for every $0\leq i\leq m$, $\overleftarrow{s}_i$ and $\overrightarrow{s}_i$ induce functions from $\Znm$ to $\Znm$, in which if a unit shift $s_i(v)$ is not possible for some $v\in\Znm$, then $v$ is a fixed point of $s_i$.
Denote the set $\{\overleftarrow{s}_i,\overrightarrow{s}_i:0\leq i\leq m\}$ by $\Fnm$.
If $m=0$, then $\overleftarrow{s}_0$ and $\overrightarrow{s}_0$ are inverses of each other and $\Fnm[n][0]$ generates the cyclic group $\mathbb{Z}_n$. 
If $m>0$, then none of the functions in $\Fnm$ are bijective and $\Fnm$ generates a semigroup.

A \textbf{word in the letters} $\Fnm$ is a sequence $f_d\cdots f_1$ where \mbox{$f_1,\dots,f_d\in\Fnm$}.
Let $P$ be a path $(v=v^0\sim\dots \sim v^d=u)$ in $\Znm$. 
Clearly, there is a unique word $f_d\cdots f_1$ such that $f_t(v^{t-1})=v^t$ for every $1\leq t\leq d$ or, equivalently,  \mbox{$f_t\cdots f_1(v)=v^t$} for every $1\leq t\leq d$.
We say that $f_d\cdots f_1$ is the \textbf{word corresponding to the path $P$ from $v$ to $u$} (or the path $P$ starting at $v$).

Note that a word $f_d\cdots f_1$ does not correspond to a path starting at $v$ if $f_{t}\cdots f_1(v) = f_{t+1}f_t\cdots f_1(v)$ for some $1\leq t\leq d-1$.
For example, the word $\overleftarrow{s_1}\overleftarrow{s_0}$ corresponds to the path $(v=(0, 1, 1, 1)\sim (1, 0, 1, 1)\sim (1, 1, 0, 1))$ in $\Znm[3][2]$ starting at $v$.
However, $\overleftarrow{s_0}\overleftarrow{s_1}$ does not correspond to a path starting at $v$, since $v$ is a fixed point of $\overleftarrow{s_1}$.

Let $f_d\cdots f_1$ be the word corresponding to a path from $v$ to $u$ and let $1\leq t\leq d-1$.
If $f_d\cdots f_{t}f_{t+1}\cdots f_1$ (changing the order of $f_t$ and $f_{t+1}$) is also a word corresponding to a path from $v$ to $u$, then we say that $f_t$ and $f_{t+1}$ \textbf{relatively commute} in (the path) $f_d\cdots f_1(v)$.
Note that the fact that $f_t$ and $f_{t+1}$ relatively commute in $f_d\cdots f_1(v)$, does not imply that $f_t$ and $f_{t+1}$ commute (as functions from the set of vertices of $\Znm$ to itself). 
For example, if $f_d\cdots f_1$ corresponds to a path from $v$ to $u$ such that $f_t=\overrightarrow{s}_i$ and $f_{t+1}=\overleftarrow{s}_{i+1}$ for some $1\leq t \leq d-1$ and $0\leq i\leq m-1$, then $f_t$ and $f_{t+1}$ relatively commute in $f_d\cdots f_1(v)$ but do not commute (as functions).
If, however, $f_t$ and $f_{t+1}$ commute, then they also relatively commute in $f_d\cdots f_1(v)$.
Clearly, two distinct $s_i$ and $s_j$ in $\Fnm$ commute if and only if $|i-j|>1$.

\begin{lemma}[\sc Shift Direction Lemma] \label{lem:shift_direction_lemma_Znm}
    Let $f_d\cdots f_1$ be the word corresponding to a geodesic in $\Znm$.
    For every $0\leq i\leq m$, all of the instances of $s_i$ in $f_d\cdots f_1$ shift in the same direction.
\end{lemma}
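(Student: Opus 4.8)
The plan is to follow the minimal-counterexample strategy of Lemma \ref{lem:shift_direction_lemma_Ynm}, but to replace the validity-based exclusions (which in $\Ynm$ relied on inner entries lying in $\{0,1\}$) by arguments using the relative-commutation notion introduced for $\Znm$, since inner entries now range over $Q_3=\{-1,0,1\}$. Suppose the lemma fails, and let $\mathcal{W}$ be the set of words corresponding to geodesics in $\Znm$ in which some $s_i$ occurs both as $\overrightarrow{s}_i$ and as $\overleftarrow{s}_i$. Choose $w=f_d\cdots f_1\in\mathcal{W}$ minimizing the smallest gap $|t_2-t_1|$ between an occurrence $f_{t_1}=\overrightarrow{s}_i$ and an occurrence $f_{t_2}=\overleftarrow{s}_i$ (over all $i$), and fix such a pair with $t_1<t_2$. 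Minimality forces that no $s_i$ occurs strictly between positions $t_1$ and $t_2$ (an $\overleftarrow{s}_i$ there would pair with $t_1$, an $\overrightarrow{s}_i$ with $t_2$, both strictly closer).

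First, if $t_2=t_1+1$ then $\overleftarrow{s}_i$ immediately undoes $\overrightarrow{s}_i$, so $v^{t_1-1}=v^{t_1+1}$; deleting $f_{t_1},f_{t_2}$ yields a shorter walk between the same endpoints, contradicting that $w$ is a geodesic. Hence $t_2-t_1\ge 2$, and I would examine $f_{t_1+1}=s_j$ with $j\neq i$. If $|i-j|>1$ then $s_i$ and $s_j$ commute as functions, hence relatively commute, and interchanging $f_{t_1}$ and $f_{t_1+1}$ gives a geodesic in $\mathcal{W}$ whose pair $(t_1+1,t_2)$ has strictly smaller gap, contradicting minimality. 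So $j\in\{i-1,i+1\}$, and the core of the argument becomes a short case analysis, over the value of the shared entry, of whether $\overrightarrow{s}_i$ relatively commutes with $f_{t_1+1}$. One checks that $\overrightarrow{s}_i$ relatively commutes with $\overleftarrow{s}_{i-1}$ and with $\overleftarrow{s}_{i+1}$ whenever the consecutive application is legal (both letters move the shared entry in the same sense, which forces it to the relevant extreme and makes the two letters interchangeable), and likewise with $\overrightarrow{s}_{i-1}$ when entry $i$ equals $0$ before $t_1$ and with $\overrightarrow{s}_{i+1}$ when entry $i+1$ equals $0$ before $t_1$. In every such subcase I swap and reduce the gap, a contradiction.

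The only surviving possibilities are $f_{t_1+1}=\overrightarrow{s}_{i-1}$ with entry $i$ equal to $1$ just before $t_1$, or $f_{t_1+1}=\overrightarrow{s}_{i+1}$ with entry $i+1$ equal to $-1$ just before $t_1$. Here I would track the shared entry. In the first case $v^{t_1+1}_i=1$, whereas $f_{t_2}=\overleftarrow{s}_i$ requires $v^{t_2-1}_i\le 0$; since no $s_i$ acts in between, entry $i$ can change only through $s_{i-1}$, so some $\overleftarrow{s}_{i-1}$ must occur at a position $t_1+1<k<t_2$, and $(t_1+1,k)$ is then an opposite pair for index $i-1$ of gap strictly less than $t_2-t_1$, contradicting minimality. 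The second case is symmetric: $v^{t_1+1}_{i+1}=-1$ while $\overleftarrow{s}_i$ needs $v^{t_2-1}_{i+1}\ge 0$, forcing an $\overleftarrow{s}_{i+1}$ strictly between and again a closer opposite pair. This exhausts all cases.

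I anticipate the main obstacle to be exactly the relative-commutation bookkeeping in the neighbor cases: unlike the $\{0,1\}$ world of $\Ynm$, where leftward neighbor shifts following $\overrightarrow{s}_i$ are simply illegal, in $\Znm$ they are legal and must instead be eliminated by showing they relatively commute with $\overrightarrow{s}_i$. Pinning down the value conditions precisely, so that the ``reduce the gap by swapping'' branch and the ``track the entry to a forced opposite shift'' branch together cover all three values of $Q_3$, is the delicate part; the remaining structure mirrors the proof of Lemma \ref{lem:shift_direction_lemma_Ynm}.
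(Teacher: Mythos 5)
Your proposal is correct and follows essentially the same route as the paper's proof: a minimal-counterexample argument over the gap between opposite occurrences of $s_i$, with deletion for adjacent pairs, swapping via commutation (for $|i-j|>1$) or relative commutation (for the neighbor cases $\overleftarrow{s}_{i\pm 1}$ and the value-$0$ subcases of $\overrightarrow{s}_{i\pm 1}$), and finally tracking the shared entry in the surviving extreme-value cases ($v'_i=1$, resp. $v'_{i+1}=-1$) to force a closer opposite pair. Your preliminary observation that minimality excludes any $s_i$ strictly between $t_1$ and $t_2$ is a mild streamlining of the paper's endgame (which lists $\overrightarrow{s}_i$ as a second possibility there), and it also disposes of the $m=0$ case that the paper treats separately; the substance is identical.
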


\begin{proof}
    Let $v$ and $u$ be two vertices in $\Znm$.
    If $m=0$, then a word corresponding to a geodesic from $v$ to $u$ is of the form $f^k$ where $f\in\Fnm[n][0]$ and $0\leq k\leq \lfloor\frac{n}{2}\rfloor$.
    Assume that $m>0$ and assume to the contrary that there exist words corresponding to geodesics from $v$ to $u$ not satisfying the lemma.
    Denote the set of such words by $\mathcal{W}$.
    For every word $f_d\cdots f_1$ in $\mathcal{W}$, there exist $0\leq i\leq m$ and $1\leq t_1, t_2\leq d$ such that $f_{t_1}=\overrightarrow{s}_{i}$ and $f_{t_2}=\overleftarrow{s}_{i}$.
    Let $w=f_d\cdots f_1$ be a word in $\mathcal{W}$ in which $\min\{|t_2-t_1|: f_{t_1}=\overrightarrow{s}_{i},f_{t_2}=\overleftarrow{s}_{i}\}$ is minimal. 
    Assume without loss of generality that $t_1<t_2$.
    
    If $t_2=t_1+1$, then the word obtained by deleting both $f_{t_1}$ and $f_{t_2}$ from $f_d\cdots f_1$ is a word corresponding to a shorter path from $v$ to $u$, contradicting the minimality of $d$.
    Therefore, we can assume that $t_2-t_1>1$.
    
    If $f_{t_1+1}=s_j$ for some $0\leq j\leq m$ such that $|i-j|>1$, then $f_{t_1}$ and $f_{t_1+1}$ commute as functions.
    If $f_{t_1+1}\in\{\overleftarrow{s}_{i-1}, \overleftarrow{s}_{i+1}\}$, then $f_{t_1}$ and $f_{t_1+1}$ relatively commute in $f_d\cdots f_1(v)$.
    In both cases, the word obtained by interchanging $f_{t_1}$ and $f_{t_1+1}$ in $w$ corresponds to a path from $v$ to $u$.
    This contradicts the minimality of $t_2-t_1$ in the choice of $w$.
    Therefore, $f_{t_1+1}\in\{\overrightarrow{s}_{i-1}, \overrightarrow{s}_{i+1}\}$.
%

    Let $v'=f_{t_1+1}\cdots f_1(v)$. 
    If $f_{t_1+1}=\overrightarrow{s}_{i-1}$, then $v'_i=1$ (otherwise  $f_{t_1}$ and $f_{t_1+1}$ relatively commute in $f_d\cdots f_1(v)$). 
    Since $f_{t_2}=\overleftarrow{s}_i$, there must exist some $t_1+1<k<t_2$ such that $f_k=\overleftarrow{s}_{i-1}$ or $f_k=\overrightarrow{s}_{i}$.
    Similarly, if $f_{t_1+1}=\overrightarrow{s}_{i+1}$, then $v'_{i+1}=-1$ and there must exist some $t_1+1<k<t_2$ such that $f_k=\overleftarrow{s}_{i+1}$ or $f_k=\overrightarrow{s}_{i}$.
    All of the cases contradict the minimality of $t_2-t_1$.
\end{proof}

\section{From Diameter to Eccentricity}\label{sec:diam_ecc}
In this section, we show (Theorem \ref{thm:ecc_eq_diam}) that the diameter of $\Ynm$ is equal to the eccentricity of $0$ in $\Znm$.
Every Yoke graph $\Ynm$ is naturally embedded in the dYoke graph $\Znm$ as an induced subgraph on the vertices with no negative entries. 
Note that for every two vertices $v,u\in\Ynm$, the difference $v-u=(v_0-u_0,v_1-u_1,\dots,v_{m+1}-u_{m+1})$ is in $\Znm$.

\begin{definition}
	For every $u\in\Ynm$ let $\varphi_u:\Ynm\rightarrow \Znm$ be the map (on vertices) defined by $\varphi_u(v)=v-u$. 
\end{definition}

\begin{observation}\label{obs:embedding_Ynm_in_Znm}
	For every $u\in\Ynm$, $\varphi_u$ is a graph isomorphism between $\Ynm$ and the subgraph induced by $\varphi_u(\Ynm)$ (in $\Znm$).
\end{observation}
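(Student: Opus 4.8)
The plan is to verify three things in turn: that $\varphi_u$ maps $\Ynm$ into $\Znm$, that it is injective, and that it both preserves and reflects adjacency; together these say exactly that $\varphi_u$ is an isomorphism onto the subgraph of $\Znm$ induced by $\varphi_u(\Ynm)$. For well-definedness, take $v\in\Ynm$ and set $w=\varphi_u(v)=v-u$, computed coordinate-wise (in $\mathbb{Z}_n$ for the two buckets and in $\mathbb{Z}$ for the middle entries). Since $v_i,u_i\in\{0,1\}$ for $1\le i\le m$, each middle entry $w_i=v_i-u_i$ lies in $\{-1,0,1\}=Q_3$; the buckets $w_0,w_{m+1}$ lie in $\mathbb{Z}_n$; and $\sum_{i=0}^{m+1}w_i=\sum_{i=0}^{m+1}v_i-\sum_{i=0}^{m+1}u_i\equiv 0\pmod n$. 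Hence $w\in\Znm$. Injectivity is immediate: $\varphi_u(v)=\varphi_u(v')$ gives $v-u=v'-u$, so $v=v'$.

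The heart of the argument is the observation that, in both $\Ynm$ and $\Znm$, adjacency is governed by the identical rule of Definitions \ref{def:yoke_graph} and \ref{def:dyoke_graph}, and that this rule depends only on the coordinate-wise difference of the two vertices. Concretely, for vertices $x,y$ (in either graph) one has $x\sim y$ if and only if there is some $0\le i\le m$ with $(x-y)_j=0$ for all $j\notin\{i,i+1\}$ and $\big((x-y)_i,(x-y)_{i+1}\big)\in\{(1,-1),(-1,1)\}$. Equivalently, the edges are exactly the pairs differing by one of the fixed vectors $\pm(e_i-e_{i+1})$, $0\le i\le m$, so the adjacency relation is invariant under translation of the ambient group $\mathbb{Z}_n\times\mathbb{Z}^m\times\mathbb{Z}_n$.

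Since $\varphi_u$ is the translation $x\mapsto x-u$, it preserves differences: $\varphi_u(v)-\varphi_u(v')=(v-u)-(v'-u)=v-v'$ for all $v,v'\in\Ynm$. Applying the difference criterion above, $v\sim v'$ in $\Ynm$ holds precisely when the stated condition on $v-v'$ holds, which is precisely when $\varphi_u(v)\sim\varphi_u(v')$ in $\Znm$; and since both images lie in $\varphi_u(\Ynm)$, this is adjacency in the induced subgraph. Thus $\varphi_u$ is a bijection onto its image that both preserves and reflects adjacency, i.e.\ a graph isomorphism onto the induced subgraph $\varphi_u(\Ynm)$. I do not anticipate a genuine obstacle here; the only point demanding care is the bookkeeping for the buckets, whose differences live in $\mathbb{Z}_n$, to confirm that the $\pm 1$ shifts in the adjacency rule are interpreted consistently in both graphs.
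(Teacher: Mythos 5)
Your proof is correct and takes essentially the same route as the paper: the paper states this as an observation without proof, noting only (just before defining $\varphi_u$) that $v-u\in\Znm$, and your argument — that adjacency in both $\Ynm$ and $\Znm$ depends only on the coordinate-wise difference being $\pm(e_i-e_{i+1})$, so the translation $\varphi_u$ preserves and reflects adjacency — is exactly the routine verification being left to the reader. Your care with the buckets (differences taken in $\mathbb{Z}_n$) versus the middle entries (differences in $\mathbb{Z}$, landing in $Q_3$) is the right bookkeeping.
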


The following lemma is essential for the proof of Lemma \ref{lem:dist_preserved_under_phi}.
We start with a given $z\in\Znm$ such that $z_i=0$ for some $1\leq i\leq m$, and a geodesic $P$ between $z$ and $0$.
We show that $P$ can be transformed into a geodesic $P'$ in which the $i$th entry is either non-negative or non-positive along the path. We can do this without changing the sets of values of other entries.

\begin{lemma}\label{lem:handle_zeros_in_geodesics_znm}
	Let $z\in\Znm$ such that $z_i=0$ for some $1\leq i\leq m$ and let $P=(z=x^0\sim x^1\sim\dots\sim x^d=0)$ be a geodesic between $z$ and $0$.
	Then a geodesic $P'=(z=y^0\sim y^1\sim\dots\sim y^d=0)$ exists such that:
	{ 
		\begin{enumerate}
			\item either $y^t_i\leq 0$ for every $0\leq t\leq d$ or $y^t_i\geq 0$ for every $0\leq t\leq d$ ($P'$ can be constructed either way);
			\item \label{asdf}$\{y^t_j:0\leq t\leq d\}=\{x^t_j:0\leq t\leq d\}$ for every $j\neq i$ where $0\leq j\leq m+1$.
	\end{enumerate} }
\end{lemma}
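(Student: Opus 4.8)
The plan is to pass to \textbf{partial-sum coordinates}, which linearize the whole problem. For a vertex $v$ set $X_k=\sum_{j=0}^k v_j$ (this is the isomorphism $\chi$ of Observation \ref{obs:domprodiso}); then each edge of $\Znm$ changes exactly one coordinate $X_k$ by $\pm 1$, and by the Shift Direction Lemma \ref{lem:shift_direction_lemma_Znm} every coordinate is \emph{monotone} along the geodesic $P$, moving from its initial value $Z_k$ to $0$ (so $d=\sum_k|Z_k|$). The entry under scrutiny is $v_i=X_i-X_{i-1}$, and the hypothesis $z_i=0$ together with $0_i=0$ means the two coordinates $X_{i-1}$ and $X_i$ share a common start value $c:=Z_{i-1}=Z_i$ and common end value $0$; throughout $P$ they stay within $1$ of each other because $v_i\in\{-1,0,1\}$. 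First I would dispose of the easy cases: if $v_i$ is already one-signed along $P$ there is nothing to do, and if both $s_{i-1}$ and $s_i$ shift $v_i$ in the same direction then (as $v_i$ returns to $0$ and is bounded) $v_i\equiv 0$, so both conclusions hold vacuously. This leaves the genuinely fluctuating case, where $\overleftarrow{s}_{i-1}$ (equivalently an $X_{i-1}$-move) and $\overleftarrow{s}_i$ (an $X_i$-move) push $v_i$ in opposite directions.

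The construction to obtain the $y^t_i\ge 0$ version is to \textbf{re-synchronize only the moves touching $X_{i-1}$ and $X_i$}. Keep the multiset of all $d$ moves (so the new walk again runs from $z$ to $0$ in $d$ steps, hence is still a geodesic) and keep $X_{i+1}$ together with every coordinate away from $i$ untouched. I would replace the $X_{i-1}$-trajectory by $\min(X_{i-1},X_i)$ while leaving $X_i$ as it was, then re-sequence the $2|c|$ moves on these two coordinates into genuine single steps, ordering the two unit-moves at each contested instant so that $v_i$ never dips below $0$. Since both coordinates are monotone with the same endpoints, $\min(X_{i-1},X_i)$ is again monotone from $c$ to $0$, so the move multiset is unchanged; by design $v_i=X_i-\min(X_{i-1},X_i)=(X_i-X_{i-1})^{+}\in\{0,1\}$. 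The crucial payoff is that $v_{i+1}=X_{i+1}-X_i$ is literally unchanged as a sequence, so condition (2) is automatic for $j=i+1$ and for all $j\notin\{i-1,i,i+1\}$.

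The main obstacle is the \textbf{left neighbor} $v_{i-1}=X_{i-1}-X_{i-2}$, which the $\min$-construction lowers by exactly one unit on precisely the sub-intervals where the original $v_i$ was $-1$. Here I must prove two things: that $v_{i-1}$ never leaves $\{-1,0,1\}$, and that its \emph{set} of attained values is unchanged. The delicate point is that lowering can fail validity when $v_{i-1}$ is already at its floor during a negative excursion of $v_i$ — a ``staircase'' $v_{i-1}=v_i=-1$. I would handle each negative excursion separately, by induction on the number of such excursions, absorbing it into whichever neighbor has slack: the $\min$-construction pushes the deficit left and is valid when $v_{i-1}\ge 0$ on the excursion, while the dual ($X_i\mapsto\max(X_{i-1},X_i)$) pushes it right and is valid when $v_{i+1}\ge 0$. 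The heart of the argument is to show that across a whole excursion at least one side always has the needed slack, and that the one-unit shift only moves neighbor values within a window whose boundary instants (where $v_i$ meets $0$) retain their original values — so no element of $\{-1,0,1\}$ is created or destroyed in that neighbor's value set. The endpoint cases $i-1=0$ or $i+1=m+1$ are easier, since then the affected neighbor is a bucket in $\mathbb{Z}_n$ with no range constraint, making validity on that side free.

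Finally, the phrase ``$P'$ can be constructed either way'' is delivered by \textbf{symmetry}: the mirror construction (using $\max$ into $X_i$, equivalently negating entry $i$ and interchanging the roles of the two neighbors) produces a geodesic with $y^t_i\le 0$ for all $t$, while the same bookkeeping preserves every value set. I would close by observing that in each version the resulting walk has length $d$ from $z$ to $0$, hence is the required geodesic $P'$.
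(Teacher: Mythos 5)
Your route is genuinely different from the paper's (the paper rearranges the \emph{word} of the geodesic using commutation relations, flipping one bad excursion at a time; you work in partial-sum coordinates and replace one of the two trajectories $X_{i-1},X_i$ by their pointwise $\min$ or $\max$), but it has a genuine gap, and it sits exactly where you placed it: the claim that ``across a whole excursion at least one side always has the needed slack'' is not only unproven, it is false. Take $n\geq 4$, $m=5$, $i=3$, and $z=(2,1,-1,0,-1,0,n-1)\in\Znm[n][5]$, so $z_3=0$. The word $\overrightarrow{s}_3,\overrightarrow{s}_1,\overrightarrow{s}_4,\overrightarrow{s}_2,\overrightarrow{s}_0,\overrightarrow{s}_1,\overrightarrow{s}_0,\overrightarrow{s}_1,\overrightarrow{s}_2,\overrightarrow{s}_3,\overrightarrow{s}_5$ (listed in the order the steps are applied) gives a valid path of length $11$ from $z$ to $0$; since the partial sums $2,3,2,2,1,1$ of $z$ are not divisible by $n$, $z$ has no inner pivots, and as in the proof of Lemma \ref{lem:no_pivots}, $d(z,0)=\min\{\ps_{-1}(z),\ps_{m+1}(z)\}=\min\{11,6n-11\}=11$, so this path is a geodesic. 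Entry $3$ has a single negative excursion, consisting of the three states after the first step; at the first of these states $v_2=-1$, and at the last of them $v_4=-1$. Hence your $\min$-construction produces $v_2=-2$ and your $\max$-construction produces $v_4=-2$: \emph{neither} per-excursion absorption is valid, and your induction has no admissible first move. Eliminating this excursion requires re-timing the moves of \emph{both} $X_2$ and $X_3$ simultaneously --- the paper's commutation argument turns the first four steps into $\overrightarrow{s}_1,\overrightarrow{s}_2,\overrightarrow{s}_3,\overrightarrow{s}_4$ --- and that lies outside the rigid ``modify only one coordinate'' framework you set up.

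Two further problems, smaller but real. First, even where the $\min$-construction is valid, your argument for conclusion (2) is a non sequitur: the construction lowers $v_{i-1}$ by one precisely on the interior of each excursion, so a value attained by $v_{i-1}$ \emph{only} inside excursions is destroyed, and a value $-1$ can be newly created where $v_{i-1}=0$ inside an excursion; the observation that boundary instants keep their original values does not prevent either. Exact preservation of the value sets is precisely what the iterative application in Lemma \ref{lem:dist_preserved_under_phi} requires (processing entry $i$ must not spoil the sign constraint already imposed on entry $i\pm1$), so this cannot be waved off. Second, your opening assertions --- every edge changes exactly one $X_k$ by $\pm1$, each coordinate runs monotonically from $Z_k$ to $0$, and $d=\sum_k|Z_k|$ --- ignore the cyclic buckets: a shift out of an empty bucket changes the integer partial sums by $n-1$, and coordinates end at a multiple of $n$ rather than at $0$ (for $n=1$ all $Z_k$ vanish while $d>0$, so the distance formula is certainly false). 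This part is repairable by working with lifted trajectories, and the two lifts $\tilde X_{i-1},\tilde X_i$ do share both endpoints because their difference is the bounded entry $v_i$, which vanishes at both ends; but as written these statements are incorrect, and the main counterexample above is independent of them.
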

\begin{proof}
	We prove the existence of $P'$ such that $y^t_i\leq 0$ for every $0\leq t\leq d$.
	The proof for the case $y^t_i\geq 0$ follows by symmetric arguments.
	Let $Q=(w^0\sim w^1\sim\dots\sim w^l)$ be a path in $\Znm$. Define $O_i(Q)=|\{0\leq t\leq l:w^t_i=1\}|$.
	If $O_i(P)=0$, then set $P'=P$.
	Otherwise, assume that $O_i(P)>0$.
	We show that there exists a geodesic $Q=(z=w^0\sim w^1\sim\dots\sim w^d=0)$ with the following properties:
	{ 
		\renewcommand\labelenumi{(\theenumi)}
		\begin{enumerate}
			\item $O_i(Q)<O_i(P)$.
			\item $\{w^t_j:0\leq t\leq d\}=\{x^t_j:0\leq t\leq d\}$ for every $j\neq i$ where $0\leq j\leq m+1$.
		\end{enumerate} }
	Note that if $O_i(Q)>0$, then the process can be repeated implying the existence of $P'$, as required.
	
	Let $f_d\cdots f_1$ be the word corresponding to $P$.
	Note that by the Shift Direction Lemma \ref{lem:shift_direction_lemma_Znm}, either both $\overrightarrow{s}_{i-1}$ and $\overrightarrow{s}_i$ or both $\overleftarrow{s}_{i-1}$ and $\overleftarrow{s}_i$ appear in $f_d\cdots f_1$, since $x^0_i=0$, $x^d_i=0$ and $O_i(P)>0$. 
	Assume without loss of generality that both $\overrightarrow{s}_{i-1}$ and $\overrightarrow{s}_i$ appear in $f_d\cdots f_1$.
	
	Let $t_1$ be the minimal index such that $x^{t_1}_i=1$ and therefore $f_{t_1}=\overrightarrow{s}_{i-1}$.
	Let $t_2$ be the minimal index such that $t_1<t_2$ and $f_{t_2}=\overrightarrow{s}_{i}$ (such $t_2$ exists since $x^d_i=0$).
	If $t_2-t_1=1$, then $f_{t_1}$ and $f_{t_2}$ relatively commute in $f_d\cdots f_1(z)$, and the path $Q$ obtained by interchanging $f_{t_1}$ and $f_{t_2}$ satisfies properties (1) and (2) as required.
	In the rest of this proof, we assume that $t_2-t_1>1$.
	
	We can assume that $f_d\cdots f_1$ is in the form 
	$$(*)\;\; f_d\dots f_{t_2}w_2w_1f_{t_1}\dots f_1$$
	where $w_1$ and $w_2$ are words (at least one of which is nonempty) such that every letter in $w_1$ shifts a unit between entries to the right of $i$, and every letter in $w_2$ shifts a unit between entries to the left of $i$.
	Indeed, if there exist $t_1<t<t+1<t_2$ such that $f_{t}$ shifts a unit between entries to the left of $i$ and $f_{t+1}$ shifts a unit between entries to the right of $i$, then $f_t$ and $f_{t+1}$ commute and
	the word obtained by interchanging $f_t$ and $f_{t+1}$ corresponds to a path between $z$ and $0$ which satisfies property (2), and which does not change $O_i(P)$.
	By repeatedly interchanging such pairs, we obtain a word in the form $(*)$, since $f_{t}\notin \{\overrightarrow{s}_{i-1}, \overrightarrow{s}_{i}\}$ for every $t_1<t<t_2$.
	
	Note that $f_{t_1}$ commutes with every letter in $w_1$ and $f_{t_2}$ commutes with every letter in $w_2$. 
	Therefore, $f_d\dots w_2f_{t_1}f_{t_2}w_1\dots f_1$ corresponds to a path $Q$ between $z$ and $0$ satisfying properties (1) and (2), similarly to the case $t_2 - t_1 = 1$.
\end{proof}

\begin{lemma}\label{lem:dist_preserved_under_phi}
    Let $v,u\in\Ynm$. Then $$d_{\Ynm}(v,u)=d_{\Znm}(v-u,0).$$
\end{lemma}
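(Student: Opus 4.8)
The plan is to combine the isomorphic embedding $\varphi_u$ (Observation \ref{obs:embedding_Ynm_in_Znm}) with a rerouting of $\Znm$-geodesics into the image $\varphi_u(\Ynm)$, the rerouting being supplied by Lemma \ref{lem:handle_zeros_in_geodesics_znm}. First I would record what the image looks like. Writing $z=v-u=\varphi_u(v)$, a vertex $w\in\Znm$ lies in $\varphi_u(\Ynm)$ if and only if $w+u\in\Ynm$, that is, for each middle index $1\leq i\leq m$ we have $w_i\in\{0,1\}$ when $u_i=0$ and $w_i\in\{-1,0\}$ when $u_i=1$; the buckets are unconstrained since both graphs use $\mathbb{Z}_n$ there. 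Equivalently, $w\in\varphi_u(\Ynm)$ exactly when $w_i\geq 0$ for every $i$ with $u_i=0$ and $w_i\leq 0$ for every $i$ with $u_i=1$. Note that $z_i=v_i-u_i$ already satisfies these inequalities.

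The inequality $d_{\Ynm}(v,u)\geq d_{\Znm}(z,0)$ is the easy half. Since $\varphi_u$ is a graph isomorphism onto the induced subgraph $\varphi_u(\Ynm)$, we have $d_{\Ynm}(v,u)=d_{\varphi_u(\Ynm)}(z,0)$; and since $\varphi_u(\Ynm)$ is a subgraph of $\Znm$, every path available there is available in $\Znm$, so $d_{\varphi_u(\Ynm)}(z,0)\geq d_{\Znm}(z,0)$. The content is the reverse inequality, which amounts to showing that some $\Znm$-geodesic from $z$ to $0$ can be chosen to stay inside $\varphi_u(\Ynm)$.

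For that direction I would start from an arbitrary geodesic $P$ in $\Znm$ from $z$ to $0$, of length $d=d_{\Znm}(z,0)$, and reroute it, without changing its length, into a path lying entirely in $\varphi_u(\Ynm)$. The idea is to process the middle entries $i=1,\dots,m$ one at a time. For an index with $z_i=0$, Lemma \ref{lem:handle_zeros_in_geodesics_znm} produces a geodesic of the same length whose $i$-th entry stays $\geq 0$ (choose this when $u_i=0$) or stays $\leq 0$ (choose this when $u_i=1$), which is exactly the window constraint for that entry. The decisive feature is property \eqref{asdf} of that lemma: rerouting entry $i$ preserves the set of values taken by every other entry $j\neq i$. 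Hence once an entry has been confined to its window, subsequent rerouting of other entries keeps it confined, and the entries may be fixed in turn. After all $m$ middle entries are processed we obtain a geodesic $P'$ of length $d$ in which every middle entry stays in its window, so every vertex of $P'$ lies in $\varphi_u(\Ynm)$. This yields $d_{\varphi_u(\Ynm)}(z,0)\leq d=d_{\Znm}(z,0)$, and combined with the easy half gives $d_{\Ynm}(v,u)=d_{\varphi_u(\Ynm)}(z,0)=d_{\Znm}(z,0)$.

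The main obstacle is that the rerouting must confine all middle entries simultaneously, and the real danger is that repairing one entry could push another out of its window; this is precisely what the value-set preservation in property \eqref{asdf} defeats, letting the entries be treated independently and sequentially. A secondary point I would need to check is the indices with $z_i=\pm 1$, which are not literally covered by Lemma \ref{lem:handle_zeros_in_geodesics_znm} (it assumes $z_i=0$). I expect the very same argument to apply there: to reach the wrong-sign value and return, the Shift Direction Lemma \ref{lem:shift_direction_lemma_Znm} again forces one of the two unidirectional pairs $\{\overrightarrow{s}_{i-1},\overrightarrow{s}_i\}$ or $\{\overleftarrow{s}_{i-1},\overleftarrow{s}_i\}$, and the identical local interchange removes one wrong-sign occurrence while preserving the other entries' value sets; the nonzero starting value of entry $i$ does not interfere with this local exchange. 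Verifying this extension carefully is the step I would treat most cautiously.
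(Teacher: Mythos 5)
Your proposal is correct and follows essentially the same route as the paper's proof: the easy inequality via the induced-subgraph embedding $\varphi_u$, and the reverse inequality by iteratively applying Lemma \ref{lem:handle_zeros_in_geodesics_znm}, using its value-set-preservation property to confine the middle entries to their windows one at a time. The one point you flagged --- indices with $z_i=\pm 1$ --- is resolved in the paper without extending the lemma: since a middle entry of $\Znm$ lies in $\{-1,0,1\}$ and changes by at most one per step, it cannot change sign before first reaching $0$, so the lemma is simply applied to the suffix of the geodesic starting at the first vertex where that entry equals $0$ (the prefix is already sign-correct); your proposed direct extension of the lemma would also go through, but it is unnecessary.
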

\begin{proof}
	By Observation \ref{obs:embedding_Ynm_in_Znm}, $\Ynm$ is isomorphic to the subgraph induced by $\varphi_u(\Ynm)$ in $\Znm$, implying that $d_{\Ynm}(v,u)\geq d_{\Znm}(\varphi_u(v),\varphi_u(u))=d_{\Znm}(v-u,0)$.
	
	Conversely, let $P=(v-u=x^0\sim x^1\sim\dots\sim x^d=0)$ be a geodesic from $v-u$ to $0$ in $\Znm$.
	Note that if $x^0_i\leq 0$ for some $1\leq i\leq m$, then, by Lemma \ref{lem:handle_zeros_in_geodesics_znm}, there exists a geodesic $P'=(v-u=y^0\sim y^1\sim\dots\sim y^d=0)$ such that $y^t_i\leq 0$ for every $0\leq t\leq d$ (the lemma is applied to $(x^k\sim x^{k+1}\sim\dots\sim x^d=0)$, where $k$ is the first index such that $x^k_i=0$).
	Similarly, if $x^0_i\geq 0$ for some $1\leq i\leq m$, then there exists a geodesic $P'=(v-u=y^0\sim y^1\sim\dots\sim y^d=0)$ such that $y^t_i\geq 0$ for every $0\leq t\leq d$.
	
	Therefore, by its second property, Lemma \ref{lem:handle_zeros_in_geodesics_znm} can be applied iteratively to all $1\leq i\leq m$, to construct a geodesic
	$P'=(v-u=y^0\sim y^1\sim\dots\sim y^d=0)$ from the geodesic $P$ that satisfies the following conditions for every $1\leq i\leq m$:
	\begin{enumerate}
		\item If $u_i=1$ (implying that $(v-u)_i\leq 0$), then $y^t_i\leq 0$ for every $0\leq t\leq d$.
		\item If $u_i=0$ (implying that $(v-u)_i\geq 0$), then $y^t_i\geq 0$ for every $0\leq t\leq d$.
	\end{enumerate}
	Clearly, $(v=y^0+u\sim y^1+u\sim\dots\sim y^d+u=u)$ is a path between $v$ and $u$ in $\Ynm$. Therefore $d_{\Ynm}(v,u)\leq d_{\Znm}(v-u,0)$.
\end{proof}

\begin{theorem}\label{thm:ecc_eq_diam}
	$\diam(\Ynm) = \ecc_{\Znm}(0)$.
\end{theorem}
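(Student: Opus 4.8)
The plan is to deduce the theorem from Lemma \ref{lem:dist_preserved_under_phi} together with the observation that the \emph{difference map} $(v,u)\mapsto v-u$ from $\Ynm\times\Ynm$ into $\Znm$ is surjective. Writing the two quantities as maxima,
$$\diam(\Ynm)=\max_{v,u\in\Ynm}d_{\Ynm}(v,u),\qquad \ecc_{\Znm}(0)=\max_{z\in\Znm}d_{\Znm}(z,0),$$
Lemma \ref{lem:dist_preserved_under_phi} rewrites the first maximum as $\max_{v,u}d_{\Znm}(v-u,0)$. Once the difference map is known to be surjective, the value set $\{d_{\Znm}(v-u,0):v,u\in\Ynm\}$ coincides with $\{d_{\Znm}(z,0):z\in\Znm\}$, so the two maxima agree and the theorem follows.

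One inclusion is immediate. For any $v,u\in\Ynm$ the vector $v-u$ lies in $\Znm$ (as already noted just before the statement), so $d_{\Ynm}(v,u)=d_{\Znm}(v-u,0)\le\ecc_{\Znm}(0)$ and hence $\diam(\Ynm)\le\ecc_{\Znm}(0)$. The content lies in the reverse inequality, which is exactly the surjectivity: I would fix an arbitrary $z\in\Znm$ and build a preimage $(v,u)$ explicitly. For each inner index $1\le i\le m$ I set $(v_i,u_i)$ to be $(1,0)$, $(0,0)$, or $(0,1)$ according to whether $z_i$ equals $1$, $0$, or $-1$; this realises each inner entry $z_i\in\{-1,0,1\}$ as a difference of two entries in $\{0,1\}$. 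For the buckets I put $u_0=0$, choose $u_{m+1}\equiv-\sum_{i=1}^m u_i\pmod n$ so that $u\in\Ynm$, and then set $v_0\equiv z_0$ and $v_{m+1}\equiv u_{m+1}+z_{m+1}\pmod n$.

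By construction $v-u=z$, so what remains is to check that $v\in\Ynm$, i.e. that $\sum_{i=0}^{m+1}v_i\equiv0\pmod n$. A direct computation gives
$$\sum_{i=0}^{m+1}v_i\equiv z_0+z_{m+1}+\sum_{i=1}^m(v_i-u_i)=\sum_{i=0}^{m+1}z_i\pmod n,$$
which vanishes precisely because $z$ is a vertex of $\Znm$. Thus $v,u\in\Ynm$ with $v-u=z$, and Lemma \ref{lem:dist_preserved_under_phi} yields $d_{\Znm}(z,0)=d_{\Ynm}(v,u)\le\diam(\Ynm)$; taking the maximum over $z$ gives $\ecc_{\Znm}(0)\le\diam(\Ynm)$, completing the argument. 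The point I would be most careful about is this final consistency check: the surjectivity is not merely a counting statement, since the global mod-$n$ congruence defining $\Znm$ is exactly what guarantees that the lifted vertex $v$ lands back inside $\Ynm$. Everything metric has already been absorbed into Lemma \ref{lem:dist_preserved_under_phi}, so the theorem itself is a short algebraic lift once surjectivity is secured.
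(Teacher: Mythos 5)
Your proof is correct and takes essentially the same route as the paper: both inequalities are deduced from Lemma \ref{lem:dist_preserved_under_phi}, with the reverse inequality reduced to surjectivity of the difference map $(v,u)\mapsto v-u$ from $\Ynm\times\Ynm$ onto $\Znm$. The only difference is that the paper asserts this surjectivity without proof, whereas you construct an explicit preimage and verify the mod-$n$ consistency; that construction is correct.
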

\begin{proof}
	By Lemma \ref{lem:dist_preserved_under_phi}, $d_{\Ynm}(v,u)=d_{\Znm}(v-u,0)\leq \ecc_{\Znm}(0)$ for each pair of vertices $v$ and $u$ in $\Ynm$.
    Therefore, $\diam(\Ynm) \leq \ecc_{\Znm}(0)$.
	On the other hand, for every $z\in\Znm$ there exist $v, u\in\Ynm$ such that $v-u=z$.
	By Lemma \ref{lem:dist_preserved_under_phi}, $d_{\Znm}(z,0) = d_{\Znm}(v-u,0) = d_{\Ynm}(v,u) \leq \diam(\Ynm)$, and therefore $\ecc_{\Znm}(0) \leq \diam(\Ynm)$. This proves equality.
\end{proof}

\section{The Eccentricity of $0$ in $\Znm$}\label{sec:ecc_zero_Znm}
In this section, we compute the eccentricity of $0$ in $\Znm$  (Theorem \ref{thm:ecc_znm}). 
It turns out to be equal to the eccentricity of $0$ in $\Ynm$ as it appears in Theorem \ref{thm:ynm_ecc}.
By Theorem \ref{thm:ecc_eq_diam}, it is also equal to the diameter of $\Ynm$. 

In Subsection \ref{subsec:pivot_paths_Znm}, we study pivot paths and some related concepts. 
Note that many of the definitions that appear in this section are similar to the ones in Chapter \ref{chp:ecc_zero_Ynm}, where these notions were applied to compute the eccentricity of $0$ in Yoke graphs.
Here, these definitions are applied to dYoke graphs and the proofs are slightly more complex due to the richer structure of dYoke graphs. In Subsection \ref{subsec:n_leq_m_Znm}, we compute the eccentricity of $0$ in $\Znm$.

\begin{observation}\label{obs:corner_case_meqz}
    If $m=0$, then $\Znm[n][0]$ is a cycle graph on $n$ vertices.
    Therefore, $\ecc_{\Znm[n][0]}(0)=\lfloor\frac{n}{2}\rfloor=\lfloor\frac{n(m+1)}{2}\rfloor$.
\end{observation}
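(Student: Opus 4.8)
The plan is to identify $\Znm[n][0]$ explicitly as the cycle graph $C_n$ and then read off the eccentricity of $0$ directly from that description.

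First I would pin down the vertex set. When $m=0$ a vertex of $\Znm[n][0]$ is a pair $(u_0,u_1)\in\mathbb{Z}_n\times\mathbb{Z}_n$ (the factor $Q_3^m=Q_3^0$ is trivial) subject to the single constraint $u_0+u_1\equiv 0\pmod n$. Thus $u_1$ is forced by $u_0$, and the $n$ vertices are in bijection with $\mathbb{Z}_n$ via $u\mapsto u_0$. Next I would describe the edges: the only available index is $i=0$, and the adjacency relation of Definition \ref{def:dyoke_graph} connects $(u_0,u_1)$ to $(u_0+1,u_1-1)$ and to $(u_0-1,u_1+1)$. Under the identification with $\mathbb{Z}_n$ these are exactly the maps $u_0\mapsto u_0\pm1$, so $\Znm[n][0]$ is the Cayley graph $X(\mathbb{Z}_n,\{\pm1\})$, i.e. the cycle on $n$ vertices. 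This is consistent with the remark preceding Definition \ref{def:dyoke_graph} that $\overleftarrow{s}_0$ and $\overrightarrow{s}_0$ are mutual inverses and that $\Fnm[n][0]$ generates $\mathbb{Z}_n$.

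Finally I would compute the eccentricity. In $C_n$ the distance from $0$ to the vertex labelled $k\in\mathbb{Z}_n$ is $\min\{k,\,n-k\}$, whose maximum over $k$ is $\lfloor\frac{n}{2}\rfloor$; hence $\ecc_{\Znm[n][0]}(0)=\lfloor\frac{n}{2}\rfloor$. Since $m=0$ gives $\lfloor\frac{n(m+1)}{2}\rfloor=\lfloor\frac{n}{2}\rfloor$, the claimed identity follows immediately.

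I expect no real obstacle here: the statement is elementary once the graph is recognized as a cycle. The only point deserving a word of care is the degenerate behaviour for $n\leq 2$, where the two shifts out of a vertex may coincide and the ``cycle'' collapses to a single vertex ($n=1$) or a single edge ($n=2$); in these cases the phrase ``cycle graph on $n$ vertices'' is to be read loosely, but the distance computation and hence the value $\lfloor\frac{n}{2}\rfloor$ remain correct.
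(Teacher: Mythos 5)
Your proposal is correct and matches the reasoning the paper leaves implicit: the observation is stated without proof, resting on the identification of a vertex with its first entry $u_0$ (the remark preceding the definition of $\Fnm[n][0]$ already notes that $\overleftarrow{s}_0,\overrightarrow{s}_0$ are mutually inverse and generate $\mathbb{Z}_n$), which is exactly your Cayley-graph argument. Your extra care with the degenerate cases $n\leq 2$ is a sensible addition but does not change the approach or the conclusion.
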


In the rest of this section, unless explicitly stated otherwise, we assume that $m>0$.

\subsection{Pivots and Walls}\label{subsec:pivot_paths_Znm}

\begin{definition}[\sc Pivot]
	A \textbf{pivot} of a vertex $v\in\Znm$ is an integer $-1\leq p\leq m+1$ such that $\sum_{i=0}^{p}v_i$ is divisible by $n$. 
	We call $-1$ and $m+1$ the \textbf{outer pivots} of $v$; they are pivots of every $v\in\Znm$. Every other pivot, if it exists, is called an \textbf{inner pivot}.
	Denote the set of pivots of $v$ by $\piv(v)$.
\end{definition}

For example, in $\Znm[3][8]$, $\piv((0,1,-1,0,1,1,-1,-1,1,2))=\{-1,0,2,3,7,9\}$.
\begin{definition}[\sc Wall]
	Let $P$ be a path from $v$ to $0$ in $\Znm$ and let $f_d\cdots f_1$ be the word corresponding to $P$.
	An \textbf{inner wall} of $P$ is an integer $0\leq p\leq m$ such that $s_p$ does not appear in $f_d\cdots f_1$.
	$-1$ is a \textbf{left outer wall} of $P$ if $\overleftarrow{s}_0$ does not appear in $f_d\cdots f_1$. 
	Similarly, $m+1$ is a \textbf{right outer wall} of $P$ if $\overrightarrow{s}_m$ does not appear in $f_d\cdots f_1$.
	Finally, we say that $-1\leq p\leq m+1$ is a \textbf{wall} of $P$ if it is either an inner wall or an outer wall of $P$.
\end{definition}

Clearly, paths from $v$ to $0$ with a wall $p$ exist if and only if $p$ is a pivot of $v$.

\begin{definition}[\sc Pivot Path]
    Let $p$ be a pivot of a vertex $v$ in $\Znm$.
	We say that a path $P$ from $v$ to $0$ is a \textbf{$p$-pivot path} of $v$, if $P$ is shortest among the paths from $v$ to $0$ with a wall $p$.
	Denote the length of a $p$-pivot path of $v$ by $\ps_p(v)$.
    We say that $P$ is a \textbf{pivot path}, if it is a $p$-pivot path for some $p$. 
\end{definition}

For example, let $v=(1,0,-1,-1,1)$ in $\Znm[3][3]$. 
Then $\overleftarrow{s}_3\overrightarrow{s}_0\overrightarrow{s}_1$ is a word corresponding to a $2$-pivot path $P$ of $v$: $P=(v=(1,0,-1\,\|-1,1)\sim (1,-1,0\,\|-1,1)\sim (0,0,0\,\|-1,1)\sim (0,0,0\,\|\,0,0)=0)$ where we added the symbol $\|$ to visualize the wall $2$ of $P$.

\begin{lemma}\label{lem:geodesic_is_a_pivot_path}
	Every geodesic in $\Znm$ ending at $0$ is a pivot path (namely, has a wall).
\end{lemma}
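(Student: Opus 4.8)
The plan is to mirror the proof of Lemma \ref{lem:geodesic_is_a_pivot_path_ynm} for Yoke graphs, observing that the only point where the argument uses the structure of the underlying set survives the passage from $P_2$ to $Q_3$. Let $w = f_d\cdots f_1$ be the word corresponding to a geodesic $P$ from some vertex $v$ to $0$ in $\Znm$, and assume for contradiction that $P$ has no wall. Since $P$ has no outer wall, both $\overleftarrow{s}_0$ and $\overrightarrow{s}_m$ appear in $w$. Since $P$ has no inner wall, every $s_i$ (for $0\leq i\leq m$) appears in $w$; combining this with the Shift Direction Lemma \ref{lem:shift_direction_lemma_Znm}, I conclude that for each $0\leq i\leq m$ exactly one of $\overleftarrow{s}_i$, $\overrightarrow{s}_i$ appears, and it appears at least once.

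Next I would locate the transition point. The sequence of shift directions starts with $\overleftarrow{s}_0$ (a leftward shift) and ends with $\overrightarrow{s}_m$ (a rightward shift), so let $1\leq i\leq m$ be minimal with $\overrightarrow{s}_i$ appearing in $w$. By minimality of $i$ and the fact that $\overleftarrow{s}_0$ occurs, the index $i-1$ shifts leftward, i.e.\ $\overleftarrow{s}_{i-1}$ appears in $w$.

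The crux is then a counting argument on the $i$th entry. Recalling the conventions of Definition \ref{def:dyoke_graph}, the operations that \emph{decrease} entry $i$ are $\overleftarrow{s}_{i-1}$ (a unit leaves entry $i$ to the left) and $\overrightarrow{s}_i$ (a unit leaves entry $i$ to the right), while the operations that \emph{increase} entry $i$ are $\overrightarrow{s}_{i-1}$ and $\overleftarrow{s}_i$. By the choice of $i$ and the Shift Direction Lemma, both decreasing operations appear in $w$, whereas neither increasing operation does (since $\overrightarrow{s}_{i-1}$ is excluded by $\overleftarrow{s}_{i-1}$ appearing, and $\overleftarrow{s}_i$ is excluded by $\overrightarrow{s}_i$ appearing). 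Hence, along $P$, entry $i$ is strictly decreased at least twice and never increased, so its net change from $v_i$ to $0$ is at most $-2$, forcing $v_i\geq 2$.

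This is exactly where the richer structure of dYoke graphs must be handled, and it is the step I expect to require the most care: in the Yoke case one used $v_i\in\{0,1\}$, whereas here $v_i$ ranges over $Q_3=\{-1,0,1\}$. The argument nonetheless goes through unchanged, because the maximal value of a middle entry is still $1$, so $v_i\geq 2$ is impossible. This contradiction shows that $P$ must have a wall, and therefore (since walls correspond to pivots, as noted just before the lemma) every geodesic ending at $0$ is a pivot path.
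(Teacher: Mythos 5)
Your proof is correct and follows essentially the same route as the paper's: assume no wall, use the Shift Direction Lemma \ref{lem:shift_direction_lemma_Znm} to pick the minimal index $i$ at which the shift direction turns rightward, and derive a contradiction by counting units leaving entry $i$ (at least two out, none in, yet the entry starts in $Q_3$ and ends at $0$). Your explicit observation that the contradiction rests on middle entries being bounded above by $1$ --- so $v_i\geq 2$ is impossible even though $Q_3$ is larger than $P_2$ --- is precisely the point the paper's terser wording leaves implicit.
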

\begin{proof}
	Let $w=f_d\cdots f_1$ be the word corresponding to a geodesic $P$ in $\Znm$ from $v$ to $0$.
	Assume to the contrary that $P$ has no walls.
    Therefore, both $\overleftarrow{s}_0$ and $\overrightarrow{s}_m$ appear in $w$ (since $P$ has no outer walls).
	By the Shift Direction Lemma \ref{lem:shift_direction_lemma_Znm}, and since $P$ has no walls, exactly one of $\{\overleftarrow{s}_i, \overrightarrow{s}_i\}$ appears in $w$ (at least once) for every $0\leq i\leq m$.
    Let $1\leq i\leq m$ be minimal such that $\overrightarrow{s}_i$ appears in $w$.
    Then both $\overleftarrow{s}_{i-1}$ and $\overrightarrow{s}_{i}$ appear in $w$. 
    This is a contradiction, since it implies that, throughout $P$, at least two units are shifted from the entry $v_i$ and no units are shifted to $v_i$.
\end{proof}

\begin{observation}\label{obs:closer_pivot_is_better_znm} By Lemma \ref{lem:geodesic_is_a_pivot_path},
    $$\ecc_{\Znm}(0)=\max_vd(v,0)=\max_v\min_p\ps_p(v).$$
\end{observation}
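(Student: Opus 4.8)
The plan is to unwind the two equalities separately. The first, $\ecc_{\Znm}(0) = \max_v d(v,0)$, is nothing more than the definition of eccentricity applied to the vertex $0$: the eccentricity of a vertex is the maximal distance from it to any other vertex of the graph. So this equality requires no argument beyond citing the definition of $\ecc$.

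For the second equality it suffices to establish, for each fixed vertex $v \in \Znm$, the pointwise identity $d(v,0) = \min_p \ps_p(v)$, where $p$ ranges over $\piv(v)$; taking the maximum over $v$ then yields the claim. First I would prove the inequality $\min_p \ps_p(v) \ge d(v,0)$: by definition $\ps_p(v)$ is the length of a shortest path from $v$ to $0$ constrained to have a wall at $p$, so each $\ps_p(v)$ is the length of \emph{some} path from $v$ to $0$, and every such length is at least the distance $d(v,0)$.

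For the reverse inequality I would invoke Lemma \ref{lem:geodesic_is_a_pivot_path}, which guarantees that any geodesic from $v$ to $0$ is a pivot path, i.e.\ possesses a wall $p^{*} \in \piv(v)$. Fixing such a geodesic $P^{*}$, its length equals $d(v,0)$ and, since $P^{*}$ is a path from $v$ to $0$ with wall $p^{*}$, we have $\ps_{p^{*}}(v) \le \len(P^{*}) = d(v,0)$, whence $\min_p \ps_p(v) \le d(v,0)$. Combining the two inequalities gives $d(v,0) = \min_p \ps_p(v)$, and maximizing over $v$ completes the argument.

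The statement is essentially immediate once Lemma \ref{lem:geodesic_is_a_pivot_path} is in hand, so there is no genuine obstacle at the level of this observation itself; all of the real work has been relegated to that lemma (and, behind it, to the Shift Direction Lemma \ref{lem:shift_direction_lemma_Znm}). The only point requiring slight care is keeping straight the direction of the two inequalities: constraining paths to have a wall can only increase their minimal length, while the existence of a walled geodesic shows that this constrained minimum is not strictly larger than $d(v,0)$.
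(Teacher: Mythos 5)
Your proof is correct and takes exactly the approach the paper intends: the observation is stated with no argument beyond the citation of Lemma \ref{lem:geodesic_is_a_pivot_path}, and your two-inequality unwinding (constrained paths are at least as long as geodesics; a geodesic is itself a walled path by the lemma) is precisely what that citation compresses. Nothing is missing.
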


\begin{definition}
    Let $v\in\Znm$ and $p\in\piv(v)$.
    If (one, equivalently all) $p$-pivot paths of $v$ are also geodesics between $v$ and $0$, then we say that $p$ is a \textbf{geodesic pivot} of $v$.
\end{definition}

The Shift Direction Lemma \ref{lem:shift_direction_lemma_Znm} deals with geodesics, or equivalently (by Lemma \ref{lem:geodesic_is_a_pivot_path}) with $p$-pivot paths for geodesic pivots $p$.
It can be extended to arbitrary pivot paths, with essentially the same proof.

\begin{lemma}[\sc Pivot Shift Direction Lemma]\label{lem:pivot_shift_direction_lemma}
    Let $f_d\cdots f_1$ be the word corresponding to a pivot path in $\Znm$.
    For every $0\leq i\leq m$, all of the instances of $s_i$ in $f_d\cdots f_1$ shift in the same direction.
\end{lemma}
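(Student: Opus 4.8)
The plan is to re-run the proof of the Shift Direction Lemma \ref{lem:shift_direction_lemma_Znm} almost verbatim, with the single change that minimality of length is measured \emph{within the class of paths carrying a fixed wall} rather than among all paths. Concretely, I would fix a vertex $v\in\Znm$, a pivot $p\in\piv(v)$, and suppose toward a contradiction that some word corresponding to a $p$-pivot path of $v$ violates the conclusion. Let $\mathcal{W}$ be the set of all such offending words; each $w=f_d\cdots f_1\in\mathcal{W}$ admits an index $0\le i\le m$ together with positions $t_1,t_2$ for which $f_{t_1}=\overrightarrow{s}_i$ and $f_{t_2}=\overleftarrow{s}_i$. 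I would choose $w\in\mathcal{W}$ minimizing the least such gap $\min\{|t_2-t_1|\}$, assuming $t_1<t_2$. The observation that legitimizes every manipulation below is that, because $p$ is a wall of the path, the generator $s_p$ occurs nowhere in $w$; hence $i\ne p$, and neither deleting copies of $s_i$ nor transposing copies of $s_j$ with $j\ne p$ can make $s_p$ reappear, so the wall $p$ is preserved by all operations.

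Next I would carry out the two reductions exactly as in \ref{lem:shift_direction_lemma_Znm}. If $t_2=t_1+1$, deleting both $f_{t_1}$ and $f_{t_2}$ yields a strictly shorter word still corresponding to a path from $v$ to $0$ and still carrying the wall $p$; this contradicts the minimality of $\ps_p(v)$ among paths with wall $p$ (in contrast to \ref{lem:shift_direction_lemma_Znm}, where the analogous deletion contradicts global geodesicity). So $t_2-t_1>1$, and I would inspect $f_{t_1+1}$. If $f_{t_1+1}=s_j$ with $|i-j|>1$ then $f_{t_1}$ and $f_{t_1+1}$ commute as functions, while if $f_{t_1+1}\in\{\overleftarrow{s}_{i-1},\overleftarrow{s}_{i+1}\}$ they relatively commute in $f_d\cdots f_1(v)$; either way, transposing them gives a word of the same length, with the same multiset of letters (hence the same wall $p$) and still corresponding to a path from $v$ to $0$. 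That word lies in $\mathcal{W}$ with a strictly smaller gap, contradicting the choice of $w$. Thus $f_{t_1+1}\in\{\overrightarrow{s}_{i-1},\overrightarrow{s}_{i+1}\}$.

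Finally I would run the concluding step on $v'=f_{t_1+1}\cdots f_1(v)$. If $f_{t_1+1}=\overrightarrow{s}_{i-1}$, then $v'_i=1$ (otherwise $f_{t_1}$ and $f_{t_1+1}$ relatively commute and the transposition reduction of the previous step applies), so between positions $t_1+1$ and $t_2$ some $f_k$ must equal $\overleftarrow{s}_{i-1}$ or $\overrightarrow{s}_i$ in order for $f_{t_2}=\overleftarrow{s}_i$ to be realizable; this is a same-index conflict of gap smaller than $t_2-t_1$. The case $f_{t_1+1}=\overrightarrow{s}_{i+1}$ is symmetric, with $v'_{i+1}=-1$. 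Both contradict minimality, forcing $\mathcal{W}=\emptyset$ and proving the lemma.

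The only conceptual difference from \ref{lem:shift_direction_lemma_Znm} is the wall-preservation bookkeeping, and since this reduces to the trivial fact $i\ne p$, I do not anticipate any genuine obstacle: the task is essentially to transcribe the geodesic argument into the wall-constrained setting, taking care that the deletion step now contradicts the minimality of $\ps_p(v)$ among paths with wall $p$ rather than the minimality of a global distance.
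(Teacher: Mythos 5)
Your proof is correct and takes exactly the approach the paper intends: the paper's own ``proof'' of this lemma is just the remark that the Shift Direction Lemma \ref{lem:shift_direction_lemma_Znm} extends to arbitrary pivot paths ``with essentially the same proof.'' Your write-up supplies precisely the needed adaptation --- the deletion and transposition operations never introduce new letters, so the wall $p$ is preserved, and the deletion step now contradicts minimality of $\ps_p(v)$ among wall-$p$ paths rather than global geodesicity.
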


\begin{observation}\label{obs:trivial_path}
    Let $v\in\Znm$ and let $p\in\piv(v)$. Then
    $$\ps_p(v)\leq \sum_{i=1}^{p}i|v_i| + \sum_{i=p+1}^{m}(m+1-i)|v_i|.$$
    If, in addition, $v_i\geq 0$ for every $1\leq i\leq m$, then the above is an equality.
\end{observation}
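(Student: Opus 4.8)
The statement has two parts: a general upper bound on $\ps_p(v)$, and the assertion that it is attained once all inner entries are non-negative. I would establish the upper bound by exhibiting an explicit path with wall $p$ of exactly the claimed length, and then prove a matching lower bound (in the non-negative case) by a conservation/flow-counting argument, treating an inner pivot $p$ as the main case (the outer cases being analogous).

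For the upper bound, fix $p\in\piv(v)$. The idea is to empty every inner entry to the left of the wall into the left bucket and every inner entry to the right of the wall into the right bucket, never crossing position $p$. Concretely, I would process the left positions $i=1,2,\dots,p$ in increasing order, sliding the content of $v_i$ (a single unit or hole, since $v_i\in Q_3$ forces $|v_i|\le 1$) leftward to the left bucket, and symmetrically process $i=m,m-1,\dots,p+1$, moving content rightward. The key point is realizability inside $Q_3$: when position $i$ is reached, positions $1,\dots,i-1$ have already been returned to $0$, so they serve as a ``conveyor'' on which a single traveling unit (or hole) occupies at most one entry at any instant, so no entry ever reaches $\pm 2$; each clearing then costs exactly $i$ shifts when $v_i\ne 0$ and $0$ otherwise, giving total length $\sum_{i=1}^{p}i|v_i|+\sum_{i=p+1}^{m}(m+1-i)|v_i|$. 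This path has wall $p$, since $s_p$ is never applied, and it ends at $0$: by Convention \ref{conv:cosets_are_integers} and the pivot condition the left bucket finishes holding $\sum_{i=0}^{p}v_i\equiv 0\ (\bmod\ n)$, hence $0$, and the right bucket holds $\sum_{i=p+1}^{m+1}v_i\equiv 0\ (\bmod\ n)$, hence $0$ as well.

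For the equality statement, assume $v_i\ge 0$ (so $v_i\in\{0,1\}$) for all $1\le i\le m$. The upper bound already gives ``$\le$'' with $|v_i|=v_i$, so it suffices to prove the reverse inequality for every path $P$ from $v$ to $0$ with wall $p$. For $0\le i\le p-1$ let $c_i$ be the number of times $s_i$ occurs in $P$. The block of positions $\{i+1,\dots,p\}$ can exchange content with the rest only across boundary $i$, because the wall at $p$ forbids any crossing between positions $p$ and $p+1$; hence the net leftward flow across boundary $i$ equals the non-negative initial content $\sum_{j=i+1}^{p}v_j$. Since the total number of crossings is at least the absolute net flow, $c_i\ge\sum_{j=i+1}^{p}v_j$, and summing and reversing the order of summation gives $\sum_{i=0}^{p-1}c_i\ge\sum_{j=1}^{p}j\,v_j$. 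The symmetric estimate on the right yields $\sum_{j=p+1}^{m}(m+1-j)v_j$, and together these match the upper bound, forcing equality.

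The main obstacle I expect is the realizability of the upper-bound construction: unlike in Yoke graphs the entries here may be negative, and the naive ``move each unit straight to its bucket'' recipe can momentarily push an entry to $\pm 2$, which is not a vertex of $\Znm$. The increasing/decreasing processing order, which keeps already-cleared positions available as a conveyor, is precisely what prevents this, and the delicate verification is that holes (value $-1$) are transported by the mirror-image sequence of shifts and that the buckets, living in $\mathbb{Z}_n$, absorb content with no constraint. I would also note that the direction half of the Pivot Shift Direction Lemma \ref{lem:pivot_shift_direction_lemma} is not needed for the lower bound, since the inequality $c_i\ge|\text{net flow}|$ holds for \emph{every} path, and that the outer-pivot cases $p\in\{-1,m+1\}$ follow from the same construction and counting by reading the empty sum in the formula as sending all content to a single bucket.
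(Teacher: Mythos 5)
Your proposal is correct, and its two halves line up differently against the paper. The upper bound is essentially the paper's own argument: the paper also constructs a path with wall $p$ that handles the entries on each side of the wall one by one, sending each $v_i$ to the bucket on its side at cost $i$ (resp.\ $m+1-i$) shifts; your ``conveyor'' discussion of why no entry ever leaves $Q_3$ is a welcome elaboration of a point the paper leaves implicit. The equality half is where you genuinely diverge. The paper gets equality by invoking the Pivot Shift Direction Lemma \ref{lem:pivot_shift_direction_lemma}: in a $p$-pivot path all instances of each $s_i$ shift the same way, so when every inner entry is non-negative the number of $s_i$-shifts equals the net flow and the constructed path is already a $p$-pivot path. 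You instead prove an unconditional lower bound valid for \emph{every} path with wall $p$, via the conservation estimate $c_i\geq\bigl|\sum_{j=i+1}^{p}v_j\bigr|$, which with non-negative entries telescopes to the claimed formula. Your route is more elementary and self-contained (it never uses the geodesic structure of pivot paths), while the paper's is shorter given that the direction lemma has already been established and reuses it as the organizing principle of the whole section. One small caveat in your version: for the outer pivots $p\in\{-1,m+1\}$ the block containing a bucket is only conserved modulo $n$, so ``net flow equals initial content'' needs the extra observation that the crossing count at the outer boundary is non-negative and that buckets are identified with their smallest non-negative representatives (Convention \ref{conv:cosets_are_integers}); with that noted, your ``analogous'' outer cases do go through.
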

\begin{proof}
There exists a path with a wall $p$ from $v$ to $0$ of length $\sum_{i=1}^{p}i|v_i| + \sum_{i=p+1}^{m}(m+1-i)|v_i|$ (not necessarily a $p$-pivot path). 
It is a path in which the entries $\{v_0,\dots,v_p\}$ and $\{v_{p+1}, \dots, v_{m+1}\}$ of $v$ are handled one by one, left to right and right to left, respectively; each $v_i$, in turn, can be set to $0$ by $i$ unit shifts from (if $v_i=-1$) or to (if $v_i=1$) the bucket on the same side of $p$ as $v_i$.

For example, if $v=(0,-1,1,0)$ in $\Znm[n][2]$ for some $n\geq 1$ and $p=2$, then the path from $v$ to $0$ that corresponds to the word $\overleftarrow{s}_0\overleftarrow{s}_1\overrightarrow{s}_0$ has length $1+2=3$, as above.
Note that it is not a $2$-pivot path, since $\overleftarrow{s}_1(v)=0$.

If, however, $v_i\geq 0$ for every $1\leq i\leq m$, then by the Pivot Shift Direction Lemma \ref{lem:pivot_shift_direction_lemma}, a path as above is, in fact, a $p$-pivot path.
\end{proof}

\begin{corollary}\label{cor:trivial_upper_bound}
    If $p$ is an inner pivot of $v\in\Znm$, then
    $$\ps_p(v)\leq \binom{p+1}{2}+\binom{m-p+1}{2}.$$
\end{corollary}

Comparing Corollary \ref{cor:pivot_path_len} and Observation \ref{obs:trivial_path} leads to the following corollary:

\begin{corollary}\label{cor:dist_equal_when_positive}
    Let $v\in \Ynm$, considered also as a vertex in $\Znm$.
    Then $$d_{\Znm}(v,0)=d_{\Ynm}(v,0).$$
\end{corollary}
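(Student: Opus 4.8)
The statement to prove is Corollary~\ref{cor:dist_equal_when_positive}: for $v\in\Ynm$ viewed also as a vertex of $\Znm$, we have $d_{\Znm}(v,0)=d_{\Ynm}(v,0)$.

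The key observation is that $v\in\Ynm$ means every entry $v_i$ is non-negative (indeed $v_i\in\{0,1\}$ for $1\le i\le m$, and the buckets are non-negative). This is exactly the hypothesis under which both the Yoke and dYoke distance formulas collapse to the same quantity. So my plan is to show both distances equal $\min_p\ps_p(v)$ computed via the common formula $\sum_{i=1}^p i v_i+\sum_{i=p+1}^m(m+1-i)v_i$.

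First I would invoke the structural results already established on each side. On the Yoke side, Corollary~\ref{cor:pivot_path_len} gives the exact value of $\ps_p(v)$ for every pivot $p$, and Observation~\ref{obs:closer_pivot_is_better} together with Lemma~\ref{lem:geodesic_is_a_pivot_path_ynm} gives $d_{\Ynm}(v,0)=\min_p\ps_p^{\Ynm}(v)$. On the dYoke side, Lemma~\ref{lem:geodesic_is_a_pivot_path} gives $d_{\Znm}(v,0)=\min_p\ps_p^{\Znm}(v)$, and Observation~\ref{obs:trivial_path} gives the value of $\ps_p^{\Znm}(v)$; crucially, Observation~\ref{obs:trivial_path} states that when $v_i\ge 0$ for every $1\le i\le m$, the upper bound becomes an equality, namely $\ps_p^{\Znm}(v)=\sum_{i=1}^p i|v_i|+\sum_{i=p+1}^m(m+1-i)|v_i|$.

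The second step is simply to compare the two formulas. Since $v\in\Ynm$, all $v_i\ge 0$, so $|v_i|=v_i$, and the dYoke formula $\sum_{i=1}^p i|v_i|+\sum_{i=p+1}^m(m+1-i)|v_i|$ coincides with the Yoke formula $\sum_{i=1}^p i v_i+\sum_{i=p+1}^m(m+1-i)v_i$ of Corollary~\ref{cor:pivot_path_len}, term by term. Moreover the set $\piv(v)$ is defined identically in both settings (it depends only on the partial sums $\sum_{i=0}^p v_i \bmod n$, which are the same whether $v$ is regarded in $\Ynm$ or $\Znm$). Hence $\ps_p^{\Ynm}(v)=\ps_p^{\Znm}(v)$ for every $p\in\piv(v)$, and taking the minimum over $p$ yields $d_{\Ynm}(v,0)=\min_p\ps_p^{\Ynm}(v)=\min_p\ps_p^{\Znm}(v)=d_{\Znm}(v,0)$.

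I do not anticipate a serious obstacle here, since the corollary is essentially a bookkeeping consequence of two earlier results that were deliberately set up in parallel. The one point requiring a little care is making sure the equality clause of Observation~\ref{obs:trivial_path} is genuinely applicable: it requires $v_i\ge 0$ for all interior indices $1\le i\le m$, which is guaranteed by $v\in\Ynm$ (where interior entries lie in $P_2=\{0,1\}$), so the hypothesis is met. One should also note that $\Ynm$ embeds in $\Znm$ as the induced subgraph on vertices with no negative entries (as remarked before Observation~\ref{obs:embedding_Ynm_in_Znm}), so $v$ really is a legitimate vertex of $\Znm$ and the comparison is well posed. With these checks in place the identity follows immediately.
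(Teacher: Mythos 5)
Your proof is correct and takes essentially the same route as the paper: the paper obtains this corollary precisely by comparing Corollary \ref{cor:pivot_path_len} with the equality clause of Observation \ref{obs:trivial_path} (valid since all interior entries of $v\in\Ynm$ are non-negative), combined with the fact that in both graphs the distance to $0$ equals the minimum of the pivot path lengths over the common pivot set. You have merely spelled out the bookkeeping that the paper leaves implicit in its one-line justification.
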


\begin{definition}[\sc $P$-interval]
	Let $P$ be a pivot path of $v$ and let $\{p_1,\ldots,p_t\}$ with $-1<p_1<\ldots<p_t<m+1$ be the set of inner walls of $P$.
	Denote $p_0 = -1$, $p_{t+1} = m+1$ and $\piv(P) = \{p_0, p_1, \ldots, p_t, p_{t+1}\}$.
	Note that $\piv(P)\subseteq \piv(v)$.
	We call every $p\in\piv(P)$ a \textbf{pivot of $P$}.
	$\piv(P)$ induces $t+1$ intervals $[p_k + 1,p_{k+1}]$ for $-1\leq k\leq t$.
	Note that these intervals are a partition of $[0,m+1]$.
	We call every such interval a \textbf{$P$-interval}.
	For convenience, we say that an entry $v_i$ is in the interval $I$ if $i\in I$.
\end{definition}

\begin{lemma}\label{lem:same_direction_in_interval}
	Let $f_d\cdots f_1$ be the word corresponding to a pivot path $P$ of $v$ and let $I$ be a $P$-interval.
	Then for every $[i,i+1]\subseteq I$ and $[j,j+1]\subseteq I$, the instances of $s_i$ and $s_j$ in $f_d\cdots f_1$ shift in the same direction.
\end{lemma}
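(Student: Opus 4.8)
The plan is to show that within a single $P$-interval $I = [p_k+1, p_{k+1}]$, all of the shift operators $s_i$ (for $[i,i+1]\subseteq I$) move in a common direction. The key structural fact I would exploit is that the endpoints $p_k$ and $p_{k+1}$ of $I$ are walls of $P$, meaning neither $s_{p_k}$ nor $s_{p_{k+1}}$ appears in the word $f_d\cdots f_1$. Consequently, no units ever cross into or out of the interval $I$ along the path $P$: the total signed content $\sum_{i\in I} v_i$ is conserved, and more importantly the subword of $f_d\cdots f_1$ consisting of letters $s_i$ with $[i,i+1]\subseteq I$ acts entirely within the block of entries indexed by $I$, independently of everything outside.

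First I would invoke the Pivot Shift Direction Lemma \ref{lem:pivot_shift_direction_lemma}, which already guarantees that each individual $s_i$ appears with a single, consistent direction throughout the word. So the remaining task is to rule out the mixed situation where, say, $s_i$ always shifts right ($\overrightarrow{s}_i$) while $s_j$ always shifts left ($\overleftarrow{s}_j$) for two pairs $[i,i+1],[j,j+1]$ inside the same interval $I$. I would argue by a ``pivot in the middle'' contradiction: suppose such a mixed pair exists and, by choosing $i$ and $j$ appropriately (e.g. taking $i<j$ with $s_i$ rightward and $s_j$ leftward, and minimizing $j-i$), reduce to an adjacent conflict where for some index $\ell$ with $i\le \ell < j$ the operator to the left of position $\ell+1$ feeds units rightward while the operator to the right pulls units leftward. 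The accounting then forces $\sum_{t=p_k+1}^{\ell} v_t$ to be divisible by $n$ — because the net flow across position $\ell$ must vanish once the path returns everything to $0$ — which would make $\ell$ an inner pivot of $v$ lying strictly between the walls $p_k$ and $p_{k+1}$. Since the construction of a pivot path declares all inner pivots that are used to be walls, and $\ell$ is strictly interior to $I$, I would show $\ell$ qualifies as an admissible wall, contradicting that $I$ was a single $P$-interval with no walls in its interior.

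More concretely, I would track the cumulative shift: let $c_\ell$ denote the total number of units that cross the boundary between entry $\ell$ and entry $\ell+1$ over the course of $P$ (counted with sign according to direction). Because $v$ and $0$ agree outside their differences and the path begins at $v$ and ends at $0$, the signed crossing count at boundary $\ell$ equals $-\sum_{t=0}^{\ell} v_t$ modulo the conservation constraints; when $s_\ell$ and its neighbors shift in opposing directions with no net transfer, this quantity is forced to be $\equiv 0 \pmod n$, exhibiting $\ell$ as a pivot. The main obstacle I anticipate is making this crossing-count bookkeeping fully rigorous in the presence of the $-1$ entries allowed in $\Znm$: unlike the Yoke graph case, an entry can temporarily be decreased below $0$, so a naive ``units shifted'' count must be replaced by a signed net-flow argument, and I must be careful that the Pivot Shift Direction Lemma is what licenses collapsing each $s_i$ to a single direction before doing the flow analysis. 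Once the net-flow identity is established, the contradiction with minimality of the interval (i.e. the exhibited interior pivot being forced to be a wall) closes the proof.
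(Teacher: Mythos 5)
Your scaffolding matches the paper's: invoke the Pivot Shift Direction Lemma (Lemma \ref{lem:pivot_shift_direction_lemma}) so that each $s_i$ appears with a single direction, then rule out a mixed pair $\overrightarrow{s}_i$, $\overleftarrow{s}_j$ with $i<j$ in $I$ by contradiction. The gap is in the contradiction you actually derive. You conclude that some index $\ell$ strictly inside $I$ is an \emph{inner pivot of $v$}, and then claim this is impossible because ``the construction of a pivot path declares all inner pivots that are used to be walls.'' That implication is false: a $P$-interval is delimited by consecutive \emph{walls of $P$} (indices $p$ such that $s_p$ never appears in the word), and the inclusion $\piv(P)\subseteq\piv(v)$ can be strict --- a pivot path is only required to have a wall at its defining pivot, and may freely cross every other pivot of $v$. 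The case $n=1$ is the extreme counterexample: there \emph{every} index is a pivot of every vertex, yet pivot paths have nontrivial $P$-intervals, so exhibiting an interior pivot of $v$ contradicts nothing. Your justification for the vanishing flow is also not right: ``the net flow across position $\ell$ must vanish once the path returns everything to $0$'' is not a consequence of ending at $0$; ending at $0$ only yields the conservation identity $c_t-c_{t-1}=v_t$ at non-bucket entries, and typically every interior boundary carries nonzero net flow.

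Your flow bookkeeping can be assembled into a correct proof, but it must be routed through walls rather than pivots. Let $c_\ell$ denote the number of occurrences of $\overrightarrow{s}_\ell$ minus the number of occurrences of $\overleftarrow{s}_\ell$ in the word. By the Pivot Shift Direction Lemma all occurrences of $s_\ell$ shift the same way, so $c_\ell=0$ if and only if $s_\ell$ does not appear at all, i.e.\ if and only if $\ell$ is an inner wall of $P$; \emph{that}, not pivot-hood, is what contradicts $I$ being a single $P$-interval. Concretely, with $\overrightarrow{s}_i$ and $\overleftarrow{s}_j$ both appearing ($i<j$ in $I$) you have $c_i>0>c_j$, and since every entry strictly between them is a non-bucket entry, $c_t-c_{t-1}=v_t\in\{-1,0,1\}$ there; a discrete intermediate-value argument then yields some $\ell$ with $i<\ell<j$ and $c_\ell=0$, an interior wall --- contradiction. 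Alternatively, if you carry out your minimization of $j-i$ down to an adjacent conflict $j=i+1$, note that there is then no interior index left to play the role of $\ell$; the contradiction at adjacency is the paper's own counting argument in flow language: conservation at entry $i+1$ gives $v_{i+1}=c_{i+1}-c_i\leq -2$, impossible since $v_{i+1}\in\{-1,0,1\}$ (the entry caught between the two shifts only ever receives units, the mirror image of the paper's entry that only loses units). Either route closes the proof; as written, yours does not.
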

\begin{proof}
	By the Pivot Shift Direction Lemma \ref{lem:pivot_shift_direction_lemma} and the definition of a $P$-interval, exactly one of $\{\overleftarrow{s}_i, \overrightarrow{s}_i\}$ appears in $f_d\cdots f_1$, for every $[i,i+1]\subseteq I$.
	Assume to the contrary, without loss of generality, that both $\overleftarrow{s}_i$ and $\overrightarrow{s}_j$ appear in $f_d\cdots f_1$ for some $i<j$ in $I$.
	Let $k\in I$ such that $i<k$, $s_k$ appears shifting right in $f_d\cdots f_1$ and $k$ is minimal with these properties.
	Then both $\overleftarrow{s}_{k-1}$ and $\overrightarrow{s}_{k}$ appear in $f_d\cdots f_1$.
	A contradiction.
\end{proof}

\begin{definition}[\sc $\sigma_I(P)$]
	By Lemma \ref{lem:same_direction_in_interval}, all of the unit shifts between entries in the same $P$-interval $I$ are in the same direction throughout $P$. 
	If this direction is left (right), we say that \textbf{$P$ shifts left (right) in $I$}.
	Denote the number of unit shifts between entries in some $P$-interval $I\subseteq [0,m+1]$ within a pivot path $P$ by $\sigma_I(P)$.
\end{definition}

\begin{lemma}\label{lem:number_of_shift_in_a_p_interval}
	Let $I=[p_1+1,p_2]$ be a $P$-interval of some pivot path $P$ of $v\in\Znm$.
	If $I$ contains one of the buckets, then also assume that no step in $P$ shifts a unit from an empty bucket (i.e. $\overleftarrow{s}_m((\dots,0))=((\dots,n-1))$ and $\overrightarrow{s}_0((0,\dots))=((n-1,\dots))$ do not appear in $P$). Then
	
	\begin{enumerate}
		\item If $P$ shifts left in $I$, then $\sigma_I(P) = \sum_{i\in I}iv_i$.
		\item If $P$ shifts right in $I$, then $\sigma_I(P) = \sum_{i\in I}(m+1-i)v_i$.
	\end{enumerate}
\end{lemma}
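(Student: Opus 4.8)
The plan is to reduce to a single shift direction and then run a conservation-of-units bookkeeping argument inside $I$. By Lemma \ref{lem:same_direction_in_interval}, every shift $s_i$ with $[i,i+1]\subseteq I$ occurs in the same direction throughout $P$, and the two statements are interchanged by the order-reversing symmetry $i\mapsto m+1-i$ of $\Znm$ (which turns right shifts into left shifts and sends $\sum_{i\in I}(m+1-i)v_i$ to $\sum_{i\in I}iv_i$). So I would assume throughout that $P$ shifts right in $I$ and prove statement (2). For each $i$ with $p_1+1\le i\le p_2-1$, let $c_i$ be the number of occurrences of $\overrightarrow{s}_i$ in the word corresponding to $P$; these are exactly the shifts internal to $I$, so $\sigma_I(P)=\sum_{i=p_1+1}^{p_2-1}c_i$.

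The core step is the identity $c_i=\sum_{k=p_1+1}^{i}v_k$. I would obtain it from the net change of each entry of $I$ along $P$. Since $p_1$ and $p_2$ are walls, no unit crosses the boundary of $I$, so each interior entry $v_j$ (with $p_1+1\le j\le p_2$ and $1\le j\le m$) is increased only by $\overrightarrow{s}_{j-1}$ and decreased only by $\overrightarrow{s}_j$; as $v_j$ remains in $Q_3$ and runs from $v_j$ to $0$, its net change is exactly $-v_j$, yielding the recurrence $c_{j-1}-c_j=-v_j$. Telescoping from the left boundary then gives the formula once the starting value is fixed. If $p_1\ge 0$, the base value is $c_{p_1}=0$ because $p_1$ is an inner wall. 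If $p_1=-1$, the left bucket $v_0$ lies in $I$ and can only be emptied by $\overrightarrow{s}_0$; here I need the hypothesis that no step shifts a unit from an empty bucket, which forces $v_0$ to decrease monotonically from its representative to $0$, so $\overrightarrow{s}_0$ is applied exactly $v_0$ times and $c_0=v_0=\sum_{k=0}^{0}v_k$. Getting this base case right, and noting that the symmetric ``receiving'' bucket at $p_2=m+1$ needs no such hypothesis because its count $c_m$ is already pinned by the telescoping (and automatically consistent modulo $n$ via the pivot condition), is the main obstacle.

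Finally I would assemble the formula. Switching the order of summation gives $\sigma_I(P)=\sum_{i=p_1+1}^{p_2-1}\sum_{k=p_1+1}^{i}v_k=\sum_{k=p_1+1}^{p_2}(p_2-k)v_k$, where the $k=p_2$ term vanishes. It remains to replace the base point $p_2$ by $m+1$, i.e. to show $\sum_{k\in I}(p_2-k)v_k=\sum_{k\in I}(m+1-k)v_k$; the difference of the two sides is $(m+1-p_2)\sum_{k\in I}v_k$. If $p_2=m+1$ this is zero. Otherwise $p_2\le m$, so $v_{p_2}$ is an interior entry, and combining its recurrence $c_{p_2-1}-c_{p_2}=-v_{p_2}$ with the wall condition $c_{p_2}=0$ and the telescoped value $c_{p_2-1}=\sum_{k=p_1+1}^{p_2-1}v_k$ gives $\sum_{k\in I}v_k=0$, so the difference vanishes again. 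This proves statement (2), and statement (1) follows by the mirror argument with the roles of the two boundaries, and of $\sum_{i\in I}iv_i$ and $\sum_{i\in I}(m+1-i)v_i$, exchanged.
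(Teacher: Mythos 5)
Your proof is correct, but it reaches the lemma by a genuinely different route than the paper. The paper's proof is a one-stroke monovariant argument: assuming (say) $P$ shifts left in $I$, every left shift between entries of $I$ decreases the single quantity $\sum_{i\in I}iv_i$ by exactly $1$ (the empty-bucket hypothesis rules out the one move that would increase it), no other step of $P$ touches entries of $I$ because the endpoints of $I$ are walls, and this quantity ends at $0$; hence $\sigma_I(P)$ equals its initial value, with no further computation. You instead count shifts position by position: the net-flow recurrence $c_{j-1}-c_j=-v_j$, together with the wall base case $c_{p_1}=0$ (or the bucket base case $c_0=v_0$, which is exactly where the empty-bucket hypothesis enters), pins down each count $c_i=\sum_{k=p_1+1}^{i}v_k$, and an interchange of summation assembles the total. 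The trade-off is real: the paper's potential-function argument gets the absolute weights $i$ (resp.\ $m+1-i$) for free and needs no case split on whether $I$ ends at a bucket or at an inner wall, whereas your route requires the extra reconciliation $\sum_{k\in I}(p_2-k)v_k=\sum_{k\in I}(m+1-k)v_k$; in exchange you obtain strictly finer information, namely the exact per-position shift counts, the non-negativity of the partial sums $\sum_{k=p_1+1}^{i}v_k$ along $I$, and, as a byproduct, the identity $\sum_{i\in I}v_i=0$ when both ends of $I$ are inner walls. Your observation that the hypothesis is only needed for the bucket that units are shifted \emph{out of}, and not for the receiving bucket, correctly isolates the same asymmetry the paper's proof exploits.
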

\begin{proof}
	By Lemma \ref{lem:same_direction_in_interval}, $P$ shifts either right or left in $I$.
	Assume that $P$ shifts left in $I$ (the proof of the other case is similar).
	Note that every left unit shift in $P$ between entries in $I$ reduces the value of the sum $\sum_{i\in I}iv_i$ exactly by $1$, since a unit is never shifted left from an empty right bucket.
	Therefore $\sum_{i\in I}iv_i\geq 0$ and $\sigma_I(P) = \sum_{i\in I}iv_i$.
\end{proof}

\begin{lemma}\label{lem:sufficient_for_number_of_shifts}
	Let $I=[p_1+1,p_2]$ be a $P$-interval of some pivot path $P$ of $v\in\Znm$.
	\begin{enumerate}
		\item Assume that $p_2=m+1$ and that $P$ shifts left in $I$. If $\sum_{i=j}^{m+1}v_i\geq 0$ for every $j\in I$, then there is no left unit shift in $P$ from an empty right bucket.
		\item Assume that $p_1=-1$ and that $P$ shifts right in $I$. If $\sum_{i=0}^{j}v_i\geq 0$ for every $j\in I$, then there is no right unit shift in $P$ from an empty left bucket.
	\end{enumerate}
\end{lemma}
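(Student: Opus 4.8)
The plan is to prove part (1) in full and obtain part (2) from it by the left--right reflection $i\mapsto m+1-i$, which is an automorphism of $\Znm$ that interchanges the two buckets and reverses the two shift directions; under it, the hypothesis $\sum_{i=0}^{j}v_i\geq 0$ becomes $\sum_{i=j'}^{m+1}v_i\geq 0$ and an empty-left-bucket shift $\overrightarrow{s}_0$ becomes an empty-right-bucket shift $\overleftarrow{s}_m$. So it suffices to rule out an empty-right-bucket shift in $P$ when $p_2=m+1$ and $P$ shifts left in $I$. Writing $P=(v=x^0\sim\dots\sim x^d=0)$, the key object is the integer quantity $\Sigma(t)=\sum_{i=p_1+1}^{m+1}x^t_i$, computed with the standard representatives (buckets in $\{0,\dots,n-1\}$, middle entries in $\{-1,0,1\}$). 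When $p_1\geq 0$ it is an inner wall, so $s_{p_1}$ is absent and no unit crosses the left boundary of $I$; hence every ordinary shift inside $I$ leaves $\Sigma$ fixed. The only exceptions are the two bucket wraps: an empty-right-bucket step $\overleftarrow{s}_m$ sends the right bucket $0\mapsto n-1$ and raises $\Sigma$ by $n$, while a step $\overleftarrow{s}_0$ into a full left bucket (possible only if $p_1=-1$, when entry $0$ lies in $I$) sends $n-1\mapsto 0$ and lowers $\Sigma$ by $n$.

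First I would dispose of every case in which no full-left-bucket wrap occurs: this includes all $p_1\geq 0$, and also $p_1=-1$ when $-1$ is a genuine left outer wall (then $\overleftarrow{s}_0$ is absent, entry $0$ is inert, and I work with $\Sigma'=\sum_{i=1}^{m+1}x^t_i$ instead). In each such case $\Sigma$ (or $\Sigma'$) only ever jumps upward, by $n$ at each empty-right-bucket shift, so if $N$ counts these shifts then $0=\Sigma(d)=\Sigma(0)+nN$. The hypothesis, applied at $j=\max(p_1+1,1)\in I$, gives $\Sigma(0)\geq 0$, forcing $N=0$, which is exactly the assertion.

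The hard part is the remaining case, $p_1=-1$ with $\overleftarrow{s}_0$ present, since then $\Sigma$ receives both $+n$ and $-n$ jumps and the conservation argument collapses. Here I would switch to mass balance and the minimality built into the definition of a pivot path. Since $P$ runs entirely leftward on $I=[0,m+1]$, let $n_k$ be the number of $\overleftarrow{s}_k$ steps; balancing units at each middle entry gives $n_{k-1}=n_k+v_k$, hence $n_j=n_m+\sum_{i=j+1}^{m}v_i$, while emptying the right bucket forces $n_m=v_{m+1}+nN$, so $n_j=nN+\sum_{i=j+1}^{m+1}v_i$ for all $0\leq j\leq m$. The length of $P$ is $\sum_j n_j=(m+1)nN+\sum_j\sum_{i=j+1}^{m+1}v_i$, strictly increasing in $N$. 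In this case the only genuine wall of $P$ is $m+1$ (as $\overrightarrow{s}_m$ is absent but $\overleftarrow{s}_0$ is present), so $P$ is an $(m+1)$-pivot path. If $N\geq 1$, then the counts obtained by setting $N=0$ are all nonnegative, $n_j=\sum_{i=j+1}^{m+1}v_i\geq 0$ — this is precisely where the full hypothesis ``$\sum_{i=j}^{m+1}v_i\geq 0$ for every $j\in I$'' is used — and they realize (as in Observation~\ref{obs:trivial_path}) a valid leftward path from $v$ to $0$ that still avoids $\overrightarrow{s}_m$, hence keeps the wall $m+1$, yet is strictly shorter. This contradicts the minimality of the pivot path $P$, so $N=0$ here too.

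I expect the two fiddly points to be the realizability of the shorter $N=0$ path (scheduling the balanced nonnegative counts into an actually admissible sequence of leftward shifts) and the bookkeeping of which of $\{-1,m+1\}$ are genuine walls; the conceptual core, by contrast, is the single clean observation that an ordinary in-interval shift preserves $\Sigma$ whereas a bucket wrap changes it by exactly $\pm n$, together with $\Sigma(d)=0$ and the sign hypothesis on $\Sigma(0)$.
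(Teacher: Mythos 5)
Your overall architecture is sound, and your Case A (all $p_1\geq 0$, together with $p_1=-1$ when $-1$ is a genuine outer wall) is a genuinely different and in fact cleaner argument than the paper's: the paper works throughout with the weighted potential $\sum_{i\in I}iv_i$ and needs both a constructive claim and the minimality of $P$, whereas your unweighted conservation law (every admissible step preserves $\Sigma$ except an empty-right-bucket shift, which jumps it by $+n$) rules out wraps for \emph{any} path with the given wall structure, with no appeal to minimality at all. The reduction of part (2) to part (1) via $i\mapsto m+1-i$ is also fine and matches the paper's ``similar arguments.''

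The gap is in Case B ($p_1=-1$ with $\overleftarrow{s}_0$ present), and it is exactly the step you flagged: scheduling the counts $n'_j=\sum_{i=j+1}^{m+1}v_i$ into an admissible, purely leftward, wrap-free path. Your appeal to Observation \ref{obs:trivial_path} does not supply this: when $v$ has negative entries, the path constructed there fills each $v_i=-1$ by shifting units \emph{rightward} out of the left bucket, so it is not leftward, its shift counts are not the $n'_j$, and its length is $\sum_i i|v_i|=\sum_i iv_i+2\sum_{\{i:\,v_i=-1\}}i$, whose excess $2\sum_{\{i:\,v_i=-1\}}i$ need not be smaller than $(m+1)nN$; hence it need not be shorter than $P$ and yields no contradiction. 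What is missing is precisely the Claim that constitutes the heart of the paper's own proof: if all suffix sums $\sum_{i=j}^{m+1}v_i$ are nonnegative, then $v$ can be cleared using only \emph{simple} (non-wrapping) leftward shifts. The paper proves this by induction on the rightmost negative entry $j$: the hypothesis forces $\sum_{i=j+1}^{m+1}v_i>0$, so a unit strictly to the right of $j$ can be walked left into $j$ by simple shifts, creating no new negative entries and leaving all suffix sums at indices $\leq j$ unchanged; once no negative entries remain, clear leftmost-first into the left bucket and drain the right bucket last. (Alternatively, a greedy scheduling of your counts works: take $j$ maximal with remaining count positive, note the balance equations force $x_{j+1}\geq 0$, and descend to the first $j'\leq j$ with $x_{j'}\leq 0$ or $j'=0$ to find an admissible shift.) Until some such argument is supplied, Case B---which is the only place the full strength of the hypothesis is actually used in your scheme---remains unproved.
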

\begin{proof}
	We prove the first case and similar arguments apply to the second case.
	Note that a left unit shift from an empty right bucket increases $\sum_{i\in I}iv_i$. 
	On the other hand, every other left unit shift between entries in $I$ (we call such unit shifts \textit{simple} left unit shifts in the rest of this proof) reduces the value of the sum $\sum_{i\in I}iv_i$ exactly by $1$.
	Therefore, it is sufficient to prove the following.
	\begin{claim}
		It is possible to set to $0$ every $v_i$ in $I$ using only simple left unit shifts between entries in $I$.
	\end{claim}
	\begin{claimProof}
	Assume first that $I$ contains no negative entries.
	If $p_1=-1$, then the claim is trivial.
	If $p_1$ is an inner pivot, then $v_i=0$ for every $i\in I$, since, in this case, a left unit shift between entries in $I$ cannot decrease the sum $\sum_{i\in I}v_i$ (implying, in fact, that $v_{m+1}=0$ and $|I|=1$).

	Otherwise, let $j$ be maximal in $I$ such that $v_j=-1$.
    Of course, $1\leq j\leq m$.
	The assumption that $\sum_{i=j}^{m+1}v_i\geq 0$ implies that $\sum_{i=j+1}^{m+1}v_i > 0$.
	Therefore, it is possible to set $v_j$ to $0$ using only simple left unit shifts between entries in $[j,m+1]$ without creating new negative entries in $I$.
	Note that $\sum_{i=j}^{m+1}v_i$ does not change in this process, since all of the unit shifts are simple.
	Therefore, this can be repeated until $I$ contains no negative entries.
	\end{claimProof}
\end{proof}

\begin{lemma}\label{lem:no_pivots}
    Let $v\in\Znm$ such that $v$ has no inner pivots. 
    Then $d(v,0)\leq \lfloor\frac{n(m+1)}{2}\rfloor$.
\end{lemma}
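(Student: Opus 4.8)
The plan is to follow the structure of the Yoke-graph analogue, Lemma \ref{lem:no_pivots_ynm}, but to first extract from the hypothesis the extra information needed to cope with the negative entries that $\Znm$ permits. Write $S_p=\sum_{i=0}^{p}v_i$ for the partial sums, so that $p\in\piv(v)$ precisely when $n\mid S_p$. The first step is to show that the absence of inner pivots pins these sums down tightly. Since $v_1,\dots,v_m\in\{-1,0,1\}$, consecutive partial sums $S_0,S_1,\dots,S_m$ differ by at most $1$; moreover $S_0=v_0\in\{0,\dots,n-1\}$ is not divisible by $n$, so $0<S_0<n$. A sequence of integers taking unit steps and avoiding all multiples of $n$ cannot escape the open band $(0,n)$ once it lies there, so by induction $0<S_p<n$ for every $0\le p\le m$. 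Finally, $S_{m+1}\equiv 0\pmod n$ together with $S_{m+1}=S_m+v_{m+1}\in\{1,\dots,2n-2\}$ forces $S_{m+1}=n$; that is, $\sum_{i=0}^{m+1}v_i=n$ and every intermediate partial sum is strictly positive.

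Next, since $v$ has no inner pivots, $\piv(v)=\{-1,m+1\}$, so by Lemma \ref{lem:geodesic_is_a_pivot_path} and Observation \ref{obs:closer_pivot_is_better_znm} we have $d(v,0)=\min\{\ps_{-1}(v),\ps_{m+1}(v)\}$. The payoff of Step 1 is that the strict positivity of the partial sums lets me bound these two outer pivot paths \emph{without} the absolute values appearing in the general estimate of Observation \ref{obs:trivial_path}. Consider first the path that shifts every unit rightward (a path with wall $-1$). Because $\sum_{i=0}^{j}v_i=S_j\ge 0$ for every $j$, Lemma \ref{lem:sufficient_for_number_of_shifts}(2) certifies that no step shifts a unit out of an empty left bucket, so Lemma \ref{lem:number_of_shift_in_a_p_interval}(2) counts this path and yields $\ps_{-1}(v)\le\sum_{i=0}^{m}(m+1-i)v_i$. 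Symmetrically, since $\sum_{i=j}^{m+1}v_i=n-S_{j-1}\ge 1>0$ for every $j$, Lemma \ref{lem:sufficient_for_number_of_shifts}(1) and Lemma \ref{lem:number_of_shift_in_a_p_interval}(1) applied to the leftward-shifting path (a path with wall $m+1$) give $\ps_{m+1}(v)\le\sum_{i=1}^{m+1}iv_i$.

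It then remains only to add the two bounds: extending each sum over the full index range using the zero coefficients at $i=m+1$ and $i=0$ respectively,
$$\ps_{-1}(v)+\ps_{m+1}(v)\le\sum_{i=0}^{m}(m+1-i)v_i+\sum_{i=1}^{m+1}iv_i=(m+1)\sum_{i=0}^{m+1}v_i=n(m+1),$$
using $\sum v_i=n$ from Step 1. Hence $d(v,0)=\min\{\ps_{-1}(v),\ps_{m+1}(v)\}\le\big\lfloor(\ps_{-1}(v)+\ps_{m+1}(v))/2\big\rfloor\le\lfloor n(m+1)/2\rfloor$, as required. I expect the main obstacle to be Step 1 — correctly arguing that unit-step partial sums avoiding the multiples of $n$ stay trapped in a single band, and thereby deducing $\sum v_i=n$ — together with the bookkeeping in Step 2 where the positivity of \emph{every} partial sum is exactly what removes the absolute values of Observation \ref{obs:trivial_path} (via Lemmas \ref{lem:sufficient_for_number_of_shifts} and \ref{lem:number_of_shift_in_a_p_interval}). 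This positivity is the feature distinguishing the dYoke computation from the simpler Yoke-graph argument.
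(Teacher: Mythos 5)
Your proof is correct and follows essentially the same route as the paper's: both extract from the absence of inner pivots that $\sum_{i=0}^{m+1}v_i=n$ with all partial sums strictly positive, then apply Lemmas \ref{lem:number_of_shift_in_a_p_interval} and \ref{lem:sufficient_for_number_of_shifts} to the two outer pivot paths and conclude by averaging $\ps_{-1}(v)+\ps_{m+1}(v)=n(m+1)$. The only differences are cosmetic: you spell out the unit-step ``band'' argument for the partial sums (which the paper asserts without proof) and settle for inequalities on $\ps_{-1}(v)$ and $\ps_{m+1}(v)$ where the paper records equalities.
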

\begin{proof}
    $0<v_0,v_{m+1}<n$ and $\sum_{i=0}^{m+1}v_i = n$, since $v$ has no inner pivots.
	The assumption $\piv(v)=\{-1,m+1\}$ also implies that both $\sum_{i=0}^{j}v_i> 0$ and $\sum_{i=j}^{m+1}v_i> 0$ for every $0\leq j\leq m+1$.
    Note that $I=[0, m+1]$ is a $P$-interval for every $(m+1)$-pivot path $P$ and every such path shifts left in $I$.
    Similarly, every $(-1)$-pivot path $P$ shifts right in $I$. 
	Therefore, by Lemmas \ref{lem:number_of_shift_in_a_p_interval} and \ref{lem:sufficient_for_number_of_shifts}, $\ps_{m+1}(v)=\sum_{i=0}^{m+1}iv_i$ and $\ps_{-1}(v)=\sum_{i=0}^{m+1}(m+1-i)v_i$.
	
	$\sum_{i=0}^{m+1}iv_i + \sum_{i=0}^{m+1}(m+1-i)v_i= (m+1) \sum_{i=0}^{m+1} v_i =n(m+1)$.
	Therefore, by Lemma \ref{lem:geodesic_is_a_pivot_path}, 
	$d(v,0)=\min\{\ps_{-1}(v),\ps_{m+1}(v)\}\leq \lfloor\frac{\ps_{-1}(v)+\ps_{m+1}(v)}{2}\rfloor=\lfloor\frac{n(m+1)}{2}\rfloor.$
	
\end{proof}

\begin{lemma}\label{lem:ecc_lower_bound}
	For every dYoke graph $\Znm$ we have $\ecc_{\Znm}(0)\geq\lfloor\frac{n(m+1)}{2} \rfloor$.
\end{lemma}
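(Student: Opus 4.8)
The plan is to obtain this lower bound essentially for free from the corresponding bound already proved for Yoke graphs, Lemma~\ref{lem:ecc_lower_bound_ynm}, by exploiting the embedding of $\Ynm$ into $\Znm$. The single fact I would rely on is Corollary~\ref{cor:dist_equal_when_positive}: for a vertex $v$ of $\Ynm$, viewed inside $\Znm$ (so that all its entries are nonnegative), the two distances $d_{\Znm}(v,0)$ and $d_{\Ynm}(v,0)$ agree. Since $0$ itself lies in $\Ynm$, this lets me compare the two eccentricities directly, without re-running any pivot computation.

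Concretely, I would argue as follows. The vertex set of $\Ynm$ is exactly the set of vertices of $\Znm$ having no negative entry, hence a subset of the vertex set of $\Znm$. Therefore
\[
\ecc_{\Znm}(0)=\max_{v\in\Znm}d_{\Znm}(v,0)\ \geq\ \max_{v\in\Ynm}d_{\Znm}(v,0)\ =\ \max_{v\in\Ynm}d_{\Ynm}(v,0)\ =\ \ecc_{\Ynm}(0),
\]
where the middle equality is Corollary~\ref{cor:dist_equal_when_positive} applied termwise. Combining this with Lemma~\ref{lem:ecc_lower_bound_ynm}, which gives $\ecc_{\Ynm}(0)\geq\lfloor\frac{n(m+1)}{2}\rfloor$, immediately yields the claim for $m\geq 1$. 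For the boundary case $m=0$ one instead invokes Observation~\ref{obs:corner_case_meqz}, which already establishes the stronger equality $\ecc_{\Znm[n][0]}(0)=\lfloor\frac{n}{2}\rfloor$.

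There is essentially no hard step here: all the substance was packed into Corollary~\ref{cor:dist_equal_when_positive} and into the Yoke-graph bound. If one preferred a self-contained argument avoiding reference to Lemma~\ref{lem:ecc_lower_bound_ynm}, I would reuse its explicit witness $u$ (with $u_0=u_{m+1}=\lfloor\frac{n}{2}\rfloor$, a single $1$ in the middle entry when $n$ is odd, and all other inner entries $0$) and analyze it directly in $\Znm$. Since $u$ has only nonnegative entries it has no inner pivots, so by Lemma~\ref{lem:geodesic_is_a_pivot_path} and Observation~\ref{obs:closer_pivot_is_better_znm} its distance to $0$ equals $\min\{\ps_{-1}(u),\ps_{m+1}(u)\}$, and Observation~\ref{obs:trivial_path} evaluates the two pivot-path lengths as $\sum_{i}(m+1-i)u_i$ and $\sum_i i\,u_i$. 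Their sum is $n(m+1)$, and the only point demanding care, namely the parity check $|\ps_{-1}(u)-\ps_{m+1}(u)|\leq 1$, is carried out exactly as in the proof of Lemma~\ref{lem:ecc_lower_bound_ynm}, giving $d_{\Znm}(u,0)=\lfloor\frac{n(m+1)}{2}\rfloor$ and hence the same bound.
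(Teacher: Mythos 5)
Your proposal is correct and coincides with the paper's own proof, which likewise deduces the bound in one line from Corollary \ref{cor:dist_equal_when_positive} and Lemma \ref{lem:ecc_lower_bound_ynm} via $\ecc_{\Znm}(0)\geq\ecc_{\Ynm}(0)\geq\lfloor\frac{n(m+1)}{2}\rfloor$. Your extra handling of $m=0$ and the optional self-contained witness argument are harmless refinements of the same approach.
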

\begin{proof}
    By Lemma \ref{lem:ecc_lower_bound_ynm} and Corollary \ref{cor:dist_equal_when_positive}, $\ecc_{\Znm}(0)\geq\ecc_{\Ynm}(0)\geq\lfloor\frac{n(m+1)}{2} \rfloor$.
\end{proof}

\subsection{Computation of the Eccentricity}\label{subsec:n_leq_m_Znm}
\begin{lemma}[$n=1$]\label{lem:corner_case_neqo_znm} Let $m\geq 0$. Then
    $$\ecc_{\Znm[1][m]}(0)= \binom{\lceil \frac{m}{2}\rceil + 1}{2} + \binom{\lfloor \frac{m}{2}\rfloor + 1}{2}.$$
\end{lemma}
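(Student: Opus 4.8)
The plan is to mirror the proof of Lemma \ref{lem:corner_case_neqo} (the $n=1$ case for Yoke graphs), adapting it to the richer structure of dYoke graphs. The case $m=0$ is settled by Observation \ref{obs:corner_case_meqz}, since then $\Znm[1][0]$ is a single vertex and both sides equal $0$; so throughout I assume $m>0$.

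First I would establish the upper bound. The decisive simplification when $n=1$ is that every partial sum $\sum_{i=0}^{p}v_i$ is divisible by $1$, so $\piv(v)=\{-1,0,1,\ldots,m+1\}$ for every $v\in\Znm[1][m]$; in particular every $0\le p\le m$ is an inner pivot of $v$. Since the interior entries of a dYoke vertex lie in $Q_3=\{-1,0,1\}$, we have $|v_i|\le 1$ for all $1\le i\le m$, so the bound of Corollary \ref{cor:trivial_upper_bound} is available at the central pivot $p=\lceil\frac{m}{2}\rceil$:
$$\ps_{\lceil\frac{m}{2}\rceil}(v)\le \binom{\lceil\frac{m}{2}\rceil+1}{2}+\binom{m-\lceil\frac{m}{2}\rceil+1}{2}=\binom{\lceil\frac{m}{2}\rceil+1}{2}+\binom{\lfloor\frac{m}{2}\rfloor+1}{2}.$$
By Observation \ref{obs:closer_pivot_is_better_znm}, $d(v,0)=\min_p\ps_p(v)\le \ps_{\lceil\frac{m}{2}\rceil}(v)$, so the displayed quantity bounds $d(v,0)$ for every $v$, and hence bounds $\ecc_{\Znm[1][m]}(0)$. (If one prefers to exhibit the central pivot as genuinely optimal among the $\ps_p(v)$, Fact \ref{fac:split_sum_center} does this, but for the upper bound it suffices that $d(v,0)$ does not exceed a single $\ps_{\lceil m/2\rceil}(v)$.)

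For the matching lower bound I would not re-derive the extremal distance from scratch but instead invoke the embedding of Yoke graphs into dYoke graphs. By Corollary \ref{cor:dist_equal_when_positive}, distances from $0$ to non-negative vertices coincide in $\Ynm[1][m]$ and $\Znm[1][m]$, so $\ecc_{\Znm[1][m]}(0)\ge \ecc_{\Ynm[1][m]}(0)$; Lemma \ref{lem:corner_case_neqo} then supplies $\ecc_{\Ynm[1][m]}(0)=\binom{\lceil\frac{m}{2}\rceil+1}{2}+\binom{\lfloor\frac{m}{2}\rfloor+1}{2}$, which closes the gap. Concretely, the all-ones vertex $v$ with $v_i=1$ for $1\le i\le m$ realises the bound. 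I do not anticipate a genuine obstacle: the sole difference from the Yoke setting is that interior entries may be negative, and this is absorbed cleanly by the $|v_i|\le 1$ estimate built into Observation \ref{obs:trivial_path}. The only point requiring mild care is to use the absolute-value form of the trivial-path bound (Corollary \ref{cor:trivial_upper_bound}) rather than the signed formula of Corollary \ref{cor:pivot_path_len}, which was stated only for Yoke graphs.
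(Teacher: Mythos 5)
Your proof is correct and takes essentially the same route as the paper: the upper bound is identical (when $n=1$ every index is a pivot, so Observation \ref{obs:closer_pivot_is_better_znm} and Corollary \ref{cor:trivial_upper_bound} at the central pivot $\lceil\frac{m}{2}\rceil$ give the bound), and the lower bound rests on the same all-ones vertex. The only cosmetic difference is that the paper evaluates that vertex's distance directly in $\Znm[1][m]$ via Observation \ref{obs:trivial_path}, whereas you delegate it to Lemma \ref{lem:corner_case_neqo} through Corollary \ref{cor:dist_equal_when_positive} --- the same reduction the paper itself employs in Lemma \ref{lem:ecc_lower_bound}.
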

\begin{proof}
    The case $m=0$ is trivial. Assume that $m>0$. Then
    $\piv(v)=\{-1,\ldots,m+1\}$ for every $v\in\Znm[1][m]$.
    Therefore, by Fact \ref{fac:split_sum_center}, Observation\nolinebreak \ref{obs:closer_pivot_is_better_znm} and Corollary \ref{cor:trivial_upper_bound}, 
    $d(v,0)\leq \ps_{\lceil\frac{m}{2}\rceil}(v)\leq \binom{\lceil \frac{m}{2}\rceil + 1}{2} + \binom{\lfloor \frac{m}{2}\rfloor + 1}{2}$.
    By Observation \ref{obs:trivial_path}, this upper bound is obtained by the vertex $v\in\Ynm[1][m]$ with $v_i=1$ for every $1\leq i\leq m$.
\end{proof}

\begin{lemma}\label{lem:ecc_n_geq_m}
    If $0\leq m\leq n$, then $\ecc_{\Znm}(0) = \lfloor\frac{n(m+1)}{2}\rfloor$.
\end{lemma}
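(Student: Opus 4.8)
The plan is to follow the proof of Lemma~\ref{lem:eccentricity_in_I0} (the corresponding statement for Yoke graphs) line for line, substituting each ingredient by its dYoke analogue. First, the case $m=0$ is settled by Observation~\ref{obs:corner_case_meqz}, so I may assume $m\geq 1$. The lower bound $\ecc_{\Znm}(0)\geq\lfloor\frac{n(m+1)}{2}\rfloor$ is already available as Lemma~\ref{lem:ecc_lower_bound}. Hence it suffices to establish the matching upper bound, namely $d(v,0)\leq\lfloor\frac{n(m+1)}{2}\rfloor$ for every $v\in\Znm$.

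To bound $d(v,0)$ I would split into two cases according to whether $v$ has an inner pivot. If $v$ has no inner pivot, then Lemma~\ref{lem:no_pivots} gives $d(v,0)\leq\lfloor\frac{n(m+1)}{2}\rfloor$ directly. If $v$ has some inner pivot $p\in\piv(v)$, then, since $d(v,0)=\min_q\ps_q(v)\leq\ps_p(v)$ by Observation~\ref{obs:closer_pivot_is_better_znm}, I apply Corollary~\ref{cor:trivial_upper_bound} to get $\ps_p(v)\leq\binom{p+1}{2}+\binom{m-p+1}{2}$. By Fact~\ref{fac:split_sum_center} this quantity is maximized at the extremes $p=0$ and $p=m$, hence it is at most $\binom{m+1}{2}=\frac{m(m+1)}{2}$. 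Since $m\leq n$, this is at most $\frac{n(m+1)}{2}$, and being an integer it is at most $\lfloor\frac{n(m+1)}{2}\rfloor$, as required.

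The reason this argument goes through essentially verbatim from the Yoke case is that all the dYoke-specific complications have already been absorbed into the cited lemmas. In particular, in dYoke graphs entries may equal $-1$, so there is no exact analogue of the formula in Corollary~\ref{cor:pivot_path_len}; instead Corollary~\ref{cor:trivial_upper_bound} supplies the inequality $\ps_p(v)\leq\binom{p+1}{2}+\binom{m-p+1}{2}$, which is exactly what an upper-bound argument needs. Thus the only genuine work --- controlling unit shifts from empty buckets and handling negative entries --- was done in establishing Lemmas~\ref{lem:no_pivots}, \ref{lem:ecc_lower_bound} and Corollary~\ref{cor:trivial_upper_bound}. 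I expect no substantial new obstacle here; the sole role of the hypothesis $m\leq n$ is precisely to make $\binom{m+1}{2}\leq\frac{n(m+1)}{2}$, which is what prevents the inner-pivot bound from exceeding the target.
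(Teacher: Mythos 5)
Your proposal is correct and follows the paper's own proof essentially line for line: the same reduction via Observation \ref{obs:corner_case_meqz} and Lemma \ref{lem:ecc_lower_bound}, the same case split on inner pivots using Lemma \ref{lem:no_pivots}, and the same bound $\binom{p+1}{2}+\binom{m-p+1}{2}\leq\binom{m+1}{2}\leq\lfloor\frac{n(m+1)}{2}\rfloor$ via Fact \ref{fac:split_sum_center} and $m\leq n$. The only cosmetic difference is that you cite Corollary \ref{cor:trivial_upper_bound} where the paper cites Observation \ref{obs:trivial_path}, which are interchangeable here.
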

\begin{proof}
    The case $m=0$ is true by Observation \ref{obs:corner_case_meqz}. Assume that $m>0$. Then
    by Lemma \ref{lem:ecc_lower_bound}, it is sufficient to show that $\ecc_{\Znm}(0) \leq \lfloor\frac{n(m+1)}{2}\rfloor$.
    Let $v\in\Znm$. 
    By Lemma \ref{lem:no_pivots}, we can assume that $v$ has some inner pivot $p\in\piv(v)$.
    By Observation \ref{obs:trivial_path} and Fact \ref{fac:split_sum_center}, $d(v,0) \leq \sum_{i=0}^{p}i  + \sum_{i=0}^{m-p}i \leq \binom{m+1}{2}$.
    Since $m \le n$, $\binom{m+1}{2} \le \frac{n(m+1)}{2}$ and therefore $\binom{m+1}{2}\leq\lfloor\frac{n(m+1)}{2}\rfloor$.
\end{proof}

Throughout the rest of this subsection we assume that $2\leq n\leq m$.
\begin{definition}
For $v\in \Znm$ define:
\begin{enumerate}
	\item $p_l(v)=\max\{p\in \piv(v):p\leq \frac{m}{2}\}$.
	\item $p_r(v)=\min\{p\in \piv(v):p > \frac{m}{2}\}$.
	\item $I_c(v) = [p_l(v)+1,p_r(v)]$.
	\item $h(v)=\min\{|p-\frac{m}{2}|:p\in \piv(v)\}$.
	\item $\hnm = 
	\begin{cases}
	\frac{n}{2} & \mbox{if } 2\divides (m-n) \\
	\frac{n+1}{2} & \mbox{if } 2\ndivides (m-n)
	\end{cases}.$
\end{enumerate}
We abbreviate $p_l$, $p_r$ and $I_c$ when $v$ is evident.
\end{definition}

Recall the definitions of $\uz$, $\dz$, $\uo$, $\doo$ and $\dnm$ (in $\Ynm$) from Definitions \ref{def:uz}, \ref{def:uo} and \ref{def:dnm}.
Note that $\uz$ and $\uo$ can also be considered as vertices in $\Znm$ and by Corollary \ref{cor:dist_equal_when_positive}, $d_{\Znm}(\uz, 0)=d_{\Ynm}(\uz, 0)=\dz$ and $d_{\Znm}(\uo, 0)=d_{\Ynm}(\uo, 0)=\doo$.

\begin{observation}\label{obs:ecc_lower_bound}
    $\ecc_{\Znm}(0) \geq \dnm$.
\end{observation}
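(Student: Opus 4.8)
The plan is to exploit the fact that eccentricity is, by definition, a maximum over vertices, so any single vertex furnishes a lower bound. Concretely, $\ecc_{\Znm}(0) = \max_{v\in\Znm} d_{\Znm}(v,0)$ by Observation \ref{obs:closer_pivot_is_better_znm}, so it suffices to exhibit vertices realizing the values $\dz$ and $\doo$ and invoke this maximality. The natural candidates are already at hand: the vertices $\uz$ and $\uo$, originally constructed in $\Ynm$ in Definitions \ref{def:uz} and \ref{def:uo}.

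The first thing I would check is that $\uz$ and $\uo$ are legitimate vertices of $\Znm$, not merely of $\Ynm$. This is immediate, since all their middle entries lie in $\{0,1\}\subseteq Q_3$ and their buckets are unchanged, so every entry satisfies the membership condition of Definition \ref{def:dyoke_graph}; equivalently, $\Ynm$ embeds in $\Znm$ as the induced subgraph on vertices with nonnegative entries (the discussion preceding Observation \ref{obs:embedding_Ynm_in_Znm}). Next, I would invoke Corollary \ref{cor:dist_equal_when_positive}, which guarantees that for a vertex of $\Ynm$ the distance to $0$ is the same whether computed in $\Ynm$ or in $\Znm$. This gives $d_{\Znm}(\uz,0)=d_{\Ynm}(\uz,0)=\dz$ and, when $2\nmid(m-n)$, $d_{\Znm}(\uo,0)=d_{\Ynm}(\uo,0)=\doo$ — exactly the bridge already recorded in the paragraph just before this observation.

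Putting these together, maximality yields $\ecc_{\Znm}(0)\geq d_{\Znm}(\uz,0)=\dz$ always, and additionally $\ecc_{\Znm}(0)\geq d_{\Znm}(\uo,0)=\doo$ in the case $2\nmid(m-n)$. Comparing with Definition \ref{def:dnm}, where $\dnm=\dz$ if $2\mid(m-n)$ and $\dnm=\max\{\dz,\doo\}$ otherwise, both cases give $\ecc_{\Znm}(0)\geq\dnm$, as claimed.

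There is essentially no obstacle here: the statement is a direct consequence of the definition of eccentricity together with the distance-preservation corollary, and its role is purely to set up the harder matching \emph{upper} bound $d(v,0)\leq\dnm$ (to be proved in the analogues of Lemmas \ref{lem:close_to_middle_pivot} and \ref{lem:middle_interval_is_n} for $\Znm$). The only point demanding the slightest care is confirming that $\uz$ and $\uo$ genuinely sit inside $\Znm$ and that Corollary \ref{cor:dist_equal_when_positive} applies to them, both of which are straightforward.
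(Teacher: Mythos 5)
Your proof is correct and follows essentially the same route as the paper: the paper's own justification (given in the paragraph preceding the observation) is precisely that $\uz$ and $\uo$ can be regarded as vertices of $\Znm$ and that Corollary \ref{cor:dist_equal_when_positive} gives $d_{\Znm}(\uz,0)=\dz$ and $d_{\Znm}(\uo,0)=\doo$, from which the lower bound follows by the definitions of eccentricity and of $\dnm$. Nothing is missing.
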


In the rest of this subsection, we prove that $\dnm$ is also an upper bound on the distance of an arbitrary vertex $v$ in $\Znm$ from $0$.
Note that $\sum_{i\in I_c}v_i\in\{0,\pm n\}$ for every $v$ in $\Znm$.
We split this proof into three cases.

\begin{enumerate}
	\item $h(v)<\hnm$ (Lemma \ref{lem:close_to_middle_pivot_znm}).
	\item $h(v)\geq\hnm$ and $\sum_{i\in I_c(v)}v_i=\pm n$ (Lemma \ref{lem:middle_interval_is_n_znm}).
	\item $h(v)\geq\hnm$ and $\sum_{i\in I_c(v)}v_i=0$ (Lemma \ref{lem:zero_middle_interval}).
\end{enumerate}

\begin{lemma}\label{lem:close_to_middle_pivot_znm}
	Let $v\in\Znm$ such that $h(v)<\hnm$. Then $d(v,0)\leq\dnm$.
\end{lemma}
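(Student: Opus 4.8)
The plan is to mirror the proof of the Yoke-graph analogue, Lemma \ref{lem:close_to_middle_pivot}, replacing the exact pivot-length formula (Corollary \ref{cor:pivot_path_len}, valid only for non-negative entries) with the upper bound of Corollary \ref{cor:trivial_upper_bound}, which holds for arbitrary $v\in\Znm$. As in the Yoke case, I would prove the stronger statement $d(v,0)\leq\dz$; since $\dz\leq\dnm$ by Definition \ref{def:dnm}, this suffices.

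First I would produce an inner pivot realizing $h(v)$. Choose $p'\in\piv(v)$ with $|p'-\frac{m}{2}|=h(v)$. Because $n\leq m$, we have $h(v)<\hnm\leq\frac{n+1}{2}\leq\frac{m+1}{2}<\frac{m}{2}+1$, whereas the outer pivots $-1$ and $m+1$ both lie at distance $\frac{m}{2}+1$ from $\frac{m}{2}$; hence $p'$ cannot be an outer pivot and must be an inner pivot, so $0\leq p'\leq m$ and Corollary \ref{cor:trivial_upper_bound} gives $\ps_{p'}(v)\leq\binom{p'+1}{2}+\binom{m-p'+1}{2}$. By Lemma \ref{lem:geodesic_is_a_pivot_path}, $d(v,0)=\min_p\ps_p(v)\leq\ps_{p'}(v)$.

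Next I would compare $\binom{p'+1}{2}+\binom{m-p'+1}{2}$ with $\dz$ using Fact \ref{fac:split_sum_center}. Setting $p_r=p_r(\uz)=\lfloor\frac{m+n}{2}\rfloor$, one checks directly that $\dz=\binom{p_r+1}{2}+\binom{m-p_r+1}{2}$ and $|p_r-\frac{m}{2}|=h(\uz)$. By Observation \ref{obs:h_of_candidates}, $h(\uz)=\frac{n}{2}=\hnm$ when $2\divides(m-n)$ and $h(\uz)=\frac{n-1}{2}=\hnm-1$ when $2\ndivides(m-n)$; in either case the hypothesis $h(v)<\hnm$ forces $h(v)\leq h(\uz)$, i.e.\ $|p'-\frac{m}{2}|\leq|p_r-\frac{m}{2}|$. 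Fact \ref{fac:split_sum_center} (supplemented by the trivial equality case, where $|p'-\frac{m}{2}|=|p_r-\frac{m}{2}|$ forces $\{p',m-p'\}=\{p_r,m-p_r\}$ and hence equal sums) then yields $\binom{p'+1}{2}+\binom{m-p'+1}{2}\leq\dz$. Chaining the inequalities gives $d(v,0)\leq\dz\leq\dnm$.

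The computations here are routine, and no genuinely new difficulty arises relative to the Yoke setting: the only structural change, namely the possibility $v_i=-1$, is absorbed entirely by the absolute values in Corollary \ref{cor:trivial_upper_bound}. The two points meriting care are verifying that $p'$ is an \emph{inner} pivot (so that Corollary \ref{cor:trivial_upper_bound} is applicable) and correctly handling the equality case of Fact \ref{fac:split_sum_center} when $m$ and $n$ have the same parity and $h(v)=h(\uz)$; both are handled by the remarks above.
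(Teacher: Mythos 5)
Your proposal is correct and takes essentially the same approach as the paper: the paper's own proof is, verbatim, that the argument of Lemma \ref{lem:close_to_middle_pivot} goes through with Corollary \ref{cor:pivot_path_len} replaced by Corollary \ref{cor:trivial_upper_bound}, which is exactly what you execute (proving the stronger bound $d(v,0)\leq \dz$ via an inner pivot $p'$ realizing $h(v)\leq h(\uz)$ and Fact \ref{fac:split_sum_center}). Your two extra verifications --- that $p'$ is necessarily an inner pivot, and the equality case of Fact \ref{fac:split_sum_center} --- merely fill in details the paper leaves implicit.
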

\begin{proof}
    The proof of this lemma is identical to the proof of Lemma \ref{lem:close_to_middle_pivot} in which Corollary \ref{cor:pivot_path_len} is replaced by Corollary \ref{cor:trivial_upper_bound}.
\end{proof}

We now deal with the case $h(v)\geq\hnm$, $\sum_{i\in I_c}v_i=\pm n$.
We first reduce to the case $\sum_{i\in I_c}v_i=n$.

\begin{definition}\label{def:aut_mu}
    Let $\mu:\Znm\rightarrow \Znm$ be the map defined by $\mu(v)_i=-v_i$ for every $0\leq i\leq m+1$.
\end{definition}

Note that if $v_0 > 0$, then $\mu(v)_0=n-v_0>0$ and similarly for $v_{m+1}$.
It is straightforward to verify that $\mu$ is an automorphism of $\Znm$.
 \pagebreak
\begin{observation}\label{obs:neg_Ic_to_positive}
    Let $v\in\Znm$ such that $\sum_{i\in I_c}v_i=-n$.
    Then $I_c\subseteq [1,m]$ and $\sum_{i\in I_c}\mu(v)_i=n$.
\end{observation}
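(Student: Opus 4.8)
The plan is to prove the two assertions in turn, with the first one — that $I_c \subseteq [1,m]$, i.e.\ that neither bucket lies in the central interval — carrying essentially all of the work; the equality $\sum_{i\in I_c}\mu(v)_i = n$ will then be a one-line consequence. Showing $I_c = [p_l+1,p_r]\subseteq[1,m]$ amounts to proving $p_l(v)\geq 0$ and $p_r(v)\leq m$; equivalently, that the outer pivots $-1$ and $m+1$ are \emph{not} the pivots closest to $m/2$ on their respective sides.

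I would argue by contradiction through a discrete intermediate-value argument on the partial sums $S_k = \sum_{i=0}^{k} v_i$ (with $S_{-1}=0$). The ingredients are: (i) for $i\in[1,m]$ we have $v_i\in\{-1,0,1\}$, so consecutive $S_{k-1},S_k$ differ by at most $1$; (ii) the buckets $v_0,v_{m+1}$ are non-negative by the convention on representatives; and (iii) by the definitions of $p_l$ and $p_r$ there is no pivot strictly between them, so $S_k\not\equiv 0 \pmod n$ for $p_l<k<p_r$, while $S_{p_l}\equiv S_{p_r}\equiv 0 \pmod n$ and $S_{p_r}-S_{p_l}=\sum_{i\in I_c}v_i=-n$.

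Suppose $p_l=-1$, so $I_c=[0,p_r]$ and $S_{p_r}=-n$. Since $0$ is not a pivot (otherwise it would force $p_l\geq 0$), we get $S_0=v_0\geq 1$. If $p_r\leq m$, then $S_0,\dots,S_{p_r}$ move in steps of $\{-1,0,1\}$ from a positive value to $-n<0$, so $S_k=0$ for some $0<k<p_r$, i.e.\ a pivot strictly between $p_l$ and $p_r$, a contradiction. If instead $p_r=m+1$, I would first note $v_{m+1}\geq 1$ — otherwise $m$ would be a pivot with $m>m/2$, contradicting $p_r=m+1$ — whence $S_m=-n-v_{m+1}<0$, and the step argument applied to $S_0,\dots,S_m$ again yields an interior zero of $S$, a contradiction. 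The bound $p_r\leq m$ follows by the mirror argument on the right-hand partial sums $T_k=\sum_{i=k}^{m+1}v_i$, using that $p$ is a pivot iff $T_{p+1}\equiv 0\pmod n$. This establishes $I_c\subseteq[1,m]$.

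Once $I_c\subseteq[1,m]$ is known, every $v_i$ with $i\in I_c$ lies in $\{-1,0,1\}$, so $\mu(v)_i=-v_i$ on $I_c$, giving $\sum_{i\in I_c}\mu(v)_i=-\sum_{i\in I_c}v_i=n$. The main obstacle is the containment $I_c\subseteq[1,m]$: the subtlety is that the partial sums can jump by more than $1$ precisely at the buckets, so the intermediate-value step must be confined to index ranges inside $[1,m]$. This is exactly why the sub-cases $p_r=m+1$ (and its mirror $p_l=-1$ on the $T$-side) require the preliminary observation that the relevant bucket is strictly positive before the unit-step argument can be run.
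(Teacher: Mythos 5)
Your proof is correct and takes essentially the same route as the paper's: rule out $p_l=-1$ and $p_r=m+1$ by combining the non-negativity of the buckets (forcing $v_0>0$, resp.\ $v_{m+1}>0$, since the adjacent index is not a pivot) with the absence of pivots strictly between $p_l$ and $p_r$, then conclude by negating the interior entries. The only difference is one of explicitness: the paper compresses your discrete intermediate-value argument into the single assertion that $v_0>0$ implies $\sum_{i\in I_c}v_i\in\{0,n\}$, whereas you spell out the unit-step argument and the bucket subtlety that justify it.
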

\begin{proof}
    If $I_c\nsubseteq [1,m]$, then either $p_l=-1$ or $p_r=m+1$.
    Assume that $p_l=-1$.
    Recall that the buckets are identified with non-negative integers.
    Since $0\notin\piv(v)$, we have $v_0>0$. 
    This implies that $\sum_{i\in I_c}v_i\in\{0,n\}$. A contradiction.
    If $p_r=m+1$, then $\sum_{i\in I_c}v_i\in\{0,n\}$ following similar arguments.
    Finally, if $I_c\subseteq [1,m]$ and $\sum_{i\in I_c}v_i=-n$ then $\sum_{i\in I_c}\mu(v)_i=n$, since  $\piv(v)=\piv(\mu(v))$.
\end{proof}

The following lemma generalizes Lemma \ref{lem:no_pivots}.

\begin{lemma}\label{lem:v_hat_dist_from_zero_center}
	Let $v\in\Znm$ such that $\sum_{i\in I_c}v_i=n$.
	Then 
    $$d(v,0)\leq \sum_{i=1}^{p_l}i + \sum_{i=1}^{m-p_r}i + \lfloor\frac{n(m+1)}{2}\rfloor.$$
\end{lemma}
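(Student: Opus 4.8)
The plan is to mirror the proof of Lemma~\ref{lem:no_pivots}, but to apply it only to the central interval $I_c=[p_l+1,p_r]$ after first clearing everything outside it. First I would argue that each entry $v_i$ with $1\leq i\leq p_l$ can be zeroed by moving its single unit ($|v_i|\leq 1$) toward the left bucket, and each entry with $p_r+1\leq i\leq m$ by moving it toward the right bucket, without ever pushing a unit across either boundary of $I_c$. Since moving a unit from position $i\leq p_l$ to the left bucket costs $i$ steps and from position $i\geq p_r+1$ to the right bucket costs $m+1-i$ steps, this stage uses at most $\sum_{i=1}^{p_l}i+\sum_{i=1}^{m-p_r}i$ steps. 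Because $p_l$ and $p_r$ are pivots of $v$, after this stage the left bucket holds $\sum_{i=0}^{p_l}v_i\equiv 0$ and the right bucket holds $\sum_{i=p_r+1}^{m+1}v_i\equiv 0\pmod n$, so both buckets become empty. It therefore suffices to prove that the resulting vertex $v'$, supported on $I_c$ with empty buckets and $\sum_{i\in I_c}v'_i=n$, satisfies $d(v',0)\leq \lfloor\frac{n(m+1)}{2}\rfloor$, after which the two costs add to the claimed bound.

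The key new ingredient for $v'$ is the positivity of its partial sums inside $I_c$: I would show that $\sum_{i=p_l+1}^{j}v'_i>0$ and $\sum_{i=j}^{p_r}v'_i>0$ for every $j\in I_c$. Writing $S_j=\sum_{i=0}^{j}v_i$, we have $S_{p_l}\equiv S_{p_r}\equiv 0$ while $S_j\not\equiv 0\pmod n$ for $p_l<j<p_r$, since $p_l$ and $p_r$ are \emph{consecutive} pivots of $v$ straddling $\frac m2$. As consecutive $S_j$ differ by at most $1$ (entries lie in $\{-1,0,1\}$) and $S_{p_r}=S_{p_l}+n$, a discrete intermediate-value argument forces $S_{p_l}<S_j<S_{p_r}$ for all inner $j$, which is exactly the required positivity. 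This is the analog of the inequalities $\sum_{i=0}^{j}v_i>0$, $\sum_{i=j}^{m+1}v_i>0$ used in Lemma~\ref{lem:no_pivots}; there they held because $v$ had no inner pivots at all, whereas here they hold inside $I_c$ because $I_c$ contains none.

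With positivity established, I would examine the $p_l$-pivot path and the $p_r$-pivot path of $v'$. By the Pivot Shift Direction Lemma~\ref{lem:pivot_shift_direction_lemma} and Lemma~\ref{lem:same_direction_in_interval}, the $p_l$-pivot path shifts right throughout $[p_l+1,m+1]$, pushing the units of $I_c$ into the empty right bucket, and the $p_r$-pivot path shifts left throughout $[0,p_r]$, into the empty left bucket; the complementary regions are already zero. The positivity guarantees, through Lemma~\ref{lem:sufficient_for_number_of_shifts} and the feasibility part of Observation~\ref{obs:trivial_path}, that neither clearing ever shifts a unit out of an empty bucket, so Lemma~\ref{lem:number_of_shift_in_a_p_interval} yields the exact lengths $\ps_{p_l}(v')=\sum_{i\in I_c}(m+1-i)v'_i$ and $\ps_{p_r}(v')=\sum_{i\in I_c}i\,v'_i$. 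Adding them gives $\ps_{p_l}(v')+\ps_{p_r}(v')=(m+1)\sum_{i\in I_c}v'_i=n(m+1)$, whence by Lemma~\ref{lem:geodesic_is_a_pivot_path} we get $d(v',0)=\min_p\ps_p(v')\leq\min\{\ps_{p_l}(v'),\ps_{p_r}(v')\}\leq\lfloor\frac{n(m+1)}{2}\rfloor$.

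The main obstacle is the feasibility bookkeeping created by the negative entries together with the empty-bucket constraint: one must verify both that the outside regions can genuinely be cleared at the stated cost without shifting from an empty bucket or feeding units into $I_c$, and that inside $I_c$ the $-1$'s can be absorbed by purely unidirectional shifts. Both facts reduce to the partial-sum positivity above combined with the Claim proved inside Lemma~\ref{lem:sufficient_for_number_of_shifts}. This positivity argument is precisely the content absent from the Yoke-graph analog (Lemma~\ref{lem:v_hat_dist_from_zero_center_ynm}), where the entries are already non-negative and the reduction is immediate.
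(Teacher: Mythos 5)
Your proposal is correct and takes essentially the same route as the paper's proof: clear the entries outside $I_c$ at cost $\sum_{i=1}^{p_l}i+\sum_{i=1}^{m-p_r}i$ (both buckets emptying because $p_l$ and $p_r$ are pivots), then bound the reduced vertex's distance by $\lfloor\frac{n(m+1)}{2}\rfloor$ using the $p_l$- and $p_r$-pivot paths, whose exact lengths follow from Lemmas \ref{lem:number_of_shift_in_a_p_interval} and \ref{lem:sufficient_for_number_of_shifts} and sum to $n(m+1)$. The only difference is presentational: you make explicit, via a discrete intermediate-value argument on the partial sums, the positivity $\sum_{i=p_l+1}^{j}v_i>0$ and $\sum_{i=j}^{p_r}v_i>0$ that the paper asserts without proof.
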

\begin{proof}
	The sum $\sum_{i=1}^{p_l}i + \sum_{i=1}^{m-p_r}i$ is an upper bound on the number of steps needed to set to $0$ every $v_i$ with $i\in [1,m]\setminus I_c$; note that necessarily, following these steps, a bucket of $v$ not in $I_c$, is also equal to $0$.
	We can therefore assume that $v_i=0$ for every $i\notin I_c$ and prove that $d(v,0)\leq \lfloor\frac{n(m+1)}{2}\rfloor$.
    The assumption implies that $d(v, 0)=\min\{\ps_{p_l}(v), \ps_{p_r}(v)\}$.
	Since $\sum_{i\in I_c}v_i=n$, every $p_r$-pivot path of $v$ shifts left in $[0,p_r]$ and every $p_l$-pivot path of $v$ shifts right in $[p_l+1,m+1]$.
	Moreover, the assumption $\sum_{i\in I_c}v_i=n$ implies that $\sum_{i=j}^{p_r}v_i> 0$ for every $j\in I_c$ and $\sum_{i=p_l+1}^{j}v_i> 0$ for every $j\in I_c$.
	Therefore, by Lemmas \ref{lem:number_of_shift_in_a_p_interval} and \ref{lem:sufficient_for_number_of_shifts}, $d(v,0)=\min\{\ps_{p_l}(v),\ps_{p_r}(v)\}\leq \lfloor\frac{\ps_{p_l}(v)+ \ps_{p_r}(v)}{2}\rfloor= \lfloor\frac{\sum_{i\in I_c}iv_i + \sum_{i\in I_c}(m+1-i)v_i}{2}\rfloor=\lfloor\frac{n(m+1)}{2}\rfloor.$
\end{proof}
%

\begin{observation}\label{obs:dz_doo_split}
	\begin{enumerate}
        \item[]
		\item If $2|(m-n)$, then $\dz = \sum_{i=1}^{\frac{m-n}{2}}i + \sum_{i=1}^{m-\frac{m+n}{2}}i + \frac{n(m+1)}{2}$.
		\item $\doo = \sum_{i=1}^{\lfloor\frac{m-n}{2}\rfloor}i + \sum_{i=1}^{m-\lceil\frac{m+n}{2}\rceil}i + \lfloor\frac{n(m+1)}{2}\rfloor$.
	\end{enumerate}
\end{observation}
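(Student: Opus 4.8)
The plan is to derive both identities directly from the closed forms for $\dz$ and $\doo$ established in Lemma \ref{lem:dist_of_uz}, using nothing beyond the elementary telescoping identity $\sum_{i=a+1}^{b} i = \frac{(b-a)(a+b+1)}{2}$. The observation is purely an algebraic repackaging: it re-expresses $\dz$ and $\doo$ in a shape where the common summand $\lfloor\frac{n(m+1)}{2}\rfloor$ is exposed, which is precisely the form the later eccentricity bound (Lemma \ref{lem:v_hat_dist_from_zero_center}) needs to match. No graph-theoretic input is required.

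For part (1), assume $2\divides(m-n)$. Then $\frac{m+n}{2}$ and $\frac{m-n}{2}$ are integers, so the floors and ceilings in $\dz=\binom{\lfloor\frac{m+n}{2}\rfloor+1}{2}+\binom{\lceil\frac{m-n}{2}\rceil+1}{2}$ disappear and $\dz=\sum_{i=1}^{\frac{m+n}{2}}i+\sum_{i=1}^{\frac{m-n}{2}}i$. Setting $a=\frac{m-n}{2}$ and $b=\frac{m+n}{2}$, so that $b-a=n$, $a+b=m$, and $m-\frac{m+n}{2}=a$, the telescoping identity gives $\sum_{i=a+1}^{b}i=\frac{(b-a)(a+b+1)}{2}=\frac{n(m+1)}{2}$. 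Substituting $\sum_{i=1}^{b}i=\sum_{i=1}^{a}i+\frac{n(m+1)}{2}$ and rewriting the second copy of $\sum_{i=1}^{a}i$ as $\sum_{i=1}^{m-\frac{m+n}{2}}i$ yields exactly the claimed expression.

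For part (2), I would feed in $\doo=\dz+n-\lceil\frac{m+1}{2}\rceil$ from Lemma \ref{lem:dist_of_uz}. Since $2\ndivides(m-n)$, both $m+n$ and $m-n$ are odd, so $\lfloor\frac{m+n}{2}\rfloor=\frac{m+n-1}{2}$ and $\lceil\frac{m-n}{2}\rceil=\frac{m-n+1}{2}$; writing $a'=\frac{m-n+1}{2}$ and $b'=\frac{m+n-1}{2}$ (with $b'-a'=n-1$, $a'+b'=m$), telescoping gives $\sum_{i=a'}^{b'}i=\frac{nm}{2}$. Noting that the target sums on the right satisfy $\lfloor\frac{m-n}{2}\rfloor=m-\lceil\frac{m+n}{2}\rceil=a'-1$, the whole identity reduces to the single scalar equation $\frac{(n+1)(m+1)}{2}-\lceil\frac{m+1}{2}\rceil=\lfloor\frac{n(m+1)}{2}\rfloor$, equivalently $\frac{n(m+1)}{2}-\lfloor\frac{n(m+1)}{2}\rfloor=\lceil\frac{m+1}{2}\rceil-\frac{m+1}{2}$, where both sides are fractional parts lying in $\{0,\tfrac12\}$.

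The only genuine case distinction, and hence the main (still routine) obstacle, is verifying this last equation. Here I use that $2\ndivides(m-n)$ forces exactly one of $m,n$ to be even: if $n$ is even then $n(m+1)$ is even and $m+1$ is even, so both sides vanish; if $n$ is odd, hence $m$ even, then $n(m+1)$ is odd and $m+1$ is odd, so both sides equal $\tfrac12$. This settles part (2) and completes the proof.
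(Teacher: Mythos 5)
Your proposal is correct and takes essentially the same approach as the paper: the paper's proof is literally stated as ``a direct (though tedious) comparison with Lemma \ref{lem:dist_of_uz}'', which is exactly the algebra you carry out — splitting $\binom{\lfloor\frac{m+n}{2}\rfloor+1}{2}$ at the lower pivot via telescoping and reducing part (2) to the parity identity $\frac{(n+1)(m+1)}{2}-\lceil\frac{m+1}{2}\rceil=\lfloor\frac{n(m+1)}{2}\rfloor$, which holds because $2\nmid(m-n)$ forces $m$ and $n$ to have opposite parities. (The paper also mentions an alternative argument via the symmetry $|p_l-\frac{m}{2}|=|p_r-\frac{m}{2}|$, but you were not required to use it.)
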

\begin{proof}
    A direct (though tedious) comparison with Lemma \ref{lem:dist_of_uz}.
    Alternatively, it follows by observing that $|p_l-\frac{m}{2}|=|p_r-\frac{m}{2}|$ in both cases under consideration.
\end{proof}

\begin{lemma}\label{lem:middle_interval_is_n_znm}
	Let $v\in\Znm$ such that $h(v)\geq\hnm$ and $\sum_{i\in I_c(v)}v_i=\pm n$. 
	Then $d(v,0) \leq \dnm.$
\end{lemma}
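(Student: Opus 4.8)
The plan is to follow the same skeleton as the Yoke-graph statement Lemma~\ref{lem:middle_interval_is_n}, substituting its input Lemma~\ref{lem:v_hat_dist_from_zero_center_ynm} by the dYoke analogue Lemma~\ref{lem:v_hat_dist_from_zero_center}, whose hypothesis requires $\sum_{i\in I_c}v_i=n$. So the first move is a reduction to that sign. If $\sum_{i\in I_c(v)}v_i=-n$, then Observation~\ref{obs:neg_Ic_to_positive} gives $I_c\subseteq[1,m]$ and $\sum_{i\in I_c}\mu(v)_i=n$, where $\mu$ is the automorphism of $\Znm$ from Definition~\ref{def:aut_mu}. Since $\mu(0)=0$ we have $d(v,0)=d(\mu(v),0)$, and since $\piv(\mu(v))=\piv(v)$ the quantities $p_l,p_r,I_c,h$ are all preserved. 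Hence it suffices to treat the case $\sum_{i\in I_c}v_i=n$.

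Assuming $\sum_{i\in I_c}v_i=n$, Lemma~\ref{lem:v_hat_dist_from_zero_center} yields
$$d(v,0)\leq \sum_{i=1}^{p_l}i + \sum_{i=1}^{m-p_r}i + \left\lfloor\frac{n(m+1)}{2}\right\rfloor.$$
The first sum is nondecreasing in $p_l$ and the second is nonincreasing in $p_r$, so I would bound the right-hand side by inserting the extreme values of $p_l,p_r$ permitted by $h(v)\geq\hnm$. As $p_l$ is the largest pivot $\leq\frac m2$ and $p_r$ the smallest pivot $>\frac m2$, one has $h(v)=\min\{\tfrac m2-p_l,\ p_r-\tfrac m2\}$, so the hypothesis forces both $p_l\leq\tfrac m2-\hnm$ and $p_r\geq\tfrac m2+\hnm$.

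Next I would split on the parity of $m-n$. If $2\mid(m-n)$, then $\hnm=\frac n2$, giving the integer bounds $p_l\leq\frac{m-n}{2}$ and $p_r\geq\frac{m+n}{2}$; substituting these and using $m-\frac{m+n}{2}=\frac{m-n}{2}$ reproduces exactly the expression for $\dz$ in Observation~\ref{obs:dz_doo_split}(1) (note that $2\mid(m-n)$ makes $n(m+1)$ even, so the floor is an equality there), whence $d(v,0)\leq\dz=\dnm$. If $2\nmid(m-n)$, then $\hnm=\frac{n+1}{2}$, giving $p_l\leq\frac{m-n-1}{2}=\lfloor\frac{m-n}{2}\rfloor$ and $p_r\geq\frac{m+n+1}{2}=\lceil\frac{m+n}{2}\rceil$ (both integers since $m-n$ is odd); substituting these reproduces the expression for $\doo$ in Observation~\ref{obs:dz_doo_split}(2), so $d(v,0)\leq\doo\leq\max\{\dz,\doo\}=\dnm$.

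The only delicate point is the floor/ceiling bookkeeping in the odd case: one must check that the integrality constraints on $p_l$ and $p_r$ land precisely on $\lfloor\frac{m-n}{2}\rfloor$ and $\lceil\frac{m+n}{2}\rceil$, so that the two truncated sums match Observation~\ref{obs:dz_doo_split} on the nose. This is routine, since Observation~\ref{obs:dz_doo_split} has already been arranged to package $\dz$ and $\doo$ in exactly the summation form produced by Lemma~\ref{lem:v_hat_dist_from_zero_center}; the whole argument is then a short monotonicity comparison rather than a genuine new difficulty.
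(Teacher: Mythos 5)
Your proposal is correct and takes essentially the same route as the paper's proof: reduce to the case $\sum_{i\in I_c}v_i=n$ via Observation~\ref{obs:neg_Ic_to_positive} and the automorphism $\mu$, apply Lemma~\ref{lem:v_hat_dist_from_zero_center}, then split on the parity of $m-n$ and compare the resulting bound, using $p_l\leq\lfloor\frac{m-n}{2}\rfloor$ and $p_r\geq\lceil\frac{m+n}{2}\rceil$, against the packaged expressions for $\dz$ and $\doo$ in Observation~\ref{obs:dz_doo_split}. The only difference is that you spell out the justification of the $\mu$-reduction and the floor/ceiling bookkeeping that the paper leaves implicit.
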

\begin{proof}
    By Observation \ref{obs:neg_Ic_to_positive}, we can assume that $\sum_{i\in I_c(v)}v_i=n$.
	Assume that $2|(m-n)$.
	By Observation \ref{obs:dz_doo_split}, 
	$\dnm = \dz = \sum_{i=1}^{\frac{m-n}{2}}i + \sum_{i=1}^{m-\frac{m+n}{2}}i + \frac{n(m+1)}{2}.$
	By Lemma \ref{lem:v_hat_dist_from_zero_center}, $d({v},0)\leq \sum_{i=1}^{p_l}i + \sum_{i=1}^{m-p_r}i + \frac{n(m+1)}{2}.$
	Since $h(v)\geq\hnm$, $p_l\leq\frac{m-n}{2}$ and $p_r\geq\frac{m+n}{2}$.
	Therefore $d(v,0)\leq \dnm$.    
	
	Otherwise assume that $2\ndivides(m-n)$.
	By Observation \ref{obs:dz_doo_split}, 
	$\dnm\geq \doo = \sum_{i=1}^{\lfloor\frac{m-n}{2}\rfloor}i + \sum_{i=1}^{m-\lceil\frac{m+n}{2}\rceil}i + \lfloor\frac{n(m+1)}{2}\rfloor.$
	By Lemma \ref{lem:v_hat_dist_from_zero_center}, $d({v},0)\leq \sum_{i=1}^{p_l}i + \sum_{i=1}^{m-p_r}i + \lfloor\frac{n(m+1)}{2}\rfloor.$
	Since $h(v)\geq\hnm$, $p_l\leq \lfloor\frac{m-n}{2}\rfloor$ and $p_r\geq\lceil\frac{m+n}{2}\rceil$.
	Therefore $d(v,0)\leq \dnm$.    
\end{proof}

\begin{lemma}\label{lem:upper_middle_is_zero}
	Let $v\in\Znm$ such that $I_c(v)\subseteq[1,m]$ and $\sum_{i\in I_c(v)}v_i=0$. Then 
	$$d(v,0) \leq \sum_{i=1}^{p_l}i + \sum_{i=1}^{m-p_r}i + \lfloor\frac{\Icwidth}{2}\rfloor\lceil\frac{\Icwidth}{2}\rceil$$ 
	where $\Icwidth=|I_c(v)|$.
\end{lemma}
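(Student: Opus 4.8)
The plan is to mirror the reduction used in Lemma~\ref{lem:v_hat_dist_from_zero_center} and then convert the remaining problem into a one-dimensional transportation estimate inside $I_c$. First I would clear every entry $v_i$ with $i\in[1,m]\setminus I_c$ by shifting it toward the bucket on its side of $I_c$, without shifting any unit into $I_c$. Exactly as in Observation~\ref{obs:trivial_path}, this costs at most $\sum_{i=1}^{p_l}i+\sum_{i=1}^{m-p_r}i$ steps (using $|v_i|\le 1$), and, since $p_l$ and $p_r$ are pivots, after these steps both buckets have become $0$ — here it is essential that $I_c\subseteq[1,m]$, so that \emph{both} buckets lie outside $I_c$, and that $\sum_{i\in I_c}v_i=0$, so that no unit ever needs to leave $I_c$. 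It therefore suffices to treat the case in which $v$ is supported on $I_c$, with all other entries (both buckets included) equal to $0$, and to prove that such a $v$ satisfies $d(v,0)\le\lfloor\frac{\Icwidth}{2}\rfloor\lceil\frac{\Icwidth}{2}\rceil$.

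For the remaining configuration I would index the $\Icwidth=p_r-p_l$ entries of $I_c$ as $c_1,\dots,c_{\Icwidth}\in\{-1,0,1\}$ and set $B_k=c_1+\dots+c_k$, so that $B_0=B_{\Icwidth}=0$ and $|B_k-B_{k-1}|\le 1$. The net amount of material that must cross the boundary between the $k$th and $(k+1)$st cell is $|B_k|$, so the natural clearing path inside $I_c$ has length $\sum_{k=1}^{\Icwidth-1}|B_k|$, and the task is to bound this quantity. The key estimate is purely combinatorial: since it takes at least $|B_k|$ unit steps to reach $B_k$ from $B_0=0$ and at least $|B_k|$ to return to $B_{\Icwidth}=0$, one has $|B_k|\le\min(k,\Icwidth-k)$, whence $\sum_{k=1}^{\Icwidth-1}|B_k|\le\sum_{k=0}^{\Icwidth}\min(k,\Icwidth-k)=\lfloor\frac{\Icwidth}{2}\rfloor\lceil\frac{\Icwidth}{2}\rceil$, the maximum being attained by the ``triangular'' walk that rises by $1$ to height $\lfloor\frac{\Icwidth}{2}\rfloor$ and then descends.

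The step that needs the most care is exhibiting a clearing path of length $\sum_{k}|B_k|$ that actually stays inside the dYoke graph, i.e.\ whose intermediate interior entries never leave $\{-1,0,1\}$. For this I would match the $+1$ entries to the $-1$ entries by the non-crossing (innermost-first) matching and route each matched pair by sliding its unit step by step along the cells separating the two. Cancelling the innermost pair first guarantees that the cells strictly between a matched pair are already $0$ when its unit passes through, so each intermediate cell holds only the single travelling unit and remains in $\{0,\pm1\}$; moreover the total number of single-unit moves equals, boundary by boundary, the number of matched pairs straddling that boundary, which is exactly $\sum_{k=1}^{\Icwidth-1}|B_k|$. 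Combining the two bounds yields $d(v,0)\le\sum_{i=1}^{p_l}i+\sum_{i=1}^{m-p_r}i+\lfloor\frac{\Icwidth}{2}\rfloor\lceil\frac{\Icwidth}{2}\rceil$, as required. I expect the genuine obstacle to be the bookkeeping in the reduction — verifying that both buckets really return to $0$ and that the $\{-1,0,1\}$ constraint is never violated while the exterior entries are cleared — rather than the transportation estimate itself.
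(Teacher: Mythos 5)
Your proof is correct, and after the (shared) first step it follows a genuinely different route from the paper's. The reduction to a vertex supported on $I_c$, with cost $\sum_{i=1}^{p_l}i+\sum_{i=1}^{m-p_r}i$, is exactly the paper's opening move (the paper cites the beginning of the proof of Lemma~\ref{lem:v_hat_dist_from_zero_center}). For the remaining configuration, however, the paper stays inside its pivot-path machinery: it takes a $p_l$- or $p_r$-pivot path $P$, asserts that $I_c$ is a single $P$-interval in which $P$ shifts in one direction, applies Lemma~\ref{lem:number_of_shift_in_a_p_interval} to write the number of shifts as $\sum_{i\in I_c}iv_i$, and bounds that sum by $\lfloor\frac{\Icwidth}{2}\rfloor\lceil\frac{\Icwidth}{2}\rceil$ via the extremal arrangement ($-1$'s first, $+1$'s last). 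You instead build an explicit clearing path by one-dimensional transportation: prefix sums $B_k$, the innermost-first matching, and the estimate $\sum_{k=1}^{\Icwidth-1}|B_k|\leq\sum_{k}\min(k,\Icwidth-k)=\lfloor\frac{\Icwidth}{2}\rfloor\lceil\frac{\Icwidth}{2}\rceil$. What your route buys is self-containedness and robustness: it avoids Lemmas~\ref{lem:same_direction_in_interval} and~\ref{lem:number_of_shift_in_a_p_interval} entirely, and it covers configurations whose prefix sums change sign inside $I_c$, such as $I_c$-entries $(1,-1,-1,1)$. For such a vertex no unidirectional clearing of $I_c$ exists, $I_c$ is not a single $P$-interval of any pivot path (extra inner walls arise where $B_k=0$), and $\sum_{i\in I_c}iv_i$ (which equals $0$ here) is not the length of any clearing path --- so the paper's ``clearly'' and ``without loss of generality'' gloss over a real case, which your two-sided argument handles seamlessly; moreover $\sum_k|B_k|$ is the exact distance of the reduced vertex, so your intermediate bound is sharp. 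What the paper's route buys is brevity, given machinery it has already built and reuses elsewhere. One caution on your side: the identity between the matching cost and $\sum_k|B_k|$ genuinely requires the innermost-first (stack) matching and not merely a non-crossing one --- for entries $(1,-1,1,-1)$ the nested non-crossing matching makes two pairs straddle the middle boundary although $B_2=0$ --- but since your routing explicitly uses the innermost-first matching, this is a detail to verify (straddling pairs of that matching all have the same orientation), not a gap.
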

\begin{proof}
	As in the beginning of the proof of Lemma \ref{lem:v_hat_dist_from_zero_center}, we can assume that $v_i=0$ for every $i\notin I_c$ and prove that $d(v,0)\leq \lfloor\frac{\Icwidth}{2}\rfloor\lceil\frac{\Icwidth}{2}\rceil$.
	
	Let $P$ be any $p_l$- or $p_r$-pivot path of $v$.
	$I_c$ is clearly a $P$-interval (namely, both $p_l$ and $p_r$ are walls of $P$), since $\sum_{i\in I_c}v_i=0$ and $I_c(v)\subseteq[1,m]$.
	Assume without loss of generality, that $P$ shifts left in $I_c$.
	By Lemma \ref{lem:number_of_shift_in_a_p_interval}, $\ps_{I_c}(P) = \sum_{i\in I_c}iv_i$.
	The number of entries of $v$ that are equal to $1$ is equal to the number of entries of $v$ that are equal to $-1$, since $\sum_{i\in I_c}v_i = 0$.
	This number is at most $\lfloor\frac{\Icwidth}{2}\rfloor$. 
    The maximum value of $\sum_{i\in I_c}iv_i$ is obtained when the first $\lfloor\frac{\Icwidth}{2}\rfloor$ entries in $I_c$ are equal to $-1$ and the last $\lfloor\frac{\Icwidth}{2}\rfloor$ entries are equal to $1$.
	Therefore $\sum_{i\in I_c}iv_i\leq \lfloor\frac{\Icwidth}{2}\rfloor\lceil\frac{\Icwidth}{2}\rceil$.
\end{proof}

\begin{fact}\label{fac:split_sum}
	Let $a$ and $b$ be two non-negative integers.
	Then 
    $$\sum_{i=1}^a i + \sum_{i=1}^{b} i \geq \lfloor\frac{a+b}{2}\rfloor\lceil\frac{a+b}{2}\rceil.$$
\end{fact}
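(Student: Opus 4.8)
The plan is to reduce the inequality to a single manifestly nonnegative expression by completing the square. First I would put the left-hand side in closed form via $\sum_{i=1}^{k} i = \binom{k+1}{2} = \frac{k(k+1)}{2}$, so that
$$\sum_{i=1}^a i + \sum_{i=1}^{b} i = \frac{a(a+1)+b(b+1)}{2}.$$
Next I would treat the right-hand side by setting $s = a+b$ and recording the uniform identity $\lfloor\frac{s}{2}\rfloor\lceil\frac{s}{2}\rceil = \frac{s^2 - r}{4}$, where $r\in\{0,1\}$ denotes the parity of $s$. This is checked in the two immediate cases: if $s=2k$ the product is $k^2 = s^2/4$ (so $r=0$), and if $s=2k+1$ it is $k(k+1) = \frac{s^2-1}{4}$ (so $r=1$).

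With both sides in algebraic form, I would clear denominators by multiplying the claimed inequality by $4$ and examining the difference of the two sides. Expanding $2a(a+1) + 2b(b+1) - (a+b)^2 + r$ and simplifying collapses it to
$$4\Bigl(\sum_{i=1}^a i + \sum_{i=1}^{b} i\Bigr) - 4\Bigl\lfloor\tfrac{s}{2}\Bigr\rfloor\Bigl\lceil\tfrac{s}{2}\Bigr\rceil = (a-b)^2 + 2(a+b) + r.$$
Since $a,b\geq 0$ and $r\geq 0$, each summand on the right is nonnegative, so the difference is $\geq 0$, which is precisely the desired inequality.

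I expect no genuine obstacle here: the statement is an elementary algebraic inequality once both sides are written in closed form, and the completion-of-the-square identity $(a-b)^2 + 2(a+b) + r \geq 0$ settles it outright. The only point requiring care is the parity split for the floor--ceiling product, which is why I isolate it in advance as the identity $\lfloor\frac{s}{2}\rfloor\lceil\frac{s}{2}\rceil = \frac{s^2-r}{4}$; handling it up front keeps the main computation free of case analysis. An essentially equivalent route is to note that $\lfloor\frac{s}{2}\rfloor\lceil\frac{s}{2}\rceil$ is exactly the maximum of $xy$ over nonnegative integers $x+y=s$, but the direct completion-of-the-square computation above is the cleanest to present.
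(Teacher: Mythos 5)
Your proof is correct. The paper states this as a \emph{Fact} with no proof at all (it is treated as elementary, and is invoked only once, in the proof of Lemma \ref{lem:zero_middle_interval}), so there is no argument to compare against; your verification fills the gap soundly. The key identity checks out: writing $s=a+b$ and $r\in\{0,1\}$ for its parity, one indeed has
\begin{equation*}
4\Bigl(\sum_{i=1}^a i + \sum_{i=1}^{b} i\Bigr) - 4\Bigl\lfloor\tfrac{s}{2}\Bigr\rfloor\Bigl\lceil\tfrac{s}{2}\Bigr\rceil = (a-b)^2 + 2(a+b) + r \geq 0,
\end{equation*}
which even gives the exact defect, slightly more than the stated inequality requires.
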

\pagebreak

\begin{lemma}\label{lem:zero_middle_interval}
	Let $v\in\Znm$ such that $h(v)\geq\hnm$ and $\sum_{i\in I_c}v_i=0$.
	Then $d(v,0)\leq\dnm$.
\end{lemma}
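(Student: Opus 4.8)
The goal is to bound $d(v,0)$ by $\dnm$ when $h(v)\geq\hnm$ and the central interval sums to zero. The natural plan is to combine the two lemmas that have just been proved for this exact situation: Lemma \ref{lem:upper_middle_is_zero} gives a bound involving the width $\Icwidth=|I_c|$ of the central interval, and Observation \ref{obs:dz_doo_split} expresses $\dz$ and $\doo$ in a form that separates the ``outside'' contribution $\sum_{i=1}^{p_l}i+\sum_{i=1}^{m-p_r}i$ from a central term. So the first thing I would do is verify that $I_c(v)\subseteq[1,m]$, so that Lemma \ref{lem:upper_middle_is_zero} applies; this should follow because $\sum_{i\in I_c}v_i=0$ forces both $p_l\neq -1$ and $p_r\neq m+1$ (a bucket, being a positive representative when not a pivot, would make the central sum nonzero, exactly as argued in Observation \ref{obs:neg_Ic_to_positive}).

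**The main estimate.** With $I_c\subseteq[1,m]$ established, Lemma \ref{lem:upper_middle_is_zero} yields
\[
d(v,0)\leq \sum_{i=1}^{p_l}i + \sum_{i=1}^{m-p_r}i + \Big\lfloor\tfrac{\Icwidth}{2}\Big\rfloor\Big\lceil\tfrac{\Icwidth}{2}\Big\rceil,
\]
where $\Icwidth = p_r-p_l$. The plan is then to compare this directly with the expressions for $\dz$ and $\doo$ in Observation \ref{obs:dz_doo_split}. The hypothesis $h(v)\geq\hnm$ pins down $p_l\leq\lfloor\frac{m-n}{2}\rfloor$ and $p_r\geq\lceil\frac{m+n}{2}\rceil$, so the outside sums $\sum_{i=1}^{p_l}i+\sum_{i=1}^{m-p_r}i$ are each bounded by the corresponding sums in $\dz$ (resp.\ $\doo$). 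The remaining task is to show that the central term $\lfloor\frac{\Icwidth}{2}\rfloor\lceil\frac{\Icwidth}{2}\rceil$ is dominated by whatever is left of $\dnm$ after subtracting the outside sums --- namely the $\lfloor\frac{n(m+1)}{2}\rfloor$-type term. Here Fact \ref{fac:split_sum} is the key tool: it converts a product $\lfloor\frac{a+b}{2}\rfloor\lceil\frac{a+b}{2}\rceil$ into a sum of two triangular numbers $\sum_{i=1}^a i+\sum_{i=1}^b i$, which is precisely the shape of the central contribution in $\dnm$. Applying it with $a,b$ chosen so that $a+b=\Icwidth$ and matching the split used to compute $\dz$ (or $\doo$) should close the inequality.

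**The main obstacle and case split.** I expect the delicate point to be the parity bookkeeping, mirroring the structure of the proof of Lemma \ref{lem:middle_interval_is_n_znm}: one must separate the cases $2\divides(m-n)$ and $2\ndivides(m-n)$, because the formulas in Observation \ref{obs:dz_doo_split} and the definition of $\dnm$ (Definition \ref{def:dnm}) differ between them. In the even case one compares against $\dz$; in the odd case one must be careful to compare against $\doo$ (or the larger of $\dz,\doo$) and track the floor/ceiling discrepancies in $\lfloor\frac{m\pm n}{2}\rfloor$ versus $\lceil\frac{m\pm n}{2}\rceil$. The risk is an off-by-one slip when $\Icwidth$ has the ``wrong'' parity relative to $m-n$; since $p_l$ and $p_r$ are genuine pivots rather than the idealized values, $\Icwidth$ could be larger than the minimal central width, and I would need Fact \ref{fac:split_sum} precisely to absorb that slack. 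Concretely, writing $p_l = \lfloor\frac{m-n}{2}\rfloor - a'$ and $p_r = \lceil\frac{m+n}{2}\rceil + b'$ for $a',b'\geq 0$ and feeding these into Fact \ref{fac:split_sum} should let the extra triangular-number mass from shrinking the outside sums exactly compensate the growth of the central product, giving $d(v,0)\leq\dnm$ in every case.
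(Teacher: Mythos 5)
There is a genuine gap, and it sits exactly where you placed your ``first thing to verify'': the claim that $\sum_{i\in I_c}v_i=0$ forces $I_c(v)\subseteq[1,m]$ is false in $\Znm$. Observation \ref{obs:neg_Ic_to_positive} only rules out the value $-n$ when a bucket lies in $I_c$; it explicitly concludes that in that situation the central sum lies in $\{0,n\}$, so $0$ is allowed. Your intuition (``a positive bucket makes the central sum nonzero'') is correct in $\Ynm$, where all entries are non-negative, but in $\Znm$ the negative non-bucket entries can cancel the bucket. Concretely, take $v=(1,1,-1,1,-1,-1,0,0)\in\Znm[3][6]$: the partial sums are $1,2,1,2,1,0,0,0$, so $\piv(v)=\{-1,5,6,7\}$, $p_l=-1$, $p_r=5$, $I_c=[0,5]$ and $\sum_{i\in I_c}v_i=0$; moreover $h(v)=2=\hnm[3][6]$ (here $m-n=3$ is odd, so $\hnm[3][6]=2$), so $v$ satisfies all hypotheses of the lemma, yet $p_l$ is an outer pivot. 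Lemma \ref{lem:upper_middle_is_zero} simply does not apply to such a $v$, and your proof says nothing about it.

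This missing case is not a technicality --- it occupies half of the paper's proof. The paper splits on the nature of $p_l,p_r$: both outer is impossible (then $v$ has no inner pivots and the central sum is forced to be $n$, not $0$); if exactly one of them is outer, it applies the negation automorphism $\mu$ of Definition \ref{def:aut_mu}: since the bucket inside $I_c$ is positive, $\sum_{i\in I_c}\mu(v)_i=n-\sum_{i\in I_c}v_i=n$ while $\piv(\mu(v))=\piv(v)$ and $\mu$ fixes $0$, so Lemma \ref{lem:middle_interval_is_n_znm} gives $d(v,0)=d(\mu(v),0)\leq\dnm$. Only in the remaining case (both pivots inner) does the paper argue as you do, and there your plan is essentially correct but heavier than necessary: the parity split and the comparison against $\doo$ can be avoided entirely, because $\dnm\geq\dz$ by definition. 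The paper compares only against $\dz=\sum_{i=1}^{p}i+\sum_{i=1}^{m-p}i$ with $p=\lfloor\frac{m+n}{2}\rfloor$ (Lemma \ref{lem:dist_of_uz}), notes $|p-\frac{m}{2}|\leq\hnm$ so that $h(v)\geq\hnm$ gives $p_l\leq p\leq p_r$, and then applies Fact \ref{fac:split_sum} once, with $a=p-p_l$ and $b=p_r-p$ satisfying $a+b=\Icwidth$, to absorb the central product $\lfloor\frac{\Icwidth}{2}\rfloor\lceil\frac{\Icwidth}{2}\rceil$. If you add the $\mu$-reduction for the outer-pivot case and streamline the inner case this way, your argument matches the paper's.
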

\begin{proof}
	If both $p_l$ and $p_r$ are outer pivots, then both buckets are non-empty and $\sum_{i\in I_c}v_i=n$, since in this case, $v$ has no pivots other than $p_l$ and $p_r$; but we assumed $\sum_{i\in I_c}v_i=0$. 
	Assume that exactly one of $p_l$ and $p_r$, say $p_l$, is an outer pivot (and that the other pivot, $p_r$, is an inner pivot).
	Let $\mu$ be the automorphism of $\Znm$ from Definition \ref{def:aut_mu}.
    Clearly, $I_c(v)=I_c(\mu(v))$.
    By assumption $v_0>0$ and therefore $\mu(v)_0=n-v_0>0$; hence $\sum_{i\in I_c}\mu(v)_i=n-\sum_{i\in I_c} v_i=n$.
	Since $0$ is a fixed point of $\mu$, by Lemma \ref{lem:middle_interval_is_n_znm}, $d(v,0)=d(\mu(v),0)\leq \dnm$, as required.
	
	Otherwise both $p_l$ and $p_r$ are inner pivots, that is, $I_c(v)\subseteq[1,m]$.
	By Lemma \ref{lem:upper_middle_is_zero}, 
	$d({v},0) \leq 
	\sum_{i=1}^{p_l}i + \sum_{i=1}^{m-p_r}i + \lfloor\frac{\Icwidth}{2}\rfloor\lceil\frac{\Icwidth}{2}\rceil$ where $\Icwidth=|I_c(v)|$.
	Let $p=\lfloor\frac{m+n}{2}\rfloor$ so that, by Lemma \ref{lem:dist_of_uz}, $\dz=\sum_{i=1}^p i + \sum_{i=1}^{m-p} i$.
	Note that $|p-\frac{m}{2}|\leq \hnm$.
	The assumption $h(v)\geq\hnm$ implies that $p_l\leq p\leq p_r$. Therefore
	\begin{align*}
	\dnm - d(v,0) \geq
	\dz - d(v,0) \geq 
	\sum_{i=p_l+1}^p i + \sum_{i=m-p_r+1}^{m-p} i - \lfloor\frac{\Icwidth}{2}\rfloor\lceil\frac{\Icwidth}{2}\rceil.
	\end{align*}
	Clearly, $\sum_{i=p_l+1}^p i + \sum_{i=m-p_r+1}^{m-p} i \geq \sum_{i=1}^{p-p_l} i + \sum_{i=1}^{p_r-p} i$.
	Let $a=p-p_l$, let $b=p_r-p$ and note that $a+b=\Icwidth$. Therefore, by Fact \ref{fac:split_sum},
	$
	\dnm - d(v,0) \geq \sum_{i=1}^a i + \sum_{i=1}^{b} i - \lfloor\frac{\Icwidth}{2}\rfloor\lceil\frac{\Icwidth}{2}\rceil\geq 0.
	$
\end{proof}

\begin{lemma}\label{lem:ecc_n_leq_m}
	Let $2\leq n\leq m$ be two integers. 
	Then $\ecc_{\Znm}(0) = \dnm$.
\end{lemma}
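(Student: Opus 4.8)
The plan is to assemble the desired equality from the lower bound already established and an upper bound proved by a tripartite case analysis. One direction, $\ecc_{\Znm}(0)\geq\dnm$, is exactly Observation \ref{obs:ecc_lower_bound}, so nothing new is needed there. For the reverse inequality I would invoke Observation \ref{obs:closer_pivot_is_better_znm}, which gives $\ecc_{\Znm}(0)=\max_v d(v,0)$; hence it suffices to prove $d(v,0)\leq\dnm$ for an arbitrary $v\in\Znm$.

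The organizing principle for the upper bound is the structural remark (stated just after Observation \ref{obs:ecc_lower_bound}) that $\sum_{i\in I_c(v)}v_i\in\{0,\pm n\}$ for every vertex $v$. Combining this trichotomy of the central-interval sum with the dichotomy $h(v)<\hnm$ versus $h(v)\geq\hnm$ partitions all vertices into the three mutually exclusive, jointly exhaustive classes already isolated: (i) $h(v)<\hnm$; (ii) $h(v)\geq\hnm$ with $\sum_{i\in I_c(v)}v_i=\pm n$; and (iii) $h(v)\geq\hnm$ with $\sum_{i\in I_c(v)}v_i=0$. Each class is precisely the hypothesis of one of Lemmas \ref{lem:close_to_middle_pivot_znm}, \ref{lem:middle_interval_is_n_znm}, and \ref{lem:zero_middle_interval}, and each of those lemmas concludes $d(v,0)\leq\dnm$. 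So the body of the present lemma reduces to the observation that these three cases cover every $v$, followed by a citation of the three lemmas together with Observation \ref{obs:ecc_lower_bound}.

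At the level of the final write-up this is a short assembly, so the genuine difficulty lives in the supporting lemmas rather than in their combination. The step I expect to be the main obstacle is case (iii), handled by Lemma \ref{lem:zero_middle_interval}: when the central interval sums to zero, a pivot path cannot discharge $I_c$ by shifting units out to a bucket across a wall, so the two subcases must be treated separately. When exactly one of $p_l,p_r$ is an outer pivot one applies the automorphism $\mu$ of Definition \ref{def:aut_mu} to flip the central sum from $0$ to $n$ and reduce to Lemma \ref{lem:middle_interval_is_n_znm}; when both are inner pivots one bounds the number of shifts inside $I_c$ by $\lfloor\frac{\Icwidth}{2}\rfloor\lceil\frac{\Icwidth}{2}\rceil$ (Lemma \ref{lem:upper_middle_is_zero}) and then compares this against $\dz$ using the convexity estimate of Fact \ref{fac:split_sum}. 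By contrast, case (i) follows verbatim from the Yoke-graph argument with Corollary \ref{cor:pivot_path_len} replaced by Corollary \ref{cor:trivial_upper_bound}, and case (ii) follows from the pivot-counting Lemmas \ref{lem:number_of_shift_in_a_p_interval} and \ref{lem:sufficient_for_number_of_shifts} via Lemma \ref{lem:v_hat_dist_from_zero_center}; these are the routine portions. Once all three are in place, the present lemma is immediate.
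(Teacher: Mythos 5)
Your proposal is correct and follows exactly the paper's own proof, which likewise combines the lower bound of Observation \ref{obs:ecc_lower_bound} with the three-case upper bound given by Lemmas \ref{lem:close_to_middle_pivot_znm}, \ref{lem:middle_interval_is_n_znm} and \ref{lem:zero_middle_interval}, using the fact that $\sum_{i\in I_c(v)}v_i\in\{0,\pm n\}$ to see that the cases are exhaustive. Your assessment of where the real work lies (in the supporting lemmas, especially the case $\sum_{i\in I_c(v)}v_i=0$) is also accurate.
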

\begin{proof}
    Combine Observation \ref{obs:ecc_lower_bound} with Lemmas \ref{lem:close_to_middle_pivot_znm}, \ref{lem:middle_interval_is_n_znm} and \ref{lem:zero_middle_interval}.
\end{proof}

\begin{theorem}\label{thm:ecc_znm}
    For all values of $n$ and $m$, the eccentricity of $0$ in $\Znm$ is equal to the eccentricity of $0$ in $\Ynm$ as it appears in Theorem \ref{thm:ynm_ecc}.
\end{theorem}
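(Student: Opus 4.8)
The plan is to assemble the four case-by-case computations of $\ecc_{\Znm}(0)$ already carried out in this section and to verify, case by case, that each resulting value is literally the expression recorded for $\ecc_{\Ynm}(0)$ in Theorem \ref{thm:ynm_ecc}. Since the partition of the parameter space $\{(n,m)\}$ used for the dYoke computation deliberately mirrors the one used for the Yoke computation in Chapter \ref{chp:ecc_zero_Ynm}, most cases reduce to a direct comparison of formulas, and only the case $2\leq n\leq m$ calls for a short additional argument. I would first note that the cases $m=0$, $n=1$, $0\leq m\leq n$, and $2\leq n\leq m$ together exhaust all admissible $(n,m)$ (with only harmless overlaps, e.g.\ at $n=m$, where both applicable formulas agree).

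First the three easy cases. For $m=0$, Observation \ref{obs:corner_case_meqz} gives $\ecc_{\Znm[n][0]}(0)=\lfloor\frac{n}{2}\rfloor=\lfloor\frac{n(m+1)}{2}\rfloor$, which is exactly part (2) of Theorem \ref{thm:ynm_ecc} evaluated at $m=0$. For $n=1$, Lemma \ref{lem:corner_case_neqo_znm} produces $\binom{\lceil\frac{m}{2}\rceil+1}{2}+\binom{\lfloor\frac{m}{2}\rfloor+1}{2}$, matching part (1). For $0\leq m\leq n$ with $m>0$, Lemma \ref{lem:ecc_n_geq_m} gives $\lfloor\frac{n(m+1)}{2}\rfloor$, matching part (2). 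In each of these the defining expression is syntactically identical to the corresponding line of Theorem \ref{thm:ynm_ecc}, so nothing further is needed.

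The one case requiring work is $2\leq n\leq m$, where Lemma \ref{lem:ecc_n_leq_m} expresses the answer as $\dnm$, whereas Theorem \ref{thm:ynm_ecc}(3) states it piecewise as $\dz$ or $\dz+n-\lceil\frac{m+1}{2}\rceil$. Here I would unwind Definition \ref{def:dnm}. When $2\divides(m-n)$ we have $\dnm=\dz$, which agrees with the first subcase of Theorem \ref{thm:ynm_ecc}(3). When $2\ndivides(m-n)$ we have $\dnm=\max\{\dz,\doo\}$, and Lemma \ref{lem:dist_of_uz}(2) supplies $\doo=\dz+n-\lceil\frac{m+1}{2}\rceil$. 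Hence $\dnm=\dz$ precisely when $n-\lceil\frac{m+1}{2}\rceil\leq 0$, i.e.\ when $n\leq\lceil\frac{m+1}{2}\rceil$, and otherwise $\dnm=\dz+n-\lceil\frac{m+1}{2}\rceil$. This is exactly the dichotomy governing the two subcases of Theorem \ref{thm:ynm_ecc}(3), so $\dnm$ equals $\ecc_{\Ynm}(0)$ in this regime as well. Combining all four cases yields $\ecc_{\Znm}(0)=\ecc_{\Ynm}(0)$ for all $n,m$.

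I do not expect any genuine obstacle: the theorem is essentially a packaging result. The only care needed is bookkeeping, namely confirming that the case boundaries exhaust all $(n,m)$ consistently, and that the $\max$ appearing in Definition \ref{def:dnm} translates correctly—via Lemma \ref{lem:dist_of_uz}—into the size condition $n\lessgtr\lceil\frac{m+1}{2}\rceil$ that separates the two subcases of Theorem \ref{thm:ynm_ecc}(3).
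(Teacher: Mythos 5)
Your proposal is correct and matches the paper's own proof, which simply combines Lemmas \ref{lem:corner_case_neqo_znm}, \ref{lem:ecc_n_geq_m} and \ref{lem:ecc_n_leq_m}; you additionally spell out the bookkeeping the paper leaves implicit, namely that the case decomposition is exhaustive and that $\dnm$, unwound via Definition \ref{def:dnm} and Lemma \ref{lem:dist_of_uz}(2), yields exactly the dichotomy in part (3) of Theorem \ref{thm:ynm_ecc}.
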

\begin{proof}
    This theorem is the combined result of Lemmas \ref{lem:corner_case_neqo_znm}, \ref{lem:ecc_n_geq_m} and \ref{lem:ecc_n_leq_m}.
\end{proof}

\chapter{Automorphisms of $\Ynm$}\label{chp:automorphisms}
This chapter deals with automorphisms of $\Ynm$.
In Section \ref{sec:fundamental_auto}, we introduce three fundamental automorphisms of $\Ynm$ and find the structure of the group they generate.
In Section \ref{sec:complete_auto_group}, we show that the group generated by these three fundamental automorphisms is, in fact, precisely the automorphism group of $\Ynm$, whenever $m\neq 2$ and $(n,m)\neq (1,3)$.
We cover the case $(n,m)=(1,3)$ separately.

\section{Fundamental Automorphisms}\label{sec:fundamental_auto}
\begin{definition}\label{def:typical_automorphisms}
	Let $\varphi$, $\psi$ and $\tau$ be the following three maps from $\Ynm$ to $\Ynm$:
	$$\begin{array}{c c c l}
	& \varphi(v_0,v_1,\dots,v_m,v_{m+1}) & = & (v_0+1,v_1,\dots,v_m,v_{m+1}-1) \\
	& \psi(v_0,v_1,\dots,v_m,v_{m+1})    & = & (v_{m+1},v_m,\dots,v_1,v_{0}) \\
	& \tau(v_0,v_1,\dots,v_m,v_{m+1})    & = & (-v_0,1-v_1,\dots,1-v_{m},-(m+v_{m+1})) \\
	\end{array}$$
\end{definition}

Note that $\varphi$ and $\tau$ are essentially as in Definition \ref{def:two_automorphisms}. We define them here again for completeness.

\begin{observation}
$\varphi$, $\psi$ and $\tau$ are automorphisms of $\Ynm$.
\end{observation}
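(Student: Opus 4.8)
The plan is to verify that each of the three maps $\varphi$, $\psi$ and $\tau$ is a well-defined bijection on the vertex set of $\Ynm$ and that it preserves the adjacency relation of Definition \ref{def:yoke_graph}. For each map I would carry out three checks in order: (i) the image of a vertex is again a vertex (i.e. the image lies in $\mathbb{Z}_n\times P_2^m\times\mathbb{Z}_n$ and its coordinate sum is $\equiv 0 \pmod n$); (ii) the map is a bijection; and (iii) for every edge $u\sim v$ the images satisfy $\varphi(u)\sim\varphi(v)$ (and likewise for $\psi$, $\tau$). Since an automorphism of a finite graph is a bijection preserving adjacency in both directions, and each of these maps will turn out to have an easily described inverse of the same form, checking adjacency-preservation in one direction together with invertibility suffices.

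First I would handle $\varphi(v)=(v_0+1,v_1,\dots,v_m,v_{m+1}-1)$. The coordinate sum is unchanged, so (i) holds; its inverse is $v\mapsto(v_0-1,v_1,\dots,v_m,v_{m+1}+1)$, giving (ii). For (iii), an edge is a unit shift $s_i$ between entries $i,i+1$ with $0\le i\le m$; since $\varphi$ only alters the two buckets $v_0,v_{m+1}$ by $\pm1$ and these are the extreme entries, any internal shift commutes with the bucket adjustment, so adjacency is preserved. Next, $\psi(v)=(v_{m+1},\dots,v_0)$ is the reversal; it is clearly an involution, it preserves the coordinate sum, and it sends a shift between positions $i,i+1$ to a shift between positions $m-i,m+1-i$ (reversing the shift direction), which is again a legal edge. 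For $\tau(v)=(-v_0,1-v_1,\dots,1-v_m,-(m+v_{m+1}))$ I would check that the new coordinate sum is $-\sum v_i + m - m = -\sum v_i \equiv 0\pmod n$, that $1-v_i\in\{0,1\}$ when $v_i\in\{0,1\}$, and that $\tau$ is an involution; a shift $u_i=v_i+1,\,u_{i+1}=v_{i+1}-1$ becomes, under $x\mapsto 1-x$ (or $x\mapsto -x$ at the buckets), the opposite shift $\tau(u)_i=\tau(v)_i-1,\,\tau(u)_{i+1}=\tau(v)_{i+1}+1$, so $\tau(u)\sim\tau(v)$.

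The only genuinely delicate point is the bookkeeping in $\tau$ at the two buckets, where the affine offset $-(m+v_{m+1})$ in the right bucket must be reconciled modulo $n$ with Convention \ref{conv:cosets_are_integers} (buckets are cosets represented by their smallest non-negative integer), and one must confirm that the shift at $i=0$ and at $i=m$ is correctly reversed despite the asymmetric constants $0$ and $m$ attached to the two buckets. I expect this is where most of the care is needed; everything else is a direct substitution. Since the statement is phrased as an \emph{Observation}, I would present these checks compactly, noting that $\varphi,\psi,\tau$ are each invertible of the stated form and that each sends a left/right unit shift to a legal unit shift, and conclude that all three are automorphisms of $\Ynm$.
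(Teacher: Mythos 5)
Your proposal is correct and takes exactly the approach the paper intends: the paper states this as an Observation with no written proof (as with Definition \ref{def:two_automorphisms}, where it notes such checks are ``straightforward to verify''), and your three checks---preservation of the coordinate sum modulo $n$, explicit inverses, and preservation of unit shifts including the sign bookkeeping at the buckets for $\tau$---are precisely that routine verification, carried out correctly.
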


\begin{definition}
	Denote $\AGnm=\left<\varphi, \psi, \tau\right>$; the subgroup of $\aut(\Ynm)$ generated by $\{\varphi, \psi, \tau\}$.
\end{definition}

Recall that the dihedral group $D_n$ ($\forall n\geq 1$) is the group of order $2n$ with presentation:
$$D_n=\left<g,h\; |\; g^n=h^2=1, hgh=g^{-1}\right>.$$

Denote by $o(g)$ the order of an element $g$ in a finite group.

\begin{theorem}[The structure of $\AGnm$]
    \label{thm:anm_structure}
	\begin{enumerate}
    \item[]
    \item $\AGnm[1][0]\cong\{1\}$ and $\AGnm[2][0]\cong \AGnm[1][1]\cong C_2$, the cyclic group of order $2$.
    \item $\AGnm[n][0] \cong D_n$ ($\forall n>2$).
    \item $\AGnm[1][m] \cong C_2\times C_2$ ($\forall m>1$). 
    \item If $n>1$ and $m>0$, then
    \begin{enumerate}
        \item if at least one of $n$ and $m$ is odd, then $\AGnm \cong D_{2n}$;
        \item otherwise ($n$ and $m$ are even), $\AGnm \cong D_{n}\times C_2$.
    \end{enumerate}
    \end{enumerate}
\end{theorem}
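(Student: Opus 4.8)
The plan is to compute the group $\AGnm = \langle \varphi, \psi, \tau\rangle$ directly by (i) finding the orders of the three generators and their pairwise products, (ii) identifying a cyclic normal subgroup and a complementary involution, and (iii) recognizing the resulting presentation as $D_n$, $D_{2n}$, or $D_n \times C_2$ as appropriate. Since the three small cases and the $n=1$ case are degenerate, I would dispatch them first by brute force: for $n=1$ the left bucket is trivial, $\varphi$ acts trivially, and one checks $\psi$ and $\tau$ are commuting involutions (distinct when $m>1$), giving $C_2 \times C_2$; the $m=0$ cases reduce to automorphisms of a cycle (Observation \ref{obs:small_m_is_trivial}), whose automorphism group is dihedral, and the tiny cases are finite checks.

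For the main case $n>1$, $m>0$, the first substantive step is to establish the orders and relations among $\varphi, \psi, \tau$. I would compute $o(\varphi) = n$, since $\varphi$ increments the left bucket mod $n$ (and correspondingly decrements the right bucket), so $\varphi^n = \mathrm{id}$. The reflection $\psi$ is clearly an involution, and $\tau$ should also be an involution after checking $\tau^2 = \mathrm{id}$ on each coordinate (the middle coordinates flip via $v_i \mapsto 1-v_i$, which is involutive, and the buckets via $v_0 \mapsto -v_0$, also involutive mod $n$). The crucial conjugation relations are $\psi \varphi \psi^{-1}$ and $\tau \varphi \tau^{-1}$: I expect $\psi \varphi \psi = \varphi^{-1}$ (reflection reverses the direction of the bucket increment) and a similar computation for $\tau$. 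These relations identify $\langle \varphi \rangle \cong C_n$ as normal, with $\psi, \tau$ acting as inversions or trivially.

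The heart of the argument — and the step where the even/odd dichotomy emerges — is computing the composite $\psi\tau$ (or $\tau\psi$) and determining its order and how it interacts with $\varphi$. I would compute the product $\psi\tau$ coordinatewise and check whether it lands inside $\langle \varphi \rangle \rtimes \langle \psi \rangle$ or generates an extra $C_2$ factor. The parity of $n$ and $m$ enters through whether $\psi\tau$ equals a power of $\varphi$ times a reflection, versus a genuinely independent central involution. Concretely, when at least one of $n,m$ is odd, I expect the three generators to collapse into a single dihedral group $D_{2n}$ of order $4n$ (with $\varphi$ and one composite reflection generating a $C_{2n}$, acted on by an involution); when both $n,m$ are even, a central involution splits off, yielding $D_n \times C_2$. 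The main obstacle will be precisely this parity bookkeeping: verifying that $(\psi\tau)$ or some product has order $2n$ in the odd case but factors as (order-$n$ rotation) $\times$ (central $C_2$) in the even case, which requires careful coordinatewise tracking of the $-v_0 \bmod n$ and $1 - v_i$ actions and how the constant shift $-(m + v_{m+1})$ in $\tau$ interacts with $\varphi^k$.

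Finally, to confirm the isomorphism type rather than just an abstract order count, I would exhibit explicit generators-and-relations matching the target presentation. For the dihedral cases I identify an element $g$ of order $n$ (respectively $2n$) and an involution $h$ with $hgh = g^{-1}$, and verify no smaller relations hold (so the group is not a proper quotient); for the $D_n \times C_2$ case I exhibit the central involution $z$ with $z^2 = \mathrm{id}$ commuting with everything, together with the $D_n$ on the quotient. An order computation — showing $|\AGnm| = 4n$ in all the nondegenerate cases — provides a useful sanity check and forces the isomorphism once the relations are in hand.
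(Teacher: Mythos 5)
Your proposal is correct and follows essentially the same route as the paper: dispatch the degenerate cases ($m=0$, $n=1$, and the tiny graphs) separately, establish $o(\varphi)=n$, the involutivity of $\psi,\tau$, the inversion relations $\psi\varphi\psi^{-1}=\tau\varphi\tau^{-1}=\varphi^{-1}$, and the key computation of $(\tau\psi)^2$ (which equals $\varphi^m$, the source of the parity dichotomy), then combine with the order count $|\AGnm|=4n$ to identify $D_{2n}$ versus $D_n\times C_2$. The one detail you leave implicit --- that in the odd case $\tau\psi$ itself need not have order $2n$, so one must pass to $\varphi^k\tau\psi$ for a $k$ with $\gcd(m+2k,n)=1$ --- is exactly the ``parity bookkeeping'' you flag as the main obstacle, and it resolves just as you anticipate.
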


This theorem is the combined result of forthcoming Lemmas \ref{lem:automorphism_group_special_cases} and \ref{lem:automorhism_group_structure}.

\begin{observation}\label{obs:anm_relations}
    The following two properties hold in $\AGnm$ for any $n$ and $m$.
    \begin{enumerate}
        \item $\psi\varphi=\varphi^{-1}\psi$ and $\tau\varphi=\varphi^{-1}\tau$.
        \item $(\tau\psi)^2=\varphi^m$.
    \end{enumerate}
\end{observation}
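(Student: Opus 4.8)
The plan is to verify all the stated identities by direct evaluation on an arbitrary vertex $v=(v_0,v_1,\dots,v_m,v_{m+1})$, keeping in mind the right-to-left composition convention $(gf)(x)=g(f(x))$ fixed earlier, so that, for instance, $\psi\varphi$ means ``first $\varphi$, then $\psi$''. The only preliminary fact I need is the explicit form of the inverse of $\varphi$, namely $\varphi^{-1}(v_0,v_1,\dots,v_m,v_{m+1})=(v_0-1,v_1,\dots,v_m,v_{m+1}+1)$, which is immediate from Definition \ref{def:typical_automorphisms}. No deeper structure is required; everything reduces to bookkeeping of the index-reversal induced by $\psi$ together with the affine terms in $\tau$.

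For the first pair of relations, I would compute both sides on $v$ and compare coordinate-by-coordinate. Applying $\varphi$ then $\psi$ gives $\psi\varphi(v)=(v_{m+1}-1,v_m,\dots,v_1,v_0+1)$, while applying $\psi$ then $\varphi^{-1}$ gives $\varphi^{-1}\psi(v)=(v_{m+1}-1,v_m,\dots,v_1,v_0+1)$, so the two agree and $\psi\varphi=\varphi^{-1}\psi$. For the second identity, applying $\varphi$ then $\tau$ yields $\tau\varphi(v)=(-v_0-1,\,1-v_1,\dots,1-v_m,\,-(m-1+v_{m+1}))$, whereas applying $\tau$ then $\varphi^{-1}$ yields the same tuple $(-v_0-1,\,1-v_1,\dots,1-v_m,\,-(m+v_{m+1})+1)$; since $-(m+v_{m+1})+1=-(m-1+v_{m+1})$, we get $\tau\varphi=\varphi^{-1}\tau$.

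For the second property I would first record the single composite $\tau\psi$. Writing $u=\tau\psi(v)$, the reversal in $\psi$ followed by $\tau$ gives $u_0=-v_{m+1}$, $u_i=1-v_{m+1-i}$ for $1\le i\le m$, and $u_{m+1}=-(m+v_0)$. Applying $\tau\psi$ once more to $u$ and tracking each coordinate — the new first entry is $-u_{m+1}=m+v_0$, the middle entries become $1-u_{m+1-i}=1-(1-v_i)=v_i$, and the new last entry is $-(m+u_0)=-(m-v_{m+1})=v_{m+1}-m$ — shows that $(\tau\psi)^2(v)=(v_0+m,\,v_1,\dots,v_m,\,v_{m+1}-m)$, which is exactly $\varphi^m(v)$. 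Hence $(\tau\psi)^2=\varphi^m$.

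The computations are all routine; the one place demanding care, and hence the main (minor) obstacle, is the second iteration of $\tau\psi$ in part (2): one must correctly reverse indices a second time while simultaneously handling the affine offsets $1-v_i$ in the interior coordinates and the accumulating $-(m+\cdot)$ term in the right bucket. In particular, verifying that the two applications of the offset $m$ land with the right signs in the first and last coordinates (producing $+m$ on the left bucket and $-m$ on the right bucket) is the step where a sign or index slip is easiest to make, so I would display that coordinate calculation explicitly rather than leaving it implicit.
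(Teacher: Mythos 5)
Your proposal is correct and follows essentially the same route as the paper: the first property is verified by direct coordinate comparison (the paper simply declares it trivial), and the second by applying $\tau\psi$ twice to an arbitrary vertex and tracking the index reversal and affine offsets, exactly as the paper's displayed computation does. All your coordinate calculations check out, including the sign bookkeeping in the buckets.
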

\begin{proof}
    The first property is trivial.
    Let $u\in\Ynm$ and $f=\tau\psi$. Then 
    \begin{equation*}
    \begin{split}
    (u_0,\dots,u_{m+1}) & \overset{f}{\mapsto} 
    (-u_{m+1},1-u_m,\dots,1-u_1,-(m+u_0)) \\ 
    & \overset{f}{\mapsto} (u_0+m,u_1,\dots,u_m,u_{m+1}-m).
    \end{split}
    \end{equation*}
    This proves the second property.
\end{proof}

\begin{fact}\label{fac:dihedral_gens}
	Let $n>2$. If $g$ and $h$ are two elements in a group such that $o(g)=n$, $o(h)=2$ and $gh=hg^{-1}$, then $\left<g,h\right>\cong D_n$.
\end{fact}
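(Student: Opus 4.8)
The plan is to identify the subgroup $H = \langle g,h\rangle$ with $D_n$ by showing that $H$ has order exactly $2n$ and that $g,h$ satisfy the defining relations of $D_n$, so that the natural surjection from $D_n$ onto $H$ is forced to be an isomorphism.

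First I would rewrite the hypothesis $gh = hg^{-1}$ using $o(h)=2$: since $h=h^{-1}$, this relation is equivalent to $hgh = g^{-1}$, i.e. conjugation by $h$ inverts $g$. In particular conjugation by $h$ carries the cyclic subgroup $\langle g\rangle$ (of order $n$) to itself, so $\langle g\rangle$ is a normal subgroup of $H$. Using the rewriting rule $hg = g^{-1}h$ to push every occurrence of $h$ to the right of every occurrence of $g$, each element of $H$ can be brought to the form $g^i h^j$; reducing $i$ modulo $n$ and $j$ modulo $2$ then shows $|H| \le 2n$.

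The key step, and the only place where the hypothesis $n>2$ is used, is to rule out $h \in \langle g\rangle$. If $h$ were a power of $g$ it would commute with $g$, and combining $gh = hg$ with $gh = hg^{-1}$ would give $hg = hg^{-1}$, hence $g^2 = 1$ and $o(g)\le 2$, contradicting $n>2$. Consequently the cosets $\langle g\rangle$ and $\langle g\rangle h$ are distinct. Since the quotient $H/\langle g\rangle$ is generated by the image of $h$ and thus has order dividing $2$, it follows that $[H:\langle g\rangle]=2$ and therefore $|H| = 2n$.

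Finally, $g$ and $h$ satisfy $g^n = h^2 = 1$ together with $hgh = g^{-1}$, which are precisely the defining relations of $D_n = \langle a,b \mid a^n = b^2 = 1,\ bab = a^{-1}\rangle$. By the universal property of the presentation there is a surjective homomorphism $D_n \to H$ with $a \mapsto g$ and $b \mapsto h$; as both groups have order $2n$, this surjection is a bijection, so $\langle g,h\rangle \cong D_n$. I do not anticipate any real obstacle: the argument is routine once $|H| = 2n$ is pinned down, and the only care required is the short case analysis showing that $n>2$ forces $h \notin \langle g\rangle$.
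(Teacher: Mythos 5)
Your proof is correct. The paper states this result as a \emph{Fact} without proof, treating it as standard group theory, so there is no argument in the paper to compare against; your proposal supplies exactly the standard verification one would expect: the relation $hgh=g^{-1}$ gives normality of $\langle g\rangle$ and the normal form $g^{i}h^{j}$ (so $|\langle g,h\rangle|\leq 2n$), the hypothesis $n>2$ correctly rules out $h\in\langle g\rangle$ (forcing $|\langle g,h\rangle|=2n$), and the surjection $D_n\to\langle g,h\rangle$ coming from the presentation is then an isomorphism by counting orders.
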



%
%
%
\begin{observation}\label{obs:anm_equal}
    \begin{enumerate}
        \item[]        
        \item $\varphi = \psi$ if and only if $n=1$ and $m\leq 1$;
        \item $\varphi = \tau$ if and only if $n=1$ and $m=0$;
        \item $\psi = \tau$ if and only $m=0$;
    \end{enumerate}
\end{observation}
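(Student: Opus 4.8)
The three claimed equivalences concern equalities of the explicitly defined permutations $\varphi$, $\psi$, $\tau$ of the vertex set of $\Ynm$, so my plan is purely computational: two such maps are equal precisely when they agree coordinatewise on every vertex $(v_0,\dots,v_{m+1})$ satisfying $\sum_{i=0}^{m+1}v_i\equiv 0\pmod n$. For each of the three statements I would prove both implications separately, establishing agreement under the stated hypotheses and producing an explicit witness vertex of disagreement otherwise. Two structural observations drive everything: when $n=1$ both buckets are forced to $0$, so $\varphi$ becomes the identity (since $0+1=0$ and $0-1=0$ in $\mathbb{Z}_1$), and when $m=0$ there are no middle entries and the defining constraint reads $v_0+v_1\equiv 0$, i.e. $v_1=-v_0$.

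For the ``only if'' directions I would test the maps on the single vertex $0=(0,\dots,0)$ whenever possible. For $\varphi=\psi$: if $n\geq 2$ then $\varphi(0)=(1,0,\dots,0,n-1)\neq 0=\psi(0)$, while if $n=1$ and $m\geq 2$ the map $\varphi$ is the identity whereas $\psi$ reverses the middle block and hence moves $(0,1,0,\dots,0)$ to $(0,0,\dots,0,1,0)$; together these force $n=1$ and $m\leq 1$. For $\varphi=\tau$: if $n\geq 2$ the zeroth coordinates already disagree ($1$ versus $0$), and if $n=1$ with $m\geq 1$ then $\varphi=\mathrm{id}$ but $\tau(0)=(0,1,\dots,1,0)$, forcing $n=1$ and $m=0$. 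For $\psi=\tau$: if $m\geq 1$ then $\psi(0)=0$ but $\tau(0)$ has all middle entries equal to $1$, forcing $m=0$.

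The ``if'' directions are short verifications. When $n=1$ and $m=0$ there is a unique vertex, so all three maps coincide with the identity; this simultaneously settles the forward implications of the first two statements, and the case $n=1$, $m=1$ of the first is checked directly on the two vertices $(0,v_1,0)$, on which both $\varphi$ and $\psi$ act trivially. The one mildly substantive point is the third statement: when $m=0$ the constraint $v_1=-v_0$ gives $\psi(v_0,v_1)=(v_1,v_0)=(-v_0,v_0)$ and $\tau(v_0,v_1)=(-v_0,-v_1)=(-v_0,v_0)$, so the two maps agree on every vertex. The only thing to watch throughout is that arithmetic in the bucket coordinates is modulo $n$ while the middle coordinates are constrained to $\{0,1\}$; since every witness and every verification respects these conventions, no genuine obstacle arises. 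The statement is essentially a bookkeeping check, the single nontrivial ingredient being the use of the vertex-sum constraint to collapse $\psi$ and $\tau$ in the case $m=0$.
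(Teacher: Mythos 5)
Your proposal is correct: the paper states this as an Observation without proof, and your direct coordinatewise verification (using that $n=1$ forces both buckets to $0$, hence $\varphi=\mathrm{id}$, and that $m=0$ forces $v_1=-v_0$, which is exactly what collapses $\psi$ and $\tau$) is precisely the routine check the paper leaves to the reader. All witnesses and case splits are valid, so nothing is missing.
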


\begin{observation}\label{obs:anm_order}
    The orders of $\varphi$, $\psi$ and $\tau$ in $\AGnm$ are as follows:
    \begin{enumerate}
        \item $\psi=Id$ if and only if $(n,m)\in\{(1,0), (1,1), (2,0)\}$, otherwise $o(\psi)=2$;
        \item $\tau=Id$ if and only if $(n,m)\in\{(1,0), (2,0)\}$, otherwise $o(\tau)=2$;
        \item $o(\varphi)=n$.
    \end{enumerate}
\end{observation}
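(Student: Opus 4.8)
The plan is to verify all three statements by direct computation on vertex coordinates, treating the buckets $v_0, v_{m+1}$ (which live in $\mathbb{Z}_n$) separately from the middle entries $v_1,\dots,v_m$ (which live in $\{0,1\}$).

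First I would handle $\varphi$, the easiest case. By an immediate induction on $k$, the $k$-th power acts by
$$\varphi^k(v_0,v_1,\dots,v_m,v_{m+1}) = (v_0+k,\,v_1,\dots,v_m,\,v_{m+1}-k),$$
with the bucket arithmetic taken modulo $n$ and the middle entries untouched. Hence $\varphi^k = Id$ if and only if $k \equiv 0 \pmod{n}$, which gives $o(\varphi) = n$.

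Next, for $\psi$ I would first note that applying the reversal twice restores the original vertex, so $\psi^2 = Id$ and thus $o(\psi) \in \{1,2\}$. It then remains to decide exactly when $\psi = Id$, i.e.\ when every vertex of $\Ynm$ is a palindrome ($v_i = v_{m+1-i}$ for all $i$). For $m \ge 2$ I would exhibit the explicit vertex whose only nonzero middle entry is $v_1 = 1$ (its right bucket chosen so that the sum vanishes modulo $n$); since $m \ge 2$ forces $v_m = 0 \neq v_1$, this vertex is not fixed, so $\psi \neq Id$ for every $n$. The remaining cases $m \in \{0,1\}$ reduce to bucket conditions: for $m=0$ the requirement $v_0 = v_1 = -v_0$ for all vertices is equivalent to $2v_0 \equiv 0 \pmod n$ for all $v_0$, i.e.\ $n \mid 2$; for $m=1$ the analogous analysis (using both a vertex with $v_1=0$ and one with $v_1=1$) forces both $n \mid 2$ and $n \mid 1$, hence $n = 1$. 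Collecting these yields $\psi = Id$ exactly for $(n,m)\in\{(1,0),(1,1),(2,0)\}$, and $o(\psi)=2$ otherwise.

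Finally, for $\tau$ I would again check $\tau^2 = Id$ by a short direct computation (each bucket map and the middle map $v_i \mapsto 1-v_i$ are involutions), so $o(\tau)\in\{1,2\}$. The key observation is that $v_i \mapsto 1 - v_i$ has no fixed point in $\{0,1\}$; consequently, as soon as $m \ge 1$, any vertex with a prescribed middle entry is moved and $\tau \neq Id$. When $m = 0$ the map is $(v_0,v_1)\mapsto(-v_0,-v_1)$, so $\tau = Id$ is equivalent to $2v_0 \equiv 0 \pmod n$ for all $v_0$, i.e.\ $n \mid 2$. This gives $\tau = Id$ exactly for $(n,m)\in\{(1,0),(2,0)\}$ and $o(\tau)=2$ otherwise. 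The computations are elementary throughout; the only place demanding care is the case analysis for $\psi$, where the exceptional pair $(1,1)$ arises precisely because over $\mathbb{Z}_1$ the buckets collapse to $0$ while a single middle entry sits at the center of a length-three coordinate vector. The main (minor) obstacle is thus to confirm that each witness vertex genuinely lies in $\Ynm$, i.e.\ satisfies $\sum_i v_i \equiv 0 \pmod n$, which is always arranged by choosing the free bucket appropriately.
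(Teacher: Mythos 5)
Your proposal is correct: the verification of $o(\varphi)=n$, the involutivity checks for $\psi$ and $\tau$, and the case analysis pinning down exactly when $\psi=Id$ (including the exceptional pair $(1,1)$, where $2v_0+v_1\equiv 0 \pmod n$ must hold for all admissible $v_0,v_1$) and when $\tau=Id$ all hold up, with each witness vertex legitimately in $\Ynm$. The paper states this as an observation without proof, and your direct coordinate computation is precisely the routine verification it implicitly relies on, so there is nothing to compare beyond noting that you have filled in the intended check.
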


\begin{corollary}\label{cor:anm_generators_distinct}
    If $n>1$ and $m>0$, then $\varphi$, $\psi$ and $\tau$ are distinct and of orders $n$, $2$ and $2$, respectively.
\end{corollary}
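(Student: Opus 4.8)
The plan is to read the corollary directly off Observations \ref{obs:anm_equal} and \ref{obs:anm_order}, the only real task being to verify that under the standing hypotheses $n>1$ and $m>0$ none of the exceptional pairs $(n,m)$ recorded there can occur. In other words, the corollary is a bookkeeping consequence of the two preceding observations, and I would present it as such.

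First I would settle the orders. By part (3) of Observation \ref{obs:anm_order}, $o(\varphi)=n$ unconditionally, which handles $\varphi$. For $\psi$, the same observation gives $o(\psi)=2$ unless $(n,m)\in\{(1,0),(1,1),(2,0)\}$; each of these pairs has either $n=1$ or $m=0$, so all three are excluded by our hypotheses, whence $o(\psi)=2$. Likewise $o(\tau)=2$ unless $(n,m)\in\{(1,0),(2,0)\}$, and again both are excluded since $n>1$ and $m>0$, so $o(\tau)=2$.

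Next I would establish distinctness using Observation \ref{obs:anm_equal}. Since $\varphi=\psi$ forces $n=1$ (and $m\leq 1$), and $\varphi=\tau$ forces $n=1$ (and $m=0$), the condition $n>1$ rules out both coincidences; and since $\psi=\tau$ forces $m=0$, the condition $m>0$ rules out the third. Hence $\varphi$, $\psi$ and $\tau$ are pairwise distinct, completing the argument.

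There is no genuine obstacle at the level of the corollary itself: all the content lives in the two cited observations, whose verification is where any actual effort resides. If those observations were not already available, the work would amount to evaluating the explicit coordinate formulas of Definition \ref{def:typical_automorphisms} on an arbitrary vertex — computing $\varphi^k$, $\psi^2$ and $\tau^2$ entrywise and comparing with the identity, and reading off when two of the maps agree on every coordinate — taking care that the buckets $v_0$ and $v_{m+1}$ are interpreted modulo $n$ per Convention \ref{conv:cosets_are_integers}. That is the only place where one must be attentive rather than mechanical.
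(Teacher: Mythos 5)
Your proposal is correct and follows exactly the paper's own (implicit) route: the corollary is stated immediately after Observations \ref{obs:anm_equal} and \ref{obs:anm_order} precisely because it reads off from them, with the hypotheses $n>1$ and $m>0$ excluding every exceptional pair $(n,m)$ listed there. Your bookkeeping — orders from Observation \ref{obs:anm_order}, pairwise distinctness from Observation \ref{obs:anm_equal} — is complete and accurate.
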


\begin{lemma}\label{lem:automorphism_group_special_cases}
	\begin{enumerate}
		\item[]
        \item $\AGnm[1][0]\cong\{1\}$ and $\AGnm[2][0]\cong \AGnm[1][1]\cong C_2$.
		\item $\AGnm[n][0] \cong D_n$ ($\forall n>2$).
		\item $\AGnm[1][m] \cong C_2\times C_2$ ($\forall m>1$). 
	\end{enumerate}
\end{lemma}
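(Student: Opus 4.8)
The plan is to treat the three items as separate computations in which $\AGnm$ collapses to a group generated by at most two of the three fundamental maps, after which the relations already recorded in Observation \ref{obs:anm_relations} pin down the isomorphism type. The preliminary Observations \ref{obs:anm_equal} and \ref{obs:anm_order} do all the bookkeeping in the degenerate ranges: they say exactly when two of $\varphi,\psi,\tau$ coincide and exactly when each is the identity, which is the only genuine subtlety here. So the proof is essentially a careful case analysis feeding into Fact \ref{fac:dihedral_gens}.

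For item (1) the group degenerates entirely. In $\AGnm[1][0]$ we have $o(\varphi)=n=1$, so $\varphi=Id$, while $(1,0)$ lies in the exceptional lists of Observation \ref{obs:anm_order}, forcing $\psi=\tau=Id$; hence $\AGnm[1][0]\cong\{1\}$. In $\AGnm[2][0]$ Observation \ref{obs:anm_order} gives $\psi=\tau=Id$ and $o(\varphi)=2$, so the group is $\langle\varphi\rangle\cong C_2$. In $\AGnm[1][1]$ we get $\varphi=Id$ (as $o(\varphi)=1$) and $\psi=Id$, while $(1,1)\notin\{(1,0),(2,0)\}$ leaves $o(\tau)=2$, so the group is $\langle\tau\rangle\cong C_2$. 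For item (2), with $m=0$ Observation \ref{obs:anm_equal}(3) gives $\psi=\tau$, so $\AGnm[n][0]=\langle\varphi,\psi\rangle$; Observation \ref{obs:anm_order} gives $o(\varphi)=n>2$ and $o(\psi)=2$. Rewriting the relation $\psi\varphi=\varphi^{-1}\psi$ of Observation \ref{obs:anm_relations} (multiply on the left by $\varphi$ and on the right by $\varphi^{-1}$) yields $\varphi\psi=\psi\varphi^{-1}$, which is exactly the hypothesis of Fact \ref{fac:dihedral_gens} with $g=\varphi,\ h=\psi$; hence $\AGnm[n][0]\cong D_n$.

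For item (3), with $n=1$ we have $o(\varphi)=1$, so $\varphi=Id$ and $\AGnm[1][m]=\langle\psi,\tau\rangle$. For $m>1$ Observation \ref{obs:anm_equal}(1) gives $\psi\neq\varphi=Id$ and Observation \ref{obs:anm_equal}(3) gives $\psi\neq\tau$, while Observation \ref{obs:anm_order} gives $o(\psi)=o(\tau)=2$. The key point is commutativity: since $\varphi=Id$, the relation $(\tau\psi)^2=\varphi^m$ of Observation \ref{obs:anm_relations} reads $(\tau\psi)^2=Id$, so $\tau\psi=(\tau\psi)^{-1}=\psi\tau$ (using that $\psi,\tau$ are involutions). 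Thus $\langle\psi,\tau\rangle$ is abelian, and the four elements $Id,\ \psi,\ \tau,\ \psi\tau$ are pairwise distinct (each coincidence would force one generator to equal the other or the identity, all excluded), so $\AGnm[1][m]\cong C_2\times C_2$.

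I expect no deep obstacle: the substance of the argument lies entirely in the preliminary observations, and the only place care is required is ensuring, in each degenerate window of $(n,m)$, that the surviving generators are genuinely distinct and of the stated orders so that Fact \ref{fac:dihedral_gens} (item 2) or the Klein-four count (item 3) applies verbatim. The mildly delicate step is item (3), where one must verify commutativity of $\psi$ and $\tau$ rather than assume it; this is supplied cleanly by specializing the identity $(\tau\psi)^2=\varphi^m$ to $n=1$.
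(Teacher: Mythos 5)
Your proposal is correct and follows essentially the same route as the paper's proof: the same case analysis driven by Observations \ref{obs:anm_equal}, \ref{obs:anm_order} and \ref{obs:anm_relations}, with Fact \ref{fac:dihedral_gens} yielding $D_n$ in case (2) and two distinct commuting involutions yielding $C_2\times C_2$ in case (3). The only difference is that you spell out details the paper leaves implicit (the generator degenerations in case (1), the rewriting $\varphi\psi=\psi\varphi^{-1}$, and the derivation of $\psi\tau=\tau\psi$ from $(\tau\psi)^2=\varphi^m=Id$), which is a matter of exposition rather than of approach.
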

\begin{proof}
    The first case is trivial.
    By Observations \ref{obs:anm_equal} and \ref{obs:anm_order}, if $n>2$ and $m=0$, then $\psi=\tau\neq Id$. Moreover, by Observation \ref{obs:anm_relations} (Property $1$), Observation \ref{obs:anm_order} and Fact \ref{fac:dihedral_gens}, $\left<\varphi,\psi\right>$ is isomorphic to $D_n$.
    This proves the second case.
    If $n=1$ and $m>1$, then $\varphi=Id$, $\psi\neq\tau$, $o(\psi)= o(\tau)=2$ and $\psi\tau=\tau\psi$.
    This proves the third case.
\end{proof}
\pagebreak
\begin{lemma}\label{lem:automorphism_group_order}
If $m>0$ and $(n,m)\neq (1,1)$, then $|\AGnm|=4n$.
\end{lemma}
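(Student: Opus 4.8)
The plan is to exhibit inside $\AGnm$ an explicit abelian normal subgroup of index two and order $2n$. I set $c = \tau\psi$. By Observation \ref{obs:anm_relations} both $\psi$ and $\tau$ invert $\varphi$, so a short conjugation computation gives $c\varphi c^{-1} = \tau(\psi\varphi\psi^{-1})\tau^{-1} = \tau\varphi^{-1}\tau^{-1} = \varphi$; that is, $c$ centralizes $\varphi$. Moreover Observation \ref{obs:anm_relations}(2) states $c^2 = (\tau\psi)^2 = \varphi^m$. Hence $N := \langle\varphi, c\rangle$ is abelian, and since $c^2 \in \langle\varphi\rangle$ every element of $N$ has the form $\varphi^a c^b$ with $0\le a<n$ and $b\in\{0,1\}$; in particular $|N|\le 2n$.

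The first thing to pin down is that this bound is sharp, i.e. $c\notin\langle\varphi\rangle$. I would verify this by evaluating at the base vertex $0$: by Definition \ref{def:typical_automorphisms}, $c(0)=\tau\psi(0)=(0,1,\dots,1,-m)$, whose middle entries are all equal to $1$, whereas every power $\varphi^a(0)=(a,0,\dots,0,-a)$ leaves the middle entries equal to $0$. Since $m>0$ there is at least one middle entry, so $c\neq\varphi^a$ for all $a$, and therefore $|N|=2n$.

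Next I would show $N\trianglelefteq\AGnm$ with $[\AGnm:N]=2$. Normality is immediate: $\varphi$ and $c$ normalize $N$ trivially, and $\psi$ conjugates $\varphi\mapsto\varphi^{-1}$ and $c=\tau\psi\mapsto\psi\tau=c^{-1}$, both in $N$. Since $\tau=c\psi$, we get $\AGnm=\langle\varphi,\psi,\tau\rangle=\langle N,\psi\rangle$, so $\AGnm/N$ is cyclic of order at most $2$ and $|\AGnm|\le 2|N|=4n$. For the matching lower bound it suffices to show $\psi\notin N$, and again I would use the base vertex: $\psi$ fixes $0$, while the only element of $N$ fixing $0$ is the identity, since $\varphi^a$ fixes $0$ only when $a\equiv 0\pmod n$ and every element $\varphi^a c$ sends $0$ to a vertex with all middle entries equal to $1\neq 0$. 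By Observation \ref{obs:anm_order} the hypotheses $m>0$ and $(n,m)\neq(1,1)$ guarantee $\psi\neq\mathrm{Id}$, so $\psi$ is a nonidentity stabilizer of $0$ and cannot lie in $N$. Therefore $[\AGnm:N]=2$ and $|\AGnm|=4n$.

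The computations are all routine; the only real subtlety is the bookkeeping around the exclusion $(n,m)=(1,1)$, which is exactly the case where $\psi=\mathrm{Id}$, so $\psi\in N$ and the index-two argument collapses. I would also stress that the argument is uniform in $n$, including the degenerate case $n=1$ where $\varphi=\mathrm{Id}$ and $N=\langle c\rangle\cong C_2$: there the two base-point evaluations still distinguish $c$ and $\psi$ from the identity because the middle coordinates live in $\{0,1\}$ rather than in $\mathbb{Z}_n$, so the reduction mod $n$ does not touch them. I expect this $\{0,1\}$-versus-$\mathbb{Z}_n$ distinction in the middle entries, rather than any genuine difficulty, to be the one point requiring care.
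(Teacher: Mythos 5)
Your proof is correct, but it is organized quite differently from the paper's. The paper argues by normal-form counting: using Observation \ref{obs:anm_relations} it writes every element of $\AGnm$ as $\varphi^k\tau^i\psi^j$ with $0\leq k\leq n-1$, $i,j\in\{0,1\}$ (so $|\AGnm|\leq 4n$), and then shows no nontrivial word of this form is the identity, using the generator orders (Corollary \ref{cor:anm_generators_distinct}) and evaluation at the vertex $0$; the case $n=1$ is handled separately by citing Lemma \ref{lem:automorphism_group_special_cases}. You instead build the subgroup tower $\langle\varphi\rangle\leq N=\langle\varphi,\tau\psi\rangle\leq\AGnm$, showing $N$ is abelian of order $2n$ (via $c\varphi c^{-1}=\varphi$ and $c^2=\varphi^m$) and normal of index exactly $2$ (via $\psi\notin N$, detected by the stabilizer of $0$). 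Both arguments ultimately rest on the same two ingredients — the relations of Observation \ref{obs:anm_relations} and base-point evaluation at $0$ — but your route buys two things: it is uniform in $n$, so the $n=1$ case needs no separate treatment (your remark about the middle entries living in $\{0,1\}$ rather than $\mathbb{Z}_n$ is exactly the right justification), and it exhibits structural information (an abelian normal subgroup of index $2$ containing $\varphi$) that anticipates the dihedral-type identifications $D_{2n}$ and $D_n\times C_2$ proved later in Lemma \ref{lem:automorhism_group_structure}. The paper's version is shorter and more elementary, at the cost of the case split and of proving only the order. All your individual steps check out against the paper's observations ($\psi^2=\tau^2=\mathrm{Id}$ always holds, even when one of them is trivial, so the inversion $c^{-1}=\psi\tau$ is legitimate), and your bookkeeping of the excluded case $(n,m)=(1,1)$ — the unique case with $m>0$ where $\psi=\mathrm{Id}$ and the index-two step collapses — is exactly right.
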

\begin{proof}
    If $n=1$ (and $m>1$), then $|\AGnm[1][m]|=4$, by Lemma \ref{lem:automorphism_group_special_cases}.
    Assume that $n>1$ (and $m>0$).     
	By Observation \ref{obs:anm_relations}, every element in $\AGnm$ can be written in the form $\varphi^k\tau^i\psi^j$ where $0\leq k\leq n-1$ and $i,j\in\{0,1\}$.
    Therefore $|\AGnm|\leq 4n$.

	Assume to the contrary that $\varphi^k\tau^i\psi^j=Id$ for $(k, i, j)\neq (0,0,0)$.
    By Corollary \ref{cor:anm_generators_distinct}, $\varphi$, $\psi$ and $\tau$ are distinct and of orders $n$, $2$ and $2$, respectively.
    Therefore, $k\neq 0$.
    If $i=0$, then $\varphi^k\psi=Id$. 
    This is impossible, since $\varphi^k\psi(0)\neq 0$.
    On the other hand, if $i=1$, then $\varphi^k\tau\psi^j=Id$.
    This is also impossible, again, since $\varphi^k\tau\psi^j(0)\neq 0$.
    This is a contradiction, completing the proof.
\end{proof}

\begin{lemma}\label{lem:automorhism_group_structure}
	If $n>1$ and $m>0$, then
	\begin{enumerate}
		\item if at least one of $n$ and $m$ is odd, then $\AGnm \cong D_{2n}$;
		\item otherwise, $\AGnm \cong D_{n}\times C_2$.
	\end{enumerate}
\end{lemma}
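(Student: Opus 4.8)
The plan is to identify the abstract isomorphism type of $\AGnm$ using the relations already established and a parity count on the orders of the relevant elements. By Lemma~\ref{lem:automorphism_group_order}, $|\AGnm|=4n$ whenever $n>1$ and $m>0$, so the task is purely to pin down which group of order $4n$ we have among the two candidates. The two key relations are in Observation~\ref{obs:anm_relations}: $\psi\varphi=\varphi^{-1}\psi$ together with $\tau\varphi=\varphi^{-1}\tau$, and $(\tau\psi)^2=\varphi^m$. The second relation is the crux, since it is exactly where the parity of $m$ enters.

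Let me think about what to prove.

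First I would set up generators with the right orders. By Corollary~\ref{cor:anm_generators_distinct}, $o(\varphi)=n$, $o(\psi)=o(\tau)=2$, and the three are distinct. The element $\varphi$ generates a cyclic normal subgroup $\langle\varphi\rangle\cong C_n$ (normality follows because $\psi$ and $\tau$ both conjugate $\varphi$ to $\varphi^{-1}$). So $\AGnm$ is generated by the normal cyclic subgroup $\langle\varphi\rangle$ together with the two involutions $\psi,\tau$, each inverting $\varphi$ under conjugation.

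Now the case split. For the first case (at least one of $n,m$ odd), I would produce an element of order $2n$ that inverts $\varphi$, then invoke Fact~\ref{fac:dihedral_gens} (or its evident extension) to conclude $\AGnm\cong D_{2n}$. The natural candidate is the product $g=\tau\psi$: by Observation~\ref{obs:anm_relations}, $(\tau\psi)^2=\varphi^m$, and one checks $g$ commutes with $\varphi$ up to the usual inversion — in fact $\tau\psi$ centralizes $\varphi$ since both $\tau$ and $\psi$ invert it, so $\langle\varphi,\tau\psi\rangle$ is abelian. When $m$ is odd, $\varphi^m$ generates $\langle\varphi\rangle$ (as $\gcd(m,n)$ need not be $1$ — here I must be careful). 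Let me reconsider: the cleaner route is to take the element $h=\psi$ (an involution inverting $\varphi$) and seek a rotation of order $2n$. Set $r=\varphi\cdot(\tau\psi)$ or examine $\langle\varphi,\tau\psi\rangle$ directly; since $\tau\psi$ centralizes $\varphi$ and $(\tau\psi)^2=\varphi^m$, this subgroup is cyclic, generated by an element of order equal to $\mathrm{lcm}$-type data governed by the parity of $n$ and $m$. I expect that when $n$ or $m$ is odd one gets a cyclic subgroup of order $2n$ of index $2$, inverted by $\psi$, giving $D_{2n}$ via Fact~\ref{fac:dihedral_gens}. When both $n,m$ are even, $\varphi^m$ lies in $\langle\varphi^2\rangle$, so $\tau\psi$ generates only a proper subgroup, $\langle\varphi,\tau\psi\rangle$ has order $2n$ and is a direct factor, and $\AGnm\cong D_n\times C_2$ with the $C_2$ generated by a central involution (namely $\varphi^{n/2}\tau\psi$ or similar).

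\textbf{The main obstacle} I anticipate is the bookkeeping in the first case: establishing that $\langle\varphi,\tau\psi\rangle$ is genuinely cyclic of order $2n$ (not merely $n$) requires tracking the subgroup $\langle\varphi^m\rangle=\langle\varphi^{\gcd(m,n)}\rangle$ inside $C_n$ and using that $\tau\psi$ has order $2n/\gcd(\cdot)$. The parity hypothesis ``$n$ or $m$ odd'' must be shown to force $\langle\varphi,\tau\psi\rangle=C_{2n}$, while ``both even'' forces the index-$2$ splitting. Concretely, I would compute $o(\tau\psi)$: since $(\tau\psi)^2=\varphi^m$ and $o(\varphi)=n$, we get $o(\tau\psi)=2\cdot\frac{n}{\gcd(m,n)}$, and then analyze whether $\langle\varphi,\tau\psi\rangle$ fills out all of $\AGnm$. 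Once the order-$2n$ cyclic (resp. order-$n$ with a central involution) structure is identified, the isomorphism type follows immediately from Fact~\ref{fac:dihedral_gens} for $D_{2n}$, and from recognizing a central $C_2$ complementing a $D_n$ for the product case. I would close by confirming the central involution in the even-even case is not in $\langle\varphi,\psi\rangle\cong D_n$, which is what certifies the direct product rather than a larger dihedral group.
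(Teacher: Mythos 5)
Your skeleton matches the paper's: use $|\AGnm|=4n$ from Lemma \ref{lem:automorphism_group_order}, the relations of Observation \ref{obs:anm_relations}, and Fact \ref{fac:dihedral_gens}, handling the odd case by exhibiting an element of order $2n$ inverted by an involution, and the even--even case via $\langle\varphi,\psi\rangle\cong D_n$ plus a central involution built from $\tau\psi$. The problem is that the crux --- the one place where the parity hypothesis actually enters --- is precisely the step you leave as an ``expectation,'' and the concrete witnesses you propose fail. You correctly compute $o(\tau\psi)=2n/\gcd(m,n)$, but this equals $2n$ only when $\gcd(m,n)=1$: for $n=9$, $m=3$ (both odd, so inside Case 1) one gets $o(\tau\psi)=6$. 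Your fallback $r=\varphi\cdot(\tau\psi)$ also fails in general: since $\tau\psi$ centralizes $\varphi$, $r^2=\varphi^{m+2}$, and for $n=15$, $m=13$ this is $\varphi^{15}=Id$, so $o(r)=2$. Relatedly, your interim claim that $\langle\varphi,\tau\psi\rangle$ ``is cyclic'' is false exactly in the even--even case, where it is $C_n\times C_2$ and contains a Klein four-group. What is needed --- and what the paper does --- is to choose $k$ with $m+2k$ coprime to $n$; such $k$ exists if and only if at least one of $n,m$ is odd (invert $2$ modulo the odd part of $n$). Then $\alpha=\varphi^k\tau\psi$ satisfies $\alpha^2=\varphi^{m+2k}$, which has order $n$; since $\alpha\notin\langle\varphi\rangle$ (normal form from Lemma \ref{lem:automorphism_group_order}, or the paper's argument that $o(\alpha)$ is even), $o(\alpha)=2n$, and with the inverting involution ($\psi$ works, as does the paper's $\beta=\tau\varphi^k$) Fact \ref{fac:dihedral_gens} plus the order count finishes Case 1. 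Alternatively, your abstract route can be completed by classifying abelian groups of order $2n$ with a cyclic subgroup of index $2$, but that argument is absent from the proposal.

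In the even--even case your plan is structurally right, but the proposed central involution $\varphi^{n/2}\tau\psi$ is wrong: $(\varphi^{n/2}\tau\psi)^2=\varphi^{n+m}=\varphi^m\neq Id$ in general (e.g.\ $n=4$, $m=2$). The correct witness, as in the paper, is $\gamma=\varphi^{-m/2}\tau\psi=\varphi^{m/2}\psi\tau$ --- this is where $2\divides m$ is used --- which is a central involution lying outside $\langle\varphi,\psi\rangle$ because $\gamma(0)_j=1$ for $1\leq j\leq m$ while every element of $\langle\varphi,\psi\rangle$ fixes those entries at $0$; this gives $\AGnm=\langle\varphi,\psi\rangle\times\langle\gamma\rangle\cong D_n\times C_2$. (One should also note, as the paper does, that $\langle\varphi,\psi\rangle\cong D_n$ needs a separate word for $n=2$, where it is $C_2\times C_2\cong D_2$.) So: right architecture, but as written the proof has a genuine gap at the odd case's key construction and incorrect witnesses in both cases.
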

\begin{proof}
	We start with Case 1.
	Assume that at least one of $n$ and $m$ is odd.
	Therefore, there exists some integer $k$ such that $m+2k$ and $n$ are coprime.
	For this $k$ define $\alpha=\varphi^k\tau\psi$ and $\beta=\tau\varphi^k$.
	We show that  $\left<\alpha, \beta\right>\cong D_{2n}$.
	This proves Case 1, since by Lemma \ref{lem:automorphism_group_order}, $|\AGnm|=4n$.
	
	$o(\alpha)$ is even, since $m>0$ and $(\alpha^i(0))_j=1$ for every odd integer $i$ and $1\leq j\leq m$. Therefore, $o(\alpha)=2o(\alpha^2)$.
	By Observation \ref{obs:anm_relations}, $$\alpha^2=\varphi^k\tau\psi\varphi^k\tau\psi=\varphi^k\tau\varphi^{-k}\psi\tau\psi=\varphi^{2k}\tau\psi\tau\psi=\varphi^{m+2k}.$$
	Since $m+2k$ and $n$ are coprime, this implies that $o(\alpha^2)=n$.
	This proves that $o(\alpha)=2n$.
	Now $\beta(0)\neq 0$, so by Observation \ref{obs:anm_relations},  $o(\beta)=2$ and $\beta\alpha\beta=(\tau\varphi^k)(\varphi^k\tau\psi)(\tau\varphi^k)=\psi\tau\varphi^{-k}=\alpha^{-1}$.
    By Fact \ref{fac:dihedral_gens}, this proves that $\left<\alpha, \beta\right>\cong D_{2n}$, as required.
	
	We proceed to prove Case 2.
    $\AGnm\cong D_n\times C_2$ if and only if there exist two subgroups $N\cong D_n$ and $M\cong C_2$ of $\AGnm$ such that $N\cap M=\{Id\}$, $NM=\AGnm$ and the generator of $M$ commutes with the generators of $N$.
    Let $N=\left<\varphi, \psi\right>$ and let $M=\left<\gamma\right>$ where $\gamma=\varphi^{\frac{m}{2}}\psi\tau$ ($=\varphi^{-\frac{m}{2}}\tau\psi$).

    If $n=2$ (and $m>0$), then $\varphi$ and $\psi$ clearly commute.
    Moreover, by Observations \ref{obs:anm_equal} and \ref{obs:anm_order}, they are also distinct and of order $2$.
    Therefore, $N\cong C_2\times C_2\cong D_2$.
    For $n>2$, $N\cong D_n$ by Observations \ref{obs:anm_relations} and \ref{obs:anm_order} and Fact \ref{fac:dihedral_gens}.
    $M\cong C_2$, since $\gamma^2=(\varphi^{-\frac{m}{2}}\tau\psi)^2=\varphi^{-m}(\tau\psi)^2=Id$ and $\gamma(0)\neq 0$.
    By Observation \ref{obs:anm_relations}, $\gamma$ commutes with $\varphi$, 
    and $\psi\gamma\psi=\psi(\varphi^{-\frac{m}{2}}\tau\psi)\psi=\varphi^{\frac{m}{2}}\psi\tau=\gamma$. Therefore, $\gamma$ commutes also with $\psi$.
    For every $1\leq j\leq m$, $\varphi(0)_j=\psi(0)_j=0$ but $\gamma(0)_j=1$.
    Therefore $\gamma\notin N$ and $N\cap M=\{Id\}$.
    $\AGnm\subseteq NM$, since $\tau=\psi\varphi^{-\frac{m}{2}}\gamma\in NM$.
    This completes the proof.
%
%
\end{proof}

\section{The Automorphism Group of $\Ynm$}\label{sec:complete_auto_group}
The main result of this section is the following theorem.

\begin{theorem}
    \label{thm:aut_structure}
    \begin{enumerate}
        \item[]
        \item $\aut(\Ynm[1][3])\cong D_4\times C_2.$
        \item $\aut(\Ynm)\cong \AGnm$ for all $n\geq 1$ and $m\geq 0$ s.t. $m\neq 2$ and \mbox{$(n,m)\neq (1, 3)$}.
    \end{enumerate}
\end{theorem}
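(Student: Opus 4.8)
The plan is to establish the nontrivial inclusion $\aut(\Ynm)\subseteq\AGnm$ (the reverse inclusion being immediate from Theorem \ref{thm:anm_structure} and the fact that $\varphi,\psi,\tau$ are automorphisms), while disposing of the cycle cases $m\le 1$ and the anomaly $(n,m)=(1,3)$ by hand. For $m\le 1$ the graph is a cycle by Observation \ref{obs:small_m_is_trivial}, so $\aut(\Ynm)$ is dihedral and coincides with $\AGnm$ by Theorem \ref{thm:anm_structure}. For $(n,m)=(1,3)$ the graph has only $8$ vertices; I would write down its adjacency structure explicitly and compute $\aut$ directly, exhibiting the accidental symmetries that inflate the group to $D_4\times C_2$ and which the generic argument below does not produce.

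For the main case I would use the valency as a first invariant. A vertex $v$ has valency $2+|\{1\le i\le m-1: v_i\neq v_{i+1}\}|$, so the vertices of minimum valency $2$ are exactly the $2n$ vertices with constant middle pattern $0^m$ or $1^m$. One checks that $\AGnm$ acts transitively on this set, using $\varphi$ to move the buckets and $\tau$ to interchange the two patterns. Hence, after composing an arbitrary $g\in\aut(\Ynm)$ with a suitable element of $\AGnm$, I may assume $g(0)=0$. Since a direct check shows that no element of $\AGnm$ involving $\tau$ fixes $0$, the stabilizer of $0$ in $\AGnm$ is exactly $\langle\psi\rangle\cong C_2$, and it remains to prove $\mathrm{Stab}_{\aut(\Ynm)}(0)=\langle\psi\rangle$.

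The key tool is the edge-coloring in which each edge is labeled by the unique generator $s_i\in\Snm$ effecting it. The heart of the argument is to recover this coloring from the bare graph up to the single nontrivial symmetry of the Coxeter diagram of $\tilde{C}_m$, namely the reversal $s_i\leftrightarrow s_{m-i}$, which is realized by $\psi$. To this end I would exploit the local cycle structure encoded in the relations of Proposition \ref{prop:coxeter_relations_snm}: edges of non-adjacent colors ($|i-j|>1$) close up into $4$-cycles, consecutive interior colors into $6$-cycles (from $(s_is_{i+1})^3=Id$), and the boundary pairs $\{s_0,s_1\}$ and $\{s_{m-1},s_m\}$ into $8$-cycles (from $(s_0s_1)^4=(s_{m-1}s_m)^4=Id$). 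Counting such cycles through each edge, together with the valency pattern along paths emanating from $0$, distinguishes the two diagram ends from the interior and recovers the linear order of colors up to reversal. Once $g$ is known to preserve colors (after possibly composing with $\psi$), the fact that each color occurs on at most one edge at each vertex lets me propagate from $0$: $g$ fixes the unique $s_0$- and $s_m$-neighbors of $0$, hence fixes their neighbors, and so on, so $g=Id$ by connectivity. This yields $\mathrm{Stab}_{\aut(\Ynm)}(0)=\langle\psi\rangle$ and therefore $\aut(\Ynm)=\AGnm$.

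The main obstacle is precisely this color-recovery step, and it is exactly where the excluded cases intervene. For $m=2$ the diagram $\tilde{C}_2$ is too short for the $6$-cycle test to separate interior from boundary colors, and for $(n,m)=(1,3)$ the graph is small enough that extra $8$-cycles appear and the colors cease to be rigidly determined — which is why both genuinely fall outside the uniform statement. I would therefore carry out the cycle bookkeeping carefully for $m\ge 3$ with $(n,m)\neq(1,3)$, paying particular attention to the degenerate behavior of the interior generators $s_i$, which act nontrivially only at alternation points of the middle pattern; because the $s_i$-colored edges are distributed unevenly, the counting must be done relative to position rather than globally, and controlling this is the delicate part of the proof.
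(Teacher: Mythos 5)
Your normalization steps are sound and essentially coincide with the paper's own (Observation \ref{obs:assume_fixing_zero}): the valency formula, the identification of the $2n$ vertices of valency $2$, the transitivity of $\AGnm$ on them, and the fact that the stabilizer of $0$ in $\AGnm$ is $\left<\psi\right>$ are all correct. The genuine gap is the step that carries all the weight, namely the recovery of the edge-coloring by generators up to the reversal $s_i\leftrightarrow s_{m-i}$; it is not only deferred (``I would carry out the cycle bookkeeping carefully'') but premised on structural claims that are false in $\Ynm$. Because $\Ynm$ is a Schreier graph rather than a Cayley graph, a Coxeter relation traces a closed cycle only when no intermediate vertex is fixed by a generator, and here the relevant orbits always degenerate: for $1\leq i\leq m-2$, the orbit of $\left<s_i,s_{i+1}\right>$ through any vertex is determined by the pattern $(v_i,v_{i+1},v_{i+2})\in\{0,1\}^3$, hence has at most three vertices, so the braid relation $(s_is_{i+1})^3=Id$ produces a path of length two and never a hexagon; similarly, the orbit of $\left<s_0,s_1\right>$ has at most four vertices (within an orbit the bucket is determined by $(v_1,v_2)$), so $(s_0s_1)^4=Id$ produces a path of length three and never an octagon. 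Your $6$-cycle and $8$-cycle tests therefore have nothing to count, and your explanations of why $m=2$ and $(n,m)=(1,3)$ must be excluded --- the acid test of any proposed mechanism here --- rest on these nonexistent cycles.

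What does survive is that two edges at a common vertex whose colors satisfy $|i-j|>1$ always lie on a genuine alternating $4$-cycle, while edges of adjacent colors never do; in fact one can check that for $m\geq 4$, or $m=3$ with $n>1$, every $4$-cycle of $\Ynm$ alternates two colors with $|i-j|>1$, whereas $\Ynm[1][3]$ contains accidental $4$-cycles using four distinct colors, e.g.\ $(000)\sim(100)\sim(010)\sim(001)\sim(000)$ with colors $s_0,s_1,s_2,s_3$ --- this, not extra $8$-cycles, is the true source of the exceptional symmetry there. A rigidity argument along your lines would have to be rebuilt on this $4$-cycle dichotomy, would have to rule out other accidental cycles, and would still have to cope with the fact that an interior color $i$ labels an edge only at vertices with $v_i\neq v_{i+1}$, so the color classes are sparse and unevenly distributed; none of that bookkeeping is done, and given your reductions the color-recovery statement is equivalent to the theorem itself, so all of the difficulty is concentrated exactly in the part you leave open. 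For comparison, the paper avoids colorings altogether: after the same normalization it runs a minimal-counterexample argument on the distance from $0$, showing via the pivot machinery (Lemmas \ref{lem:N_zero_size_far}--\ref{lem:N_zero_equal}) that two distinct vertices with the same down-neighborhood $N^0$ must form one explicitly listed pair, which is then eliminated by a valency or distance computation. As written, your proposal is a plausible program with an incorrectly specified core, not a proof.
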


This theorem is the combined result of forthcoming Lemmas \ref{lem:y13}, \ref{lem:Anm_is_aut} and Observation \ref{obs:Anm_is_aut}.

\begin{lemma}\label{lem:y13}
    $$\aut(\Ynm[1][3])\cong D_4\times C_2.$$
\end{lemma}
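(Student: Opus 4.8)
The plan is to treat $\Ynm[1][3]$ as the explicit $8$-vertex graph it is and compute its automorphism group directly, exploiting the fact that $n=1$ collapses both buckets. First I would record that when $n=1$ the group $\mathbb{Z}_1$ is trivial, so $v_0=v_{m+1}=0$ for every vertex and the divisibility condition is vacuous; hence the vertices are exactly the triples $(v_1,v_2,v_3)\in\{0,1\}^3$. Reading off Definition \ref{def:yoke_graph}, the shift $s_0$ (resp.\ $s_3$) flips the coordinate $v_1$ (resp.\ $v_3$), while $s_1$ (resp.\ $s_2$) transposes $v_1,v_2$ (resp.\ $v_2,v_3$) exactly when they differ. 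Listing these $12$ edges yields a graph with degree sequence $(2,2,3,3,3,3,4,4)$: the vertices $000,111$ have degree $2$, the vertices $010,101$ degree $4$, and $001,100,011,110$ degree $3$.

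The structural fact I would exploit is that this graph is an \emph{inflation of the path} $P_5$. Grouping vertices by their neighbourhoods, the pairs $\{010,101\}$, $\{001,100\}$, $\{011,110\}$ are each pairs of non-adjacent \emph{twins} (vertices with identical neighbourhoods), while $000$ and $111$ are singletons. The relation $N(u)=N(v)$ thus partitions the vertex set into the five classes $\{000\}$, $\{001,100\}$, $\{010,101\}$, $\{011,110\}$, $\{111\}$, and the induced quotient is precisely the path $\{000\}-\{001,100\}-\{010,101\}-\{011,110\}-\{111\}$, with complete bipartite joins between consecutive classes and no edges inside any class.

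Since the relation $N(u)=N(v)$ is preserved by every automorphism, any $g\in\aut(\Ynm[1][3])$ permutes these five classes, respects their sizes $(1,2,2,2,1)$, and induces an automorphism of the quotient $P_5$. As $\aut(P_5)\cong C_2$ (only the palindromic reversal, which respects the size pattern) and the kernel of $\aut(\Ynm[1][3])\to\aut(P_5)$ consists of within-class permutations — at most one transposition in each of the three two-element classes — I obtain $|\aut(\Ynm[1][3])|\le 2^3\cdot 2=16$. For the matching lower bound I would exhibit all $16$: each twin transposition $a=(010\,101)$, $b=(001\,100)$, $c=(011\,110)$ is an automorphism (swapping twins preserves the bipartite joins), and the reversal is realized by $\tau$ (complementation of Definition \ref{def:typical_automorphisms}), which sends $000\leftrightarrow 111$ and interchanges $\{001,100\}\leftrightarrow\{011,110\}$. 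Hence $\aut(\Ynm[1][3])=\langle a,b,c,\tau\rangle$ has order $16$.

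Finally I would identify the isomorphism type. The transposition $a$ is central: it commutes with $b,c$ (disjoint supports) and with $\tau$ (which stabilises the central class setwise, so $\tau a\tau^{-1}=a$), and $a\notin\langle b,c,\tau\rangle$. On the remaining generators $\tau b\tau^{-1}=c$, $\tau^2=b^2=c^2=1$ and $bc=cb$, so $\langle b,c,\tau\rangle\cong(C_2\times C_2)\rtimes C_2\cong D_4$ (for instance $b\tau$ has order $4$, since $(b\tau)^2=bc\neq 1$, and is inverted by $\tau$). Therefore $\aut(\Ynm[1][3])\cong\langle a\rangle\times\langle b,c,\tau\rangle\cong C_2\times D_4$, as claimed. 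I expect the main obstacle to be the upper bound: one must argue cleanly that automorphisms respect the twin partition and the induced quotient path, which is exactly what pins the order down to $16$ and rules out spurious automorphisms, whereas the realization and the final group identification are routine. It is worth remarking that here $\AGnm[1][3]=\langle\psi,\tau\rangle$ has order only $4$ (with $\psi=bc$ lying in the kernel), so this is precisely the exceptional case in which the fundamental automorphisms fail to generate the whole group.
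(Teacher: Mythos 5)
Your proof is correct, and it ends up with the same order count ($16$), the same generating automorphisms (the three twin transpositions together with $\tau$), and in fact literally the same internal decomposition as the paper's, but the route to the order bound is organized differently. The paper argues via distances: $(000)$ and $(111)$ are the only valency-$2$ vertices, so $|\aut(\Ynm[1][3])|=2|\st(000)|$; then $|\st(000)|\leq 2^3$ because the spheres around $(000)$ have sizes $2,2,2,1$, and the three transpositions $f_1,f_2,f_3$ (your $b,a,c$) force equality. You argue via neighborhoods: the twin partition $\{000\}$, $\{001,100\}$, $\{010,101\}$, $\{011,110\}$, $\{111\}$ is canonical, the quotient is the path $P_5$, and the homomorphism $\aut(\Ynm[1][3])\rightarrow\aut(P_5)\cong C_2$ has kernel contained in (in fact equal to) $C_2^3$. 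The two partitions coincide here --- the twin classes are exactly the distance spheres from $(000)$ --- so the arguments are close cousins; what your version buys is that the inflation structure makes it immediate both that swapping twins is an automorphism (the paper introduces $f_1,f_2,f_3$ as automorphisms without justification) and that the kernel is exactly $C_2^3$, whereas the paper's version needs no quotient-graph machinery. The final group identifications agree more than superficially: your $D_4$ factor $\left<b,c,\tau\right>$ is the same subgroup as the paper's $\left<\eta,f_3\right>$ with $\eta=\tau f_1$ (since $(\tau b)^2=bc$ recovers $b$ and hence $\tau$), and your central involution $a=(010\;101)$ is the paper's $f_2$. Your closing remark that $\AGnm[1][3]=\left<\psi,\tau\right>$ has order only $4$, with $\psi=bc$ in the kernel, is also consistent with Theorem \ref{thm:anm_structure} and correctly explains why $(n,m)=(1,3)$ is the exceptional case.
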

\begin{proof}
    Since $v_0=v_4=0$ for every vertex $v\in\Ynm[1][3]$, we identify each vertex $v$ with $(v_1v_2v_3)$ (since $v_i\in\{0,1\}$ for every $1\leq i\leq 3$, the vertices are well defined without commas), see Figure \ref{fig:Y13}.
    Let $f_1$, $f_2$ and $f_3$ be three automorphisms of $\Ynm[1][3]$, each one interchanging a single pair of vertices and fixing the rest of the graph:
    $f_1$ interchanges $(100)$ and $(001)$, $f_2$ interchanges $(101)$ and $(010)$ and $f_3$ interchanges $(110)$ and $(011)$.
    Let $\tau$ be as in Definition \ref{def:typical_automorphisms} and note that $\tau$ interchanges $(000)$ and $(111)$. 
    
    $(000)$ and $(111)$ are the only two vertices of valency $2$ in $\Ynm[1][3]$.
    Therefore, for every $\sigma\in\aut(\Ynm[1][3])$, either $\sigma\in \st(000)$, the stabilizer of $(000)$, or $\tau\sigma\in \st(000)$.
    This implies that $|\aut(\Ynm[1][3])|=2|\st(000)|$.
    There are precisely two vertices at each distance $1$, $2$ and $3$ from $(000)$ (and a unique vertex at distance $4$). 
    Therefore $|\st(000)|\leq 2^3=8$.
    $(000)$ is fixed by $f_1$, $f_2$ and $f_3$ and $|\left<f_1, f_2, f_3\right>|= |C_2\times C_2\times C_2|=8$.
    This implies that $|\st(000)|=8$ and therefore, $|\aut(\Ynm[1][3])|=16$.
    
    Let $\eta = \tau f_1$.
    Then $o(\eta)=4$ and $\eta f_3=f_3\eta^{-1}$.
    Therefore, by Fact \ref{fac:dihedral_gens}, $\left<\eta, f_3\right>\cong D_4$.
    $f_2\notin \left<\eta, f_3\right>$ and $f_2$ commutes with both $\eta$ and $f_3$.
    Therefore, $\left<\eta, f_2, f_3\right>\cong D_4\times C_2$.
    Since $|D_4\times C_2|=16=|\aut(\Ynm[1][3])|$, we have $\aut(\Ynm[1][3])\cong\left<\eta, f_2, f_3\right>\cong D_4\times C_2$.
\end{proof}
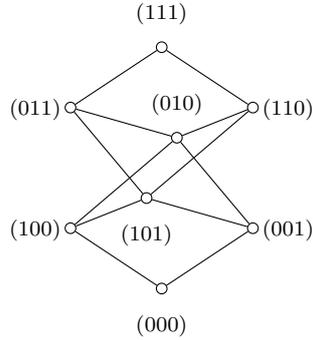
\begin{figure}[hbt]
    \begin{center}
        \begin{tikzpicture}[scale=0.4]
        \tikzstyle{every node}=[draw,circle,fill=white,minimum size=4pt,inner sep=0pt]

        \node (000) at (0,-2) [label=below:\scriptsize(000)] {};
        \node (100) at (-3,0) [label=left:\scriptsize(100)] {};
        \node (001) at (3,0) [label=right:\scriptsize(001)] {};
        \node (101) at (-0.5,1) [label=below:\scriptsize(101)] {};
        \node (010) at (0.5,3) [label=above:\scriptsize(010)] {};
        \node (011) at (-3,4) [label=left:\scriptsize(011)] {};
        \node (110) at (3,4) [label=right:\scriptsize(110)] {};
        \node (111) at (0,6) [label=above:\scriptsize(111)] {};

        \draw (000) -- (100);
        \draw (000) -- (001);
        \draw (100) -- (101);
        \draw (100) -- (010);
        \draw (001) -- (101);
        \draw (001) -- (010);
        \draw (101) -- (110);
        \draw (101) -- (011);
        \draw (010) -- (110);
        \draw (010) -- (011);
        \draw (110) -- (111);
        \draw (011) -- (111);

        \end{tikzpicture}
    \end{center}
    \caption{$\Ynm[1][3]$ (vertices are drawn without the buckets)}
    \label{fig:Y13}
\end{figure}

\begin{observation}\label{obs:Anm_is_aut}
    For $m\leq 1$, $\aut(\Ynm)=\AGnm$.
\end{observation}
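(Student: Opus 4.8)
The plan is to use the fact that $\AGnm$ is, by definition, a subgroup of $\aut(\Ynm)$, so that the inclusion $\AGnm\subseteq\aut(\Ynm)$ is free and only the reverse inclusion must be established. Since both groups are finite, it suffices to verify $|\aut(\Ynm)|=|\AGnm|$ in each of the (finitely many types of) cases with $m\le 1$. The orders of $\AGnm$ are already recorded in Theorem \ref{thm:anm_structure}, so the task reduces to identifying $\Ynm$ as a concrete small graph via Observation \ref{obs:small_m_is_trivial} and reading off its full automorphism group.

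First I would dispose of the degenerate cases. When $(n,m)\in\{(1,0),(2,0),(1,1)\}$, Observation \ref{obs:small_m_is_trivial} says $\Ynm$ has at most two vertices: $\Ynm[1][0]$ is a single vertex, so $\aut(\Ynm[1][0])$ is trivial, while $\Ynm[2][0]$ and $\Ynm[1][1]$ are each a single edge $K_2$, whose automorphism group is $C_2$. These orders match $|\AGnm[1][0]|=1$ and $|\AGnm[2][0]|=|\AGnm[1][1]|=2$ from Theorem \ref{thm:anm_structure}(1).

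For the remaining values $\Ynm$ is a cycle. If $m=0$ and $n>2$, then $\Ynm[n][0]\cong C_n$, and the standard description $\aut(C_k)\cong D_k$ (of order $2k$) for $k\ge 3$ gives $|\aut(\Ynm[n][0])|=2n=|\AGnm[n][0]|$ by Theorem \ref{thm:anm_structure}(2). If $m=1$ and $n>1$, then $\Ynm[n][1]\cong C_{2n}$, so $|\aut(\Ynm[n][1])|=4n$; since $m=1$ is odd, Theorem \ref{thm:anm_structure}(4a) (equivalently Lemma \ref{lem:automorphism_group_order}) gives $|\AGnm[n][1]|=|D_{2n}|=4n$. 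In every case the order of $\aut(\Ynm)$ coincides with the order of the subgroup $\AGnm$, forcing $\aut(\Ynm)=\AGnm$.

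There is no substantial obstacle here—this is a base-case verification complementing the harder $m\ge 3$ analysis—so the only point demanding care is the bookkeeping: pairing each small graph with the correct entry of Theorem \ref{thm:anm_structure}, and invoking $\aut(C_k)\cong D_k$ only for $k\ge 3$, the degenerate values $k\in\{1,2\}$ being exactly the at-most-two-vertex cases already treated.
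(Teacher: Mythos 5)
Your proposal is correct and follows essentially the same route as the paper: dispose of the at-most-two-vertex cases $\Ynm[1][0]$, $\Ynm[2][0]$, $\Ynm[1][1]$, identify the remaining graphs as cycles via Observation \ref{obs:small_m_is_trivial} with dihedral automorphism groups, and match against Theorem \ref{thm:anm_structure}. Your explicit framing of the final step as ``subgroup of equal finite order equals the whole group'' is a useful clarification of what the paper leaves implicit, but it is the same argument.
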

\begin{proof}
    The cases $\Ynm[1][0]$, $\Ynm[2][0]$ and $\Ynm[1][1]$ are trivial.
    By Observation \ref{obs:small_m_is_trivial}, if $m=0$ and $n>2$, then $\Ynm$ is the cycle graph on $n$ vertices.
    If $m=1$ and $n>1$, then $\Ynm$ is the cycle graph on $2n$ vertices. 
    Therefore, in both cases, $\aut(\Ynm)$ is the corresponding dihedral group.
    By Theorem \ref{thm:anm_structure}, it follows that $\aut(\Ynm)=\AGnm$ when $m\leq 1$.
\end{proof}

In the rest of this section, unless explicitly stated otherwise, we assume that $m\geq 3$ and $(n,m)\neq (1,3)$ in $\Ynm$.
The purpose of the rest of this section is the proof of Lemma \ref{lem:Anm_is_aut} where we show that indeed $\aut(\Ynm)=\AGnm$.
Specifically, Lemma \ref{lem:N_zero_equal} implies that if there exists $\pi\in\aut(\Ynm)$ such that $\pi\notin\AGnm$, then we can assume that $\pi$ maps between a unique pair of vertices in $\Ynm$:  $(n-1,0,0,1,0,\dots,0)$ and $(n-2,1,1,0,0,\dots,0)$. 
This leads to a contradiction.

\begin{definition}\label{def:aut_notations}Let $x\in\Ynm$ and let $k\in\{0,\dots,n-1\}$. 
    We use the following notations throughout this section.
	\begin{enumerate}
		\item $0_\ell=\overrightarrow{s}_0(0)$ and $0_r=\overleftarrow{s}_m(0)$.
		\item $0_k=\varphi^k(0)=(k,0,\dots,0,n-k)$.
		\item $1_k=\varphi^k(0,1,\dots,1,-m)=(k,1,\dots,1,-(m+k))$.
		\item $N^0(x)=\{y:y\sim x\mbox{ and }d(y,0)=d(x,0)-1\}$.
		\item $n^0(x)=|N^0(x)|$.
	\end{enumerate}
    If $y\in N^0(x)$, we say that $x$ \textbf{covers} $y$ and write $y\lessdot x$.
\end{definition}

We use diagrams to visualize covering relations between vertices.
For example, in the following diagram $w = s_j(v) \lessdot v$ and 
the directed edge $(u, w)$, combined with the label $\overleftarrow{s}_i$, indicates that $w = \overleftarrow{s}_i(u) \lessdot u$.

\begin{center}
    \begin{tikzpicture}
    \matrix (m) [matrix of math nodes,row sep=2em,column sep=2em,minimum width=2em]
    {
        u & \; &  v\\
        \;  & w & \; \\};
    \path[-stealth]
    (m-1-1) edge [->] node [above] {$\overleftarrow{s}_i$} (m-2-2)
    (m-2-2) edge [-] node [above] {$s_j$} (m-1-3);
    \end{tikzpicture}
\end{center}

In Definition \ref{def:uvw_setup}, we setup three vertices in $\Ynm$ in a way that will recur throughout the statements in this section.
The example diagram above visualizes the covering relations in this setup.

\begin{definition}\label{def:uvw_setup}
	Let $u, v\in \Ynm$ be two distinct vertices such that $N^0(u)= N^0(v)$ and $d(u,0)=d(v,0)\geq 2$. 
    Assume that there exists $w\in N^0(u)= N^0(v)$ such that $w=\overleftarrow{s}_i(u)=s_j(v)$ for some $0\leq i,j\leq m$.
    We say that $(u, v, w)$ are in \textbf{$\Vij$-position}.
\end{definition}


\begin{lemma}\label{lem:N_zero_size_far}
    Let $(u, v, w)$ be in $\Vij$-position.
    Then $|i-j|>1$.
\end{lemma}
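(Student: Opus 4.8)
The plan is to assume $|i-j|\le 1$ and derive a contradiction, handling $j=i$ and $j=i\pm 1$ separately. Throughout I use that every inner coordinate of a vertex lies in $\{0,1\}$. Since $w=\overleftarrow{s}_i(u)$ we have $w_i=u_i+1$ and $w_{i+1}=u_{i+1}-1$; when $i+1$ is an inner index this forces $u_{i+1}=1$ and $w_{i+1}=0$, and when $i$ is an inner index it forces $u_i=0$ and $w_i=1$. First I dispose of $j=i$. If $w=\overleftarrow{s}_i(v)$, then injectivity of the shift $\overleftarrow{s}_i$ on its domain gives $u=v$, which is excluded. If $w=\overrightarrow{s}_i(v)$, then reading off $w$ from both sides gives $u=\overrightarrow{s}_i(w)$ and $v=\overleftarrow{s}_i(w)$, whence one of the two coordinates at $i,i+1$ is forced to equal both $0$ and $1$ (or to leave $\{0,1\}$) -- a contradiction. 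Hence $j=i$ is impossible.

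Next I treat $j=i+1$; the case $j=i-1$ is entirely analogous with the two sides interchanged. The shift $s_{i+1}$ carries $v$ to $w$, and the choice $w=\overleftarrow{s}_{i+1}(v)$ would give $v_{i+1}=w_{i+1}-1=-1\notin\{0,1\}$, so necessarily $w=\overrightarrow{s}_{i+1}(v)$. Reading off coordinates then pins down the local picture completely: on the three positions $i,i+1,i+2$ we have $u=(0,1,1)$, $w=(1,0,1)$ and $v=(1,1,0)$, while $u$ and $v$ agree at every other coordinate. In particular $v_i=u_i+1$ and $v_{i+2}=u_{i+2}-1$, so $u$ and $v$ differ in the two non-consecutive coordinates $i$ and $i+2$, and therefore $u\not\sim v$.

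The contradiction is then extracted from the pivot-path length formula. By Lemma~\ref{lem:geodesic_is_a_pivot_path_ynm} the geodesic $u\sim w\sim\cdots\sim 0$ is a pivot path; since its first letter $\overleftarrow{s}_i$ is a left shift at boundary $i$, Observation~\ref{obs:shift_direction_pivot_sides} forces every wall $p$ of this geodesic to satisfy $p\ge i+1$, and any such wall lies in $\piv(u)$ with $\ps_p(u)=d(u,0)$. Likewise the geodesic through $w$ from $v$ begins with the right shift $\overrightarrow{s}_{i+1}$, so each of its walls $p'$ satisfies $p'\le i$ with $\ps_{p'}(v)=d(v,0)$. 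Now either $u$'s geodesic has a wall $p\ge i+2$, in which case $p\in\piv(v)$ and Corollary~\ref{cor:pivot_path_len} gives $\ps_p(v)=\ps_p(u)-2$ (the coordinate change contributes $i-(i+2)=-2$; the same value appears in the boundary cases $i=0$ or $i+2=m+1$, where one changed coordinate is a bucket and drops out while the other still contributes, via the outer-pivot form of the formula); or $v$'s geodesic has a wall $p'\le i-1$, in which case $p'\in\piv(u)$ and the symmetric computation gives $\ps_{p'}(u)=\ps_{p'}(v)-2$. Either way Observation~\ref{obs:closer_pivot_is_better} yields $d(v,0)\le d(u,0)-2=d(v,0)-2$ (or the mirror inequality), which is absurd.

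The one remaining possibility is the degenerate case in which the only available walls are $p=i+1\in\piv(u)$ and $p'=i\in\piv(v)$, with $i\notin\piv(u)$ and $i+1\notin\piv(v)$. Here the walls sit exactly at the boundary, the length comparison collapses (it only reproduces the given equality $d(u,0)=d(v,0)$), and I must instead use the hypothesis $N^0(u)=N^0(v)$ of Definition~\ref{def:uvw_setup} directly. The idea is to exhibit an element of $N^0(u)$ that is not even adjacent to $v$: with wall $i+1$, the nonempty block of $1$'s occupying positions $i+2,i+3,\dots$ must shift right, so pushing its rightmost unit into the first empty slot to its right -- or into the right bucket, which always accepts a unit -- produces $u'\in N^0(u)$ with $u'_i=0$; since $v_i=1$ and $u'$ also disagrees with $v$ at the shifted position, $u'$ and $v$ differ in two non-consecutive coordinates, so $u'\not\sim v$, contradicting $u'\in N^0(u)=N^0(v)$. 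When that far block degenerates into the bucket itself (the corner $i+2=m+1$), the right push is unavailable, and I instead push the wall-unit of $v$ leftward, using $m\ge 3$ to guarantee that $v$ has an element of $N^0(v)$ changing only coordinates $\le m-2$, which is then non-adjacent to $u$. I expect this degenerate double-boundary case to be the main obstacle: the clean $-2$ length bound no longer applies, and one has to read the forbidden neighbour off the block structure, with the two bucket corners requiring the separate $m\ge 3$ argument.
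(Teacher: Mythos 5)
Your treatment of $j=i$ is fine, and your reduction to the local picture $(u_i,u_{i+1},u_{i+2})=(0,1,1)$, $w=(1,0,1)$, $v=(1,1,0)$ matches the paper's diagram; but the engine of your argument, the identity $\ps_p(v)=\ps_p(u)-2$ (and its mirror), is false exactly in the boundary cases you dispose of parenthetically, and your trichotomy then fails to cover all configurations. Corollary \ref{cor:pivot_path_len} does \emph{not} let a bucket ``drop out'' at an outer pivot: for $p=m+1$ the formula reads $\ps_{m+1}(x)=\sum_{k=1}^{m+1}kx_k$, so the right bucket carries coefficient $m+1$ (compare the paper's worked example in $\Ynm[2][2]$, where $\ps_{-1}\bigl((1,0,1,0)\bigr)=4$ includes the left bucket with coefficient $3$). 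Moreover, by Convention \ref{conv:cosets_are_integers} buckets are least non-negative residues, so when $i+2=m+1$ the relation ``$v_{i+2}=u_{i+2}-1$'' is only a congruence modulo $n$: if $u_{m+1}=0$ then $v_{m+1}=n-1$, and for the only admissible wall $p=m+1$ one gets $\ps_{m+1}(v)-\ps_{m+1}(u)=(m-1)+(m+1)(n-1)>0$, not $-2$. (For $n=1$ the bucket difference is $0$ and the step fails the same way; the mirror failure hits your second alternative when $i=0$, $p'=-1$ and $u_0=n-1$.)

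This is not a hypothetical worry. Take $n=3$, $m=3$, $i=2$, $j=3$, and $u=(2,0,0,1,0)$, $w=\overleftarrow{s}_2(u)=(2,0,1,0,0)$, $v=(2,0,1,1,2)$, so that $w=\overrightarrow{s}_3(v)$. Then $d(u,0)=d(v,0)=3$ and $d(w,0)=2$, so $w\lessdot u$ and $w\lessdot v$, and every hypothesis invoked in your first alternative holds: the only geodesic from $u$ through $w$ is $u\to w\to(2,1,0,0,0)\to 0$, whose walls are $3$ and $4$, so it fires with $p=4\geq i+2$; yet $\ps_4(u)=3$ while $\ps_4(v)=13$, and no contradiction follows. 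Your second alternative does not fire (the geodesic from $v$ through $w$ has the single wall $2=i$), and your degenerate case does not fire either, since it requires the walls on the $u$-side to be exactly $i+1=3$, whereas $4$ is also a wall. So this configuration escapes your case analysis entirely. The paper excludes it by a different mechanism: it shows that $i$ and $i+1$ are geodesic pivots of $v$ and $u$ respectively, and then exhibits an explicit element of the symmetric difference of the $N^0$'s (here $\overleftarrow{s}_1(v)=(2,1,0,1,2)\in N^0(v)$ is not adjacent to $u$, since the two differ in the non-consecutive positions $1$ and $4$); that witness argument never compares bucket values across a wall and is therefore immune to the wrap-around. Your ``degenerate case'' paragraph is essentially this witness argument, so the repair is to run it in all cases, not only when every wall sits at $i+1$ or $i$ --- keeping in mind that $n=1$ (all indices are pivots) still needs the separate treatment the paper gives it.
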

\begin{proof}
$i\neq j$, since $u\neq v$.
Assume to the contrary that $|i-j|=1$ and assume that $j=i-1$.
By definition, $u, v$ and $w$ are distinct and $s_i(u)$ shifts left, therefore $s_{i-1}(w)$ must shift left as well.
We visualize the scenarios in this proof using diagrams in which the three entries that $s_i$ and $s_j$ change are all non-bucket entries.
Note that the proof also covers the cases in which one of these entries is a bucket (at most one of the entries, since $m\geq 3$).
\begin{center}
    \begin{tikzpicture}
    \matrix (m) [matrix of math nodes,row sep=0.75cm,column sep=1,minimum width=2em]
    {
        (u_{i-1},u_{i},u_{i+1})=(0,0,1)\\
          (w_{i-1},w_{i},w_{i+1})=(0,1,0)  \\
           (v_{i-1},v_{i},v_{i+1})=(1,0,0)\\};
    \path[-stealth]
    (m-1-1) edge [->] node [left] {$\overleftarrow{s}_i$} (m-2-1)
    (m-2-1) edge [->] node [left] {$\overleftarrow{s}_{i-1}$} (m-3-1);
    \end{tikzpicture}
\end{center}
By Lemma \ref{lem:geodesic_is_a_pivot_path_ynm} and Observation \ref{obs:shift_direction_pivot_sides}, $u$ has a geodesic pivot $p$ such that $p\geq i+1$.
$p$ is also a geodesic pivot of $w$; therefore, by Observation \ref{obs:shift_direction_pivot_sides}, $\overleftarrow{s}_{i-1}(w)=v \lessdot w$, in contradiction to $w\in N^0(v)$.

Assume that $j=i+1$.
Similarly to the previous case, $s_{i+1}(w)$ must shift left as well,
$u$ has a geodesic pivot $p$ such that $p\geq i+1$ and
$p$ is also a geodesic pivot of $w$.
In this case, since $\overleftarrow{s}_{i+1}(w) = v \gtrdot w$, $w$ has no geodesic pivot greater than $i+1$.
Therefore, $i+1$ is a geodesic pivot of $w$.
$i$ is also a geodesic pivot of $w$ following similar arguments.
This implies that $i$ and $i+1$ are geodesic pivots of $v$ and $u$ respectively.
In the following diagram, we use the symbol $\|$ to visualize a geodesic pivot.

\begin{center}
    \begin{tikzpicture}
    \matrix (m) [matrix of math nodes,row sep=2em,column sep=0.5em,minimum width=2em]
    {
        (u_{i},u_{i+1},u_{i+2})=(0,1\|1) & \; &  (v_{i},v_{i+1},v_{i+2})=(1\|1,0)\\
        \;  & (w_{i},w_{i+1},w_{i+2})=(1\|0\|1) & \; \\};
    \path[-stealth]
    (m-1-1) edge [->] node [above] {$\overleftarrow{s}_i$} (m-2-2)
    (m-2-2) edge [->] node [above] {$\overleftarrow{s}_{i+1}$} (m-1-3);
    \end{tikzpicture}
\end{center}

We split the rest of the proof of this case ($j=i+1$) into the following three sub-cases.
Assume that $n=1$ and that $m$ is even ($3<m$).
Then, by Observation \ref{obs:closer_pivot_is_better}, $\frac{m}{2}$ is a geodesic pivot of every vertex in $\Ynm[1][m]$.
Note that a unit shift between $\frac{m}{2}$ and $\frac{m}{2}+1$ does not change the distance of any vertex from $0$. 
This implies that there does not exist $w\in \Ynm[1][m]$ and $i\in\{0,\dots m\}$ such that both $w\lessdot \overrightarrow{s}_{i}(w)$ and $w\lessdot \overleftarrow{s}_{i+1}(w)$.

In the last two cases, we show that $N^0(u)\neq N^0(v)$.
Assume that $n=1$ and that $m$ is odd ($3<m$).
Then, by Observation \ref{obs:closer_pivot_is_better}, $\lfloor\frac{m}{2}\rfloor$ and $\lceil\frac{m}{2}\rceil$ are geodesic pivots of every vertex in $\Ynm[1][m]$.
This implies that $i=\lfloor\frac{m}{2}\rfloor$.
For example, $u=(0,0,0,1,1,0,0)$, $w=(0,0,1,0,1,0,0)$ and $v=(0,0,1,1,0,0,0)$ in $\Ynm[1][5]$.
Therefore, there exists some $t<i$ such that $\overleftarrow{s}_{t}(v)=v'\lessdot v$ and clearly $v'\notin N^0(u)$.

Assume that $1<n$ (and $3\leq m$).
Then at least one of the following is true: either $0<i$ or $i+2<m+1$.
If $0<i$, then there exists some $t<i$ such that $\overleftarrow{s}_{t}(v)=v'\lessdot v$ and $v'\notin N^0(u)$.
If $i+2<m+1$, then there exists some $t\geq i+2$ such that $\overrightarrow{s}_{t}(u)=u'\lessdot u$ and $u'\notin N^0(v)$. 
In both cases $N^0(u)\neq N^0(v)$.
\end{proof}

\begin{lemma}\label{lem:bowtie_in_Ic}
    Let $u\in\Ynm$ and let $s_t$ and $s_k$ be two distinct elements in $\Snm$ such that $1<|t-k|$, $s_t(u)\lessdot u$ and $s_k(u)\lessdot s_ks_t(u)$.
    Then
    \begin{enumerate}
    \item both $s_t(u)$ and $s_k(u)$ shift units from entries in $I_c(u)$;
    \item $s_t(u)$ and $s_k(u)$ shift in opposite directions;
    \item $p_l(u)$ and $p_r(u)$ are geodesic pivots.
    \end{enumerate}
\end{lemma}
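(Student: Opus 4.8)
The plan is to first convert the two hypotheses into a symmetric ``diamond'' of four vertices and then read off the three claims from the pivot structure of $u$. Write $a=u$, $b=s_t(u)$, $c=s_k(u)$, and $e=s_ks_t(u)=s_ts_k(u)$; the two compositions agree because $|t-k|>1$ forces $s_t$ and $s_k$ to act on disjoint pairs of entries and hence to commute. The hypotheses read $b\lessdot a$ and $c\lessdot e$, and one checks (using that the hypotheses exclude $s_k$ fixing $a$ or $s_t$ fixing $c$) that $a\sim c$ and $b\sim e$ are genuine edges. Since adjacent vertices differ in distance to $0$ by at most $1$, the four relations $d(b)=d(a)-1$, $d(c)=d(e)-1$, together with the edges $a\sim c$ and $b\sim e$, pin down $d(a)=d(e)=d$ and $d(b)=d(c)=d-1$, so that $a$ and $e$ each cover both $b$ and $c$. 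A second preliminary observation, used throughout, is that because $s_k$ moves entries disjoint from those moved by $s_t$, the direction of $s_t$ is the same on $a$ and on $c$, and likewise the direction of $s_k$ is the same on $a$ and on $b$.

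For Claim 2, I would show the two shifts cannot move in the same direction. Suppose both shift left on $a$ and, without loss of generality, $t<k$. Since $c\lessdot a$ via a left shift, prepending this covering edge to a geodesic from $c$ to $0$ and applying Lemma \ref{lem:geodesic_is_a_pivot_path_ynm} together with Observation \ref{obs:shift_direction_pivot_sides} yields a geodesic pivot $P$ of $a$ with $P>k$. Both left shifts then strictly decrease $\sigma_P$, so $\sigma_P(e)=\sigma_P(a)-2=d-2<d=d(e)$, contradicting $P\in\piv(e)$. The both-right case is symmetric, so the shifts are opposite; I normalise to $s_t=\overleftarrow{s}_t$ and $s_k=\overrightarrow{s}_k$ on $a$.

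The core of the argument is to derive Claims 1 and 3 from the equality $d(e)=d(a)$. The same prepend-and-read-the-wall device gives a geodesic pivot $A_+>t$ of $a$ (from the left shift $b\lessdot a$) and a geodesic pivot $A_-<k$ of $a$ (from the right shift $c\lessdot a$). Next I would track how $\sigma_p$ evolves along $a\to b\to e$: for a pivot $p\notin\{t,k\}$ the net change is $-2$ precisely when $p$ lies strictly between $t$ and $k$, and $0$ otherwise. Using $d(e)=d$ to rule out every such ``shortcut'', I would conclude that $u$ has no pivot strictly between $\min(t,k)$ and $\max(t,k)$, and that $A_-\le\min(t,k)$ while $A_+\ge\max(t,k)$. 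By the unimodality of $p\mapsto\sigma_p$ recorded in the note before Observation \ref{obs:closer_pivot_is_better}, the contiguous block of geodesic pivots of $u$ is therefore split by this gap; the no-shortcut constraint forces the gap to contain $\tfrac{m}{2}$, so the two pivots flanking the gap are exactly $p_l$ and $p_r$, and both attain $d$. This is Claim 3. Moreover $t+1$ and $k$ then lie in $I_c=[p_l+1,p_r]$, so $\overleftarrow{s}_t$ and $\overrightarrow{s}_k$ each move a unit out of $I_c$, which is Claim 1; and both shifts moving in opposite directions is Claim 2, already established.

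I expect the main obstacle to be exactly the bookkeeping of pivots \emph{created or destroyed} at the two shift positions $t$ and $k$. The clean transfer formula only governs pivots outside $\{t,k\}$, yet the shortcut that actually rules out the ``unbalanced'' configurations (where $t,k$ fail to straddle $\tfrac{m}{2}$) arises from a pivot newly formed at $t$ or $k$ in passing to $b$ or $e$, whose pivot-length can drop below $d$ and thereby violate $d(e)=d$. Making this precise—showing that $d(e)=d(a)$ is incompatible with such a created pivot unless $p_l,p_r$ are geodesic and $t,k$ sit inside $I_c$—is the delicate step, and is where the hypothesis $c\lessdot s_ks_t(u)$ (equivalently $d(e)=d$) does all the work.
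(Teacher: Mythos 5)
Your diamond setup ($d(a)=d(e)=d$, $d(b)=d(c)=d-1$) matches the paper's, and your proof of Claim 2 is correct but organized differently from the paper's: the paper proves Claim 1 first (separately for each shift, by contradiction through the fourth vertex $s_k(u)$) and only then deduces Claim 2, whereas you dispose of the same-direction case directly by producing a geodesic pivot $P$ of $u$ beyond both shift positions and noting $\ps_P(e)=\ps_P(u)-2=d-2<d(e)$. The problem is in your joint derivation of Claims 1 and 3. Your shortcut argument does show that no \emph{geodesic} pivot of $u$ lies strictly between $t$ and $k$, and hence that there are geodesic pivots $A_-\le\min(t,k)$ and $A_+\ge\max(t,k)$. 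But the next inference --- ``the no-shortcut constraint forces the gap to contain $\frac{m}{2}$, so the two pivots flanking the gap are exactly $p_l$ and $p_r$'' --- is not justified by anything you have established. Observation \ref{obs:closer_pivot_is_better} only guarantees that $p_l$ \emph{or} $p_r$ attains the minimum; whenever $\ps_{p_l}(u)\neq\ps_{p_r}(u)$, \emph{all} geodesic pivots of $u$ lie on one side of $\frac{m}{2}$, so a priori $A_-$ and $A_+$ could both lie on the same side (say both $\le\frac{m}{2}$, with $A_-\le t$ but $p_l\ge k$), in which case neither Claim 1 nor Claim 3 follows. The bookkeeping of $d(e)=d$ does nothing to exclude this configuration.

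What closes the hole is an idea you never invoke: the monotonicity recorded before Observation \ref{obs:closer_pivot_is_better} is \emph{strict} across any index where $u$ has a nonzero entry. Concretely, if $q_1<q_2\le\frac{m}{2}$ were both geodesic pivots, then by Corollary \ref{cor:pivot_path_len} one has $0=\ps_{q_1}(u)-\ps_{q_2}(u)=\sum_{i=q_1+1}^{q_2}(m+1-2i)u_i$ with all weights positive, forcing $u_i=0$ for $q_1<i\le q_2$; but $\overleftarrow{s}_t$ acting nontrivially forces $u_{t+1}=1$ and $\overrightarrow{s}_k$ forces $u_k=1$, so two distinct geodesic pivots can never sit on the same side of $\frac{m}{2}$ with a shift position between them (the right-side case is symmetric). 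Only with this step do $A_-$ and $A_+$ become $p_l$ and $p_r$; it plays exactly the role that the step ``if $t+1\le p_l(u)$ then $\overleftarrow{s}_t s_k(u)\lessdot s_k(u)$, a contradiction'' plays in the paper's proof. Relatedly, the ``delicate step'' you flag at the end --- pivots created or destroyed at $t$ or $k$ --- is a red herring: your shortcut only ever uses pivots strictly between $t$ and $k$, and those are automatically pivots of $e$; the genuine missing piece is the same-side case above. A minor further slip: your transfer formula (net change $-2$ exactly for pivots strictly between $t$ and $k$) holds only for the inward orientation $t<k$; for $k<t$ the change is $+2$, although in that orientation $A_-<k<t<A_+$ are distinct for free and the same strictness argument is still needed to identify them with $p_l$ and $p_r$.
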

\pagebreak
\begin{proof}
    $s_t$ and $s_k$ commute, since $1<|t-k|$.
    Denote $v=s_ts_k(u)=s_ks_t(u)$, $w=s_t(u)$ and $w'=s_k(u)$.
    The assumptions imply that $u$, $v$, $w$ and $w'$ are distinct and the covering relations between them can be presented as follows:
\begin{center}
\begin{tikzpicture}
\matrix (m) [matrix of math nodes,row sep=3em,column sep=4em,minimum width=2em]
{
    u &  v\\
    w & w' \\};
\path[-stealth]
(m-1-1) edge [-] node [left] {$s_t$} (m-2-1) edge [-] node [above] {$s_k$} (m-2-2)
(m-1-2) edge [-] node [right] {$s_t$} (m-2-2) edge [-] (m-2-1);
\end{tikzpicture}
\end{center}
    

    Assume that $s_t(u)$ shifts left (the proof is similar if $s_t(u)$ shifts right).
    $t+1\leq p_r(u)$, since $\overleftarrow{s}_t(u)\lessdot u$.
    If $t+1\leq p_l(u)$, then $s_t$ shifts left in every geodesic pivot path of $u$.
    This implies that $\overleftarrow{s}_ts_k(u)\lessdot s_k(u)$.
    A contradiction.
    Therefore $s_t$ shifts a unit from an entry in $I_c(u)$.
    $s_k(u)$ also shifts a unit from an entry in $I_c(u)$, following similar arguments.
    Note that $s_k(u)$ and $s_k(w)$ shift in the same direction, since $1<|t-k|$.
    This implies that $s_t(u)$ and $s_k(u)$ shift in opposite directions, otherwise $s_k(w)\lessdot w$.
    Case 3 is implied by the first two, by Observation \ref{obs:closer_pivot_is_better}.
\end{proof}

\begin{corollary}\label{cor:uvw_setup_in_Ic}
    Let $(u, v, w)$ be in $\Vij$-position.
    If $s_j(u)\lessdot u$, then 
    \begin{enumerate}
    \item $\{i+1, j\}\subseteq I_c(u)$, $\{i, j+1\}\subseteq I_c(v)$;
    \item $s_j(u)$ shifts right;
    \item $p_l(u), p_r(u), p_l(v)$ and $p_r(v)$ are geodesic pivots.
    \end{enumerate}
\end{corollary}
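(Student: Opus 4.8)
The plan is to derive all three parts from Lemma~\ref{lem:bowtie_in_Ic}, applied twice --- once with base vertex $u$ and once with base vertex $v$. First I would set $w'=s_j(u)$ and record the relevant covers. Since $1<|i-j|$ by Lemma~\ref{lem:N_zero_size_far}, the generators $s_i$ and $s_j$ commute, and since each is an involution with $s_i(u)=w$ and $s_j(v)=w$, we get $s_is_j(u)=s_js_i(u)=s_j(w)=s_js_j(v)=v$. Thus the four distinct vertices $u,v,w,w'$ form a ``bowtie'': $w\lessdot u$ and $w\lessdot v$ (from the $\Vij$-position), $w'\lessdot u$ (the hypothesis $s_j(u)\lessdot u$), and $w'\lessdot v$ (since $N^0(u)=N^0(v)$), with $v=s_i(w')=s_j(w)$.

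Next I would apply Lemma~\ref{lem:bowtie_in_Ic} to $u$ in the roles $s_t=s_j$, $s_k=s_i$. Its hypotheses hold: $1<|i-j|$; $s_j(u)=w'\lessdot u$ by assumption; and $s_i(u)=w\lessdot s_is_j(u)=v$ by the bowtie. The lemma then gives that $p_l(u)$ and $p_r(u)$ are geodesic pivots, that both $s_i(u)$ and $s_j(u)$ shift a unit from an entry lying in $I_c(u)$, and that they shift in opposite directions. As $s_i(u)=\overleftarrow{s}_i(u)$ shifts left (with source entry $i+1$), the opposite-direction clause forces $s_j(u)=\overrightarrow{s}_j(u)$ to shift right (with source entry $j$); this is part~(2), and reading off the two source entries yields $\{i+1,j\}\subseteq I_c(u)$, the first half of part~(1).

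Symmetrically, I would apply Lemma~\ref{lem:bowtie_in_Ic} to $v$ in the roles $s_t=s_i$, $s_k=s_j$: here $s_i(v)=w'\lessdot v$ and $s_j(v)=w\lessdot s_js_i(v)=u$, so the hypotheses hold and $p_l(v),p_r(v)$ are geodesic pivots, completing part~(3). To finish part~(1) I would read off the shift directions at $v$ from the entry data: since $w'=\overrightarrow{s}_j(u)$ leaves entries $i,i+1$ untouched while $w=\overleftarrow{s}_i(u)$ leaves $j,j+1$ untouched, a direct comparison shows $v=\overleftarrow{s}_i(w')$ and $v=\overrightarrow{s}_j(w)$; hence $s_i(v)=\overrightarrow{s}_i(v)$ shifts right (source entry $i$) and $s_j(v)=\overleftarrow{s}_j(v)$ shifts left (source entry $j+1$), giving $\{i,j+1\}\subseteq I_c(v)$.

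The bulk of this is bookkeeping, so I expect the only genuine obstacle to be setting up the bowtie correctly --- in particular verifying the second hypothesis $s_k(u)\lessdot s_ks_t(u)$ of Lemma~\ref{lem:bowtie_in_Ic}, which is exactly the identity $s_is_j(u)=v$ combined with the cover $w\lessdot v$ --- and then carefully matching the phrase ``shifts a unit from an entry in $I_c$'' to the correct source entry ($i+1$ versus $i$, and $j$ versus $j+1$) according to whether the relevant shift is left or right.
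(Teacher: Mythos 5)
Your proof is correct and is essentially the paper's intended argument: the paper states this result as an immediate consequence of Lemma~\ref{lem:bowtie_in_Ic} with no written proof, and your two applications of that lemma (to $u$ with $s_t=s_j$, $s_k=s_i$, and to $v$ with $s_t=s_i$, $s_k=s_j$), together with the careful source-entry bookkeeping for left versus right shifts, supply exactly the missing details.
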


\begin{lemma}\label{lem:possible_uv}
    Let $x\in\Ynm$ such that
    \begin{enumerate}
        \item $n^0(x)=2$;
        \item $I_c(x)$ is not trivial (i.e. $x_{p_l+1}\neq 0$, $x_{p_r}\neq 0$ and $\sum_{i\in I_c}x_i=n$);
        \item both $p_l(x)$ and $p_r(x)$ are geodesic pivots.
    \end{enumerate}
\pagebreak
    Then $x$ is in one of the following three forms:\\
    Either $p_l=-1$ and $p_r=m+1$: 
       \begin{enumerate}
        \item[(a)] $x=(\frac{n}{2},0,\dots,0,\frac{n}{2})$ (in this case $n$ is even);
        \item[(b)] $x=(\frac{n-m}{2},1,\dots,1,\frac{n-m}{2})$ (in this case $n-m\geq 2$ and $2\divides n-m$).
       \end{enumerate}
   	Or $p_l=\frac{m-n}{2}$, $p_r=\frac{m+n}{2}$, and for some $0\leq t_1\leq\frac{m-n}{2}+1$ and $\frac{m+n}{2}\leq t_2\leq m$:
       \begin{enumerate}
        \item[(c)] $x=(-(\frac{n-m}{2}+1-t_1),0,\dots, 0, \overbrace{1,\dots,1}^{x_{t_1}, \dots, x_{t_2}},0,\dots,0,-(\frac{n+m}{2}-t_2))$ (in this case $m\geq n$ and $2\divides m-n$).
        \end{enumerate}
\end{lemma}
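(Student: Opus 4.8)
The plan is to read off the structure of $x$ from its set $N^0(x)$ of distance-decreasing neighbours, using the pivot-path machinery of Chapter \ref{chp:ecc_zero_Ynm}. First I would record that hypothesis (3) means $\ps_{p_l}(x)=\ps_{p_r}(x)=d(x,0)$: by Observation \ref{obs:closer_pivot_is_better} the distance equals $\min\{\ps_{p_l}(x),\ps_{p_r}(x)\}$, and both pivots being geodesic forces equality. Next I would describe $N^0(x)$ explicitly. By Lemma \ref{lem:geodesic_is_a_pivot_path_ynm} every geodesic ending at $0$ is a pivot path, and the only walls realising the minimum are $p_l$ and $p_r$; hence a neighbour lies in $N^0(x)$ exactly when it is the first step of a $p_l$- or $p_r$-pivot geodesic. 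By Observation \ref{obs:shift_direction_pivot_sides} such a first step either pushes a unit one place to the left out of a position in $[1,p_r]$, or pushes a unit one place to the right out of a position in $[p_l+1,m]$, and the two families are disjoint. Thus $n^0(x)=|L|+|R|$, where $L$ collects the enabled leftward steps and $R$ the enabled rightward steps.

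The combinatorial heart is to count $L$ and $R$ from the $0/1$-pattern of the inner entries. A leftward step is enabled precisely at the left end of each maximal run of $1$'s whose left end lies in $[1,p_r]$, together with one extra step when $p_r=m+1$ and $x_m=0$ (the nonempty right bucket shedding a unit); $R$ is described symmetrically. I would then show $|L|\ge 1$ and $|R|\ge 1$: since $I_c$ is non-trivial we have $\sum_{i\in I_c}x_i=n\ge 2$ with $x_{p_l+1}\neq 0$ and $x_{p_r}\neq 0$, so $I_c$ carries a genuine run of $1$'s (or bucket mass that can be shed), producing at least one enabled step on each side. Consequently $n^0(x)=2$ forces $|L|=|R|=1$, which rigidly constrains the pattern: there is at most one maximal run of $1$'s; that run must cover all of $I_c$ (a $0$ inside $I_c$ would split it and create a second boundary); and no additional loose bucket-shedding move may occur.

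Finally I would split into cases according to whether $p_l,p_r$ are outer or inner, closing each with the balance $\ps_{p_l}(x)=\ps_{p_r}(x)$ via the formula in Corollary \ref{cor:pivot_path_len}. If both are outer ($p_l=-1$, $p_r=m+1$), then either the inner entries are all $0$, and balancing the two bucket contributions gives $x_0=x_{m+1}=\tfrac n2$ (form (a)), or they are all $1$ — any interior $0$ would let a bucket shed a second unit — and balancing gives $x_0=x_{m+1}=\tfrac{n-m}2$ (form (b)). A mixed configuration (one outer, one inner pivot) is excluded because the balance $\ps_{p_l}(x)=\ps_{p_r}(x)$ cannot hold together with $\sum_{i\in I_c}x_i=n$: the pivot path through the outer wall is then strictly the longer of the two. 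If both pivots are inner, the single run fills $I_c$, so $p_r-p_l=|I_c|=\sum_{i\in I_c}x_i=n$; computing $\ps_{p_r}(x)-\ps_{p_l}(x)=\sum_{i=p_l+1}^{p_r}(2i-m-1)=(p_r-p_l)(p_l+p_r-m)$ and setting it to $0$ yields $p_l+p_r=m$, whence $p_l=\tfrac{m-n}2$ and $p_r=\tfrac{m+n}2$. The run may then overhang $I_c$ to an interval $[t_1,t_2]$ within the stated ranges, and the two pivot congruences $\sum_{i=0}^{p_l}x_i\equiv\sum_{i=0}^{p_r}x_i\equiv 0\pmod n$ fix the buckets, giving form (c).

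The main obstacle I anticipate is the precise characterisation of $N^0(x)$ at the two extremal buckets: deciding exactly when the left or right bucket contributes a distance-decreasing move requires careful bookkeeping, since it depends both on the emptiness of the adjacent inner entry and on whether the bucket itself sits inside $I_c$. It is this bucket accounting, rather than the inner $0/1$-combinatorics, that ultimately separates the three output forms. Getting the enabledness conditions and the disjointness of $L$ and $R$ exactly right is what makes the clean identity $n^0(x)=|L|+|R|$, and hence the whole rigidity argument, go through.
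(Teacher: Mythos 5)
Your proposal is correct and takes essentially the same route as the paper's own proof: both arguments use $n^0(x)=2$ to force the nonzero inner entries into a single contiguous run (the paper asserts this directly with ``otherwise $n^0(x)>2$,'' while you justify it through the $|L|+|R|$ count of distance-decreasing first steps of $p_l$- and $p_r$-pivot geodesics), and both then apply the balance $\ps_{p_l}(x)=\ps_{p_r}(x)$ from hypothesis (3) in a case split on outer/outer, inner/inner and mixed pivots, excluding the mixed case because the pivot path through the outer wall comes out strictly longer. The differences are only in the level of bookkeeping detail, not in the underlying argument.
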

\begin{proof}
We split the proof into the following three cases:

Assume that $p_l=-1$ and $p_r=m+1$.
Then $x_0\neq 0$, $x_{m+1}\neq 0$ and $\sum_{i=0}^{m+1}x_i=n$.
Note that all of the non-bucket entries of $x$ must be equal to each other, otherwise $n^0(x)>2$.
If $(x_1,\dots,x_m)=(0,\dots, 0)$, then $x=(\frac{n}{2},0,\dots,0,\frac{n}{2})$, the only vertex in which $(x_1,\dots,x_m)=(0,\dots, 0)$ and $\ps_{-1}(x)=\ps_{m+1}(x)$.
Similarly, if $(x_1,\dots,x_m)=(1,\dots, 1)$, then $x=(\frac{n-m}{2},1,\dots,1,\frac{n-m}{2})$ ($m<n$, $2\divides m-n$ and $n-m\geq 2$).

Assume that both $p_l$ and $p_r$ are inner pivots.
Then $x_{p_l+1}=x_{p_r}=1$ and $\sum_{i\in I_c}x_i=n$.
Note that all of the non-bucket entries of $x$ that are equal to $1$ are contiguous, otherwise $n^0(x)>2$.
Therefore, $p_r-p_l=n$, $p_l=\frac{m-n}{2}$ and $p_r=\frac{m+n}{2}$ (and $m\geq n$ and $2\divides m-n$).
This implies that $x$ is in the form of Case (c) of this lemma, since $\ps_{p_l}(x)=\ps_{p_r}(x)$.

Finally, we show that if one of $p_l$ and $p_r$ is an inner pivot, the other an outer pivot and $n^0(x)=2$, then $p_l$ and $p_r$ cannot both be geodesic pivots.
Assume, without loss of generality, that $p_l=-1$ and $\frac{m}{2}<p_r\leq m$.
Then $x_0\neq 0$, $x_{p_r}=1$ and $\sum_{i=0}^{p_r}x_i=n$.
Note, again, that all of the non-bucket entries of $x$ that are equal to $1$ are contiguous, otherwise $n^0(x)>2$.
This implies that $\ps_{-1}(x)=\ps_{p_l}(x)>\ps_{p_r}(x)$.
\end{proof}

\begin{observation}\label{obs:no_such_pair}
    Let $u,v,w\in \Ynm$, such that $u$ and $v$ are each with one of the forms in Lemma \ref{lem:possible_uv}. 
    Then $(u,v,w)$ is not in $V_{i,j}$-position.
\end{observation}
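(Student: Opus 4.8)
The plan is to assume, for contradiction, that $(u,v,w)$ is in $\Vij$-position with $u$ and $v$ each of one of the forms (a)--(c) of Lemma~\ref{lem:possible_uv}. By Lemma~\ref{lem:N_zero_size_far} we then have $|i-j|>1$, so the coordinate pairs $\{i,i+1\}$ and $\{j,j+1\}$ in which $w$ differs from $u$ and from $v$ respectively are disjoint; hence $v$ is completely determined by $w$ and $j$ via $v=s_j(w)$. The whole argument will consist of determining $w$ and $i$ from the shape of $u$, and then showing that no admissible $v=s_j(w)$ can again be a form.

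The first step is to pin down $w$ and the index $i$. The decisive feature common to all three forms is that the non-bucket entries equal to $1$ form a single contiguous block, while $n^0(u)=2$. Inspecting the admissible unit shifts shows that shifts strictly inside the $1$-block, and shifts inside the all-zero stretches, are fixed points of the corresponding $s_t$; so the only two distance-decreasing moves occur at the two ends of the block (or, in form~(a), at the buckets). Exactly one of these is a left shift and one is a right shift, and since $w=\overleftarrow{s}_i(u)$ is a left shift, $w$ must be the unique left-shift element of $N^0(u)$, which forces $i$ (the left end of the block in form~(c), the index $0$ in form~(b), and $i=m$ in form~(a)).

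The second step is to rule out $v=s_j(w)$ being a form. For every index $j$ with $|i-j|>1$ whose pair $\{j,j+1\}$ meets $w$ only in an all-zero stretch or strictly inside its $1$-block, $s_j$ fixes $w$, forcing $v=w$, which is impossible since $w\in N^0(v)$ requires $v\neq w$; thus only a few indices survive, and for these I would compute $v$ and test it against (a)--(c) directly. In forms~(a) and~(c) the forced left shift breaks contiguity: in~(a) the single surviving move reinstates a second $1$ far from the one already present, and in~(c) moving the leftmost $1$ opens a gap of width one that could only be closed by a shift at index $i$ or $i+1$, both excluded by $|i-j|>1$; either way $v$ has non-contiguous $1$-entries, which no form possesses. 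The remaining possibility, coming from form~(b) for $u$, yields a $v$ with a contiguous block but strictly fewer than $m$ ones; such a $v$ could only be of form~(c), which requires $m\geq n$, contradicting the inequality $n>m$ imposed by $u$ being of form~(b) in the same graph $\Ynm$.

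The step I expect to be the main obstacle is the analysis of form~(c), where the block $[t_1,t_2]$ carries two parameters and may abut a bucket ($t_1=0$ or $t_2=m$); there the identification of the left-shift down-neighbour and of the surviving indices $j$ must carefully separate genuine block-end shifts from bucket shifts, and the boundary cases must be excluded by observing that an abutting block forces an outer pivot, pushing $x$ back into form~(a) or~(b). Once the contiguity dichotomy is set up uniformly and these boundary cases are cleared, the contradiction holds in every configuration, so no $\Vij$-position with $u$ and $v$ both of the listed forms can exist.
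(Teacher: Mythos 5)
There is a genuine gap, and it sits exactly where you predicted trouble: the form~(c) boundary case. Your strategy is to derive the contradiction from contiguity alone --- identify $w$ as the unique left-shift element of $N^0(u)$, then show that every admissible $v=s_j(w)$ with $|i-j|>1$, $v\neq w$, has non-contiguous $1$-entries (or is killed by the incompatibility of forms (b) and (c) in one graph). That dichotomy is false, and your proposed repair of the boundary case is also false: a form-(c) vertex whose $1$-block abuts a bucket does \emph{not} get pushed back into form (a) or (b), since form (c) explicitly allows $t_1=1$ and $t_2=m$ while $p_l=\frac{m-n}{2}$ and $p_r=\frac{m+n}{2}$ remain inner pivots. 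Concretely, in $\Ynm[2][4]$ take $u=(1,1,1,1,0,0)$, which is form (c) with $(t_1,t_2)=(1,3)$, and $w=\overleftarrow{s}_0(u)=(0,0,1,1,0,0)$; the left shift sends the end of the block into the bucket, so no gap is created. Now $v=(0,0,1,1,1,1)$ satisfies $w=\overrightarrow{s}_4(v)$, so $i=0$, $j=4$, $|i-j|>1$, and $v$ is again of form (c) with $(t_1,t_2)=(2,4)$ (because $n=2$, the bucket values wrap around to match the form-(c) pattern as well). Moreover $d(u,0)=d(v,0)=6$ and $d(w,0)=5$, so $w$ genuinely lies in $N^0(u)\cap N^0(v)$. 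Every test your proof performs is passed, yet $v$ is one of the forms, so your argument yields no contradiction here.

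What actually fails for this triple --- and what any correct verification must invoke --- is the remaining clause of Definition~\ref{def:uvw_setup}, equality of the \emph{whole} down-sets: here $N^0(u)=\{w,(1,1,1,0,1,0)\}$ while $N^0(v)=\{w,(0,1,0,1,1,1)\}$, so $N^0(u)\neq N^0(v)$ and $(u,v,w)$ is indeed not in $\Vij$-position, exactly as the observation asserts. Your proposal never uses $N^0(u)=N^0(v)$ or $d(u,0)=d(v,0)$ beyond the trivial requirement $v\neq w$, so it is structurally unable to detect this obstruction. (The paper itself states the result as an observation without proof, i.e.\ as a direct check; such a check must run through the pairs of forms using all the $\Vij$-position conditions, and in the critical case of two form-(c) vertices it is the comparison of the second down-neighbours --- equivalently of the pivot data --- that produces the contradiction, not the shape of $v$.) Your handling of forms (a), (b) and of form (c) with an interior block is fine; the bucket-abutting form-(c) configurations are where the argument must be replaced, not patched.
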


\begin{lemma}\label{lem:N_zero_size_only_one}
	Let $(u, v, w)$ be in $\Vij$-position.
    Then $n^0(u)=n^0(v)=1$.
\end{lemma}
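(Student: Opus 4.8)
The plan is to exploit the defining equality $N^0(u)=N^0(v)$ directly. Since these down-neighbor sets coincide we have $n^0(u)=n^0(v)$, so it suffices to prove $n^0(u)=1$; and since $w\in N^0(u)$ we already know $n^0(u)\ge 1$, so the whole task reduces to ruling out a second down-neighbor of $u$. The crucial point is that, because $N^0(u)=N^0(v)$, every element of $N^0(u)$ is simultaneously a down-neighbor of $u$ and of $v$, i.e. a common neighbor of the pair $(u,v)$; so I would first enumerate all common neighbors of $u$ and $v$.

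To do this I would invoke Lemma \ref{lem:N_zero_size_far} to get $|i-j|>1$, so that $s_i$ and $s_j$ are commuting involutions. From $w=\overleftarrow{s}_i(u)=s_j(v)$ and involutivity, $v=s_js_i(u)$, hence $u$ and $v$ differ exactly at the two disjoint coordinate pairs $\{i,i+1\}$ and $\{j,j+1\}$. Put $w'=s_j(u)=s_i(v)$ (the opposite corner of the $4$-cycle $u\sim w\sim v\sim w'\sim u$). I claim the only common neighbors of $u$ and $v$ are $w$ and $w'$: any neighbor $z=s_k(u)$ of $u$ changes a single adjacent pair, and for $z$ to also be a neighbor of $v$ the difference vector $z-v$ must again be supported on a single adjacent pair; comparing with the support $\{i,i+1,j,j+1\}$ of $v-u$ forces $s_k$ to coincide with $s_i$ or $s_j$, giving $z\in\{w,w'\}$. (The only near-miss is the bridging case $j=i+2$, $k=i+1$, but there $z-v$ is supported on the non-adjacent pair $\{i,j+1\}$, excluded since $m\ge 3$.) Consequently $N^0(u)\subseteq\{w,w'\}$ and $n^0(u)\le 2$.

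It then remains to exclude $n^0(u)=2$. In that case $w'\in N^0(u)$, i.e. $s_j(u)\lessdot u$, so Corollary \ref{cor:uvw_setup_in_Ic} applies and yields that $I_c(u)$ and $I_c(v)$ are nontrivial and that $p_l,p_r$ are geodesic pivots of both $u$ and $v$. Since also $n^0(u)=n^0(v)=2$, Lemma \ref{lem:possible_uv} forces each of $u$ and $v$ into one of the three explicit forms (a)--(c); but then Observation \ref{obs:no_such_pair} asserts that such a pair cannot be in $\Vij$-position, contradicting the hypothesis. Therefore $n^0(u)=1$, and $n^0(v)=n^0(u)=1$, as required.

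The main obstacle I anticipate is the enumeration of common neighbors in the second paragraph: one must argue carefully about the supports of the difference vectors, including the boundary cases $i=0$ or $j+1=m+1$ (when a bucket is involved) and the near-cancellation case $j=i+2$, to be certain that no common neighbor other than $w,w'$ slips through. A secondary point is to confirm that the conclusions of Corollary \ref{cor:uvw_setup_in_Ic} supply the full ``nontrivial $I_c$'' hypothesis ($x_{p_l+1}\ne 0$, $x_{p_r}\ne 0$, $\sum_{i\in I_c}x_i=n$) demanded by Lemma \ref{lem:possible_uv}; this should follow from $p_l,p_r$ being pivots (so $\sum_{i\in I_c}x_i\equiv 0\bmod n$) together with $\{i+1,j\}\subseteq I_c(u)$, but it is worth checking explicitly.
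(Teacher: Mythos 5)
Your overall strategy is the same as the paper's: use Lemma \ref{lem:N_zero_size_far} to get $|i-j|>1$, bound the common neighbors of $u$ and $v$ (equivalently, the length-$2$ geodesics between them) by $\{w,w'\}$ so that $n^0\leq 2$, and then eliminate $n^0=2$ via Corollary \ref{cor:uvw_setup_in_Ic}, Lemma \ref{lem:possible_uv} and Observation \ref{obs:no_such_pair} --- this last step is exactly the paper's. The genuine gap is in the middle step, in precisely the place you deferred. Your claims that ``$u$ and $v$ differ exactly at the two disjoint coordinate pairs $\{i,i+1\}$ and $\{j,j+1\}$'' and that every neighbor of $u$ ``changes a single adjacent pair'' are both \emph{false} when $n=1$: there the two buckets are frozen at $0$, so $s_0$ and $s_m$ change a single non-bucket entry, the support of $v-u$ may consist of only two or three entries, and single-entry moves enter the support comparison. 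Consequently there are more near-misses than the one bridging case $j=i+2$, $k=i+1$ that you list, and they cannot all be excluded: in $\Ynm[1][3]$ take $u=(0,0,1,0,0)$, $v=(0,1,0,1,0)$ and $w=(0,1,0,0,0)=\overleftarrow{s}_1(u)=\overrightarrow{s}_3(v)$. This triple is in $V_{1,3}$-position, yet $u$ and $v$ have \emph{four} common neighbors --- the extra two, $\overrightarrow{s}_0(u)=\overleftarrow{s}_2(v)$ and $\overleftarrow{s}_3(u)=\overrightarrow{s}_1(v)$, each arise from a frozen-bucket move --- and $n^0(u)=n^0(v)=2$, so the lemma itself fails at $(1,3)$ (these two vertices are the twins swapped by $f_2$ in Lemma \ref{lem:y13}). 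Hence the $n=1$ boundary analysis is not a routine verification one can postpone: it is exactly where the standing hypothesis $(n,m)\neq(1,3)$, i.e.\ $m>3$ when $n=1$, must do real work.

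For comparison, this is where the paper spends half of its proof: for $n=1$ it reduces by symmetry to $i=0$, dispatches $j=m$ using $m>3$, and for $2\leq j\leq m-1$ it observes that $\sum_{i=1}^{m}(u_i-v_i)=1$, so every geodesic word from $u$ to $v$ must contain a bucket move, and then shows (again using $m>3$) that this move must be $\overleftarrow{s}_0$; this pins down the two geodesics $s_j\overleftarrow{s}_0$ and $\overleftarrow{s}_0 s_j$ and yields $n^0\leq 2$. Your argument is complete for $n>1$ (there every edge changes both entries of an adjacent pair, buckets included, so $v-u$ is genuinely supported on four distinct entries and your analysis with the single near-miss suffices), and your secondary worry about feeding the nontriviality hypothesis of Lemma \ref{lem:possible_uv} is resolved the same way the paper resolves it, through properties 1 and 2 of Corollary \ref{cor:uvw_setup_in_Ic}. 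But as written, the $n=1$ case is a gap, not a formality.
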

\begin{proof}
	By Lemma \ref{lem:N_zero_size_far}, necessarily $|i-j|>1$.
	Therefore, $s_i$ and $s_j$ commute and both $v=s_{j}s_{i}(u)$ and $v=s_{i}s_{j}(u)$, are geodesics between $u$ and $v$.
	We first show that these two geodesics are, in fact, the only two geodesics between $u$ and $v$ and therefore $n^0(u)=n^0(v)\leq 2$.
	
	If $1<n$ or $1\leq i,j\leq m-1$, then $u$ and $v$ have four distinct entries (indexed by $i,i+1,j$ and $j+1$) in which they are not equal.
	This implies that $v=s_{j}s_{i}(u)$ and $v=s_{i}s_{j}(u)$ are the only two geodesics between $u$ and \nolinebreak$v$, as required.
	
	Assume that $n=1$ (and $3<m$) and that at least one of $i$ and $j$ is in $\{0,m\}$.
    Note that $s_i$ and $s_j$ shift in $s_js_i(v)$ and $s_is_j(v)$ in opposite directions than they do in $v=s_js_i(u)$ and $v=s_is_j(u)$.
    Therefore, we can assume without loss of generality, that $i\in\{0,m\}$ (by interchanging the roles of $s_i$ and $s_j$ if $i\notin\{0,m\}$; and if $s_j(u)$ shifts right, then also by interchanging the roles of $u$ and $v$).
	Assume also that $i=0$ (the proof for the case $i=m$ follows by similar arguments).

	If $j=m$, then $v=s_{j}s_{i}(u)$ and $v=s_{i}s_{j}(u)$ are the only two geodesics from $u$ to $v$, since $3<m$.
	Therefore $2\leq j\leq m-1$.
    In this case, $\sum_{i=1}^{m}u_i - v_i = 1$; therefore either $\overleftarrow{s}_0$ or $\overrightarrow{s}_m$ is in every word corresponding to a geodesic from $u$ to $v$. 
    It is sufficient to show that $\overleftarrow{s}_0$ is in every word corresponding to a geodesic from $u$ to $v$.
    If $j< m-1$, then clearly, $\overleftarrow{s}_0$ is in every word corresponding to a geodesic from $u$ to $v$.
    The same result follows if $j= m-1$, again, since $m>3$.
    
	It remains to show that $n^0(u) = n^0(v) \neq 2$.
	Assume to the contrary that $n^0(u) = n^0(u) = 2$.
	Since $v=s_js_i(u)$ and $v=s_is_j(u)$ are the only two geodesics from $u$ to $v$, $N^0(u)=N^0(v)=\{s_i(u), s_j(u)\}=\{s_j(v), s_i(v)\}$.
    Therefore $s_j(u)\lessdot u$ and the vertices $u$ and $v$ are as in Corollary \ref{cor:uvw_setup_in_Ic}.
    By the second property of Corollary \ref{cor:uvw_setup_in_Ic}, $I_c(u)$ and $I_c(v)$ are not trivial (and $n>1$).
    Therefore, $u$ and $v$ are two distinct vertices, each with one of the forms as in Lemma \ref{lem:possible_uv}.
    This is a contradiction by Observation \ref{obs:no_such_pair}.
\end{proof}

\begin{lemma}\label{lem:N_zero_equal}
	Let $(u, v, w)$ be in $V_{i,j}$-position.
    Then
    \begin{enumerate}
    \item $u$ and $v$ are the unique pair of vertices $(n-1,0,0,1,0,\dots,0)$ and \linebreak $(n-2,1,1,0,0,\dots,0)$, and $w=(n-1,0,1,0,0,\dots,0)$;
    \item if $n=1$, then $6\leq m$.
    \end{enumerate}
\end{lemma}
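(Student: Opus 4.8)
The plan is to pin down $u$, $v$, $w$ exactly by combining the two preceding lemmas with a characterization of the vertices having a single ``down'' neighbour. First I would record what is already in hand: since $(u,v,w)$ is in $\Vij$-position, Lemma~\ref{lem:N_zero_size_far} gives $|i-j|>1$ and Lemma~\ref{lem:N_zero_size_only_one} gives $n^0(u)=n^0(v)=1$, so $N^0(u)=N^0(v)=\{w\}$. Thus $w$ is the \emph{only} neighbour of $u$ (and of $v$) that is strictly closer to $0$; consequently every geodesic from $u$ to $0$ begins with the edge to $w$, likewise for $v$, and $d(u,0)=d(v,0)=d(w,0)+1$. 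Since $w=\overleftarrow{s}_i(u)$ is a left shift, the moving unit sits in entry $i+1$ of $u$, and $u=\overrightarrow{s}_i(w)$; using Lemma~\ref{lem:geodesic_is_a_pivot_path_ynm} and Observation~\ref{obs:shift_direction_pivot_sides} I would then determine the direction of $s_j$ and the relative order of $i$ and $j$, showing that $w=\overleftarrow{s}_j(v)$ is again a left shift with $j<i$, so that both $u$ and $v$ arise from the single vertex $w$ by the reverse right shifts $\overrightarrow{s}_i,\overrightarrow{s}_j$ and are up-neighbours of a common $w$.

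The core step is a characterization of the vertices $x$ with $n^0(x)=1$. The plan is to show that a single down-neighbour forces all geodesics from $x$ to $0$ to agree on their first step, which in turn forces the nonzero inner entries of $x$ to form one contiguous block of $1$'s whose descent costs toward the left and right buckets are \emph{strictly} unequal (otherwise the block could start moving either way and $n^0(x)\ge 2$), with no removable surplus in the bucket on the far side. Concretely, via Corollary~\ref{cor:pivot_path_len} and Observation~\ref{obs:closer_pivot_is_better}, I would argue that a second movable unit, a second admissible first step, or an equidistant placement each produces a distinct element of $N^0(x)$. Applying this both to $u$ and to $v$ reduces each of them to ``one block of $1$'s together with balancing buckets.''

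It then remains to solve the resulting small system. Both $u$ and $v$ are single-block vertices that are right-shift up-neighbours of the same $w$ at positions $i$ and $j$ with $j<i$ and $i-j>1$, and both have $w$ as their unique down-neighbour. Matching the local pictures on entries $j,\dots,i+1$, the identity $w=\overleftarrow{s}_i(u)=\overleftarrow{s}_j(v)$ together with $n^0(u)=n^0(v)=1$ forces the blocks to be as short and as far left as possible: $i=2$, $j=0$, the block of $u$ is the single $1$ in entry $3$, the block of $v$ is the pair of $1$'s in entries $1,2$, and $w$ carries the single $1$ in entry $2$. This yields $u=(n-1,0,0,1,0,\dots,0)$, $v=(n-2,1,1,0,0,\dots,0)$ and $w=(n-1,0,1,0,\dots,0)$, proving part~(1). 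For part~(2), when $n=1$ every integer $-1\le p\le m+1$ is a pivot of every vertex, so $n^0(u)=1$ can hold only if the two descent costs of the block, namely $3$ to the left bucket and $m-2$ to the right bucket, are strictly unequal; that is $3<m-2$, i.e. $m\ge 6$.

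The main obstacle is the $n^0(x)=1$ characterization and the exhaustive ruling-out of all near-miss configurations: a second contiguous block, a block placed symmetrically between the two buckets, and the boundary cases where one of the two shifts involves a bucket, so that wrap-around modulo $n$ must be tracked. Making the equidistance analysis and the $n=1$ degeneracy airtight—rather than the final bookkeeping that extracts the explicit vertices—is where the real work lies.
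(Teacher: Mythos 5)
Your proposal is correct and takes essentially the same route as the paper: both arguments rest on the characterization that $n^0(x)=1$ with a left-shift descent forces the nonzero entries of $x$ to form a single contiguous block all moving toward one bucket, and then pin down the unique pair by matching the two such vertices $u=\overrightarrow{s}_i(w)$ and $v$ over the common covered vertex $w$. The only organizational difference is that you derive part (2) as a corollary of the explicit form in part (1) (requiring $3<m-2$, so $m\geq 6$), whereas the paper proves part (2) first by enumerating the $n=1$, $m\in\{4,5\}$ cases; both orderings are sound.
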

\begin{proof}
	By Lemma \ref{lem:N_zero_size_only_one}, $n^0(u)=n^0(v)=1$.
    We first prove that if $n=1$, then $6\leq m$.
	Assume to the contrary that $n=1$ and $m\in\{4,5\}$.
	In this case, there are precisely three vertices in $\Ynm$ which cover only one vertex and that the covered vertex is obtained via a left unit shift:
    $0_\ell$, $(0,1,1,0,\dots,0)$ and $(0,0,1,0,\dots,0)$.
	Therefore either $u=(0,1,1,0,\dots,0)$ and $s_i=\overleftarrow{s}_0$ or $u=(0,0,1,0,\dots,0)$ and $s_i=\overleftarrow{s}_1$, since $w\neq 0$.
	In both cases, $n^0(v)=2$ for every possible $s_j$.
    A contradiction.
    
	It is simple to verify that the pair of vertices given in this lemma indeed satisfies the conditions of the lemma.
	Let $(u,v,w)$ be in $V_{i,j}$-position, where $u$ and $v$ are any other pair of vertices.
	We show that $n^0(v)>1$.
	
	Since $n^0(u)=1$ and $s_i(u)$ shifts left, every geodesic between $u$ and $0$ contains only left unit shifts and $u$ consists of a sequence of $k\geq 1$ consecutive non-zero entries which are to be shifted into the left bucket in every geodesic from $u$ to $0$ (including the case for $k=1$ where $u=(u_0,0,\dots,0,u_{m+1})$ with $0<u_{m+1}<\frac{n}{2}$).
	
	If $k=1$, then $i\notin\{0,2\}$, since $u\notin\{0_\ell, (n-1,0,0,1,0,\dots,0)\}$.
	Clearly, $s_j\in\{s_0,s_m\}$ and in both cases, $n^0(v)=2$.
	Similarly, if $k>1$, then it is simple to verify that $u=(n-2,1,1,0,0,\dots,0)$ is the only case in which there exists $s_j$ such that $n^0(v)=1$.
\end{proof}
\pagebreak
\begin{observation}
	By symmetric arguments to the ones in the proof of Lemma \ref{lem:N_zero_equal}, if $s_i$ were to shift right in Definition \ref{def:uvw_setup}, then the only pair of vertices for which $n^0(u) = n^0(v)=1$ was $(0,\dots,0,1,0,0,n-1)$ and $(0,\dots,0,0,1,1,n-2)$. 
\end{observation}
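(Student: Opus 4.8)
The plan is to derive this observation from Lemma \ref{lem:N_zero_equal} by exploiting the reflection automorphism $\psi$ of Definition \ref{def:typical_automorphisms}, which reverses the order of the entries of a vertex. First I would record the two properties of $\psi$ that power the symmetry. Since $\psi$ is an automorphism of $\Ynm$ it is distance preserving, and since $\psi(0)=0$ we have $d(\psi(y),0)=d(y,0)$ for every vertex $y$; hence $y\in N^0(x)$ if and only if $\psi(y)\in N^0(\psi(x))$. Consequently $\psi$ maps $N^0(x)$ bijectively onto $N^0(\psi(x))$, so that $n^0(\psi(x))=n^0(x)$ and the relation $N^0(u)=N^0(v)$ is equivalent to $N^0(\psi(u))=N^0(\psi(v))$. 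Thus all of the conditions imposed in Definition \ref{def:uvw_setup} that do not mention shift direction are invariant under $\psi$.

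The second, and only mildly technical, point is to track how $\psi$ transports the generators. A short index computation shows that a right shift $\overrightarrow{s}_i$ performed at a vertex becomes, after reversal, the left shift $\overleftarrow{s}_{m-i}$ at the reflected vertex: the unit that $\overrightarrow{s}_i$ moves from entry $i$ to entry $i+1$ is carried by $\psi$ to a unit moved from entry $m-i+1$ to entry $m-i$. Therefore, if $(u,v,w)$ satisfy all the hypotheses of Definition \ref{def:uvw_setup} except that $w=\overrightarrow{s}_i(u)=s_j(v)$ (the right-shift variant under consideration), then applying $\psi$ produces a triple $(\psi(u),\psi(v),\psi(w))$ with $\psi(w)=\overleftarrow{s}_{m-i}(\psi(u))=s_{m-j}(\psi(v))$, with $d(\psi(u),0)=d(\psi(v),0)\geq 2$, and with $N^0(\psi(u))=N^0(\psi(v))$. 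In other words $(\psi(u),\psi(v),\psi(w))$ is genuinely in $\Vij[m-i][m-j]$-position in the sense of Definition \ref{def:uvw_setup}, now realized by a left shift as that definition requires.

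Finally I would apply Lemma \ref{lem:N_zero_equal} to this transformed triple, which forces $\{\psi(u),\psi(v)\}$ to be the canonical pair $(n-1,0,0,1,0,\dots,0)$ and $(n-2,1,1,0,0,\dots,0)$ (and $6\leq m$ when $n=1$). Since $\psi$ is an involution, applying it once more recovers $\{u,v\}$ as the reflected pair $\psi(n-1,0,0,1,0,\dots,0)=(0,\dots,0,1,0,0,n-1)$ and $\psi(n-2,1,1,0,0,\dots,0)=(0,\dots,0,0,1,1,n-2)$, which is precisely the claimed pair. The one place that genuinely requires care is the index bookkeeping of the middle paragraph: confirming that $\psi$ sends a right shift at index $i$ to a left shift at index $m-i$, and that the reflected configuration obeys the left-shift convention baked into Definition \ref{def:uvw_setup}. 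Once this conjugation identity is in hand, no portion of the argument of Lemma \ref{lem:N_zero_equal} needs to be re-run, and the observation follows immediately.
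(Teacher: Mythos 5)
Your proposal is correct, and it is a genuinely cleaner mechanism than what the paper intends by ``symmetric arguments'': the paper's implicit proof is to re-run the entire case analysis of Lemma \ref{lem:N_zero_equal} with left and right interchanged, whereas you transport its \emph{conclusion} through the reflection automorphism $\psi$ of Definition \ref{def:typical_automorphisms}. Your three verifications are exactly the ones needed and are all correct: $\psi$ is an automorphism fixing $0$, so it preserves distances to $0$ and hence carries $N^0(x)$ bijectively onto $N^0(\psi(x))$; the conjugation identity holds, since if $w=\overrightarrow{s}_i(u)$ then $\psi(w)$ and $\psi(u)$ differ by $+1$ in entry $m-i$ and $-1$ in entry $m+1-i$, i.e.\ $\psi(w)=\overleftarrow{s}_{m-i}(\psi(u))$, so a right-shift triple $(u,v,w)$ is carried to a triple $(\psi(u),\psi(v),\psi(w))$ genuinely in $V_{m-i,m-j}$-position as Definition \ref{def:uvw_setup} requires; and $\psi$ applied to the canonical pair $(n-1,0,0,1,0,\dots,0)$, $(n-2,1,1,0,0,\dots,0)$ (with $\psi(w)=(n-1,0,1,0,\dots,0)$) returns precisely the claimed pair $(0,\dots,0,1,0,0,n-1)$, $(0,\dots,0,0,1,1,n-2)$. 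What your route buys is that none of the internal argument of Lemma \ref{lem:N_zero_equal} needs to be redone or even inspected --- the only thing to check is the index bookkeeping $\psi\,\overrightarrow{s}_i\,\psi=\overleftarrow{s}_{m-i}$ --- and the side condition ``$n=1$ implies $6\leq m$'' transfers automatically, which you note. What the paper's phrasing buys is brevity, at the cost of leaving the symmetry unformalized; your version is the one a careful referee would want written down.
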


\begin{observation}\label{obs:assume_fixing_zero}
    If $f\in\aut(\Ynm)$, then there exist $\alpha\in\{0,\dots,n-1\}$ and $\beta, \gamma\in\{0,1\}$ such that $0$, $0_\ell$ and $0_r$ are fixed points of $\pi=\psi^\gamma\tau^\beta\varphi^\alpha f$.
\end{observation}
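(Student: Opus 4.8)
The plan is to exploit the fact that a graph automorphism preserves valency, together with a clean combinatorial description of the valency of a vertex in $\Ynm$. First I would compute $\delta_{\Ynm}(v)$ for an arbitrary vertex $v$. Since the boundary shifts $s_0$ and $s_m$ always act non-trivially on $v$, while for $1\le i\le m-1$ the shift $s_i$ moves $v$ precisely when $v_i\neq v_{i+1}$, one gets $\delta_{\Ynm}(v)=2+\#\{1\le i\le m-1: v_i\neq v_{i+1}\}$; here $m\geq 3$ guarantees that the neighbours produced by distinct shifts are genuinely distinct. Consequently $\delta_{\Ynm}(v)=2$ if and only if the middle entries $v_1,\dots,v_m$ are all equal. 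The vertices with all middle entries equal to $0$ are exactly $(\alpha,0,\dots,0,n-\alpha)=\varphi^{\alpha}(0)$ for $0\le\alpha\le n-1$, and those with all middle entries equal to $1$ are exactly $\tau\varphi^{\alpha}(0)$ for $0\le\alpha\le n-1$. Thus the set of valency-$2$ vertices is precisely the $\langle\varphi,\tau\rangle$-orbit of $0$; applied to $v=0$ itself this also records that the only neighbours of $0$ are $0_\ell=\overrightarrow{s}_0(0)$ and $0_r=\overleftarrow{s}_m(0)$.

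Now let $f\in\aut(\Ynm)$. Since $\delta_{\Ynm}(f(0))=\delta_{\Ynm}(0)=2$, the crucial consequence of the first step is that $f(0)$ lies in the $\langle\varphi,\tau\rangle$-orbit of $0$, so it is correctable by the fundamental automorphisms. I would then split into two cases. If the middle entries of $f(0)$ are all $0$, write $f(0)=\varphi^{\alpha_0}(0)$ and set $\beta=0$, $\alpha=(-\alpha_0)\bmod n$, so that $\varphi^{\alpha}f(0)=0$. If instead the middle entries are all $1$, write $f(0)=\tau\varphi^{\alpha_0}(0)$ and set $\beta=1$, $\alpha=\alpha_0$; using the relation $\tau\varphi=\varphi^{-1}\tau$ of Observation \ref{obs:anm_relations} (equivalently $\varphi^{\alpha_0}\tau=\tau\varphi^{-\alpha_0}$) one computes $\tau\varphi^{\alpha_0}\,\tau\varphi^{\alpha_0}(0)=\varphi^{-\alpha_0}\varphi^{\alpha_0}(0)=0$. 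In either case $\pi_0:=\tau^{\beta}\varphi^{\alpha}f$ fixes $0$, with $\alpha\in\{0,\dots,n-1\}$ and $\beta\in\{0,1\}$.

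Finally, since $\pi_0$ fixes $0$ and is an automorphism, it permutes the two neighbours $\{0_\ell,0_r\}$ of $0$, hence either fixes both or swaps them. As $\psi$ reverses the entry string, it fixes $0$ and satisfies $\psi(0_\ell)=0_r$, i.e. it swaps $0_\ell$ and $0_r$. I would therefore set $\gamma=0$ if $\pi_0$ fixes both $0_\ell$ and $0_r$, and $\gamma=1$ otherwise; then $\pi=\psi^{\gamma}\pi_0=\psi^{\gamma}\tau^{\beta}\varphi^{\alpha}f$ fixes each of $0$, $0_\ell$ and $0_r$, which is exactly the assertion.

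The main obstacle — indeed essentially the only non-formal point — is the valency computation of the first paragraph, and in particular verifying for $m\geq 3$ that the boundary shifts $s_0,s_m$ are never trivial and that the neighbours coming from the various $s_i$ are pairwise distinct, while correctly accounting for the fact that the buckets $v_0,v_{m+1}$ live in $\mathbb{Z}_n$. Once the valency-$2$ vertices are identified with the orbit of $0$, everything else is a direct manipulation of the two relations recorded in Observation \ref{obs:anm_relations}.
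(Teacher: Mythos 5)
Your proposal is correct and follows essentially the same route as the paper: characterize the valency-$2$ vertices as those with all non-bucket entries equal (equivalently, the $\langle\varphi,\tau\rangle$-orbit of $0$), use this to move $f(0)$ back to $0$ via $\tau^{\beta}\varphi^{\alpha}$, and then use $\psi$, which fixes $0$ and swaps its two neighbors $0_\ell$ and $0_r$, to fix those as well. The only difference is that you spell out the valency computation and the relation $\tau\varphi^{\alpha_0}\tau=\varphi^{-\alpha_0}$ explicitly, which the paper leaves implicit.
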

\begin{proof}
    A vertex in $\Ynm$ has valency $2$ if and only if all of its non-bucket entries are equal to each other.
    Therefore, there exist $\alpha\in\{0,\dots,n-1\}$ and $\beta\in\{0,1\}$ such that $0$ is a fixed point of $\pi=\tau^\beta\varphi^\alpha f$.
    $0$ is also a fixed point of $\psi$, and $\psi$ interchanges the only two neighbors of $0$; $0_\ell$ and $0_r$.
    Therefore, there exists $\gamma\in\{0,1\}$ such that $0$, $0_\ell$ and $0_r$ are fixed points of $\pi=\psi^\gamma\tau^\beta\varphi^\alpha f$.
\end{proof}    

%
\begin{lemma}\label{lem:assume_fixing_zero_one}
	Let $\pi\in\aut(\Ynm)$ such that $0$, $0_\ell$ and $0_r$ are fixed points of $\pi$.
	Then $0_1$ and $0_{n-1}$ are also fixed points of $\pi$.
\end{lemma}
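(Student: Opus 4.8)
The plan is to reduce to a single statement and then to pin down $0_{n-1}$ by an intrinsic, $\pi$-invariant description. If $n=1$ then $\varphi=\mathrm{Id}$, so $0_1=0_{n-1}=0$ and both are fixed trivially; hence assume $n\ge 2$. The reversal automorphism $\psi$ of Definition \ref{def:typical_automorphisms} fixes $0$ and satisfies $\psi(0_\ell)=0_r$ and $\psi(0_1)=0_{n-1}$. Thus if $\pi$ fixes $0,0_\ell,0_r$ then so does $\pi':=\psi\pi\psi$, and $\pi(0_1)=0_1$ is equivalent to $\pi'(0_{n-1})=0_{n-1}$. It therefore suffices to prove $\pi(0_{n-1})=0_{n-1}$ for every $\pi$ fixing $0,0_\ell,0_r$.

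First I would record the distance data of $0_{n-1}=(n-1,0,\dots,0,1)$. It has valency $2$ (all middle entries are equal, by the criterion in Observation \ref{obs:assume_fixing_zero}), its only pivots are $-1$ and $m+1$, and Corollary \ref{cor:pivot_path_len} gives $\ps_{m+1}(0_{n-1})=m+1\le\ps_{-1}(0_{n-1})$, so $d(0,0_{n-1})=m+1$. Applying Lemma \ref{lem:dist_preserved_under_phi} to the representatives $0_{n-1}-0_\ell\equiv(0,-1,0,\dots,0,1)$ and $0_r-0_{n-1}\equiv(1,0,\dots,0,1,n-2)$ yields $d(0_\ell,0_{n-1})=m$ and, for $n\ge 3$, $d(0_r,0_{n-1})=m+2$. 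Since $\pi$ preserves valency and distance and fixes $0,0_\ell,0_r$, the image $\pi(0_{n-1})$ is again a valency-$2$ vertex at distances $m+1,m,m+2$ from $0,0_\ell,0_r$; so everything reduces to showing that $0_{n-1}$ is the \emph{unique} such vertex.

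The valency-$2$ vertices are exactly the $0_k$ and the $1_k$. Among the $0_k$ one has $d(0,0_k)=(m+1)\min(k,n-k)$, which equals $m+1$ only for $k\in\{1,n-1\}$, and the geodesics from $0$ to $0_1$ run through $0_r$ rather than $0_\ell$, giving $d(0_\ell,0_1)>m$, so $0_1$ is excluded. For the $1_k$, the inner pivots $p$ satisfy $p\equiv -k\pmod n$ and $\ps_p(1_k)=\binom{p+1}{2}+\binom{m-p+1}{2}$, while the outer pivots contribute at least $\binom{m+1}{2}$; hence $d(0,1_k)\ge\binom{\lfloor m/2\rfloor+1}{2}+\binom{\lceil m/2\rceil+1}{2}$, which already exceeds $m+1$ once $m\ge 4$. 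So for $m\ge 4$ no $1_k$ is even at distance $m+1$ from $0$, and uniqueness follows. For $m=3$ the bound is an equality, and only $1_{n-1}$ and $1_{n-2}$ can reach distance $m+1$; a short pivot computation shows each satisfies $d(0_r,\cdot)=m\neq m+2$, so the coordinate $0_r$ excludes them and uniqueness again holds.

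The main obstacle is the genuinely degenerate pair $(n,m)=(2,3)$. There $0_1=0_{n-1}$, while a $1$-type vertex such as $1_0$ shares with it not only the distance triple $(m+1,m,m)$ (note $d(0_r,0_{n-1})=m$ when $n=2$) but also its valency and its value of $n^0$, so none of the local invariants above can separate them; $0_{n-1}$ and the offending $1$-vertex are precisely the kind of "twin" configuration dissected in Lemmas \ref{lem:N_zero_size_only_one} and \ref{lem:N_zero_equal}. For this case I would instead compare the actual down-neighbour sets $N^0(\cdot)$—which $\pi$ preserves because it fixes $0$—and invoke those structural lemmas, or simply inspect the $16$-vertex graph $\Ynm[2][3]$ directly, to conclude that no automorphism fixing $0,0_\ell,0_r$ can move $0_1$. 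Establishing the lower bound on $d(0,1_k)$ and carrying out the $m=3$ and $(2,3)$ bookkeeping cleanly is where the real work lies.
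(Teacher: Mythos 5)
Your argument is correct in every case except $(n,m)=(2,3)$, and for the main case it takes a genuinely different route from the paper. Where you characterize $0_{n-1}$ as the unique valency-$2$ vertex with distance triple $(m+1,m,m+2)$ from the fixed vertices $(0,0_\ell,0_r)$ — with the $\psi$-conjugation trick halving the work, and extra bookkeeping for $1_{n-1},1_{n-2}$ when $m=3$ — the paper uses a different invariant for $n>2$: $0_1$ and $0_{n-1}$ are the only nonzero valency-$2$ vertices admitting a \emph{unique} geodesic to $0$, and that geodesic passes through $0_r$ (for $0_1$) or $0_\ell$ (for $0_{n-1}$); since $0_\ell$ and $0_r$ are fixed, the two cannot be swapped. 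This avoids computing any distances to $0_\ell,0_r$ and needs no $m=3$ special case, but it breaks down at $n=2$ (where $0_1$ has two geodesics to $0$), so the paper splits off $n=2$ separately: for $m>3$ it compares $d(0_1,0)=m+1$ with the lower bound on $d(1_k,0)$, exactly as you do. Your distance computations — $d(0,0_k)=(m+1)\min(k,n-k)$, the bound $d(0,1_k)\geq\binom{\lfloor m/2\rfloor+1}{2}+\binom{\lceil m/2\rceil+1}{2}$, and the exclusion of $1_{n-1},1_{n-2}$ via $d(0_r,\cdot)\leq m$ — all check out.

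The gap is $(n,m)=(2,3)$. You correctly observe that valency, $n^0$ and the distance triple all coincide for $0_1$ and $1_0$ there, but your proposed fix of invoking Lemmas \ref{lem:N_zero_size_only_one} and \ref{lem:N_zero_equal} does not apply as stated: those lemmas concern pairs in $V_{i,j}$-position, i.e.\ with $N^0(u)=N^0(v)$, whereas $N^0(0_1)=\{(0,1,0,0,1),(1,0,0,1,0)\}$ and $N^0(1_0)=\{(1,0,1,1,1),(0,1,1,0,0)\}$ are different sets, so the lemmas say nothing about the pair you must separate. What does close the case is either the paper's trick — the vertex $x=(1,1,0,1,1)$ is the unique vertex at distance $2$ from $0$ adjacent to both $0_\ell$ and $0_r$, hence fixed by $\pi$, and $d(1_k,x)=2\neq 4=d(0_1,x)$ — or a one-level-deeper version of your own idea: both elements of $N^0(0_1)$ have $n^0=1$, while $(1,0,1,1,1)\in N^0(1_0)$ has $n^0=2$ (its lower covers being $x$ and $(1,0,1,0,0)$), and an automorphism fixing $0$ preserves this profile. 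Your fallback of a direct check of the $16$-vertex graph $\Ynm[2][3]$ would of course also settle it, but as written this case remains open.
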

\begin{proof}
	If $n=1$, then $0_1=0_{n-1}=0$, which is a fixed point of $\pi$.
	If $n>2$, then $0_1$ and $0_{n-1}$ are the only (non zero) vertices of valency $2$ with a single geodesic between them and $0$.
	Therefore, $0_1$ and $0_{n-1}$ are either fixed or interchanged by $\pi$.
	Note that the geodesic between $0_1$ and $0$ contains $0_r$ and doesn't contain $0_\ell$ whereas the geodesic between $0_{n-1}$ and $0$ contains $0_\ell$ and doesn't contain $0_r$.
	Therefore, $0_1$ and $0_{n-1}$ are fixed by $\pi$, since $0_r$ and $0_\ell$ are fixed by $\pi$.
	
	Assume that $n=2$ and $m>3$. In this case, $0_1=0_{n-1}$ and there are four vertices with valency $2$: $0, 0_1, 1_0$ and $1_1$.
	Note that $d(0_1,0)=m+1$ and, by Observation \ref{obs:closer_pivot_is_better} and Fact \ref{fac:split_sum_center},         
        $d(1_k,0)\geq \binom{\lfloor\frac{m}{2}\rfloor+1}{2}+\binom{\lceil\frac{m}{2}\rceil+1}{2}$ (for $k\in\{0,1\}$).
	Therefore, since $m>3$, $d(0_1,0)<d(1_k,0)$ for both $k=0$ and $k=1$.
	This proves that $0_1$ is fixed by $\pi$.
	
	The only case left is $n=2$ and $m=3$.
	Note that $x=(1,1,0,1,1)\in\Ynm[2][3]$ is fixed by $\pi$, since it is the only vertex at distance $2$ from $0$ that is a neighbor of both $0_\ell$ and $0_r$.
	Both vertices $1_0$ and $1_1$ are at distance $2$ from $x$ whereas the vertex $0_1$ is at distance $4$ from $x$.
	Therefore $0_1$ is fixed by $\pi$.
\end{proof}

\begin{lemma}\label{lem:Anm_is_aut}
	Let $n\geq 1$ and $m\geq 3$ be two integers such that $(n,m)\neq (1,3)$.
	Then $\aut(\Ynm)=\AGnm$.
\end{lemma}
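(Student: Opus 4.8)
The plan is to prove $\aut(\Ynm)\subseteq\AGnm$, the reverse inclusion being immediate. Fix $f\in\aut(\Ynm)$. By Observation \ref{obs:assume_fixing_zero} there are $\alpha,\beta,\gamma$ so that $\pi:=\psi^\gamma\tau^\beta\varphi^\alpha f$ fixes $0$, $0_\ell$ and $0_r$; since the prefix lies in $\AGnm$, it suffices to prove $\pi=Id$. First I would record two structural facts. Because $\pi\in\aut(\Ynm)$ fixes $0$, it preserves the distance-to-$0$ function, and hence the relation $N^0$, so that $N^0(\pi(x))=\pi(N^0(x))$ for every $x$; and by Lemma \ref{lem:assume_fixing_zero_one}, $\pi$ also fixes $0_1$ and $0_{n-1}$.

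I would then show $\pi(x)=x$ for all $x$ by strong induction on $d(x,0)$. The vertex $0$ has valency $2$, so its only neighbours are the fixed vertices $0_\ell,0_r$; this settles $d(x,0)\le 1$. For the step, assume every vertex at distance $<d$ is fixed and let $d(x,0)=d\ge 2$. Since $N^0(x)$ consists of fixed vertices, $N^0(\pi(x))=N^0(x)$, so $x$ and $\pi(x)$ are two vertices at equal distance $d\ge 2$ sharing the same set of lower neighbours. If $\pi(x)\neq x$, then choosing any common $w\in N^0(x)$ and, if necessary, reflecting the whole configuration by the orientation-reversing automorphism $\psi$ (to arrange that $w$ is reached from $x$ by a left shift), the triple $(x,\pi(x),w)$ is in $\Vij$-position. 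Lemma \ref{lem:N_zero_equal} then forces $d=3$ and $\{x,\pi(x)\}$ to be the pair $\{(n-1,0,0,1,0,\dots,0),\,(n-2,1,1,0,\dots,0)\}$ or its image under $\psi$.

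The crux is therefore to exclude the possibility that $\pi$ transposes this one pair; this is the step I expect to be the main obstacle. For $m\ge 4$ I would argue by valency. Writing $u=(n-1,0,0,1,0,\dots,0)$ and $v=(n-2,1,1,0,\dots,0)$, the neighbours of $u$ are those produced by $s_0,s_2,s_3,s_m$, while $v$ has only those produced by $s_0,s_2,s_m$ (the shift $s_3$ yielding a genuine fourth neighbour of $u$ precisely because $m\ge 4$, so that $s_3\neq s_m$). Thus $\delta(u)=4\neq 3=\delta(v)$, and since $\psi$ preserves valency the same separation holds for the mirror pair; as $\pi$ preserves valency, no such transposition can occur. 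The delicate remaining case is $m=3$ (where necessarily $n\ge 2$, as $(n,m)\neq(1,3)$), since there $\delta(u)=\delta(v)=3$. Here I would instead invoke the already-fixed vertices: for $m=3$ the fixed vertex $0_{n-1}=(n-1,0,\dots,0,1)$ is adjacent to $u$ (via $\overrightarrow{s}_3$) but lies at distance $\ge 2$ from $v$, so a transposition $u\leftrightarrow v$ would contradict $d(0_{n-1},u)=d(0_{n-1},\pi(u))$; the mirror pair is ruled out identically using the fixed vertex $0_1$. In every case this yields $\pi(x)=x$, completing the induction, whence $\pi=Id$ and $f\in\AGnm$.
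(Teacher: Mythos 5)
Your proposal is correct and follows essentially the same route as the paper's proof: reduce via Observation \ref{obs:assume_fixing_zero} and Lemma \ref{lem:assume_fixing_zero_one} to an automorphism $\pi$ fixing $0$, $0_\ell$, $0_r$, $0_1$, $0_{n-1}$; take a closest non-fixed vertex and place $(x,\pi(x),w)$ in $\Vij$-position; invoke Lemma \ref{lem:N_zero_equal} to pin down the exceptional pair; and eliminate it by the valency count for $m\geq 4$ and by the fixed vertex $0_{n-1}$ for $m=3$. The only cosmetic differences are that you make the paper's ``symmetric arguments'' explicit by conjugating with $\psi$, and for $m=3$ you use the adjacency of $u$ to $0_{n-1}$ directly where the paper translates by $\varphi$ and compares distances to $0$ --- both contradictions rest on the same fixed point.
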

\begin{proof}
	Let $f\in\aut(\Ynm)$.
	By Observation \ref{obs:assume_fixing_zero} and Lemma \ref{lem:assume_fixing_zero_one}, there exist integers $\alpha, \beta, \gamma$ such that $0$, $0_\ell$, $0_r$, $0_1$ and $0_{n-1}$ are fixed points of $\pi = \psi^\gamma \tau^\beta \varphi^\alpha f$.
	We show that $\pi$ is, in fact, the trivial automorphism.	
	Assume to the contrary that $\pi\neq Id$ and let $u\in\Ynm$ be a vertex at minimal distance $d$ from $0$ such that $u$ is not fixed by $\pi$.
	Denote $v=\pi(u)$.
	Note that $N^0(u)=N^0(v)$, by the minimality of $d$, and let $w\in N^0(u)= N^0(v)$.
    Note that $d\geq 2$ and therefore $w\neq 0$.
	
	Let $0\leq i,j\leq m$ such that $s_i(u)=s_j(v)=w$, so that $v=s_j s_i(u)$. 
	Assume that $s_i$ shifts left (the proof of the other case follows by symmetric arguments).
	Therefore, $(u, v, w)$ are in $V_{i,j}$-position.
	By Lemma \nolinebreak \ref{lem:N_zero_equal}, $u$ and $v$ are the unique pair of vertices 
		$(n-1,0,0,1,0,\dots,0)$ and \linebreak
		$(n-2,1,1,0,0,\dots,0)$;
	and if $n=1$, then $6\leq m$.

    If $m>3$, then $u$ and $v$ have different valencies, in contradiction to $\pi(u)=v$.
    Finally, assume $m=3$ (and $n>1$).
    Then $u$ and $v$ are the pair of vertices $u_1=(n-1,0,0,1,0)$ and $u_2=(n-2,1,1,0,0)$.
    
    Since $0_{n-1}$ is a fixed point of $\pi$,  $d(u_1,0_{n-1})=d(u_2,0_{n-1})$.
    Therefore, 
    $$d(\varphi(u_1), 0)=d(\varphi(u_1),\varphi(0_{n-1})) = d(\varphi(u_2),\varphi(0_{n-1}))=d(\varphi(u_2),0).$$
    But $d(\varphi(u_1),0)=1$ and $d(\varphi(u_2),0)=3$.
    A contradiction.
\end{proof}

\cleardoublepage
\phantomsection
\addcontentsline{toc}{chapter}{References}

\end{document}